\newif\ifmsbmloaded@
\def\loadmsbm{\msbmloaded@true
  \font\tenmsb=msbm10 scaled 1\@ptsize00
  \font\sevenmsb=msbm7 scaled 1\@ptsize00
  \font\fivemsb=msbm5 scaled 1\@ptsize00
  \alloc@8\fam\chardef\sixt@@n\msbfam
  \textfont\msbfam=\tenmsb
  \scriptfont\msbfam=\sevenmsb
  \scriptscriptfont\msbfam=\fivemsb
  }
\def\nonmatherr@#1{\errmessage%
{LateX error: \string#1\space allowed only in math mode}}
\def\Bbb{\relax\ifmmode\expandafter\Bbb@\else
  \expandafter\nonmatherr@\expandafter\Bbb\fi}
\def\Bbb@#1{{\Bbb@@{#1}}}
\def\Bbb@@#1{\fam\msbfam\relax#1}
\def\u1{u_1}
\def\b1{b_1}
\newcommand{\beq}{\begin{equation}}
\newcommand{\eeq}{\end{equation}}
\newcommand{\ben}{\begin{eqnarray}}
\newcommand{\een}{\end{eqnarray}}
\newcommand{\beno}{\begin{eqnarray*}}
\newcommand{\eeno}{\end{eqnarray*}}
\newtheorem{thm}{Theorem}[section]
\newtheorem{lem}{Lemma}[section]
\newtheorem{remk}{Remark}[section]
\newtheorem{prop}{Proposition}[section]
\newtheorem{cor}{Corrolary}[section]
\newtheorem{defin}{Definition}[section]
\begin{document}
\title{\bf  $\alpha$-Modulation Spaces (I) Scaling, Embedding and Algebraic Properties }

\author{\bf Jinsheng Han \\
{\it \small Department of Mathematics, Shanghai Jiao Tong University, Shanghai 200240, P. R. China.}\\
   \bf  Baoxiang Wang \\
    {\it \small LMAM, School of Mathematics, Peking University and BICMR, Beijing 100871, P. R. China.}\\
    {\small (pkuhan@sjtu.edu.cn, wbx@pku.edu.cn)}     }
\date{}
 \maketitle

\renewcommand{\thefootnote}{\fnsymbol{footnote}}

\vspace{-1.2in} \vspace{.9in} \vspace{0.2cm} {\bf Abstract.} First, we consider some fundamental properties including dual spaces, complex interpolations of $\alpha$-modulation spaces $M^{s,\alpha}_{p,q}$ with $0<p,q \le \infty$. Next, necessary  and sufficient conditions for the scaling property and the inclusions between $\alpha_1$-modulation and $\alpha_2$-modulation spaces are obtained. Finally, we give some criteria for $\alpha$-modulation spaces constituting multiplication algebra. As a by-product, we show that there exists an $\alpha$-modulation space which is not an interpolation space between modulation and Besov spaces. In a subsequent paper, we will give some applications of $\alpha $-modulation spaces to nonlinear dispersive wave equations.

\vspace{0.2cm} {\bf Key words.} $\alpha$-modulation space, Dual space,
 Multiplication algebra, Scaling property,
Embedding.
 \vspace{0.2cm}

{\bf AMS subject classifications.} 42 B35, 42 B37, 35 A23

\section{Introduction and definition}

Frequency localization technique plays an important role in the modern theory of function spaces.
There are two kinds of basic partitions to the Euclidean
space $\mathbb{R}^n$, one is the dyadic decomposition $\mathbb{R}^n = \{\xi: |\xi|< 1\} \bigcup (\bigcup^\infty_{j=1} \{\xi : |\xi|\in [2^{j-1}, 2^j)\})$,  another is the uniform decomposition $\mathbb{R}^n = \bigcup_{k\in \mathbb{Z}^n} (k+[-1/2, 1/2)^n)$. According to these two kinds of decompositions in  frequency spaces, one can naturally introduce the dyadic decomposition operators
$\triangle_j \ (j\in\mathbb{Z}_+)$ whose symbol $\varphi_j$ is localized in $\{\xi: |\xi|\sim 2^j\}$, and the uniform decomposition operator  $\square_k \ (k\in\mathbb{Z}^n)$ whose symbol $\sigma_k$ is supported in $k+[-1,1]^n$. The difference between $\varphi_j$  and $\sigma_k$ is
that the diameters of ${\rm supp} \ \varphi_j$ and ${\rm supp} \ \sigma_k$ are $O(2^j)$ and $O(1)$, respectively. All tempered
distributions acted on these decomposition operators with finite
$\ell^q(L^p)$ (quasi)-norms constitute Besov space $B_{p,q}^s$ and
modulation space $M_{p,q}^s$, respectively.

The $\alpha$-modulation spaces
$M_{p,q}^{s,\alpha}$, introduced by Gr\"obner \cite{Gb}, are proposed to be intermediate function spaces
to connect modulation space and Besov space with respect to parameters $\alpha\in [0,1]$, which are formulated by
some new kind of $\alpha$-decomposition operators $\square_k^{\alpha} \ (k\in\mathbb{Z}^n)$. We denote by
$\eta_k^{\alpha}$ the symbol of $\square_k^{\alpha}$, whose essential characteristic is that
the diameter of its support set has
power growth as $\langle k\rangle^{\alpha/(1-\alpha)}$.

Modulation spaces are special $\alpha$-modulation spaces in the case $\alpha=0$, and Besov space can be regarded as the limit case of $\alpha$-modulation space when $\alpha \nearrow 1$. Modulation spaces were first introduced by
Feichtinger \cite{F} in the study of time-frequency analysis to
consider the decay property of a function in both physical and
frequency spaces. His original idea is to use the short-time Fourier transform
of a tempered distribution equipping with a mixed $L^{q}(L^{p})$-norm to generate modulation spaces
$M_{p,q}^s$. Gr\"{o}chenig's book \cite{Gc} systematically discussed
the theory of time-frequency analysis and modulation spaces.
In Gr\"{o}bner's doctoral thesis,   he used the
$\alpha$-covering to the frequency space $\mathbb{R}^n$ and a
corresponding bounded admissible partition of unity of order $p$
($p$-BAPU) to define $\alpha$-modulation spaces. Some recent    works have been devoted to the study of $\alpha$-modulation spaces (see \cite{BoNi06, BoNi062, DaFor08, For07, KoSuTo09, KoSuTo091} and references therein). Borup and Nielsen \cite{BoNi06} and Fornasier \cite{For07} constructed  Banach frames for $\alpha$-modulation spaces in the multivariate setting,  Kobayashi,  Sugimoto and  Tomita \cite{KoSuTo09, KoSuTo091} discussed the boundedness for a class of pseudo-differential operators with symbols in $\alpha$-modulation spaces.   Dahlke, Fornasier, Rauhut, Steidl and Teschke \cite{DaFor08} established the relationship between  the generalized coorbit theory and
$\alpha$-modulation spaces. The aim of the present paper is to describe some standard properties including the dual spaces, embeddings, scaling and algebraic structure of $\alpha$-modulation spaces.

Before stating the notion of $\alpha$-modulation spaces, we introduce some notations frequently used in this paper.
$A\lesssim B$ stands for $A\le C B$, and $A\sim B$ denote $A\lesssim B$ and $B\lesssim A$, where $C$ is a positive constant which can be different at different places.
Let $\mathscr{S}(\mathbb{R}^n)$  be the Schwartz space and $\mathscr{S}'(\mathbb{R}^n)$ be its strongly topological dual space. Suppose $f\in\mathscr{S}'({\mathbb{R}^n})$ and $\lambda>0$, we write $f_{\lambda}(\cdot)=f(\lambda\cdot)$. Let $X$ be a (quasi-)Banach space, we denote by $X^*$  the dual space of $X$.  For any $p\in [1,\infty]$, $p^*$ will stand for
the dual number of $p$, i.e., $1/p+1/p^*=1$. We denote by $L^p= L^p(\mathbb{R}^n)$  the Lebesgue space for which the norm is written by $\|\cdot\|_p$, and by $\ell^p$ the sequence Lebesgue space. We will write
$\langle x\rangle=(1+|x|^2)^{1/2}$. For any multi-index
$\delta=(\delta_1,\delta_2,\cdots,\delta_n)$, we denote
$D^{\delta}=\partial_1^{\delta_1}\partial_2^{\delta_2}\cdots\partial_n^{\delta_n}$.
It is convenient to divide $\mathbb{R}^n$ into $n$ parts
$\mathbb{R}^n_j,j=0,1,2,\cdots,n$:
\begin{equation*}
\mathbb{R}^n_j=\left\{x\in\mathbb{R}^n: |x_i|\leqslant|x_j|,\,
i=1,\cdots,j-1,j+1,\cdots,n\right\}.
\end{equation*}
 We write $J=(I-\Delta)^{s/2}$ and define the Sobolev space
$$
H^s(\mathbb{R}^n)=\left\{f\in\mathscr{S}'(\mathbb{R}^n): \|f\|_{H^s}=\|J^sf\|_2<\infty\right\}
$$
and
$$
\ell_{s,\alpha}^{q}(\mathbb{Z}^n;L^p)=\left\{\{g_k\}_{k\in\mathbb{Z}^n}:g_k\in\mathscr{S}'(\mathbb{R}^n),\left\|\langle
k\rangle^{\frac{s}{1-\alpha}}\|g_k\|_p\right\|_{\ell^q}<\infty\right\}.
$$
Without additional note, we will always assume that
$$
s\in\mathbb{R}, \ \ 0<p,q\leqslant \infty, \ \  0\leqslant\alpha< 1.
$$
Let us start with the third partition of unity on
frequency space for $\alpha\in[0,1)$ (see \cite{BoNi06}). We suppose $c<1$ and $C>1$ are two positive constants, which relate to the space dimension $n$, and a Schwartz function sequence $\{\eta_k^{\alpha}\}_{k\in\mathbb{Z}^n}$ satisfies
\begin{subequations}\label{eta}
\begin{align}
& |\eta_k^{\alpha}(\xi)|\gtrsim1, \quad  \mbox{if } \big|\xi-\langle k\rangle^{\frac{\alpha}{1-\alpha}}k\big|<c\langle k\rangle^{\frac{\alpha}{1-\alpha}}; \label{eta-1}\\
& \mbox{supp}\eta_k^{\alpha}\subset\big\{\xi:\big|\xi-\langle
k\rangle^{\frac{\alpha}{1-\alpha}}k\big|<C\langle
k\rangle^{\frac{\alpha}{1-\alpha}}\big\}; \label{eta-2}\\
& \sum\nolimits_{k\in\mathbb{Z}^n}\eta_k^{\alpha}(\xi)\equiv1, \quad
\forall\xi\in\mathbb{R}^n; \label{eta-3}\\
& \langle
k\rangle^{\frac{\alpha|\delta|}{1-\alpha}}\big|D^{\delta}\eta_k^{\alpha}(\xi)\big|\lesssim1,
\quad \forall\xi\in\mathbb{R}^n. \label{eta-4}
\end{align}
\end{subequations}
We denote
\begin{equation}
\Upsilon=\big\{\{\eta_k^{\alpha}\}_{k\in\mathbb{Z}^n}:\{\eta_k^{\alpha}\}_{k\in\mathbb{Z}^n}\,\mbox{satisfies \eqref{eta}}\big\}
\end{equation}
Corresponding to every sequence
$\{\eta_k^{\alpha}\}_{k\in\mathbb{Z}^n}\in\Upsilon$, one can construct an
operator sequence denoted by
$\{\square_k^{\alpha}\}_{k\in\mathbb{Z}^n}$, and
\begin{equation}
\square_k^{\alpha}=\mathscr{F}^{-1}\eta_k^{\alpha}\mathscr{F}.
\end{equation}
 $\Upsilon$ is nonempty. Indeed, let
$\rho$ be a smooth radial bump function supported in $B(0,2)$,
satisfying $\rho(\xi)=1$ as $|\xi|<1$, and $\rho(\xi)=0$ as
$|\xi|\geqslant2$. For any $k\in\mathbb{Z}^n$, we set
\begin{equation}
\rho_k^{\alpha}(\xi)=\rho\left( \frac{\xi-\langle
k\rangle^{\frac{\alpha}{1-\alpha}}k}{C \langle
k\rangle^{\frac{\alpha}{1-\alpha}} }\right) \label{rhok}
\end{equation}
and  denote
\begin{equation*}
\eta_k^{\alpha}(\xi)=\rho_k^{\alpha}(\xi)\left(\sum_{l\in\mathbb{Z}^n}\rho_l^{\alpha}(\xi)\right)^{-1}.
\end{equation*}
It is easy to verify that $\{\eta_k^{\alpha}\}_{k\in\mathbb{Z}^n}$
satisfies \eqref{eta}.  This type of decomposition on frequency space is a generalization of
the uniform decomposition and the dyadic decomposition. When
$0\leqslant\alpha<1$, on the basis of this decomposition, we define the
$\alpha$-modulation space by
\begin{align} \label{alphamodulation}
M_{p,q}^{s,\alpha}(\mathbb{R}^n)=\left\{f\in\mathscr{S}'(\mathbb{R}^n):\|f\|_{M_{p,q}^{s,\alpha}}= \left\|\{\square_k^{\alpha}f\}_{k\in\mathbb{Z}^n}\right\|_{\ell_{s,\alpha}^q(\mathbb{Z}^n;L^p)}<\infty\right\}.
\end{align}
Denote
$
\varphi(\xi)= \rho(\xi)- \rho(2\xi),
$ we may assume $\varphi(\xi)=1$ if $ 5/8 \le |\xi| \le 3/2$.
We introduce the function sequence $\{\varphi_k\}^\infty_{k=0}$:
\begin{align}\label{eq1.1.5}
\left\{
\begin{array}{l}
 \varphi_j (\xi)=\varphi (2^{-j} \xi), \quad j\in {\Bbb N},\\
\varphi_0 (\xi)=1-\sum ^\infty_{j=1}\varphi_j (\xi).
\end{array}
\right.
\end{align}
Define
\begin{align}\label{eq1.1.6} \index{$\triangle_k$}
\triangle_j = \mathscr{F}^{-1} \varphi_j \mathscr{F}, \quad j\in {\Bbb
N}\cup \{0\},
\end{align}
$\{\triangle_j\}^\infty_{j=0}$ is said to be the Littlewood--Paley (or dyadic) decomposition operators. Denote
\begin{align} \label{besov}
B_{p,q}^{s}(\mathbb{R}^n)=\left\{f\in\mathscr{S}'(\mathbb{R}^n):\|f\|_{B_{p,q}^{s}}= \left\|\{ 2^{sj} \triangle_j f\}_{j \in\mathbb{N} \cup \{0\}}\right\|_{\ell^q(L^p)}<\infty \right\}.
\end{align}
Strictly speaking, \eqref{alphamodulation} cannot cover the case $\alpha=1$, however, we will denote $M_{p,q}^{s,1} =
B_{p,q}^s$ for convenience.

The paper is organized as follows.  In Section 2, we
show some basic properties on $\alpha$-modulation spaces, their dual and complex interpolation spaces are presented there. In Section 3,
we discuss the scaling property. In Section 4, the
inclusions between $\alpha$-modulation spaces for different indices $\alpha$ (including
Besov spaces) are obtained. In
Section 5, we study the regularity conditions so that
$\alpha$-modulation spaces form  multiplication algebra. Finally, we show the necessity for the conditions of scalings, embeddings and algebra structures by constructing several counterexamples.

\section{Some basic properties}

In the sequel, we give some basic properties of
$M_{p,q}^{s,\alpha}$. We need the following
\begin{prop}[{\bf \cite{triebel}, Convolution in $L^p$ with $p<1$}]\label{convolution}
Let $0<p\le 1$. $L^p_{B(x_0,R)} =\{f\in L^p(\mathbb{R}^n): \ {\rm supp}f \subset B(x_0,R) \}$, $B(x_0,R)=\{x: |x-x_0|\le R\}$.
Suppose that $f, g \in L^p_{B(x_0,R)}$, then there exists a constant $C>0$ which is independent of $x_0$ and $R>0$ such that
$$
\|f*g\|_p \le C R^{n(1/p-1)} \|f\|_p \|g\|_p.
$$
\end{prop}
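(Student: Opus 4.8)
\emph{The plan is} to strip off the parameters $x_0$ and $R$ by translation and dilation, reducing everything to functions supported in the unit ball, and then to prove the normalized estimate by a Fourier--series argument in which the hypothesis $p\le 1$ enters through the quasi-triangle inequality. Since $\|\cdot\|_p$ is translation invariant and $(\tau_a f)*(\tau_a g)=\tau_{2a}(f*g)$, one may take $x_0=0$. Given $f,g\in L^p_{B(0,R)}$, set $f_R(x)=f(Rx)$ and $g_R(x)=g(Rx)$, both in $L^p_{B(0,1)}$; a change of variables gives $\|f_R\|_p=R^{-n/p}\|f\|_p$ and $(f_R)*(g_R)=R^{-n}(f*g)(R\,\cdot)$, hence $\|(f_R)*(g_R)\|_p=R^{-n-n/p}\|f*g\|_p$. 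Feeding the unit-scale bound $\|u*v\|_p\le C\|u\|_p\|v\|_p$ (for $u,v$ supported in $B(0,1)$) into these identities reproduces the claimed inequality with its explicit dependence on $R$, so it suffices to treat $R=1$.

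\emph{For the unit-scale estimate}, write $h=f*g$, which is supported in $B(0,2)$. Expanding $h$ in a Fourier series over a fixed cube $Q\supset B(0,2)$, its coefficients are, up to a fixed constant, the samples $\widehat h(\lambda m)=\widehat f(\lambda m)\,\widehat g(\lambda m)$, $m\in\mathbb{Z}^n$, on a fixed lattice $\lambda\mathbb{Z}^n$. Fixing $\chi\in C_c^\infty$ with $\chi\equiv 1$ on $B(0,2)$ and $\mathrm{supp}\,\chi\subset Q$, one has $h=\chi\sum_m \widehat h(\lambda m)\,e^{i\lambda m\cdot x}$ (up to the same constant). Here is where $p\le 1$ is used: the $\ell^p$-superadditivity of the quasi-norm gives
$$
\|h\|_p^p\;\le\;\|\chi\|_p^p\sum_{m\in\mathbb{Z}^n}\bigl|\widehat f(\lambda m)\bigr|^p\,\bigl|\widehat g(\lambda m)\bigr|^p .
$$

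\emph{The main obstacle} is the remaining step: dominating $\sum_m|\widehat f(\lambda m)|^p|\widehat g(\lambda m)|^p$ by $\|f\|_p^p\|g\|_p^p$. Because $f$ and $g$ are supported in the unit ball, $\widehat f$ and $\widehat g$ are entire of exponential type, so the tools to bring in are sampling and Plancherel--Polya--Nikol'skii-type inequalities for band-limited functions, together with the product structure $\widehat f\,\widehat g=\widehat{f*g}$. The delicate point — and, I expect, the crux of the whole argument — is that for $p<1$ there is no Hausdorff--Young inequality, so norms of a function and of its Fourier transform cannot be freely interchanged; one must use the compact support of $f,g$ (equivalently, the band-limitedness of $\widehat f,\widehat g$) in an essential way, and it is here that $0<p\le 1$ is exploited a second time. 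Granting this sampling estimate, the proposition follows by unwinding the two reductions.
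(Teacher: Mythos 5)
The paper offers no proof of this proposition (it is quoted from \cite{triebel}), so the only question is whether your argument is sound, and it is not; in fact, carrying out your own first step honestly would have exposed a problem with the statement as written. Substituting your (correct) identities $\|f_R*g_R\|_p=R^{-n-n/p}\|f*g\|_p$ and $\|f_R\|_p=R^{-n/p}\|f\|_p$ into a unit-scale bound gives $\|f*g\|_p\le C\,R^{\,n-n/p}\|f\|_p\|g\|_p=C\,R^{-n(1/p-1)}\|f\|_p\|g\|_p$ --- the \emph{opposite} power of $R$ from the one claimed, so the reduction does not ``reproduce the claimed inequality.'' Worse, the unit-scale bound you need is false for $p<1$: for $f=g=\varepsilon^{-n}\chi_{B(0,\varepsilon)}$, all supported in $B(0,1)$, one computes $\|f*g\|_p/(\|f\|_p\|g\|_p)\gtrsim\varepsilon^{-n(1/p-1)}\to\infty$. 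Hence no proof of the statement with the support condition placed on $f$ and $g$ themselves can succeed. What Triebel (Prop.\ 1.5.3) actually asserts --- and what this paper uses everywhere, since Proposition \ref{convolution} is only ever applied to band-limited functions such as $\mathscr{F}^{-1}\eta_k^{\alpha}$ and $\square_l^{\alpha}f$, which are not compactly supported --- is the inequality under $\mathrm{supp}\,\widehat{f},\,\mathrm{supp}\,\widehat{g}\subset B(x_0,R)$ (compare the notation $L^r_\Omega$ in Proposition \ref{thm1.9}). With that hypothesis your dilation argument does produce the factor $R^{n(1/p-1)}$, and the unit-scale case is a true statement.

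Independently, your unit-scale strategy fails at precisely the step you defer, and not merely for lack of a tool. The superadditivity bound $\|h\|_p^p\le\|\chi\|_p^p\sum_m|\widehat{h}(\lambda m)|^p$ is valid for $p\le1$ but far too lossy: the ``sampling estimate'' $\sum_m|\widehat{f}(\lambda m)|^p|\widehat{g}(\lambda m)|^p\lesssim\|f\|_p^p\|g\|_p^p$ cannot hold, since together with your previous step it would imply the unit-scale inequality refuted above; and for small $p$ it fails outright in the strongest sense --- for $n=1$, $p=1/2$, $f=g=\chi_{[-1,1]}$ one has $|\widehat{f}(\lambda m)\widehat{g}(\lambda m)|^{1/2}=2|\sin(\lambda m)|/(\lambda|m|)$, whose sum over $m$ diverges while $\|f*g\|_{1/2}$ is finite. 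The actual proof of the Fourier-support version does exploit band-limitedness essentially, but through the Plancherel--Polya--Nikol'skij inequalities and Peetre's maximal function applied to $f$ and $g$ themselves, not through Fourier coefficients of $f*g$; I would redo the argument along those lines after correcting the hypothesis.
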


\begin{prop}\label{thm1.9}
{\rm ({\bf \cite{triebel}, Generalized Bernstein inequality})} \it Let $\Omega\subset
\mathbb{R}^n$ be a compact set, $0<r\le \infty.$ Let us denote
$\sigma_r=n( 1/(r\wedge 1)-1/2)$ and assume that $s>\sigma_r$. Then
there exists a constant $C>0$ such that
\begin{align} \label{1.35}
\|\mathscr{F}^{-1}\varphi \mathscr{F} f\|_r\le
C\|\varphi\|_{H^s}\|f\|_r
\end{align}
holds for all $f\in L^r_\Omega:=\{f\in L^p: \; {\rm supp}
\widehat{f} \subset \Omega\}$ and $\varphi\in H^s$. Moreover, if
$r\ge 1$, then \eqref{1.35} holds for all $f\in L^r$.
\end{prop}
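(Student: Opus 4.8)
The plan is to treat the two regimes $r\ge 1$ and $0<r<1$ separately, in both cases writing $\mathscr{F}^{-1}\varphi\mathscr{F}$ as convolution against the kernel $\mathscr{F}^{-1}\varphi$ and reducing \eqref{1.35} to a bound for that kernel in terms of $\|\varphi\|_{H^s}$. The elementary identity behind the kernel bound is $\|\langle\cdot\rangle^{s}\,\mathscr{F}^{-1}\varphi\|_{2}=c\,\|\varphi\|_{H^s}$, which holds because the Fourier transform maps the weighted space $L^{2}(\langle x\rangle^{2s}dx)$ isometrically (up to a constant) onto $H^{s}$.

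For $r\ge 1$ I would write $\mathscr{F}^{-1}\varphi\mathscr{F}f=(\mathscr{F}^{-1}\varphi)*f$ and apply Young's inequality, $\|\mathscr{F}^{-1}\varphi\mathscr{F}f\|_{r}\le\|\mathscr{F}^{-1}\varphi\|_{1}\|f\|_{r}$, which uses no support hypothesis on $f$. Then $\|\mathscr{F}^{-1}\varphi\|_{1}\le\|\langle\cdot\rangle^{-s}\|_{2}\,\|\langle\cdot\rangle^{s}\mathscr{F}^{-1}\varphi\|_{2}$ by Cauchy--Schwarz, the first factor being finite precisely when $2s>n$, i.e. $s>n/2=\sigma_{r}$ (here $r\wedge 1=1$), and the second being $c\|\varphi\|_{H^s}$. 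This proves \eqref{1.35} for all $f\in L^{r}$ when $r\ge 1$, $r=\infty$ included.

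For $0<r<1$ Young's inequality is unavailable, so the hypothesis $\mathrm{supp}\,\widehat f\subset\Omega$ must be used. Fix $\psi\in C_{c}^{\infty}(\mathbb{R}^{n})$ with $\psi\equiv 1$ on $\Omega$; since $\widehat{\mathscr{F}^{-1}\varphi\mathscr{F}f}=\varphi\,\widehat f=\varphi\psi\,\widehat f$, we have $\mathscr{F}^{-1}\varphi\mathscr{F}f=F*f$ with $F:=\mathscr{F}^{-1}(\varphi\psi)$, and now $\widehat F=\varphi\psi$ is supported in the fixed compact set $\mathrm{supp}\,\psi$. As $\widehat F$ and $\widehat f$ both lie in a fixed ball $B(0,R_{\Omega})$, Proposition~\ref{convolution} gives $\|F*f\|_{r}\le C\,R_{\Omega}^{\,n(1/r-1)}\|F\|_{r}\|f\|_{r}=C'\|F\|_{r}\|f\|_{r}$ with $C'=C'(\Omega,r,n)$. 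Finally, inserting $\langle x\rangle^{-s}\langle x\rangle^{s}$ and applying Hölder's inequality with exponents $2/r$ and $2/(2-r)$ gives $\|F\|_{r}\le\|\langle\cdot\rangle^{-s}\|_{L^{2r/(2-r)}}\,\|\langle\cdot\rangle^{s}F\|_{2}$; the weight lies in $L^{2r/(2-r)}$ exactly when $\tfrac{2sr}{2-r}>n$, i.e. $s>n(\tfrac1r-\tfrac12)=\sigma_{r}$, which is precisely the hypothesis, while $\|\langle\cdot\rangle^{s}F\|_{2}=c\|\varphi\psi\|_{H^s}\le C_{\Omega}\|\varphi\|_{H^s}$ since multiplication by a fixed Schwartz function is bounded on $H^{s}$. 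Chaining these estimates yields \eqref{1.35}.

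Essentially all the work sits in the range $0<r<1$; the case $r\ge 1$ is immediate from Young's inequality. The two delicate points are (i) the Fourier-side truncation by $\psi$, which converts the non-localized multiplier $\varphi$ into a kernel whose Fourier transform lies in a fixed ball, so that the sub-additive convolution estimate of Proposition~\ref{convolution} applies — this is where the factor $R_{\Omega}^{n(1/r-1)}$, harmless because $\Omega$ is fixed, is incurred; and (ii) the Hölder step bounding $\|F\|_{r}$ by $\|\varphi\|_{H^s}$, which is exactly where the threshold $s>\sigma_{r}$, and no smaller exponent, is forced. The remaining technicalities (that $\varphi\psi\in H^{s}$, and that $\varphi\,\widehat f$ and $F*f$ are well defined, both of which hold because $s>\sigma_{r}>n/2$ makes $\varphi\psi$ continuous and $\widehat f$ a compactly supported distribution) are routine. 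Alternatively, the $r<1$ convolution bound can be obtained directly from the Peetre maximal function, bypassing Proposition~\ref{convolution}, but invoking the latter is cleaner.
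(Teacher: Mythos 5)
Your argument is correct, and it is worth noting that the paper itself offers no proof of this proposition at all --- it is stated with the attribution ``see \cite{triebel}'' --- so any comparison is really with Triebel's treatment. Your route is the standard self-contained one: for $r\ge 1$, Young's inequality plus the Cauchy--Schwarz bound $\|\mathscr{F}^{-1}\varphi\|_1\le\|\langle\cdot\rangle^{-s}\|_2\,\|\langle\cdot\rangle^{s}\mathscr{F}^{-1}\varphi\|_2=c\,\|\langle\cdot\rangle^{-s}\|_2\,\|\varphi\|_{H^s}$ with $2s>n$; for $0<r<1$, truncation of the symbol by $\psi\equiv 1$ on $\Omega$, the band-limited convolution inequality, and the H\"older step with exponents $2/(2-r)$ and $2/r$, which pins down the threshold $s>n(1/r-1/2)=\sigma_r$ exactly. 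All the exponents check out, and the auxiliary facts you use ($\|\varphi\psi\|_{H^s}\lesssim_\psi\|\varphi\|_{H^s}$ since $H^s$ is an algebra for $s>n/2$; $F*f$ coincides with $\mathscr{F}^{-1}\varphi\mathscr{F}f$ because $\psi\equiv1$ on ${\rm supp}\,\widehat f$) are legitimate. Triebel's own proof of the $r<1$ case runs instead through the Peetre maximal function and the Fefferman--Stein/Hardy--Littlewood maximal inequality, which is the route you mention as an alternative; your reduction to Proposition \ref{convolution} is shorter given that the paper has already recorded that proposition, at the cost of importing it as a black box.

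One point of care: Proposition \ref{convolution} as literally printed in the paper defines $L^p_{B(x_0,R)}$ by ${\rm supp}\,f\subset B(x_0,R)$, i.e.\ compact support on the \emph{physical} side, whereas you apply it to $F$ and $f$ whose \emph{Fourier} transforms lie in a fixed ball. The version you need (and the one Triebel actually proves, and the one the paper itself uses everywhere else, e.g.\ in the displays estimating ${\rm diam}\,{\rm supp}\,\mathscr{F}[\cdots]$) is the band-limited one, so you should either state explicitly that you are invoking the Fourier-support form or note that the printed hypothesis is a misprint. With that caveat made explicit, the proof is complete.
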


\begin{prop}[{\bf Equivalent norm}]\label{equivalent-norm} Let
$\{\eta_k^{\alpha}\}_{k\in\mathbb{Z}^n},\{\widetilde{\eta}_k^{\alpha}\}_{k\in\mathbb{Z}^n}\in\Upsilon$,
then they genarate equivalent quasi-norms on $M_{p,q}^{s,\alpha}$.
\end{prop}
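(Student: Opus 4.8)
The plan is to show that for any two families $\{\eta_k^{\alpha}\}, \{\widetilde{\eta}_k^{\alpha}\} \in \Upsilon$, we have $\widetilde{\square}_k^{\alpha} f = \sum_{l} \widetilde{\square}_k^{\alpha} \square_l^{\alpha} f$, where the sum runs only over those $l$ for which $\operatorname{supp}\widetilde{\eta}_k^{\alpha} \cap \operatorname{supp}\eta_l^{\alpha} \neq \emptyset$. The geometry of the $\alpha$-covering guarantees that this index set is finite with cardinality bounded by a constant $N$ independent of $k$: indeed both supports are balls of comparable radius $\sim \langle k\rangle^{\alpha/(1-\alpha)}$ centered near $\langle k\rangle^{\alpha/(1-\alpha)}k$ and $\langle l\rangle^{\alpha/(1-\alpha)}l$, and the number of lattice points $l$ whose associated ball meets a fixed ball of the covering is uniformly bounded (this is the standard finite-overlap property of an $\alpha$-covering). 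Moreover, for such neighboring indices one has $\langle l\rangle \sim \langle k\rangle$, so the weights $\langle k\rangle^{s/(1-\alpha)}$ and $\langle l\rangle^{s/(1-\alpha)}$ are comparable.

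Next I would estimate $\|\widetilde{\square}_k^{\alpha}\square_l^{\alpha}f\|_p$ in terms of $\|\square_l^{\alpha}f\|_p$ uniformly in $k$ (with $l$ neighboring $k$). Write $\widetilde{\square}_k^{\alpha}\square_l^{\alpha}f = \mathscr{F}^{-1}\widetilde{\eta}_k^{\alpha}\mathscr{F}(\square_l^{\alpha}f)$; since $\square_l^{\alpha}f$ has Fourier support in the ball of radius $\sim\langle l\rangle^{\alpha/(1-\alpha)}\sim\langle k\rangle^{\alpha/(1-\alpha)}$, Proposition \ref{thm1.9} (generalized Bernstein inequality) gives $\|\widetilde{\square}_k^{\alpha}\square_l^{\alpha}f\|_p \lesssim \|\widetilde{\eta}_k^{\alpha}(\lambda_k\,\cdot\,)\|_{H^\sigma}\|\square_l^{\alpha}f\|_p$ after rescaling by $\lambda_k = \langle k\rangle^{\alpha/(1-\alpha)}$ to normalize the support to a fixed compact set; here $\sigma > \sigma_p = n(1/(p\wedge 1) - 1/2)$. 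The derivative bounds \eqref{eta-4} for $\{\widetilde{\eta}_k^{\alpha}\}$ are exactly what make $\|\widetilde{\eta}_k^{\alpha}(\lambda_k\,\cdot\,)\|_{H^\sigma}$ bounded uniformly in $k$ (one needs to take enough derivatives, i.e.\ pick $\sigma$ a nonnegative integer exceeding $\sigma_p$, and control the $L^2$ norm of $D^\delta[\widetilde{\eta}_k^{\alpha}(\lambda_k\,\cdot\,)]$ using \eqref{eta-2}, \eqref{eta-4}).

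Combining these, $\|\widetilde{\square}_k^{\alpha}f\|_p \le \sum_{l \,\sim\, k}\|\widetilde{\square}_k^{\alpha}\square_l^{\alpha}f\|_p \lesssim \sum_{l\,\sim\,k}\|\square_l^{\alpha}f\|_p$, and after multiplying by $\langle k\rangle^{s/(1-\alpha)}$ and using $\langle k\rangle\sim\langle l\rangle$ for neighbors, one takes the $\ell^q$ norm. For $q \ge 1$ one applies the triangle inequality in $\ell^q$ together with a Young-type convolution estimate on $\mathbb{Z}^n$ against the finitely-supported neighbor kernel; for $q<1$ one uses the $q$-triangle inequality $\|\sum\|_{\ell^q}^q \le \sum\|\cdot\|_{\ell^q}^q$ and the uniform bound $N$ on the number of neighbors. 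Either way this yields $\|f\|_{\widetilde{M}_{p,q}^{s,\alpha}} \lesssim \|f\|_{M_{p,q}^{s,\alpha}}$, and the reverse inequality follows by symmetry.

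The main obstacle is the uniform-in-$k$ control of $\|\widetilde{\eta}_k^{\alpha}(\lambda_k\,\cdot\,)\|_{H^\sigma}$: one must verify that rescaling the $k$-th bump to unit scale turns the weighted derivative bounds \eqref{eta-4} into genuinely $k$-independent Sobolev bounds, keeping careful track of how the $\langle k\rangle^{\alpha|\delta|/(1-\alpha)}$ factors cancel against the Jacobian and chain-rule factors from the dilation by $\lambda_k$. A secondary technical point is handling the case $p<1$ correctly — there one cannot use Young's inequality naively and must invoke Proposition \ref{convolution} or the generalized Bernstein inequality in its stated form for $L^r_\Omega$ — but this is routine once the neighbor structure and uniform Sobolev bounds are in place.
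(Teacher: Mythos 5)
Your argument is correct and is essentially the standard proof: the paper itself does not write it out (it simply cites Borup--Nielsen for this proposition), but the machinery you use --- the neighbor set with uniformly bounded cardinality, the comparability $\langle k\rangle\sim\langle l\rangle$ for overlapping indices, and the uniform multiplier bound $\|\widetilde{\square}_k^{\alpha}\square_l^{\alpha}f\|_p\lesssim\|\square_l^{\alpha}f\|_p$ obtained from \eqref{eta-2}, \eqref{eta-4} via Young's inequality, Proposition \ref{convolution}, or the rescaled Bernstein inequality of Proposition \ref{thm1.9} --- is exactly what the authors deploy in their proof of Lemma \ref{lemma-sequence}. The only cosmetic point is that when you rescale $\widetilde{\eta}_k^{\alpha}$ to unit scale for the $H^\sigma$ bound you should also translate the frequency center $\langle k\rangle^{\alpha/(1-\alpha)}k$ to the origin (harmless, since the multiplier estimate is modulation-invariant), after which \eqref{eta-4} gives the claimed $k$-independent Sobolev bound.
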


\begin{proof}
See \cite{BoNi06}.
\end{proof}

\begin{prop}[{\bf Embedding}]\label{embede}
Let $0< p_1\leqslant p_2 \le \infty$, $0< q_1, q_2 \le \infty$. We have \\
(i) if $q_1\leqslant q_2$ and $s_1\geqslant
s_2+n\alpha\big(\frac{1}{p_1}-\frac{1}{p_2}\big)$, then
\begin{equation}\label{embede-1}
M_{p_1,q_1}^{s_1,\alpha}\subset M_{p_2,q_2}^{s_2,\alpha};
\end{equation}
(ii) if $q_1>q_2$ and
$s_1>s_2+n\alpha\big(\frac{1}{p_1}-\frac{1}{p_2}\big)+n(1-\alpha)\big(\frac{1}{q_2}-\frac{1}{q_1}\big)$,
then
\begin{equation}\label{embede-2}
M_{p_1,q_1}^{s_1,\alpha}\subset M_{p_2,q_2}^{s_2,\alpha}.
\end{equation}
\end{prop}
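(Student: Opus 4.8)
The plan is to reduce both inclusions to elementary inequalities on the defining sequences $\{\square_k^\alpha f\}$, using the generalized Bernstein inequality (Proposition \ref{thm1.9}) to handle the change of integrability index $p_1 \to p_2$, and H\"older's inequality in the summation variable $k$ to handle the change of summability index $q_1 \to q_2$. The key geometric fact I would exploit is that the Fourier support of $\square_k^\alpha f$ is contained in a ball of radius $\sim \langle k\rangle^{\alpha/(1-\alpha)}$, so Bernstein's inequality on that ball produces exactly a factor $\langle k\rangle^{\frac{\alpha}{1-\alpha} \cdot n(1/p_1 - 1/p_2)}$, which after raising to the power in the $s$-weight accounts precisely for the loss $n\alpha(1/p_1 - 1/p_2)$ in the smoothness exponent.

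For part (i), first I would fix an admissible family $\{\eta_k^\alpha\} \in \Upsilon$ and, using the finite-overlap property of the $\alpha$-covering, choose for each $k$ an enlarged bump $\widetilde\eta_k^\alpha$ equal to $1$ on $\mathrm{supp}\,\eta_k^\alpha$ and supported in a slightly larger ball of the same scale $\langle k\rangle^{\alpha/(1-\alpha)}$, so that $\square_k^\alpha f = \widetilde\square_k^\alpha \square_k^\alpha f$. Then I apply Proposition \ref{thm1.9} (in the form with $\mathscr{F}^{-1}\varphi\mathscr{F}$ a Fourier multiplier localized to that ball), together with a scaling/dilation argument to track the dependence of the $H^s$-norm of the localized multiplier on the radius: this gives
$$
\|\square_k^\alpha f\|_{p_2} \lesssim \langle k\rangle^{\frac{\alpha}{1-\alpha} n(1/p_1 - 1/p_2)} \|\square_k^\alpha f\|_{p_1},
$$
with constant uniform in $k$. (For $p_1 \ge 1$ one may instead invoke Young's inequality directly; for $p_1 < 1$ one needs Proposition \ref{convolution}, which is why that proposition is stated beforehand.) Multiplying by $\langle k\rangle^{s_2/(1-\alpha)}$ and using $s_1 - s_2 \ge n\alpha(1/p_1 - 1/p_2)$, the weight $\langle k\rangle^{s_2/(1-\alpha)} \langle k\rangle^{\frac{\alpha}{1-\alpha}n(1/p_1-1/p_2)}$ is dominated by $\langle k\rangle^{s_1/(1-\alpha)}$; then the embedding $\ell^{q_1} \hookrightarrow \ell^{q_2}$ for $q_1 \le q_2$ closes the estimate, giving $\|f\|_{M_{p_2,q_2}^{s_2,\alpha}} \lesssim \|f\|_{M_{p_1,q_1}^{s_1,\alpha}}$.

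For part (ii), the only change is that $\ell^{q_1} \not\hookrightarrow \ell^{q_2}$ when $q_1 > q_2$, so after the Bernstein step I would instead split the weight and apply H\"older's inequality in $k$ with exponents $q_1/q_2$ and its conjugate: writing $\langle k\rangle^{s_2 q_2/(1-\alpha)}\|\square_k^\alpha f\|_{p_2}^{q_2} = \big(\langle k\rangle^{s_1 q_2/(1-\alpha)}\|\square_k^\alpha f\|_{p_1}^{q_2}\big)\cdot \langle k\rangle^{-\epsilon q_2/(1-\alpha)}$ for an appropriate $\epsilon > 0$, H\"older yields the $\ell^{q_1}$-norm of the first factor times $\|\langle k\rangle^{-\epsilon/(1-\alpha)}\|_{\ell^r}$ with $1/r = 1/q_2 - 1/q_1$. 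The latter sum over $k \in \mathbb{Z}^n$ converges precisely when $\epsilon r/(1-\alpha) > n$, i.e. when the room $s_1 - s_2 - n\alpha(1/p_1-1/p_2)$ available for $\epsilon$ exceeds $n(1-\alpha)(1/q_2 - 1/q_1)$, which is exactly the hypothesis.

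The main obstacle — really the only nontrivial point — is making the constant in the Bernstein step uniform in $k$: one must verify that the $H^s$-norm of the rescaled localized multiplier $\widetilde\eta_k^\alpha$, after the dilation that normalizes its support ball to the unit ball, stays bounded independently of $k$. This follows from the derivative bounds \eqref{eta-4}, which are scale-invariant in exactly the right way, but it requires a careful change of variables; everything else is H\"older and the elementary inclusion $\ell^{q_1}\hookrightarrow \ell^{q_2}$. One should also double-check the endpoint $p_2 = \infty$ separately, where the Bernstein inequality is applied with $r = \infty$.
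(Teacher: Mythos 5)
Your proposal is correct and follows essentially the same route as the paper: the Bernstein-type inequality $\|\square_k^{\alpha}f\|_{p_2}\lesssim\langle k\rangle^{\frac{n\alpha}{1-\alpha}\left(\frac{1}{p_1}-\frac{1}{p_2}\right)}\|\square_k^{\alpha}f\|_{p_1}$ plus $\ell^{q_1}\subset\ell^{q_2}$ for (i), and H\"older in $k$ with exponent $q_1q_2/(q_1-q_2)$ for (ii), where the convergence of $\sum_k\langle k\rangle^{-\epsilon r/(1-\alpha)}$ is exactly the condition you identify. Your extra care about the uniformity in $k$ of the multiplier constant (via rescaling and \eqref{eta-4}) is a detail the paper leaves implicit, but it is not a different method.
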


\begin{proof}
From Bernstein's inequality it follows that
\begin{equation}\label{bernstein-alpha}
\|\square_k^{\alpha}f\|_{p_2}\lesssim\langle
k\rangle^{\frac{n\alpha}{1-\alpha}\big(\frac{1}{p_1}-\frac{1}{p_2}\big)}\|\square_k^{\alpha}f\|_{p_1}.
\end{equation}
Then (i) follows from $\ell^{q_1}\subset\ell^{q_2}$ and
\eqref{bernstein-alpha}. For (ii), we use H\"{o}lder's
inequality to obtain
\begin{equation*}
\begin{split}
\|f\|_{M_{p_2,q_2}^{s_2,\alpha}}&\lesssim\|\{\square_k^{\alpha}f\}_{k\in\mathbb{Z}^n}\|_{\ell_{s_1,\alpha}^{q_1}}\|\{1\}_{k\in\mathbb{Z}^n}\|_{\ell_{s_2-s_1+n\alpha(1/p_1-1/p_2),\alpha}^{q_1q_2/(q_1-q_2)}}.
\end{split}
\end{equation*}
For the second term in the right-hand side, we easily see that it
is finite by changing the summation to an integration.
\end{proof}

\begin{prop}[{\bf Completeness}]
(i) $M_{p,q}^{s,\alpha}$ is a quasi-Banach space, and is a Banach
space if $1\leqslant p\leqslant\infty$ and $1\leqslant
q\leqslant\infty$. \\
(ii) We have
\begin{equation}\label{complete}
\mathscr{S}(\mathbb{R}^n)\subset
M_{p,q}^{s,\alpha}(\mathbb{R}^n)\subset\mathscr{S}'(\mathbb{R}^n).
\end{equation}
Moreover, if $0<p,q<\infty$, then $\mathscr{S}(\mathbb{R}^n)$ is
dense in $M_{p,q}^{s,\alpha}$.
\end{prop}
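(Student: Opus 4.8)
The plan is to establish the three assertions in order: the quasi-Banach (resp. Banach) property, the two inclusions in \eqref{complete}, and finally the density of $\mathscr{S}$. For part (i), I would first observe that $\|\cdot\|_{M_{p,q}^{s,\alpha}}$ is a quasi-norm: homogeneity is clear, and the (quasi-)triangle inequality follows from the corresponding property of $L^p$ and of the weighted sequence space $\ell_{s,\alpha}^q$, with the quasi-triangle constant degenerating to $1$ precisely when $p,q\ge 1$, which gives the Banach case. For completeness I would take a Cauchy sequence $\{f_m\}$ in $M_{p,q}^{s,\alpha}$; then for each fixed $k$ the sequence $\{\square_k^\alpha f_m\}_m$ is Cauchy in $L^p$ (using that the weight $\langle k\rangle^{s/(1-\alpha)}$ is a fixed positive constant for fixed $k$), hence converges in $L^p$ to some $g_k$ whose Fourier transform is supported in the ball from \eqref{eta-2}. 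One then sets $f=\sum_k g_k$; the key point is that this series converges in $\mathscr{S}'$ and that $\square_k^\alpha f = g_k$, which can be seen by testing against Schwartz functions and using the finite overlap of the supports of the $\eta_k^\alpha$ together with a Fatou-type argument in $\ell_{s,\alpha}^q$ to control $\|f-f_m\|_{M_{p,q}^{s,\alpha}}$.

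For the first inclusion $\mathscr{S}\subset M_{p,q}^{s,\alpha}$ in part (ii), given $f\in\mathscr{S}$ I would estimate $\|\square_k^\alpha f\|_p$ by exploiting that $\widehat{f}$ is rapidly decreasing: on $\mathrm{supp}\,\eta_k^\alpha$ one has $|\xi|\sim\langle k\rangle^{1/(1-\alpha)}$ for large $k$, so $\eta_k^\alpha\widehat f$ and all its derivatives decay faster than any polynomial in $\langle k\rangle$; combined with the derivative bounds \eqref{eta-4} and the generalized Bernstein inequality (Proposition \ref{thm1.9}), this yields $\|\square_k^\alpha f\|_p\lesssim_N \langle k\rangle^{-N}$ for every $N$, which is more than enough to make the $\ell_{s,\alpha}^q$ norm finite. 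For the second inclusion $M_{p,q}^{s,\alpha}\subset\mathscr{S}'$, given $f\in M_{p,q}^{s,\alpha}$ I would write $f=\sum_k\square_k^\alpha f$ (convergence in $\mathscr{S}'$ from \eqref{eta-3}) and, for $\phi\in\mathscr{S}$, bound $|\langle f,\phi\rangle|$ by $\sum_k\|\square_k^\alpha f\|_p\,\|\square_k^\alpha\phi\|_{p^*}$ when $p\ge 1$ (and by a similar argument using Nikolskii-type inequalities to pass to an exponent $\ge 1$ when $p<1$), then apply Hölder in $k$ against the rapidly decaying sequence $\{\|\square_k^\alpha\phi\|_{p^*}\}$ from the previous step, absorbing the weight $\langle k\rangle^{s/(1-\alpha)}$ into the decay; this shows $f$ defines a continuous functional on $\mathscr{S}$.

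For the density statement when $0<p,q<\infty$, I would proceed in two standard steps. First, truncate in frequency: for $f\in M_{p,q}^{s,\alpha}$ set $f_N=\sum_{|k|\le N}\square_k^\alpha f$; since $q<\infty$ the tail $\sum_{|k|>N}\langle k\rangle^{sq/(1-\alpha)}\|\square_k^\alpha f\|_p^q\to 0$, so $f_N\to f$ in $M_{p,q}^{s,\alpha}$, and $f_N$ has compactly supported Fourier transform. Second, for such a band-limited $f_N$ I would approximate it by Schwartz functions: mollifying in the physical variable (multiplying $f_N$ by a smooth cutoff $\chi(\cdot/R)$ and letting $R\to\infty$, or convolving with an approximate identity) produces Schwartz functions converging to $f_N$, and since only finitely many $\square_k^\alpha$ act nontrivially on a band-limited function, convergence in $L^p$ of finitely many pieces upgrades to convergence in $M_{p,q}^{s,\alpha}$. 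The main obstacle throughout is the range $p<1$ or $q<1$, where one lacks the ordinary triangle inequality and Hölder/duality must be replaced by $p$-triangle inequalities and the Bernstein-type estimates of Propositions \ref{convolution} and \ref{thm1.9}; once those substitutes are in place, each step goes through as in the Besov-space case.
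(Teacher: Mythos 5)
The paper does not actually prove this proposition: its ``proof'' is the single line ``See \cite{BoNi06a}'', deferring everything to Borup--Nielsen. Your sketch is therefore doing real work that the paper omits, and the route you take is the standard one (and essentially the one carried out in the cited reference and in Triebel's treatment of Besov spaces): fix $k$, extract $L^p$-limits $g_k$ of $\square_k^\alpha f_m$, resum; prove $\mathscr{S}\subset M^{s,\alpha}_{p,q}$ via rapid decay of $\|\square_k^\alpha\phi\|_p$ in $k$ and the generalized Bernstein inequality, and the reverse inclusion by pairing against that decay; then truncate in frequency and mollify for density. Two points deserve more care than your sketch gives them. First, a logical ordering issue: to assert that $\sum_k g_k$ converges in $\mathscr{S}'$ in part (i) you already need the quantitative content of the second inclusion in (ii) (a Nikolskii-type bound $\|g_k\|_\infty\lesssim\langle k\rangle^{N}\|g_k\|_p$ for band-limited $g_k$, valid also for $p<1$), so that inclusion should be established first, not after. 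Second, and more substantively, in the density step the physical-space cutoff $\chi(\cdot/R)f_N$ does \emph{not} have compactly supported Fourier transform --- $\widehat{\chi(\cdot/R)f_N}=R^n\widehat\chi(R\cdot)*\widehat{f_N}$ spreads over all frequencies --- so your claim that ``only finitely many $\square_k^\alpha$ act nontrivially'' fails after the cutoff. You must instead estimate the high-frequency tail $\sum_{|k|>C}\langle k\rangle^{sq/(1-\alpha)}\|\square_k^\alpha(\chi(\cdot/R)f_N)\|_p^q$ using the rapid decay of $\widehat\chi(R\cdot)$ away from the original band, uniformly enough to pass to the limit $R\to\infty$; this is routine but it is a genuine step, and for $p<1$ it is exactly where Proposition \ref{convolution} and the restriction to band-limited functions must be invoked rather than Young's inequality. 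With those two repairs your argument is complete and consistent with what the cited reference proves.
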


\begin{proof}
 See \cite{BoNi06a}.
\end{proof}

\begin{prop}[{\bf Isomorphism}]\label{isom}
For any $\sigma\in\mathbb{R}$, the mapping
$J^{\sigma}:M_{p,q}^{s,\alpha}\rightarrow M_{p,q}^{s-\sigma,\alpha}$
is isomorphic.
\end{prop}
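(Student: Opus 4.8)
The plan is to reduce the statement to a Fourier-multiplier estimate on each $\alpha$-block, uniform in $k$. Since $J^{\sigma}$ is the Fourier multiplier with the smooth, polynomially bounded symbol $\langle\xi\rangle^{\sigma}$, it maps $\mathscr{S}'(\mathbb{R}^n)$ into itself bijectively, with inverse $J^{-\sigma}$. Hence it suffices to prove the one-sided bound $\|J^{\sigma}f\|_{M_{p,q}^{s-\sigma,\alpha}}\lesssim\|f\|_{M_{p,q}^{s,\alpha}}$ for every $\sigma\in\mathbb{R}$: applying it with $-\sigma$ in place of $\sigma$ to $g=J^{\sigma}f$ gives the reverse inequality $\|f\|_{M_{p,q}^{s,\alpha}}\lesssim\|J^{\sigma}f\|_{M_{p,q}^{s-\sigma,\alpha}}$, and then $J^{\sigma}$ is a linear homeomorphism of $M_{p,q}^{s,\alpha}$ onto $M_{p,q}^{s-\sigma,\alpha}$.

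Fix $\{\eta_k^{\alpha}\}\in\Upsilon$ and, by Proposition \ref{equivalent-norm}, use it to compute both norms. Build an auxiliary family $\{\psi_k^{\alpha}\}$ by the recipe \eqref{rhok} from a bump $\psi$ with $\psi\equiv1$ on $B(0,C)$ and $\mathrm{supp}\,\psi\subset B(0,2C)$; then $\psi_k^{\alpha}\equiv1$ on $\mathrm{supp}\,\eta_k^{\alpha}$, its support is the ball of radius $2C\langle k\rangle^{\alpha/(1-\alpha)}$ about $\langle k\rangle^{\alpha/(1-\alpha)}k$, and it obeys the bounds $\langle k\rangle^{\alpha|\delta|/(1-\alpha)}|D^{\delta}\psi_k^{\alpha}|\lesssim1$ exactly as in \eqref{eta-4}. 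On this support one has $\langle\xi\rangle\sim\langle k\rangle^{1/(1-\alpha)}$ uniformly in $k$ (for $|k|$ large since there $|\xi|\sim\langle k\rangle^{\alpha/(1-\alpha)}|k|$, and trivially for bounded $|k|$). Since $\psi_k^{\alpha}\eta_k^{\alpha}=\eta_k^{\alpha}$,
\[
\square_k^{\alpha}J^{\sigma}f=\mathscr{F}^{-1}\big[\langle\xi\rangle^{\sigma}\psi_k^{\alpha}\,\widehat{\square_k^{\alpha}f}\,\big]=\langle k\rangle^{\frac{\sigma}{1-\alpha}}\,\mathscr{F}^{-1}\big[m_k\,\widehat{\square_k^{\alpha}f}\,\big],\qquad m_k:=\langle k\rangle^{-\frac{\sigma}{1-\alpha}}\langle\xi\rangle^{\sigma}\psi_k^{\alpha}.
\]
So the entire matter reduces to the uniform bound $\big\|\mathscr{F}^{-1}[m_k\,\widehat{\square_k^{\alpha}f}\,]\big\|_{p}\lesssim\|\square_k^{\alpha}f\|_{p}$.

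To obtain it, I would rescale the $k$-th frequency ball to unit size: set $\xi=\langle k\rangle^{\alpha/(1-\alpha)}(k+\zeta)$ and $\widetilde m_k(\zeta):=m_k\big(\langle k\rangle^{\alpha/(1-\alpha)}(k+\zeta)\big)$, which is supported in the fixed ball $B(0,2C)$. Using $|D_{\xi}^{\delta}\langle\xi\rangle^{\sigma}|\lesssim\langle\xi\rangle^{\sigma-|\delta|}$, the chain rule (each $\partial_{\zeta}$ contributing a factor $\langle k\rangle^{\alpha/(1-\alpha)}$), the bounds on $\psi_k^{\alpha}$, and $\langle\xi\rangle\sim\langle k\rangle^{1/(1-\alpha)}$ on the support, a short computation gives $|D_{\zeta}^{\delta}\widetilde m_k|\lesssim1$ for all $\delta$, uniformly in $k$; hence $\|\widetilde m_k\|_{H^{s}}\lesssim1$ for each fixed $s$. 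Choosing $s>\sigma_{p}=n(1/(p\wedge1)-1/2)$ and applying the generalized Bernstein inequality (Proposition \ref{thm1.9}) on $B(0,2C)$ to the dilated (and frequency-translated) functions, then transporting back — a dilation is an $L^{p}$-isometry up to a fixed constant that cancels, and a frequency translation is harmless — yields $\big\|\mathscr{F}^{-1}[m_k\,\widehat{\square_k^{\alpha}f}\,]\big\|_{p}\lesssim\|\square_k^{\alpha}f\|_{p}$ with constant independent of $k$ and $f$. Consequently
\[
\langle k\rangle^{\frac{s-\sigma}{1-\alpha}}\big\|\square_k^{\alpha}J^{\sigma}f\big\|_{p}=\langle k\rangle^{\frac{s}{1-\alpha}}\big\|\mathscr{F}^{-1}[m_k\,\widehat{\square_k^{\alpha}f}\,]\big\|_{p}\lesssim\langle k\rangle^{\frac{s}{1-\alpha}}\big\|\square_k^{\alpha}f\big\|_{p},
\]
and taking the $\ell^{q}$-norm in $k$ proves the one-sided estimate.

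The main obstacle is precisely this uniformity in $k$: the support of $\eta_k^{\alpha}$ lives at frequency scale $\langle k\rangle^{\alpha/(1-\alpha)}$, and one must convert that scale into harmless constants. This is what the dilation-invariant reading of Proposition \ref{thm1.9}, combined with the elementary relation $\langle\xi\rangle\sim\langle k\rangle^{1/(1-\alpha)}$ on $\mathrm{supp}\,\psi_k^{\alpha}$ (valid simultaneously for bounded and for large $|k|$), is designed to handle. The remaining ingredients — the identity $J^{-\sigma}J^{\sigma}=\mathrm{Id}$ on $\mathscr{S}'$, the norm equivalence of Proposition \ref{equivalent-norm}, and the construction of the fattened cut-offs $\psi_k^{\alpha}$ — are routine.
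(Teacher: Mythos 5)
Your proposal is correct. Note that the paper does not actually prove this proposition --- it simply cites Triebel --- so what you have written is a self-contained version of the standard lifting argument, correctly adapted to the $\alpha$-decomposition. The one genuinely delicate point is the uniformity in $k$ of the multiplier bound, and you handle it properly: the fattened cut-off $\psi_k^{\alpha}$ with $\psi_k^{\alpha}\eta_k^{\alpha}=\eta_k^{\alpha}$, the observation that $\langle\xi\rangle\sim\langle k\rangle^{1/(1-\alpha)}$ on its support (both for $|k|$ large and, trivially, for $|k|$ bounded, where the support still meets a fixed ball), and the rescaling $\xi=\langle k\rangle^{\alpha/(1-\alpha)}(k+\zeta)$ under which each $\zeta$-derivative of $\langle k\rangle^{-\sigma/(1-\alpha)}\langle\xi\rangle^{\sigma}\psi_k^{\alpha}$ is bounded uniformly in $k$, so that Proposition \ref{thm1.9} applies on a fixed ball with a constant independent of $k$ (dilations and frequency translations being harmless for $L^p$, $0<p\le\infty$). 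The reduction to the one-sided estimate via $J^{-\sigma}J^{\sigma}=\mathrm{Id}$ on $\mathscr{S}'$ also yields surjectivity, which is needed for the word ``isomorphic.'' This is essentially the proof one would extract from the cited reference for Besov spaces, transported to the $\alpha$-covering; no gaps.
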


\begin{proof}
 See \cite{triebel}
\end{proof}

\begin{prop}\label{alpha=2-equivalent-besov}
$M_{2,2}^{s,\alpha}(\mathbb{R}^n)=H^s(\mathbb{R}^n)$  with equivalent norms.
\end{prop}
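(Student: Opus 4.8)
The plan is to show that the $\ell^2_{s,\alpha}(\mathbb{Z}^n;L^2)$-norm of $\{\square_k^\alpha f\}$ is equivalent to $\|J^s f\|_2$ by exploiting the Hilbert-space structure that $p=q=2$ provides. Using Plancherel, $\|\square_k^\alpha f\|_2^2 = \|\eta_k^\alpha \widehat f\|_2^2$, so that
\begin{align*}
\|f\|_{M_{2,2}^{s,\alpha}}^2 = \sum_{k\in\mathbb{Z}^n} \langle k\rangle^{\frac{2s}{1-\alpha}} \int_{\mathbb{R}^n} |\eta_k^\alpha(\xi)|^2 |\widehat f(\xi)|^2 \, d\xi = \int_{\mathbb{R}^n} \Big( \sum_{k\in\mathbb{Z}^n} \langle k\rangle^{\frac{2s}{1-\alpha}} |\eta_k^\alpha(\xi)|^2 \Big) |\widehat f(\xi)|^2 \, d\xi,
\end{align*}
where the interchange of sum and integral is justified by Tonelli's theorem. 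Thus it suffices to prove the pointwise equivalence
\begin{align*}
\sum_{k\in\mathbb{Z}^n} \langle k\rangle^{\frac{2s}{1-\alpha}} |\eta_k^\alpha(\xi)|^2 \sim \langle \xi\rangle^{2s}, \qquad \forall\, \xi\in\mathbb{R}^n,
\end{align*}
which, combined with the definition $\|J^s f\|_2^2 = \int \langle\xi\rangle^{2s} |\widehat f(\xi)|^2 \, d\xi$, immediately yields the claim.

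To establish the pointwise equivalence, I would first record that, by \eqref{eta-2}, the support of $\eta_k^\alpha$ is contained in a ball centered at $\langle k\rangle^{\frac{\alpha}{1-\alpha}}k$ of radius $C\langle k\rangle^{\frac{\alpha}{1-\alpha}}$, and that for $\xi$ in this ball one has $|\xi| \sim \langle k\rangle^{1/(1-\alpha)}$, hence $\langle\xi\rangle \sim \langle k\rangle^{1/(1-\alpha)}$ and therefore $\langle k\rangle^{\frac{2s}{1-\alpha}} \sim \langle\xi\rangle^{2s}$ uniformly on $\mathrm{supp}\,\eta_k^\alpha$. (A short computation with the definitions of the covering centers handles the low-frequency region $k=0$ and the relation $|\langle k\rangle^{\frac{\alpha}{1-\alpha}}k| \sim \langle k\rangle^{1/(1-\alpha)}$ separately.) Next, the finite-overlap property of the $\alpha$-covering — each $\xi$ lies in the support of at most $N=N(n,\alpha,c,C)$ of the sets $\{\eta_k^\alpha \neq 0\}$ — together with \eqref{eta-4} (which gives $|\eta_k^\alpha(\xi)| \lesssim 1$) provides the upper bound: for fixed $\xi$, $\sum_k \langle k\rangle^{\frac{2s}{1-\alpha}}|\eta_k^\alpha(\xi)|^2 \lesssim \langle\xi\rangle^{2s} \sum_{k:\,\xi\in\mathrm{supp}\,\eta_k^\alpha} |\eta_k^\alpha(\xi)|^2 \lesssim N \langle\xi\rangle^{2s}$. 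For the lower bound, from \eqref{eta-3} we have $\sum_k \eta_k^\alpha(\xi) = 1$, so by Cauchy–Schwarz over the at-most-$N$ nonvanishing indices, $1 = |\sum_k \eta_k^\alpha(\xi)|^2 \le N \sum_k |\eta_k^\alpha(\xi)|^2$, and using $\langle k\rangle^{\frac{2s}{1-\alpha}} \sim \langle\xi\rangle^{2s}$ on each relevant support gives $\sum_k \langle k\rangle^{\frac{2s}{1-\alpha}}|\eta_k^\alpha(\xi)|^2 \gtrsim \langle\xi\rangle^{2s}\sum_k |\eta_k^\alpha(\xi)|^2 \gtrsim \frac{1}{N}\langle\xi\rangle^{2s}$.

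I expect the main obstacle to be the bookkeeping around the two comparisons $\langle k\rangle^{\frac{1}{1-\alpha}} \sim |\xi|$ and the finite-overlap bound $N$: one must check carefully that for every $k$ (including small $k$, where $\langle k\rangle^{\frac{\alpha}{1-\alpha}}k$ can be $0$ or small and the ball radius is $O(1)$) the implied constants are uniform, and that the number of lattice points $k$ whose dilated balls meet a fixed $\xi$ is bounded independently of $\xi$. The latter is precisely the defining property of an $\alpha$-covering and follows from the structure in \eqref{rhok}; since Proposition~\ref{equivalent-norm} already guarantees that any admissible system in $\Upsilon$ yields an equivalent norm, it is enough to verify this for one convenient choice, e.g.\ the $\eta_k^\alpha$ built from $\rho_k^\alpha$ in \eqref{rhok}. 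Once these uniform comparisons are in hand, the argument above closes, and the equivalence of norms — hence the set equality $M_{2,2}^{s,\alpha} = H^s$ — follows.
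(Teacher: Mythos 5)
Your proof is correct and follows the same route the paper takes: the paper's proof of Proposition \ref{alpha=2-equivalent-besov} is simply ``Plancherel's identity implies the result, see \cite{Gb},'' and your argument is exactly the expected fleshing-out of that one line, reducing via Plancherel to the pointwise equivalence $\sum_{k}\langle k\rangle^{2s/(1-\alpha)}|\eta_k^{\alpha}(\xi)|^2\sim\langle\xi\rangle^{2s}$, which you justify correctly using \eqref{eta-2}--\eqref{eta-4} and the finite-overlap property of the $\alpha$-covering (the same fact the paper establishes in the proof of Lemma \ref{lemma-sequence}). No gaps.
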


\begin{proof}
Plancherel's identity  implies the result, see \cite{Gb}.
\end{proof}

\subsection{Duality}

It is known that the dual space of Besov space $B^s_{p,q}$ is $B^{-s+n(1/(p\wedge 1)-1) }_{(p\vee 1)^*, (q\vee 1)^*}$ (see \cite{triebel}) and the dual space of modulation space $M^s_{p,q}$  is $M^{-s }_{(p\vee 1)^*, (q\vee 1)^*}$ (see \cite{WaHe07}). In this section we study the dual spaces of
$\alpha$-modulation spaces.

\begin{prop}\label{duality-sequence}
Suppose $1\leqslant p,q<\infty$. Then we have
\begin{equation*}
\big(\ell_{s,\alpha}^q(\mathbb{Z}^n;L^p)\big)^*=\ell_{-s,\alpha}^{q^*}(\mathbb{Z}^n;L^{p^*}).
\end{equation*}
More precisely,
$f\in\big(\ell_{s,\alpha}^q(\mathbb{Z}^n;L^p)\big)^*$ is equivalent
to that there exists a sequence
$\{f_k\}_{k\in\mathbb{Z}^n}\in\ell_{-s,\alpha}^{q^*}(\mathbb{Z}^n;L^{p^*})$
such that for any
$g=\{g_k\}_{k\in\mathbb{Z}^n}\in\ell_{s,\alpha}^q(\mathbb{Z}^n;L^p)$,
we have
\begin{equation*}
\langle f,g \rangle=\sum_{k\in\mathbb{Z}^n}\int_{\mathbb{R}^n}f_k(x)\overline{g_k(x)}dx,
\end{equation*}
with
$\|f\|_{(\ell_{s,\alpha}^q(\mathbb{Z}^n;L^p))^*}=\|\{f_k\}\|_{\ell_{-s,\alpha}^{q^*}(\mathbb{Z}^n;L^{p^*})}$.
\end{prop}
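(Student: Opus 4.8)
The plan is to strip off the polynomial weight by an isometric rescaling, identify the dual of the resulting $\ell^{q}$-sum of $L^{p}$-spaces by the classical argument, and transport the identification back. First I would note that the multiplication operator $T:\{g_{k}\}_{k\in\mathbb{Z}^{n}}\mapsto\{\langle k\rangle^{s/(1-\alpha)}g_{k}\}_{k\in\mathbb{Z}^{n}}$ is, directly from the definitions of the norms, an isometric isomorphism of $\ell_{s,\alpha}^{q}(\mathbb{Z}^{n};L^{p})$ onto the unweighted space $\ell^{q}(\mathbb{Z}^{n};L^{p}):=\ell_{0,\alpha}^{q}(\mathbb{Z}^{n};L^{p})$, with inverse given by multiplication by $\langle k\rangle^{-s/(1-\alpha)}$; hence it suffices to treat the case $s=0$ and then compose with $T$.

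For the unweighted case I would prove $(\ell^{q}(\mathbb{Z}^{n};L^{p}))^{*}=\ell^{q^{*}}(\mathbb{Z}^{n};L^{p^{*}})$ isometrically, under the pairing $\langle f,g\rangle=\sum_{k}\int_{\mathbb{R}^{n}}f_{k}\overline{g_{k}}\,dx$. One inclusion is immediate from two applications of H\"older's inequality: first in $x$ (the duality of $L^{p}$ and $L^{p^{*}}$, valid since $1\le p<\infty$), giving $\bigl|\int f_{k}\overline{g_{k}}\,dx\bigr|\le\|f_{k}\|_{p^{*}}\|g_{k}\|_{p}$, and then in $k$ (the duality of $\ell^{q}$ and $\ell^{q^{*}}$), so that any $\{f_{k}\}\in\ell^{q^{*}}(\mathbb{Z}^{n};L^{p^{*}})$ defines a bounded functional of norm at most $\|\{f_{k}\}\|_{\ell^{q^{*}}(L^{p^{*}})}$. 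For the converse, given $\Phi\in(\ell^{q}(\mathbb{Z}^{n};L^{p}))^{*}$, I would restrict it to the isometric copies of $L^{p}$ inside $\ell^{q}(\mathbb{Z}^{n};L^{p})$ that consist of sequences supported at a single index $k$; by the Riesz representation of $(L^{p})^{*}$ this restriction is $g_{k}\mapsto\int f_{k}\overline{g_{k}}\,dx$ for a unique $f_{k}\in L^{p^{*}}$, so $\Phi(g)=\sum_{k}\int f_{k}\overline{g_{k}}\,dx$ for every finitely supported $g$, and such $g$ are dense in $\ell^{q}(\mathbb{Z}^{n};L^{p})$ because $q<\infty$. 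It remains to show $\{f_{k}\}\in\ell^{q^{*}}(\mathbb{Z}^{n};L^{p^{*}})$ with $\|\{f_{k}\}\|_{\ell^{q^{*}}(L^{p^{*}})}\le\|\Phi\|$; for $1<q<\infty$ I would fix a finite set $F\subset\mathbb{Z}^{n}$ and $\varepsilon>0$, pick $h_{k}\in L^{p}$ with $\|h_{k}\|_{p}=1$ and $\operatorname{Re}\int f_{k}\overline{h_{k}}\,dx\ge\|f_{k}\|_{p^{*}}-\varepsilon$ for $k\in F$, test $\Phi$ against the sequence equal to $\|f_{k}\|_{p^{*}}^{q^{*}-1}h_{k}$ on $F$ and $0$ elsewhere (using the identity $(q^{*}-1)q=q^{*}$), and let $\varepsilon\to0$ and $F\uparrow\mathbb{Z}^{n}$; for $q=1$ one simply tests against sequences supported at a single $k$ to get $\|f_{k}\|_{p^{*}}\le\|\Phi\|$ for all $k$. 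Since then the pairing against $\{f_{k}\}$ is a bounded functional agreeing with $\Phi$ on a dense subspace, it equals $\Phi$, and the two norm bounds give the isometry.

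Finally I would transport through $T$. Given $\Psi\in(\ell_{s,\alpha}^{q}(\mathbb{Z}^{n};L^{p}))^{*}$, the composition $\Psi\circ T^{-1}$ lies in $(\ell^{q}(\mathbb{Z}^{n};L^{p}))^{*}$ and is represented by a unique $\{f_{k}\}\in\ell^{q^{*}}(\mathbb{Z}^{n};L^{p^{*}})$ with $\|\{f_{k}\}\|_{\ell^{q^{*}}(L^{p^{*}})}=\|\Psi\circ T^{-1}\|=\|\Psi\|$. Setting $\widetilde f_{k}=\langle k\rangle^{s/(1-\alpha)}f_{k}$, one gets $\Psi(g)=(\Psi\circ T^{-1})(Tg)=\sum_{k}\int\widetilde f_{k}\,\overline{g_{k}}\,dx$ for all $g\in\ell_{s,\alpha}^{q}(\mathbb{Z}^{n};L^{p})$, while $\bigl\|\langle k\rangle^{-s/(1-\alpha)}\|\widetilde f_{k}\|_{p^{*}}\bigr\|_{\ell^{q^{*}}}=\|\{f_{k}\}\|_{\ell^{q^{*}}(L^{p^{*}})}$, that is $\{\widetilde f_{k}\}\in\ell_{-s,\alpha}^{q^{*}}(\mathbb{Z}^{n};L^{p^{*}})$ with norm exactly $\|\Psi\|$; this is the asserted identification.

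The hard part will be the converse inclusion in the unweighted step, i.e.\ extracting from an abstract functional a representing sequence in $\ell^{q^{*}}(L^{p^{*}})$ of the correct norm. It rests on the Riesz representation of $(L^{p})^{*}$ to construct the coordinates $f_{k}$, on the density of finitely supported sequences (which is where the hypothesis $q<\infty$ enters) to pin down the action of $\Phi$, and on a near-extremal choice of test sequences to recover the sharp bound $\|\{f_{k}\}\|_{\ell^{q^{*}}(L^{p^{*}})}\le\|\Phi\|$; everything else, namely the rescaling $T$, the two H\"older inequalities, and the density truncations, is routine bookkeeping.
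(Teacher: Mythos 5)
Your proposal is correct and complete. Note that the paper itself offers no argument for this proposition: it simply defers to Proposition 3.3 of \cite{WaHe07}, so there is no in-paper proof to compare against line by line. Your route --- strip the weight $\langle k\rangle^{s/(1-\alpha)}$ by the isometric rescaling $T$, establish $(\ell^{q}(\mathbb{Z}^{n};L^{p}))^{*}=\ell^{q^{*}}(\mathbb{Z}^{n};L^{p^{*}})$ by the classical argument, and transport back --- is exactly the standard proof that the cited reference encapsulates, and every step checks out: the two applications of H\"older give the easy inclusion with the right norm bound; the Riesz representation of $(L^{p})^{*}$ (this is where $p<\infty$ enters) produces the coordinates $f_{k}$; density of finitely supported sequences (where $q<\infty$ enters) pins down $\Phi$; and your near-extremal test sequence $\{\|f_{k}\|_{p^{*}}^{q^{*}-1}h_{k}\}_{k\in F}$, via $(q^{*}-1)q=q^{*}$, yields $\bigl(\sum_{k\in F}\|f_{k}\|_{p^{*}}^{q^{*}}\bigr)^{1/q^{*}}\le\|\Phi\|$ after letting $\varepsilon\to 0$ and $F\uparrow\mathbb{Z}^{n}$, with the obvious modification at $q=1$. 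The only value your write-up adds beyond the citation is making explicit that the weighted statement is an isometric relabeling of the unweighted one, which is worth recording since the norm identity $\|f\|_{(\ell_{s,\alpha}^{q}(L^{p}))^{*}}=\|\{f_{k}\}\|_{\ell_{-s,\alpha}^{q^{*}}(L^{p^{*}})}$ claimed in the proposition falls out of precisely that bookkeeping.
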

\noindent It is a direct consequence of Proposition 3.3 in \cite{WaHe07}.

\begin{lem}\label{lemma-sequence}
Let
$\{g_k\}_{k\in\mathbb{Z}^n}\in\ell_{s,\alpha}^q(\mathbb{Z}^n;L^p)$,
and $\Gamma$ be a subset of $\mathbb{Z}^n$, then we have
\begin{equation}\label{gggggggggggggggggllllllll}
\left\|\sum_{k\in\Gamma}\square_k^{\alpha}g_k\right\|_{M_{p,q}^{s,\alpha}}\lesssim
\|\{\square_k^{\alpha}g_k\}_{k\in\Gamma}\|_{\ell_{s,\alpha}^q(\mathbb{Z}^n;L^p)}.
\end{equation}
\end{lem}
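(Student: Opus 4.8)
The plan is to exploit the almost-orthogonality built into the $\alpha$-covering: the symbol $\eta_k^\alpha$ is supported in a ball of radius $\sim \langle k\rangle^{\alpha/(1-\alpha)}$ around $\langle k\rangle^{\alpha/(1-\alpha)}k$, and only a bounded number of these balls (say at most $N=N(n)$ of them) can overlap any fixed one. Write $h=\sum_{k\in\Gamma}\square_k^\alpha g_k$ and estimate $\|h\|_{M^{s,\alpha}_{p,q}}$ by computing $\square_j^\alpha h$ for each $j$. Since $\square_j^\alpha\square_k^\alpha f = \mathscr F^{-1}(\eta_j^\alpha\eta_k^\alpha\widehat f)$ and $\eta_j^\alpha\eta_k^\alpha\equiv 0$ unless $k$ lies in the finite set $\Lambda_j=\{k\in\Gamma:\operatorname{supp}\eta_j^\alpha\cap\operatorname{supp}\eta_k^\alpha\neq\emptyset\}$ with $\#\Lambda_j\le N$, we get $\square_j^\alpha h=\sum_{k\in\Lambda_j}\square_j^\alpha\square_k^\alpha g_k$.

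The first substantive step is to bound $\|\square_j^\alpha\square_k^\alpha g_k\|_p$ by $\|\square_k^\alpha g_k\|_p$ up to a constant uniform in $j,k$. For $p\ge 1$ this is immediate: $\square_j^\alpha$ acts by convolution with $\mathscr F^{-1}\eta_j^\alpha$, whose $L^1$ norm is bounded uniformly in $j$ by a change of variables using \eqref{eta-2} and \eqref{eta-4} (the rescaling $\xi\mapsto\langle j\rangle^{\alpha/(1-\alpha)}\xi$ turns $\eta_j^\alpha$ into a function with derivative bounds independent of $j$, so its inverse Fourier transform has uniformly bounded $L^1$ norm). For $p<1$ one instead invokes Proposition \ref{thm1.9} (the generalized Bernstein inequality): $\square_k^\alpha g_k$ has Fourier support in a ball of radius $\sim\langle k\rangle^{\alpha/(1-\alpha)}$, and applying $\mathscr F^{-1}\eta_j^\alpha\mathscr F$ to it — after the same $j$-dependent rescaling to normalize the frequency support — costs only a uniform constant times $\|\eta_j^\alpha(\langle j\rangle^{\alpha/(1-\alpha)}\cdot)\|_{H^s}\sim 1$. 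Either way,
\[
\|\square_j^\alpha h\|_p \lesssim \Big(\sum_{k\in\Lambda_j}\|\square_k^\alpha g_k\|_p^{p\wedge 1}\Big)^{1/(p\wedge 1)} \lesssim \max_{k\in\Lambda_j}\|\square_k^\alpha g_k\|_p,
\]
using $\#\Lambda_j\le N$.

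The second step is to sum in $j$ with the weight $\langle j\rangle^{s/(1-\alpha)}$ in $\ell^q$. The key geometric fact is that $j\in\Lambda_k$ iff $k\in\Lambda_j$ (symmetry of "supports overlap"), and for overlapping indices $\langle j\rangle\sim\langle k\rangle$, so the weights are comparable: $\langle j\rangle^{s/(1-\alpha)}\sim\langle k\rangle^{s/(1-\alpha)}$ when $k\in\Lambda_j$. Hence
\[
\|h\|_{M^{s,\alpha}_{p,q}}^{q\wedge 1}=\sum_j\langle j\rangle^{\frac{s(q\wedge 1)}{1-\alpha}}\|\square_j^\alpha h\|_p^{q\wedge 1}\lesssim \sum_j\sum_{k\in\Lambda_j}\langle k\rangle^{\frac{s(q\wedge 1)}{1-\alpha}}\|\square_k^\alpha g_k\|_p^{q\wedge 1}\lesssim N\sum_k\langle k\rangle^{\frac{s(q\wedge 1)}{1-\alpha}}\|\square_k^\alpha g_k\|_p^{q\wedge 1},
\]
where the last step interchanges the order of summation and uses that each $k$ appears in at most $N$ sets $\Lambda_j$. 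When $q\ge 1$ one runs the same computation with the triangle inequality in $\ell^q$ (or Hölder across the $\le N$ terms of each $\Lambda_j$) in place of the $q\wedge 1$ power trick. This gives exactly \eqref{gggggggggggggggggllllllll}, with the understanding that by Proposition \ref{equivalent-norm} we may work with the specific BAPU satisfying \eqref{eta}. The main obstacle is the bookkeeping in the $p<1$ (and $q<1$) cases: one must be careful that all constants in the Bernstein step are genuinely uniform in $j$, which is why the $j$-dependent rescaling normalizing the support radius to $O(1)$ is essential before applying Proposition \ref{thm1.9}; everything else is the standard finite-overlap argument.
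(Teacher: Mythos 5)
Your proposal is correct and follows essentially the same route as the paper: the finite-overlap set $\Lambda(k)$ with $\#\Lambda(k)\sim 1$ and $\langle j\rangle\sim\langle k\rangle$ for overlapping indices, a uniform multiplier bound $\|\square_j^{\alpha}\square_k^{\alpha}g_k\|_p\lesssim\|\square_k^{\alpha}g_k\|_p$ (the paper cites Young's inequality for $p\ge 1$ and Proposition \ref{convolution} for $p<1$, while you use the rescaled generalized Bernstein inequality for $p<1$ --- both are standard and equivalent here, since translating the frequency support to the origin is free in $L^p$), and then an interchange of summation using the bounded cardinality. No gaps.
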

\begin{proof}
We introduce
\begin{equation}
 {\Lambda}(k)=\{l\in\mathbb{Z}^n:\square_k^{\alpha} {\square}_l^{\alpha}f\neq 0 \}.
\end{equation}
We denote the constant in \eqref{eta-2} relating to
$\{ {\eta}_l^{\alpha}\}_{l\in\mathbb{Z}^n}$ by
$ {C}$, thus for every $l\in \Lambda(k)$, there holds
\begin{subequations}\label{quantitative-k-l}
\begin{align}
\langle k\rangle^{\frac{\alpha}{1-\alpha}}(k_j-C) &<\langle
l\rangle^{\frac{\alpha}{1-\alpha}}(l_j+ {C}),  \label{quantitative-k-l-1}\\
\langle k\rangle^{\frac{\alpha}{1-\alpha}}(k_j+C) &> \langle
l\rangle^{\frac{\alpha}{1-\alpha}}(l_j- {C})
\label{quantitative-k-l-2}
\end{align}
\end{subequations}
with $j=1,\cdots,n$. For the above $l$ and $k$, we conclude  that
\begin{equation}\label{quantitative-k-l-3333}
\langle k\rangle\sim\langle l\rangle.
\end{equation}
If $|k|\lesssim1 \; (\mbox{or}\; |l|\lesssim1)$, it is easy to see that
\eqref{quantitative-k-l-3333} follows from \eqref{quantitative-k-l}. Thus, it suffices to show  \eqref{quantitative-k-l-3333} in the case
$|k|\gg1 \; (\mbox{or}\;|l|\gg1)$. When $k\in\mathbb{R}^n_j$ with $k_j>0$,
from \eqref{quantitative-k-l-1}; whereas when $k\in\mathbb{R}^n_j$ but
with $k_j<0$, from \eqref{quantitative-k-l-2}$\times(-1)$, we see
$\langle k\rangle^{\frac{1}{1-\alpha}}\lesssim\langle
l\rangle^{\frac{1}{1-\alpha}}$, and symmetrically, we have $\langle
l\rangle^{\frac{1}{1-\alpha}}\lesssim\langle
k\rangle^{\frac{1}{1-\alpha}}$. Therefore, we get
\eqref{quantitative-k-l-3333}. Suppose both $l$ and $\widetilde{l}$
are in $\Lambda(k)$.   substituting  $l$ with
$\widetilde{l}$
(\ref{quantitative-k-l-1}) and (\ref{quantitative-k-l-2}) also hold.
It
follows that
\begin{equation*}
\left|\langle
l\rangle^{\frac{\alpha}{1-\alpha}}l_j-\langle\widetilde{l}\rangle^{\frac{\alpha}{1-\alpha}}\widetilde{l}_j\right|\lesssim\langle
k\rangle^{\frac{\alpha}{1-\alpha}}+\langle
l\rangle^{\frac{\alpha}{1-\alpha}}+\langle\widetilde{l}\rangle^{\frac{\alpha}{1-\alpha}}.
\end{equation*}
Then Taylor's theorem, combined with \eqref{quantitative-k-l-3333},
gives $|l_j-\widetilde{l}_j|\lesssim1$. It follows that
\begin{equation}\label{cardinal}
\#  {\Lambda}(k)\sim 1.
\end{equation}

 One has that the right hand side of
\eqref{gggggggggggggggggllllllll} is
\begin{equation}\label{vvvvvvvvvvvvvvvvvvvvvvvvvvvvv}
\begin{split}
\left(\sum_{l\in\mathbb{Z}^n}\langle
l\rangle^{\frac{sq}{1-\alpha}}\left\|\sum_{k\in\Lambda(l)\cap\Gamma}\square_l^{\alpha}\square_k^{\alpha}g_k\right\|_p^q\right)^{\frac1q}
&\lesssim\left(\sum_{l\in\mathbb{Z}^n}\langle
l\rangle^{\frac{sq}{1-\alpha}}\sum_{k\in\Lambda(l)\cap\Gamma}\|\square_l^{\alpha}\square_k^{\alpha}g_k\|_p^q\right)^{\frac1q}
\\
&\lesssim\left(\sum_{k\in\Gamma}\|\square_k^{\alpha}g_k\|_p^q\sum_{l\in\Lambda(k)}\langle
l\rangle^{\frac{sq}{1-\alpha}}\right)^{\frac1q}
\\
&\lesssim\left(\sum_{k\in\Gamma}\langle
k\rangle^{\frac{sq}{1-\alpha}}\|\square_k^{\alpha}g_k\|_p^q\right)^{\frac1q}.
\end{split}
\end{equation}
We remark that in the second inequality of \eqref{vvvvvvvvvvvvvvvvvvvvvvvvvvvvv}, by Young's inequality, or by Proposition \ref{convolution} we see that
$$
\|\square_k^{\alpha} {\square}_l^{\alpha}f\|_p\lesssim\| {\square}_k^{\alpha}f\|_p,
$$
which enable us to remove
$\square_l^{\alpha}$; and in the third inequality of
\eqref{vvvvvvvvvvvvvvvvvvvvvvvvvvvvv}, we have applied
\eqref{cardinal} to remove the summation on $l\in \Lambda (k)$.
\end{proof}

\begin{thm}\label{duality}
Suppose $0<p,q<\infty$, then we have
\begin{equation}
\big(M_{p,q}^{s,\alpha}\big)^*=M_{(1\vee p)^*,(1\vee
q)^*}^{-s+n\alpha\big(\frac{1}{1\wedge p}-1\big)}.
\end{equation}
\end{thm}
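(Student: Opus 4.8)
The plan is to realize $M_{p,q}^{s,\alpha}$ as a retract of a weighted sequence space of band-limited functions and then transfer the sequence-level duality; the only feature not already present for modulation spaces is the way the block size $\langle k\rangle^{\alpha/(1-\alpha)}$ enters in the quasi-Banach range. Write $\sigma=n\alpha\big(1/(1\wedge p)-1\big)$. Using Proposition \ref{equivalent-norm} I would fix $\{\eta_k^{\alpha}\}\in\Upsilon$ together with an enlarged system $\{\widetilde\eta_k^{\alpha}\}\in\Upsilon$ with $\widetilde\eta_k^{\alpha}\equiv1$ on $\mathrm{supp}\,\eta_k^{\alpha}$ (admissible after enlarging the constant $C$ in \eqref{eta-2}), so that $\widetilde\square_k^{\alpha}\square_k^{\alpha}=\square_k^{\alpha}$; put $\Omega_k=\mathrm{supp}\,\widetilde\eta_k^{\alpha}$, a ball of radius $R_k\sim\langle k\rangle^{\alpha/(1-\alpha)}$, and let $E$ be $\ell_{s,\alpha}^{q}(\mathbb{Z}^n;L^p)$ with the $k$-th block constrained to have Fourier support in $\Omega_k$. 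Then $Tf=\{\square_k^{\alpha}f\}_k$ maps $M_{p,q}^{s,\alpha}$ boundedly into $E$ by \eqref{alphamodulation}; $S\{g_k\}_k=\sum_k\widetilde\square_k^{\alpha}g_k$ maps $E$ boundedly into $M_{p,q}^{s,\alpha}$, by the argument of Lemma \ref{lemma-sequence} (which applies verbatim to the system $\{\widetilde\eta_k^{\alpha}\}$) together with $\|\widetilde\square_k^{\alpha}g_k\|_p\lesssim\|g_k\|_p$ (Young's inequality if $p\ge1$, Proposition \ref{convolution} if $p<1$); and $S\circ T=\mathrm{Id}$ because $\sum_k\square_k^{\alpha}f=f$ in $\mathscr{S}'$. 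Thus $T$ identifies $M_{p,q}^{s,\alpha}$ with a complemented subspace of $E$, so every $\Phi\in(M_{p,q}^{s,\alpha})^{*}$ is of the form $\Phi=\Psi\circ T$ with $\Psi\in E^{*}$ (namely $\Psi=\Phi\circ S$), $\|\Phi\|\sim\inf\|\Psi\|$, and conversely; hence it suffices to compute $E^{*}$.

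For $1\le p,q<\infty$ this is Proposition \ref{duality-sequence}: $E^{*}=\ell_{-s,\alpha}^{q^{*}}(\mathbb{Z}^n;L^{p^{*}})$, and since $\sigma=0$ the retract transports this to $(M_{p,q}^{s,\alpha})^{*}=M_{p^{*},q^{*}}^{-s,\alpha}$, the pairing being the extension of the natural duality between $\mathscr{S}(\mathbb{R}^n)$ and $\mathscr{S}'(\mathbb{R}^n)$, $\langle f,h\rangle=\sum_k\int_{\mathbb{R}^n}\square_k^{\alpha}f\,\overline{\widetilde\square_k^{\alpha}h}\,dx$. For $0<q<1$ one uses instead $\big(\ell_{s,\alpha}^{q}(\mathbb{Z}^n;X_k)\big)^{*}=\ell_{-s,\alpha}^{\infty}(\mathbb{Z}^n;X_k^{*})$, valid because $\|\cdot\|_{\ell^1}\le\|\cdot\|_{\ell^q}$ when $q\le1$; this matches $(1\vee q)^{*}=\infty$. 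For $0<p<1$ one has to identify $(L^p_{\Omega_k})^{*}$: in contrast to $L^p(\mathbb{R}^n)$, which has no nonzero continuous functional, the band-limited space $L^p_{\Omega_k}$ has a rich dual, isomorphic to the band-limited $L^{\infty}$ functions, but with dual norm comparable to $R_k^{n(1/p-1)}\|\cdot\|_{\infty}$; indeed for $u\in L^p_{\Omega_k}$ and bounded $v$ one has $|\int u\bar v|\le\|u\|_1\|v\|_{\infty}\lesssim R_k^{n(1/p-1)}\|u\|_p\|v\|_{\infty}$ by the Nikolskii-type inequality for band-limited functions (a consequence of Proposition \ref{convolution}), and the reverse comparison follows by testing against a suitable $u$. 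Since $R_k^{n(1/p-1)}\sim\langle k\rangle^{\frac{\alpha}{1-\alpha}n(1/p-1)}=\langle k\rangle^{\sigma/(1-\alpha)}$, absorbing this factor into the weight of $E^{*}$ changes the smoothness index from $-s$ to $-s+\sigma$ and the Lebesgue exponent to $\infty=(1\vee p)^{*}$. Combining the cases, $E^{*}=\ell_{-s+\sigma,\alpha}^{(1\vee q)^{*}}(\mathbb{Z}^n;L^{(1\vee p)^{*}})$, and pushing this through the retract gives the asserted $(M_{p,q}^{s,\alpha})^{*}=M_{(1\vee p)^{*},(1\vee q)^{*}}^{-s+\sigma,\alpha}$.

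It remains to check that the bracket $\langle f,h\rangle=\sum_k\int_{\mathbb{R}^n}\square_k^{\alpha}f\,\overline{\widetilde\square_k^{\alpha}h}\,dx$ converges absolutely on $M_{p,q}^{s,\alpha}\times M_{(1\vee p)^{*},(1\vee q)^{*}}^{-s+\sigma,\alpha}$, is independent of the admissible systems chosen, and is nondegenerate; here the almost-orthogonality \eqref{cardinal} shows that only $O(1)$ cross terms contribute at each $k$, and for $0<p,q<\infty$ the density of $\mathscr{S}$ in $M_{p,q}^{s,\alpha}$ guarantees that these functionals exhaust $(M_{p,q}^{s,\alpha})^{*}$. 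The routine part is the retract bookkeeping and the Banach range $1\le p,q<\infty$, which reduces immediately to Lemma \ref{lemma-sequence} and Proposition \ref{duality-sequence}. The main obstacle is the quasi-Banach range: one must know that $M_{p,q}^{s,\alpha}$ carries enough continuous functionals at all — which it does precisely because its blocks are frequency-localized, unlike $L^p$ with $p<1$ — then correctly identify $(L^p_{\Omega_k})^{*}$ together with the scaling constant $R_k^{n(1/p-1)}$, and keep track that $R_k$ is a power of $\langle k\rangle^{\alpha/(1-\alpha)}$; this last point is exactly why the smoothness correction carries the factor $n\alpha$ rather than $n$ as in the Besov-space duality.
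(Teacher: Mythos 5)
Your strategy is sound and reaches the right answer, but it reorganizes the argument in a genuinely different way from the paper. The paper never computes the dual of the sequence space in full: it proves the inclusion $M_{(1\vee p)^*,(1\vee q)^*}^{-s+n\alpha(1/(1\wedge p)-1),\alpha}\subset (M_{p,q}^{s,\alpha})^*$ once and for all from the embedding $M_{p,q}^{s,\alpha}\subset M_{1\vee p,1\vee q}^{s-n\alpha(1/(p\wedge1)-1),\alpha}$ plus the Banach case, and then establishes only the reverse inclusion by a four-fold case analysis on $(p,q)$, testing a given functional against explicit elements ($\square_k^{\alpha}\varphi$ when $q<1$; translates $\mathscr{F}^{-1}\eta_k^{\alpha}(\cdot-x)$ when $p<1$; a randomized sum $\sum_k\widetilde a_k\mathscr{F}^{-1}\eta_k^{\alpha}(\cdot-x_k)$ when $p<1\le q$ to capture the $\ell^{q^*}$ summability). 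Your retract formulation buys a cleaner separation of concerns — the $q$-dependence and the $p$-dependence are handled once each at the level of $\ell^q$ and of $L^p_{\Omega_k}$ — and it makes transparent why the correction is $n\alpha(1/p-1)$ rather than $n(1/p-1)$. The Banach case, the $q<1$ sequence duality, and the forward Nikolskii estimate $\|\psi\|_{(L^p_{\Omega_k})^*}\lesssim R_k^{n(1/p-1)}\|v\|_\infty$ are all fine as you state them.

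The one step you assert but do not prove, and which carries the real content of the quasi-Banach range, is the surjectivity of the representation: that \emph{every} continuous functional on $L^p_{\Omega_k}$, $p<1$, is of the form $u\mapsto\int u\bar v$ with $v$ bounded. "Testing against a suitable $u$" settles the norm comparison once a representing $v$ exists; it does not produce $v$. To produce it you need the reproducing formula $u=\mathscr{F}^{-1}\widetilde\eta_k^{\alpha}*u$ together with a discretization (sampling series or Riemann sums converging in the $L^p$ quasi-norm for band-limited functions) so that $\psi(u)=\int u(x)\,\psi\big(\mathscr{F}^{-1}\widetilde\eta_k^{\alpha}(\cdot-x)\big)\,dx$, whence $v(x)=\overline{\psi(\mathscr{F}^{-1}\widetilde\eta_k^{\alpha}(\cdot-x))}$; interchanging $\psi$ with a vector-valued integral in a quasi-Banach space is exactly the point that needs care. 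The paper circumvents this entirely: since $\mathscr{S}$ is dense in $M_{p,q}^{s,\alpha}$ for $p,q<\infty$, any $f\in(M_{p,q}^{s,\alpha})^*$ is already a tempered distribution, so $\square_k^{\alpha}f$ is a concrete smooth function and the identity $\square_k^{\alpha}f(x)=\langle f,\overline{\mathscr{F}^{-1}\eta_k^{\alpha}(x-\cdot)}\rangle$ gives the pointwise bound directly, with no abstract representation theorem needed. Either supply the discretization argument for $(L^p_{\Omega_k})^*$, or adopt the paper's shortcut of working with the distributional realization of the functional from the outset; as written, this link in your chain is a genuine gap.
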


\begin{proof} The proof is separated into four cases.

\noindent{\bf Case 1:}\;$\boldsymbol{1\leqslant p,q<\infty}$. First,  we show that
$ \big(M_{p,q}^{s,\alpha}\big)^* \subset M_{p^*,q^*}^{-s,\alpha}$.  Noticing that
$$
M_{p,q}^{s,\alpha} \ni f \to \{ \Box^\alpha_k f\}  \in \ell_{s,\alpha}^q(\mathbb{Z}^n;L^p)
$$
is an isometric mapping from $M_{p,q}^{s,\alpha}$ onto a subspace $X=\{\{\Box^\alpha_k f\}: \ f\in M_{p,q}^{s,\alpha}\}$ of $\ell_{s,\alpha}^q(\mathbb{Z}^n;L^p)$, so, any continuous functional $g$ on $M_{p,q}^{s,\alpha}$ can be regarded as a bounded linear functional on $X$, which can be extended onto $\ell_{s,\alpha}^q(\mathbb{Z}^n;L^p)$ (the extension is  written as $\tilde{g}$) and the norm of $g$ is preserved. By Proposition \ref{duality-sequence}, there exists $ \{g_k\} \in\ell_{-s,\alpha}^{q^*}(\mathbb{Z}^n;L^{p^*})$ such that
\begin{align} \label{funct}
\langle \tilde{g}, \{f_k\} \rangle = \sum_{k\in \mathbb{Z}^n} \int \overline{g_k(x)} f_k(x) dx
\end{align}
holds for all $ \{f_k\} \in \ell_{s,\alpha}^q(\mathbb{Z}^n;L^p)$. Moreover,  $\|g\|_{(M_{p,q}^{s,\alpha})^*} = \|\{g_k\}\|_{\ell_{-s,\alpha}^{q^*}(\mathbb{Z}^n;L^{p^*})}$. Since $M^{s,\alpha}_{p,q}$ is isometric to $X$, we see that
\begin{align} \label{functal}
\langle g, \varphi \rangle = \langle  \tilde{g}, \{\Box^\alpha_k  \varphi \} \rangle = \int \sum_{k\in \mathbb{Z}^n} \overline{\Box^\alpha_k  g_k(x)} \varphi(x) dx, \quad \varphi \in  M_{p,q}^{s,\alpha},
\end{align}
Hence,
$g= \sum_{k\in \mathbb{Z}^n} \Box^\alpha_k g_k(x)$.  In view of Lemma \ref{lemma-sequence} and Young's inequality,
$$
\|g\|_{M_{p^*,q^*}^{-s,\alpha}} \lesssim   \|\{ g_k\}_{k\in\mathbb{Z}^n}\|_{\ell_{-s,\alpha}^{q^*}(\mathbb{Z}^n;L^{p^*})}= \|g\|_{ (M_{p,q}^{s,\alpha} )^*},
$$
which implies $\big(M_{p,q}^{s,\alpha}\big)^*\subset
M_{p^*,q^*}^{-s,\alpha}$. Next, we prove the reverse inclusion. For any $f\in M_{p^*,q^*}^{-s,\alpha} \subset \mathscr{S}'$, we show that
 $f\in\big(M_{p,q}^{s,\alpha}\big)^*$. Let $\varphi \in \mathscr{S}$.  We have
\begin{equation*}\label{duality-1-1}
\begin{split}
|\langle f,\varphi\rangle |
&\leqslant\|\{\square_k^{\alpha}f\}_{k\in\mathbb{Z}^n}\|_{\ell_{-s,\alpha}^{q^*}(\mathbb{Z}^n;L^{p^*})} \left\|\left\{\sum_{l\in\Lambda(k)}\square_l^{\alpha} \varphi\right\}_{k\in\mathbb{Z}^n}\right\|_{\ell_{s,\alpha}^q(\mathbb{Z}^n;L^p)}
\\
&\lesssim\|f\|_{M_{p^*,q^*}^{-s,\alpha}}\|\varphi\|_{M_{p,q}^{s,\alpha}}.
\end{split}
\end{equation*}
The principle of duality implies
$M_{p^*,q^*}^{-s,\alpha}\subset\big(M_{p,q}^{s,\alpha}\big)^*$.

In the following, we discuss the left three cases. From
\eqref{embede-1} in Proposition \ref{embede}, we know
$$
M_{p,q}^{s,\alpha}\subset M_{1\vee p,1\vee
q}^{s-n\alpha\big(\frac{1}{p\wedge 1} -1\big),\alpha}.
$$
This combined with the principle of duality gives
$$
\big(M_{p,q}^{s,\alpha}\big)^*\supset\left(M_{1\vee p,1\vee
q}^{s-n\alpha\big(\frac{1}{p\wedge 1}-1\big),\alpha}\right)^*=M_{(1\vee
p)^*,(1\vee q)^*}^{-s+n\alpha\big(\frac{1}{p\wedge 1}-1\big),\alpha}.
$$
Hence, only the reverse inclusion  needs to be proven.

\noindent{\bf Case 2.}\;$\boldsymbol{1\leqslant p<\infty,0<q<1.}$ For any
$f\in\big(M_{p,q}^{s,\alpha}\big)^*$, take any $k\in\mathbb{Z}^n$
and any $\varphi\in\mathscr{S}(\mathbb{R}^n)$, we have
\begin{equation}
\begin{split}
|\langle \square_k^{\alpha}f,\varphi \rangle| &=|\langle f,\square_k^{\alpha}\varphi \rangle| \\
&\leqslant\|f\|_{\big(M_{p,q}^{s,\alpha}\big)^*}\|\square_k^{\alpha}\varphi\|_{M_{p,q}^{s,\alpha}}
\\
&\lesssim\langle
k\rangle^{\frac{s}{1-\alpha}}\|f\|_{\big(M_{p,q}^{s,\alpha}\big)^*}\|\varphi\|_p.
\end{split}
\end{equation}
This implies $\big(M_{p,q}^{s,\alpha}\big)^*\subset
M_{p^*,\infty}^{-s,\alpha}$.

\noindent{\bf Case 3.}\;$\boldsymbol{0<p,q<1.}$ For any
$f\in\big(M_{p,q}^{s,\alpha}\big)^*$, take any $k\in\mathbb{Z}^n$,
we have
\begin{equation}
\begin{split}
|\square_k^{\alpha}f(x)|
&=|\langle f,\overline{\mathscr{F}^{-1}\eta_k^{\alpha}(x-\cdot)}\rangle |
\\
&=|\langle f,\mathscr{F}^{-1}\eta_k^{\alpha}(\cdot-x) \rangle| \\
&\lesssim\|f\|_{\big(M_{p,q}^{s,\alpha}\big)^*}\|\mathscr{F}^{-1}\eta_k^{\alpha}(\cdot-x)\|_{M_{p,q}^{s,\alpha}}
\\
&\lesssim\langle
k\rangle^{\frac{s}{1-\alpha}-\frac{n\alpha}{1-\alpha}\big(\frac1p-1\big)}\|f\|_{\big(M_{p,q}^{s,\alpha}\big)^*}.
\end{split}
\end{equation}
This implies $\big(M_{p,q}^{s,\alpha}\big)^*\subset
M_{\infty,\infty}^{-s+n\alpha\big(\frac1p-1\big),\alpha}$.

\noindent{\bf Case 4.}\;$\boldsymbol{0<p<1,1\leqslant q<\infty}$. For any
$f\in\big(M_{p,q}^{s,\alpha}\big)^*$ and every $k\in\mathbb{Z}^n$,
there exists some $x_k\in\mathbb{R}^n$ satisfying
$$
\|\square_k^{\alpha}f\|_{\infty}\sim|\mathscr{F}^{-1}\eta_k^{\alpha}\mathscr{F}f(x_k)|.
$$
Let $\{a_k\}$ be an arbitrary $\ell_{s-n\alpha(1/p-1)}^q$ sequence,
and we construct another sequence namely $\{\widetilde{a}_k\}$, such
that $|\widetilde{a}_k|=|a_k|$, and the argument of
$\widetilde{a}_k$ is the opposite number of the principal argument
of $\mathscr{F}^{-1}\eta_k^{\alpha}\mathscr{F}f(x_k)$. From
\eqref{eta-2},\eqref{eta-4}, we get
$\|\mathscr{F}^{-1}\eta_k^{\alpha}(\cdot-x_k)\|_p\lesssim\langle
k\rangle^{-\frac{n\alpha}{1-\alpha}\big(\frac1p-1\big)}$. Therefore,
we have
\begin{equation*}
\begin{split}
\sum_{k\in\mathbb{Z}^n}|a_k|\|\square_k^{\alpha}f\|_{\infty} &\sim
\left<f,\sum_k\widetilde{a}_k\mathscr{F}^{-1}\eta_k^{\alpha}(\cdot-x_k)\right>
\\
&\leqslant\|f\|_{\big(M_{p,q}^{s,\alpha}\big)^*}\left\|\sum_k\widetilde{a}_k\mathscr{F}^{-1}\eta_k^{\alpha}(\cdot-x_k)\right\|_{M_{p,q}^{s,\alpha}}
\\
&\lesssim\|f\|_{\big(M_{p,q}^{s,\alpha}\big)^*}\|\{a_k\}_{k\in\mathbb{Z}^n}\|_{\ell_{s-n\alpha(\frac1p-1)}^q}.
\end{split}
\end{equation*}
This implies $\big(M_{p,q}^{s,\alpha}\big)^*\subset
M_{\infty,q^*}^{-s+n\alpha\big(\frac1p-1\big),\alpha}$.
\end{proof}

\subsection{Complex interpolation}

The complex interpolation for  Besov spaces has a beautiful  theory; cf. \cite{triebel}.  We can imitate the counterpart for the Besov space to construct the
complex interpolation for  $\alpha$-modulation spaces. It
will be repeatedly used in the following argument.
Since there is little essential modification in the statement, we
only provide the outline of the proof.

We start with some abstract theory about complex interpolation on
quasi-Banach spaces. Let $S=\{z:0<\mbox{Re} z<1\}$ be a strip in the
complex plane. Its closure $\{z:0\leqslant\mbox{Re} z\leqslant1\}$
is denoted by $\overline{S}$. We say that $f(z)$ is an
$\mathscr{S}'(\mathbb{R}^n)$-analytic function in $S$ if the
following properties are satisfied: \\
(i) for every fixed $z\in\overline{S}$,
$f(z)\in\mathscr{S}'(\mathbb{R}^n)$; \\
(ii) for any $\varphi\in\mathscr{S}(\mathbb{R}^n)$ with compact
support, $\mathscr{F}^{-1}\varphi\mathscr{F}f(x,z)$ is a uniformly
continuous and bounded function in $\mathbb{R}^n\times\overline{S}$;
\\
(iii) for any $\varphi\in\mathscr{S}(\mathbb{R}^n)$ with compact
support, $\mathscr{F}^{-1}\varphi\mathscr{F}f(x,z)$ is an analytic
function in $S$ for every fixed $x\in\mathbb{R}^n$. \\
We denote the set of all $\mathscr{S}'(\mathbb{R}^n)$-analytic
functions in $S$ by $\mathbf{A}(\mathscr{S'}(\mathbb{R}^n))$. The
idea we used here is due to   Calder\'{o}n \cite{Cal}, Calder\'{o}n and Torchinsky
\cite{CalTor,CalTor2} and Triebel \cite{triebel}.

\begin{defin}\label{def-complex}
Let $A_0$ and $A_1$ be quasi-Banach spaces, and $0<\theta<1$. We
define
\begin{equation}
\begin{split}
\mathbf{F}(A_0,A_1)=&\Big\{\varphi(z)\in\mathbf{A}(\mathscr{S}'(\mathbb{R}^n)):\varphi(\ell+\mbox{i}t)\in
A_\ell,\ell=0,1,\forall t\in\mathbb{R},
\\
&\|\varphi(z)\|_{\mathbf{F}(A_0,A_1)}\stackrel{\triangle}{=}\max_{\ell=0,1}\sup_{t\in\mathbb{R}}\|\varphi(\ell+\mbox{i}t)\|_{A_{\ell}}\Big\};
\end{split}
\end{equation}
and
\begin{equation}\label{inter-space}
\begin{split}
(A_0,A_1)_{\theta}=&\Big\{f\in\mathscr{S}'(\mathbb{R}^n):\exists
\varphi(z)\in\mathbf{F}(A_0,A_1)\; \mbox{such
that}\;f=\varphi(\theta),
\\
&\|f\|_{(A_0,A_1)_{\theta}}\stackrel{\triangle}{=}\inf_{\varphi}\|\varphi(z)\|_{\mathbf{F}(A_0,A_1)}\Big\},
\end{split}
\end{equation}
where the infimum is taken over all
$\varphi(z)\in\mathbf{F}(A_0,A_1)$ such that $\varphi(\theta)=f$.
\end{defin}

\noindent The following two propositions are essentially known in
\cite{triebel} and the references therein.
\begin{prop}
Suppose all notations have the same meaning as in Definition
\ref{def-complex}, then we have
$$
\big((A_0,A_1)_{\theta},\|\cdot\|_{(A_0,A_1)_{\theta}}\big)
$$
is a quasi-Banach space.
\end{prop}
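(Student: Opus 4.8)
The plan is to follow the classical blueprint for interpolation spaces on quasi-Banach spaces as in Triebel \cite{triebel}, verifying that $(A_0,A_1)_\theta$ with the functional $\|\cdot\|_{(A_0,A_1)_\theta}$ from \eqref{inter-space} satisfies the quasi-norm axioms and is complete. First I would check that $\|\cdot\|_{(A_0,A_1)_\theta}$ is well-defined and finite on $(A_0,A_1)_\theta$: given $f$, the defining set of admissible $\varphi$ is nonempty by construction, so the infimum is a nonnegative real number. Homogeneity $\|\lambda f\|=|\lambda|\,\|f\|$ is immediate by scaling the analytic function $\varphi \mapsto \lambda\varphi$. For the quasi-triangle inequality, given $f,g\in(A_0,A_1)_\theta$ and $\varphi,\psi$ nearly optimal for $f,g$ respectively, the function $\varphi+\psi$ lies in $\mathbf{F}(A_0,A_1)$ and equals $f+g$ at $z=\theta$; applying the quasi-triangle inequalities of the quasi-norms on $A_0$ and $A_1$ (with a common constant $C$) to $\|\varphi(\ell+\mathrm{i}t)+\psi(\ell+\mathrm{i}t)\|_{A_\ell}$ gives $\|f+g\|_{(A_0,A_1)_\theta}\le C(\|f\|_{(A_0,A_1)_\theta}+\|g\|_{(A_0,A_1)_\theta})$ after letting the near-optimality parameters tend to zero. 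That $\|f\|_{(A_0,A_1)_\theta}=0$ forces $f=0$ will follow from the estimate below.

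The crucial analytic ingredient, and the main obstacle, is the pointwise-in-frequency estimate controlling $f=\varphi(\theta)$ by $\|\varphi\|_{\mathbf{F}(A_0,A_1)}$: for any $\phi\in\mathscr{S}$ with compact support, the scalar function $z\mapsto \mathscr{F}^{-1}\phi\mathscr{F}\varphi(x,z)$ is bounded and analytic on $S$, continuous on $\overline{S}$, so by the three-lines theorem
\begin{equation*}
\big|\mathscr{F}^{-1}\phi\mathscr{F}\varphi(x,\theta)\big|\le \Big(\sup_{t}\big\|\mathscr{F}^{-1}\phi\mathscr{F}\varphi(\cdot,\mathrm{i}t)\big\|_\infty\Big)^{1-\theta}\Big(\sup_{t}\big\|\mathscr{F}^{-1}\phi\mathscr{F}\varphi(\cdot,1+\mathrm{i}t)\big\|_\infty\Big)^{\theta}.
\end{equation*}
Combining this with the generalized Bernstein inequality (Proposition \ref{thm1.9}) to pass from the $A_\ell$-quasi-norm bound on $\varphi(\ell+\mathrm{i}t)$ to a uniform bound on $\mathscr{F}^{-1}\phi\mathscr{F}\varphi(\ell+\mathrm{i}t)$ in $L^\infty$ (or $L^r$), one gets $\|\mathscr{F}^{-1}\phi\mathscr{F}f\|_r\lesssim_\phi \|\varphi\|_{\mathbf{F}(A_0,A_1)}$, hence $\|\mathscr{F}^{-1}\phi\mathscr{F}f\|_r\lesssim_\phi\|f\|_{(A_0,A_1)_\theta}$. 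In particular $\|f\|_{(A_0,A_1)_\theta}=0$ implies $\mathscr{F}^{-1}\phi\mathscr{F}f=0$ for all such $\phi$, so $f=0$ in $\mathscr{S}'$.

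For completeness, take a sequence $(f_m)$ in $(A_0,A_1)_\theta$ that is quasi-Cauchy; passing to a subsequence we may assume $\|f_{m+1}-f_m\|_{(A_0,A_1)_\theta}\le 2^{-m}$ and choose $\varphi_m\in\mathbf{F}(A_0,A_1)$ with $\varphi_m(\theta)=f_{m+1}-f_m$ and $\|\varphi_m\|_{\mathbf{F}(A_0,A_1)}\le 2^{-m+1}$. The series $\Phi(z)=\varphi_1'(z)+\sum_{m\ge 1}\varphi_m(z)$ — where $\varphi_1'$ is an admissible function for $f_1$ — converges in $\mathbf{F}(A_0,A_1)$: by the pointwise-in-frequency estimate the partial sums converge in $\mathscr{S}'$ locally uniformly in $z$, condition (i)--(iii) of $\mathscr{S}'$-analyticity are stable under such limits (analyticity by Morera/uniform convergence), and the boundary quasi-norms $\sup_t\|\cdot\|_{A_\ell}$ are controlled by $\sum 2^{-m+1}<\infty$ together with completeness of $A_0,A_1$. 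Then $f:=\Phi(\theta)\in(A_0,A_1)_\theta$ and $\Phi(\theta)-\big(\varphi_1'(\theta)+\sum_{m<N}\varphi_m(\theta)\big)=\sum_{m\ge N}\varphi_m(\theta)$ has $(A_0,A_1)_\theta$-quasi-norm $\le \sum_{m\ge N}2^{-m+1}\to 0$, i.e.\ $f_N\to f$; a standard argument upgrades convergence of the subsequence to convergence of the original quasi-Cauchy sequence. The one delicate point throughout is that we are in the quasi-Banach regime, so the triangle inequality is only a quasi-inequality and infinite series must be handled via the Aoki–Rolewicz trick (an equivalent $r$-norm with $\|\sum a_m\|^r\le\sum\|a_m\|^r$) rather than naive summation; this is where most of the care is needed, but it is entirely parallel to Triebel's treatment of Besov spaces, so I would only sketch it.
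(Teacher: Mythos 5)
The paper offers no proof of this proposition at all --- it simply declares it ``essentially known'' and cites Triebel --- and your proposal is a correct reconstruction of precisely that classical argument: quasi-norm axioms via summing admissible analytic functions, the three-lines estimate on $z\mapsto \mathscr{F}^{-1}\phi\mathscr{F}\varphi(x,z)$ to make the evaluation $\varphi\mapsto\varphi(\theta)$ continuous into $\mathscr{S}'$ (hence definiteness), and completeness via a telescoping series handled with the Aoki--Rolewicz $r$-norm. The one point worth making explicit is that the step passing from $\sup_t\|\varphi(\ell+\mathrm{i}t)\|_{A_\ell}$ to $\sup_t\|\mathscr{F}^{-1}\phi\mathscr{F}\varphi(\cdot,\ell+\mathrm{i}t)\|_\infty$ requires $A_0,A_1$ to embed continuously into $\mathscr{S}'$ with band-limited multiplier bounds (Proposition \ref{thm1.9} supplies this for the $\alpha$-modulation spaces to which the result is actually applied), an hypothesis left implicit in the paper's abstract formulation.
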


\begin{prop}\label{calcul-interp-norm}
Suppose all notations have the same meaning as in Definition
\ref{def-complex}, then we have
\begin{equation}
\|f\|_{(A_0,A_1)_{\theta}}=\inf_{\varphi}\left(\sup_{t\in\mathbb{R}}\|\varphi(\mbox{i}t)\|_{A_0}^{1-\theta}\sup_{t\in\mathbb{R}}\|\varphi(1+\mbox{i}t)\|_{A_1}^{\theta}\right),
\end{equation}
where the infimum is taken over all
$\varphi(z)\in\mathbf{F}(A_0,A_1)$ such that $\varphi(\theta)=f$.
\end{prop}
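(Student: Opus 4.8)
The plan is to reduce the formula for $\|f\|_{(A_0,A_1)_\theta}$ in Proposition \ref{calcul-interp-norm} to the already-known definition \eqref{inter-space} by a rescaling trick on the analytic family. Write $M$ for the quantity on the right-hand side, $M_0=\sup_t\|\varphi(\mathrm{i}t)\|_{A_0}$ and $M_1=\sup_t\|\varphi(1+\mathrm{i}t)\|_{A_1}$, so the claim is $\|f\|_{(A_0,A_1)_\theta}=\inf_\varphi M_0^{1-\theta}M_1^\theta$, the infimum over $\varphi\in\mathbf{F}(A_0,A_1)$ with $\varphi(\theta)=f$. The two inequalities are proved separately.

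First I would prove $\|f\|_{(A_0,A_1)_\theta}\le\inf_\varphi M_0^{1-\theta}M_1^\theta$. Given any admissible $\varphi$ with $\varphi(\theta)=f$ and (after discarding the trivial cases $M_0=0$ or $M_1=0$) $M_0,M_1>0$, set
\begin{equation*}
\psi(z)=\Big(\frac{M_1}{M_0}\Big)^{z-\theta}\varphi(z)=e^{(z-\theta)\log(M_1/M_0)}\varphi(z).
\end{equation*}
The scalar prefactor $e^{(z-\theta)c}$ is entire, bounded on $\overline S$, nonvanishing, and its modulus depends only on $\mathrm{Re}\,z$; hence $\psi\in\mathbf{A}(\mathscr S'(\mathbb R^n))$ (properties (i)--(iii) are inherited from $\varphi$), $\psi(\ell+\mathrm{i}t)\in A_\ell$, and $\psi(\theta)=f$, so $\psi\in\mathbf{F}(A_0,A_1)$. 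On $\mathrm{Re}\,z=0$ the prefactor has modulus $(M_1/M_0)^{-\theta}$, giving $\sup_t\|\psi(\mathrm{i}t)\|_{A_0}\le(M_1/M_0)^{-\theta}M_0=M_0^{1-\theta}M_1^\theta$; on $\mathrm{Re}\,z=1$ it has modulus $(M_1/M_0)^{1-\theta}$, giving $\sup_t\|\psi(1+\mathrm{i}t)\|_{A_1}\le(M_1/M_0)^{1-\theta}M_1=M_0^{1-\theta}M_1^\theta$. Thus $\|\psi\|_{\mathbf{F}(A_0,A_1)}\le M_0^{1-\theta}M_1^\theta$, and by \eqref{inter-space} $\|f\|_{(A_0,A_1)_\theta}\le M_0^{1-\theta}M_1^\theta$; taking the infimum over $\varphi$ gives the desired bound. (The edge cases $M_0=0$ or $M_1=0$ force $f=0$, for which both sides are $0$.)

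For the reverse inequality, take any $\varphi\in\mathbf{F}(A_0,A_1)$ with $\varphi(\theta)=f$. Then $M_0\le\|\varphi\|_{\mathbf{F}(A_0,A_1)}$ and $M_1\le\|\varphi\|_{\mathbf{F}(A_0,A_1)}$ by the very definition of the $\mathbf{F}$-norm, hence $M_0^{1-\theta}M_1^\theta\le\|\varphi\|_{\mathbf{F}(A_0,A_1)}$; taking the infimum over such $\varphi$ yields $\inf_\varphi M_0^{1-\theta}M_1^\theta\le\|f\|_{(A_0,A_1)_\theta}$. Combining the two inequalities proves the proposition.

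The only genuinely delicate point — the ``main obstacle'' — is verifying that multiplying by the exponential factor keeps us inside the class $\mathbf{A}(\mathscr S'(\mathbb R^n))$ in the quasi-Banach setting: one must check that for $\varphi\in\mathscr S(\mathbb R^n)$ of compact support, $\mathscr F^{-1}\varphi\mathscr F(\psi(\cdot,z))=e^{(z-\theta)c}\,\mathscr F^{-1}\varphi\mathscr F(f(\cdot,z))$ is still uniformly continuous and bounded on $\mathbb R^n\times\overline S$ and analytic in $z$, which is immediate since $e^{(z-\theta)c}$ is bounded and analytic on $\overline S$ and independent of $x$. Everything else is the standard Calder\'on-type computation carried over verbatim from the Banach (Besov) case in \cite{triebel}, which is why only the outline is given.
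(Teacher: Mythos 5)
The paper offers no proof of this proposition at all---it simply declares it ``essentially known'' and cites \cite{triebel}---so your write-up is filling a genuine gap, and the rescaling argument you use is exactly the standard Calder\'on-type proof one would expect. The reverse inequality (geometric mean $\le$ max) and the remark about why the exponential prefactor preserves membership in $\mathbf{A}(\mathscr{S}'(\mathbb{R}^n))$ are both fine.

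There is, however, a sign error in the key construction. With your choice $\psi(z)=(M_1/M_0)^{z-\theta}\varphi(z)$, the prefactor on $\mathrm{Re}\,z=0$ has modulus $(M_1/M_0)^{-\theta}=M_0^{\theta}M_1^{-\theta}$, so $\sup_t\|\psi(\mathrm{i}t)\|_{A_0}\le M_0^{1+\theta}M_1^{-\theta}$, not $M_0^{1-\theta}M_1^{\theta}$; similarly on $\mathrm{Re}\,z=1$ you get $M_0^{\theta-1}M_1^{2-\theta}$. The displayed identities $(M_1/M_0)^{-\theta}M_0=M_0^{1-\theta}M_1^{\theta}$ and $(M_1/M_0)^{1-\theta}M_1=M_0^{1-\theta}M_1^{\theta}$ are both false, and the maximum of the two quantities you actually obtain is not $M_0^{1-\theta}M_1^{\theta}$ in general. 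The fix is to invert the ratio: take $\psi(z)=(M_0/M_1)^{z-\theta}\varphi(z)$, for which the modulus on $\mathrm{Re}\,z=0$ is $M_0^{-\theta}M_1^{\theta}$ and on $\mathrm{Re}\,z=1$ is $M_0^{1-\theta}M_1^{\theta-1}$, yielding $M_0^{1-\theta}M_1^{\theta}$ on both boundary lines as desired. With that one-character correction the proof is complete.
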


We point out the interpolation functor referred in
\eqref{inter-space} is an exact interpolation functor of exponent
$\theta$. For our purpose, we will use the following multi-linear case.

\begin{prop}\label{boundness}
Let $T$ be a continuous multi-linear operator from $A_0^{(1)}\times
A_0^{(2)}\times\cdots\times A_0^{(m)}$ to $B_0$ and from
$A_1^{(1)}\times A_1^{(2)}\times\cdots\times A_1^{(m)}$ to $B_1$,
satisfying
\begin{equation*}
\begin{aligned}
\big\|T\big(f^{(1)},f^{(2)},\cdots,f^{(m)}\big)\big\|_{B_0}
\leqslant C_0\prod_{j=1}^m\big\|f^{(j)}\big\|_{A_0^{(j)}}; \\
\big\|T\big(f^{(1)},f^{(2)},\cdots,f^{(m)}\big)\big\|_{B_1}
\leqslant C_1\prod_{j=1}^m\big\|f^{(1)}\big\|_{A_1^{(1)}},
\end{aligned}
\qquad f^{(j)}\in A_0^{(j)}\cap A_1^{(j)}.
\end{equation*}
Then $T$ is continuous from
$\big(A_0^{(1)},A_1^{(1)}\big)_{\theta}\times\big(A_0^{(2)},A_1^{(2)}\big)_{\theta}\times\cdots\times\big(A_0^{(m)},A_1^{(m)}\big)_{\theta}$
to $(B_0,B_1)_{\theta}$ with norm at most
$C_0^{1-\theta}C_1^{\theta}$, provided $0\leqslant\theta\leqslant1$.
\end{prop}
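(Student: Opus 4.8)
The plan is to run the Calder\'on $\mathbf{F}$-method exactly as in the linear case treated in \cite{triebel}, the only new feature being the bookkeeping of the $m$ factors. As in that abstract framework, $T$ is understood to act on products of (quasi-Banach subspaces of) $\mathscr{S}'(\mathbb{R}^n)$, so that $T(\varphi_1(z),\dots,\varphi_m(z))$ is an unambiguously defined tempered distribution for every $z\in\overline{S}$; the content of the statement is purely quantitative. Fix $0<\theta<1$ and $f^{(j)}\in\big(A_0^{(j)},A_1^{(j)}\big)_{\theta}$ for $j=1,\dots,m$. Given $\varepsilon>0$, use Proposition \ref{calcul-interp-norm} to choose, for each $j$, an analytic representative $\varphi_j(z)\in\mathbf{F}\big(A_0^{(j)},A_1^{(j)}\big)$ with $\varphi_j(\theta)=f^{(j)}$ and
\[
\Big(\sup_{t\in\mathbb{R}}\|\varphi_j(\mathrm{i}t)\|_{A_0^{(j)}}\Big)^{1-\theta}\Big(\sup_{t\in\mathbb{R}}\|\varphi_j(1+\mathrm{i}t)\|_{A_1^{(j)}}\Big)^{\theta}\le\big\|f^{(j)}\big\|_{(A_0^{(j)},A_1^{(j)})_{\theta}}+\varepsilon .
\]
(By multiplying $\varphi_j$ with a suitable scalar factor $\lambda_j^{z-\theta}$, $\lambda_j>0$, one may even arrange that the two suprema coincide, which streamlines the final line slightly but is not essential.)

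Next set $F(z):=T\big(\varphi_1(z),\dots,\varphi_m(z)\big)$ and claim $F\in\mathbf{F}(B_0,B_1)$. Membership on the boundary lines and the bounds
\[
\sup_{t}\|F(\mathrm{i}t)\|_{B_0}\le C_0\prod_{j=1}^{m}\sup_{t}\|\varphi_j(\mathrm{i}t)\|_{A_0^{(j)}},\qquad \sup_{t}\|F(1+\mathrm{i}t)\|_{B_1}\le C_1\prod_{j=1}^{m}\sup_{t}\|\varphi_j(1+\mathrm{i}t)\|_{A_1^{(j)}},
\]
are immediate from the two hypotheses on $T$ together with $\varphi_j(\ell+\mathrm{i}t)\in A_\ell^{(j)}$. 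The substantive point is that $F\in\mathbf{A}(\mathscr{S}'(\mathbb{R}^n))$: for $\psi\in\mathscr{S}(\mathbb{R}^n)$ with compact support and $x$ fixed, $\mathscr{F}^{-1}\psi\mathscr{F}F(x,z)$ is a continuous multilinear image of the tuple $(\varphi_1(z),\dots,\varphi_m(z))$, each component of which is $\mathscr{S}'$-analytic; hence $z\mapsto\mathscr{F}^{-1}\psi\mathscr{F}F(x,z)$ is holomorphic in $S$ (by a vector-valued Morera/Cauchy-integral argument, moving the contour integral through the continuous operator $T$) and is jointly uniformly continuous and bounded on $\mathbb{R}^n\times\overline{S}$ because each $\mathscr{F}^{-1}\psi\mathscr{F}\varphi_j(x,z)$ has these properties and $T$ is continuous. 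This verification of $\mathscr{S}'$-analyticity, which differs only notationally from the linear case in \cite{triebel}, is the only step that is not formal, and I expect it to be the main (and essentially the sole) obstacle.

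Finally, since $F(\theta)=T\big(f^{(1)},\dots,f^{(m)}\big)$ and $F\in\mathbf{F}(B_0,B_1)$, Proposition \ref{calcul-interp-norm} applied with $F$ itself as the competing representative yields
\[
\big\|T(f^{(1)},\dots,f^{(m)})\big\|_{(B_0,B_1)_{\theta}}\le\Big(\sup_{t}\|F(\mathrm{i}t)\|_{B_0}\Big)^{1-\theta}\Big(\sup_{t}\|F(1+\mathrm{i}t)\|_{B_1}\Big)^{\theta}\le C_0^{1-\theta}C_1^{\theta}\prod_{j=1}^{m}\Big(\big\|f^{(j)}\big\|_{(A_0^{(j)},A_1^{(j)})_{\theta}}+\varepsilon\Big),
\]
where the boundary bounds for $F$ and the choice of the $\varphi_j$ were inserted. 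Letting $\varepsilon\to0$ gives the claimed continuity with norm at most $C_0^{1-\theta}C_1^{\theta}$; the endpoint values $\theta=0,1$ are the hypotheses themselves.
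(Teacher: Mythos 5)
Your proposal is correct and follows essentially the same route as the paper: choose near-optimal analytic representatives via Proposition \ref{calcul-interp-norm}, compose them with $T$ to obtain a competitor in $\mathbf{F}(B_0,B_1)$, and read off the bound from the geometric-mean formula (the paper absorbs the constants through the factor $C_0^{(z-1)/m}C_1^{-z/m}$ where you use $\varepsilon$-optimality of the representatives, a cosmetic difference). Your write-up is in fact the cleaner of the two --- the paper's display garbles the multilinear indices by applying $T$ to a single $\varphi_k^{(j)}(z)$ and taking a product of norms of the output --- and you correctly isolate the one non-formal step, namely the $\mathscr{S}'(\mathbb{R}^n)$-analyticity of $z\mapsto T(\varphi_1(z),\dots,\varphi_m(z))$, which the paper passes over with ``it is easy to see.''
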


\begin{proof}
From Proposition \ref{calcul-interp-norm}, we know there exist $m$
sequences $\{\varphi_k^{(j)}(z)\}_{k\in\mathbb{N}}, j=1,\cdots,m$
satisfying
\begin{equation}\label{interp-functor-1}
\lim_{k\to\infty}\sup_t\|\varphi_k^{(j)}(\mbox{i}t)\|_{A_0}^{1-\theta}\sup_t\|\varphi_k^{(j)}(1+\mbox{i}t)\|_{A_1}^{\theta}=\|f^{(j)}\|_{(A_0,A_1)_{\theta}}.
\end{equation}
We put
$\psi_k^{(j)}(z)=C_0^{\frac{z-1}{m}}C_1^{-\frac{z}{m}}T(\varphi_k^{(j)}(z))$.
It is easy to see that $\psi_k^{(j)}(z)\in F(B_0,B_1)$ with
$\psi_k^{(j)}(\theta)=C_0^{\theta-1}C_1^{-\theta}Tf^{(j)}$, and
\begin{equation*}
\|\psi_k^{(j)}(\ell+\mbox{i}t)\|_{B_{\ell}}\leqslant
C_{\ell}^{-\frac1m}\|T\varphi_k^{(j)}(\ell+\mbox{i}t)\|_{B_{\ell}}\leqslant\|\varphi_k^{(j)}(\ell+\mbox{i}t)\|_{A_{\ell}},
\quad\ell=0,1.
\end{equation*}
Thus, combining Proposition \ref{calcul-interp-norm}, we have
\begin{equation}\label{interp-functor-2}
\begin{split}
\big\|T\big(f^{(1)},\cdots,f^{(m)}\big)\big\|_{(B_0,B_1)_{\theta}}&=C_0^{1-\theta}C_1^{\theta}\prod_{j=1}^m\|\psi_k^{(j)}(\theta)\|_{(B_0,B_1)_{\theta}}
\\
&\leqslant
C_0^{1-\theta}C_1^{\theta}\prod_{j=1}^m\left(\sup_t\|\psi_k^{(j)}(\mbox{i}t)\|_{B_0}^{1-\theta}\sup_t\|\psi_k^{(j)}(1+\mbox{i}t)\|_{B_1}^{\theta}\right)
\\
&\leqslant
C_0^{1-\theta}C_1^{\theta}\prod_{j=1}^m\left(\sup_t\|\varphi_k^{(j)}(\mbox{i}t)\|_{A_0}^{1-\theta}\sup_t\|\varphi_k^{(j)}(1+\mbox{i}t)\|_{B_1}^{\theta}\right)
\end{split}
\end{equation}
The conclusion follows from
\eqref{interp-functor-2},\eqref{interp-functor-1}.
\end{proof}

\begin{thm}\label{interpolation-M}
Suppose $0<\theta<1$ and
\begin{equation}
s=(1-\theta)s_0+\theta s_1, \quad
\frac1p=\frac{1-\theta}{p_0}+\frac{\theta}{p_1}, \quad
\frac1q=\frac{1-\theta}{q_0}+\frac{\theta}{q_1},
\end{equation}
then we have
\begin{equation}
\big(M_{p_0,q_0}^{s_0,\alpha},M_{p_1,q_1}^{s_1,\alpha}\big)_{\theta}=M_{p,q}^{s,\alpha}.
\end{equation}
\end{thm}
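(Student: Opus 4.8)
The plan is to prove the two inclusions $M^{s,\alpha}_{p,q}\subset\big(M^{s_0,\alpha}_{p_0,q_0},M^{s_1,\alpha}_{p_1,q_1}\big)_\theta$ and $\big(M^{s_0,\alpha}_{p_0,q_0},M^{s_1,\alpha}_{p_1,q_1}\big)_\theta\subset M^{s,\alpha}_{p,q}$ separately, transporting everything to the sequence-space level, exactly as in the Besov case in \cite{triebel}. The key reduction is that $f\mapsto\{\Box^\alpha_k f\}$ is, by Proposition \ref{equivalent-norm}, an isometric embedding of $M^{s,\alpha}_{p,q}$ onto a complemented subspace of $\ell^q_{s,\alpha}(\mathbb{Z}^n;L^p)$, the bounded retraction being $\{g_k\}\mapsto\sum_k\Box^\alpha_k g_k$ (bounded by Lemma \ref{lemma-sequence}, using a fattened partition $\widetilde\eta^\alpha_k$). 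Once we know complex interpolation of the weighted mixed sequence spaces behaves correctly, namely $\big(\ell^{q_0}_{s_0,\alpha}(L^{p_0}),\ell^{q_1}_{s_1,\alpha}(L^{p_1})\big)_\theta=\ell^q_{s,\alpha}(L^p)$ with the stated parameter relations, a standard retract/co-retract argument (Proposition \ref{boundness} applied to the two bounded linear maps above, in both directions) yields the theorem. So the real content splits into (a) the scalar-valued fact for the sequence spaces and (b) checking the retraction argument is legitimate in the quasi-Banach setting using the $\mathbf{A}(\mathscr S')$-analytic framework of Definition \ref{def-complex}.

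For step (a): complex interpolation of $L^{p_0}$ and $L^{p_1}$ giving $L^p$ is classical for $1\le p_i$ and, in the quasi-Banach range, goes through for the spaces $L^p$ of functions with Fourier support in a fixed ball via the Bernstein-type inequalities of Propositions \ref{convolution}–\ref{thm1.9}; since every $\Box^\alpha_k f$ has compact Fourier support, this is exactly the situation we are in. Interpolation of the weighted $\ell^q$-sums, $\big(\ell^{q_0}_{s_0,\alpha},\ell^{q_1}_{s_1,\alpha}\big)_\theta=\ell^q_{s,\alpha}$, is the usual vector-valued Calderón computation: on the upper inclusion one builds, for a sequence $\{g_k\}\in\ell^q_{s,\alpha}(L^p)$ and $z$ in the strip, the analytic family $g_k(z)=\langle k\rangle^{(\,\cdot\,)}\,|g_k|^{\,q(\,\cdot\,)}\,(\mathrm{sgn}\,g_k)$ with the exponents chosen affine in $z$ so that at $\mathrm{Re}\,z=\ell$ the weight and power land on $(s_\ell,q_\ell)$ and at $z=\theta$ one recovers $\{g_k\}$; on the lower inclusion one uses Proposition \ref{calcul-interp-norm} and the three-lines/Hadamard bound to estimate $\|\{g_k(\theta)\}\|_{\ell^q_{s,\alpha}(L^p)}$ by the geometric mean of the boundary norms. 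These are routine once one is careful that the $\Box^\alpha_k$-localization keeps every step inside spaces of band-limited functions.

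For step (b): with $R:\ell^q_{s,\alpha}(L^p)\to M^{s,\alpha}_{p,q}$, $R\{g_k\}=\sum_k\widetilde\Box^\alpha_k g_k$, and $E:M^{s,\alpha}_{p,q}\to\ell^q_{s,\alpha}(L^p)$, $Ef=\{\Box^\alpha_k f\}$, one has $R\circ E=\mathrm{id}$. Applying Proposition \ref{boundness} (the $m=1$ case) to $E$ gives $\big(M^{s_0,\alpha}_{p_0,q_0},M^{s_1,\alpha}_{p_1,q_1}\big)_\theta\hookrightarrow\big(\ell^{q_0}_{s_0,\alpha}(L^{p_0}),\ell^{q_1}_{s_1,\alpha}(L^{p_1})\big)_\theta=\ell^q_{s,\alpha}(L^p)$, and applying it to $R$ gives the reverse inequality; composing with $R\circ E=\mathrm{id}$ turns these into the two desired space inclusions with matching norms. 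The one point needing care—and the step I expect to be the main obstacle—is verifying that the analytic families produced in step (a) actually lie in $\mathbf{F}(A_0,A_1)$ in the sense of Definition \ref{def-complex}, i.e.\ that $\mathscr F^{-1}\varphi\mathscr F\,f(\cdot,z)$ is jointly continuous, bounded, and analytic in $z$; this is where the frequency-localization (so that all relevant functions are entire of exponential type) and the generalized Bernstein inequality are used to upgrade pointwise/analytic estimates to the required uniform bounds, and where the quasi-Banach ($p,q<1$) case differs from the classical one. Since, as the excerpt notes, "there is little essential modification in the statement," I would carry these verifications out only schematically, referring to \cite{triebel,CalTor} for the analogous Besov computations.
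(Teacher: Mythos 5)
Your plan is correct in outline, but it takes a genuinely different route from the paper. The paper never leaves $\mathscr{S}'(\mathbb{R}^n)$: for $M^{s,\alpha}_{p,q}\subset\big(M^{s_0,\alpha}_{p_0,q_0},M^{s_1,\alpha}_{p_1,q_1}\big)_\theta$ it writes the Calder\'on family directly on $f$, namely $\varphi(x,z)=\sum_k\langle k\rangle^{\frac{1}{1-\alpha}[\frac{sq}{q(z)}-s(z)]}\|\square_k^{\alpha}f\|_p^{\frac{q}{q(z)}-\frac{p}{p(z)}}(\square_k^{\alpha}f)^{p/p(z)}(x)$, and for the converse it bounds each $\|\square_k^{\alpha}f\|_p$ by a three-lines estimate in terms of $\|\square_k^{\alpha}\varphi(\mbox{i}t)\|_{p_0}$ and $\|\square_k^{\alpha}\varphi(1+\mbox{i}t)\|_{p_1}$, then applies H\"older and Minkowski to the $\ell^q_{s,\alpha}$ sums; no retraction is invoked. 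You instead factor through the sequence space via $Ef=\{\square_k^{\alpha}f\}$ and $R\{g_k\}=\sum_k\widetilde{\square}_k^{\alpha}g_k$ and reduce to $\big(\ell^{q_0}_{s_0,\alpha}(L^{p_0}),\ell^{q_1}_{s_1,\alpha}(L^{p_1})\big)_\theta=\ell^q_{s,\alpha}(L^p)$. That route is viable (it is essentially Triebel's treatment of the Besov case) and buys you a clean separation between the scalar sequence-space computation and the geometry of the $\alpha$-covering, reusing Lemma \ref{lemma-sequence}; but it costs two verifications the paper's direct construction sidesteps: (i) the functor of Definition \ref{def-complex} is defined only for quasi-Banach subspaces of $\mathscr{S}'(\mathbb{R}^n)$ through $\mathscr{S}'$-analytic families, so you must first extend it to sequence spaces before your target identity is even meaningful; (ii) for $p<1$ the retraction $R$ is bounded only on sequences with $\mathrm{supp}\,\widehat{g_k}$ contained in a dilate of $\mathrm{supp}\,\eta_k^{\alpha}$ (Proposition \ref{convolution} is a band-limited statement), so the sequence-space interpolation must be run on that subspace, and you should note that the Calder\'on family $|g_k|^{a(z)}\,g_k/|g_k|$ does not preserve band-limitedness, so the boundary-value requirement must be formulated with care. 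Since the paper's own proof is itself a sketch, I would not call either point a gap, but they are where your write-up has to do the real work.
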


\begin{proof} [Sketch of Proof.]
For $z\in\overline{S}$, we write
$$
s(z)=(1-z)s_0+zs_1, \quad
\frac{1}{p(z)}=\frac{1-z}{p_0}+\frac{z}{p_1}, \quad
\frac{1}{q(z)}=\frac{1-z}{q_0}+\frac{z}{q_1}.
$$
For any $f\in M_{p,q}^{s,\alpha}$, we set
\begin{equation*}
\varphi(x,z)=\sum_{k\in\mathbb{Z}^n}\langle
k\rangle^{\frac{1}{1-\alpha}\big[\frac{sq}{q(z)}-s(z)\big]}\|\square_k^{\alpha}f\|_p^{\frac{q}{q(z)}-\frac{p}{p(z)}}(\square_k^{\alpha}f)^{\frac{p}{p(z)}}(x).
\end{equation*}
Obviously, $\varphi(z)\in\mathbf{A}(\mathscr{S}'(\mathbb{R}^n))$ and
$\varphi(\theta)=f$. Direct calculation shows
$$
\|\varphi(\ell+\mbox{i}t)\|_{M_{p_{\ell},q_{\ell}}^{s_{\ell},\alpha}}\lesssim\|f\|_{M_{p,q}^{s,\alpha}},
\quad \ell=0,1.
$$
This proves that
$
M_{p,q}^{s,\alpha}\subset\big(M_{p_0,q_0}^{s_0,\alpha},M_{p_1,q_1}^{s_1,\alpha}\big)_{\theta}.
$

Conversely, for any
$f\in\big(M_{p_0,q_0}^{s_0,\alpha},M_{p_1,q_1}^{s_1,\alpha}\big)_{\theta}$,
if $\varphi\in\mathbf{A}(\mathscr{S}'(\mathbb{R}^n))$ such that
$\varphi(\theta)=f$, for some $\theta\in(0,1)$, we can find two
positive functions $\mu_0(\theta,t)$ and $\mu_1(\theta,t)$ in
$(0,1)\times\mathbb{R}$ satisfying
\begin{equation*}
\|\square_k^{\alpha}f\|_p\leqslant\left(\frac{1}{1-\theta}\int_{\mathbb{R}}\|\square_k^{\alpha}\varphi(\mbox{i}t)\|_{p_0}^r\mu_0(\theta,t)dt\right)^{\frac{1-\theta}{r}}
\left(\frac{1}{\theta}\int_{\mathbb{R}}\|\square_k^{\alpha}\varphi(1+\mbox{i}t)\|_{p_1}^r\mu_1(\theta,t)dt\right)^{\frac{\theta}{r}},
\end{equation*}
with
$\frac{1}{1-\theta}\int_{\mathbb{R}}\mu_0(\theta,t)dt=\frac{1}{\theta}\int_{\mathbb{R}}\mu_1(\theta,t)dt=1$.
Taking the $\ell_{s,\alpha}^q$ norm of both sides leads to
\begin{align}
\|\{\square_k^{\alpha}f\}_{k\in\mathbb{Z}^n}\|_{\ell_{s,\alpha}^q} & \leqslant\left\|\frac{1}{1-\theta}\int_{\mathbb{R}}\langle
k\rangle^{\frac{s_0r}{1-\alpha}}\|\square_k^{\alpha}\varphi(\mbox{i}t)\|_{p_0}^r\mu_0(\theta,t)dt\right\|_{\ell^{\frac{q_0}{r}}}^{\frac{1-\theta}{r}}
\nonumber \\
& \quad \quad \times\left\|\frac{1}{\theta}\int_{\mathbb{R}}\langle
k\rangle^{\frac{s_1r}{1-\alpha}}\|\square_k^{\alpha}\varphi(1+\mbox{i}t)\|_{p_1}^r\mu_1(\theta,t)dt\right\|_{\ell^{\frac{q_1}{r}}}^{\frac{\theta}{r}}.
\end{align}
Then, Minkowski's inequality implies that
\begin{equation*}
\|f\|_{M_{p,q}^{s,\alpha}}\lesssim\sup_t\|\varphi(\mbox{i}t)\|_{M_{p_0,q_0}^{s_0,\alpha}}\sup_t\|\varphi(1+\mbox{i}t)\|_{M_{p_1,q_1}^{s_1,\alpha}}.
\end{equation*}
This proves $
\big(M_{p_0,q_0}^{s_0,\alpha},M_{p_1,q_1}^{s_1,\alpha}\big)_{\theta}\subset
M_{p,q}^{s,\alpha}. $
\end{proof}

~~~~

\noindent The following is a natural consequence of Proposition
\ref{boundness} and Theorem \ref{interpolation-M}, and is frequently
used later on.
\begin{cor}\label{interpolation}
Suppose $T$ is a continuous multi-linear mapping from
$M_{p_0^{(1)},q_0^{(1)}}^{s_0^{(1)},\alpha}\times\cdots\times
M_{p_0^{(m)},q_0^{(m)}}^{s_0^{(m)},\alpha}$ to
$M_{p_0,q_0}^{s_0,\alpha}$ with norm $M_0$, and is also continuous,
multi-linear from
$M_{p_1^{(1)},q_1^{(1)}}^{s_1^{(1)},\alpha}\times\cdots\times
M_{p_1^{(m)},q_1^{(m)}}^{s_1^{(m)},\alpha}$ to
$M_{p_1,q_1}^{s_1,\alpha}$ with norm $M_1$. Then $T$ is continuous
and multi-linear from
$M_{p^{(1)},q^{(1)}}^{s^{(1)},\alpha}\times\cdots\times
M_{p^{(m)},q^{(m)}}^{s^{(m)},\alpha}$ to $M_{p,q}^{s,\alpha}$ with
norm at most $M_0^{1-\theta}M_1^{\theta}$, provided
$0\leqslant\theta\leqslant1$, and
\begin{equation*}
s^{(j)}=(1-\theta)s_0^{(j)}+\theta s_1^{(j)}, \quad
\frac{1}{p^{(j)}}=\frac{1-\theta}{p_0^{(j)}}+\frac{\theta}{p_1^{(j)}},
\quad
\frac{1}{q^{(j)}}=\frac{1-\theta}{q_0^{(j)}}+\frac{\theta}{q_1^{(j)}},
\quad j=1\cdots m.
\end{equation*}
\end{cor}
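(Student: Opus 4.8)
The plan is to deduce Corollary~\ref{interpolation} directly from Theorem~\ref{interpolation-M} and Proposition~\ref{boundness}, with no new analysis required. First I would observe that the hypotheses of the corollary say precisely that $T$ is continuous multi-linear from the tuple $\big(A_0^{(1)},\dots,A_0^{(m)}\big)$ to $B_0$ and from $\big(A_1^{(1)},\dots,A_1^{(m)}\big)$ to $B_1$, where we abbreviate $A_\ell^{(j)}=M_{p_\ell^{(j)},q_\ell^{(j)}}^{s_\ell^{(j)},\alpha}$ and $B_\ell=M_{p_\ell,q_\ell}^{s_\ell,\alpha}$ for $\ell=0,1$, and where the relevant operator norms are $C_0=M_0$ and $C_1=M_1$. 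Hence Proposition~\ref{boundness} applies verbatim and yields that $T$ is continuous multi-linear from $\big(A_0^{(1)},A_1^{(1)}\big)_\theta\times\cdots\times\big(A_0^{(m)},A_1^{(m)}\big)_\theta$ to $(B_0,B_1)_\theta$ with norm at most $M_0^{1-\theta}M_1^\theta$.

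Next I would invoke Theorem~\ref{interpolation-M} to identify each of the complex interpolation spaces appearing here. Applied with the triple $(s_0^{(j)},p_0^{(j)},q_0^{(j)})$, $(s_1^{(j)},p_1^{(j)},q_1^{(j)})$ and parameter $\theta$, the theorem gives
\begin{equation*}
\big(A_0^{(j)},A_1^{(j)}\big)_\theta=\big(M_{p_0^{(j)},q_0^{(j)}}^{s_0^{(j)},\alpha},M_{p_1^{(j)},q_1^{(j)}}^{s_1^{(j)},\alpha}\big)_\theta=M_{p^{(j)},q^{(j)}}^{s^{(j)},\alpha}
\end{equation*}
for each $j=1,\dots,m$, since the exponents $s^{(j)},p^{(j)},q^{(j)}$ are by hypothesis the $\theta$-means of the corresponding endpoint exponents. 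Likewise $(B_0,B_1)_\theta=\big(M_{p_0,q_0}^{s_0,\alpha},M_{p_1,q_1}^{s_1,\alpha}\big)_\theta=M_{p,q}^{s,\alpha}$. Substituting these identifications into the conclusion of Proposition~\ref{boundness} gives exactly the asserted continuity of $T$ from $M_{p^{(1)},q^{(1)}}^{s^{(1)},\alpha}\times\cdots\times M_{p^{(m)},q^{(m)}}^{s^{(m)},\alpha}$ to $M_{p,q}^{s,\alpha}$ with norm at most $M_0^{1-\theta}M_1^\theta$.

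The only points that require a word of care — and the closest thing to an obstacle — are the boundary cases $\theta=0$ and $\theta=1$, which are admitted in the statement but are degenerate for the complex method; here one simply notes that $(A_0,A_1)_0=A_0$ and $(A_0,A_1)_1=A_1$ in the appropriate sense, so the claim reduces to the hypotheses themselves. One should also remark that Theorem~\ref{interpolation-M} as stated covers the full range $0<p,q\le\infty$ (with the usual quasi-Banach conventions), so no restriction beyond those already built into the definition of $M_{p,q}^{s,\alpha}$ is needed on the intermediate indices. With these trivial verifications in place the corollary follows, and no genuinely new estimate is introduced beyond what is already contained in Theorem~\ref{interpolation-M} and Proposition~\ref{boundness}.
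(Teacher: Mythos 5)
Your argument is exactly the one the paper intends: the corollary is stated there as ``a natural consequence of Proposition \ref{boundness} and Theorem \ref{interpolation-M}'', and your proposal simply makes that deduction explicit by identifying each interpolation space via Theorem \ref{interpolation-M} and then applying Proposition \ref{boundness}. This is correct and matches the paper's (unwritten) proof, with your remarks on the degenerate endpoints $\theta=0,1$ being a harmless extra precaution.
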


\section{Scaling property}

For Besov space, it is well known that
\begin{equation}\label{dilation-besov}
\|f_{\lambda}\|_{B_{p,q}^s}\lesssim\lambda^{-\frac{n}{p}}(1\vee\lambda^s)\|f\|_{B_{p,q}^s}.
\end{equation}
For modulation spaces with $s=0$ and $1\le p,q \le \infty$, the sharp
dilation property was obtained in \cite{ST} and they showed
\begin{align}
& \|f_{\lambda}\|_{M_{p,q}^0}\lesssim \lambda^{-\frac{n}{p}} \lambda^{0 \vee n\big(\frac1q-\frac1p\big) \vee n\big(\frac1p+\frac1q-1\big)} \|f\|_{M_{p,q}^0}, \quad \lambda >1;
\label{dimo} \\
& \|f_{\lambda}\|_{M_{p,q}^0}\lesssim \lambda^{-\frac{n}{p}} \lambda^{- \left[ 0 \vee n\big(\frac1p - \frac1q\big) \vee n\big(1-\frac1p-\frac1q \big) \right] } \|f\|_{M_{p,q}^0}, \
\quad \lambda <1. \label{dimo1}
\end{align}

In this section, we study the scaling property of $\alpha$-modulation spaces. For $0<p,q\leqslant\infty$ and
$(\alpha_1,\alpha_2)\in[0,1]\times[0,1]$, we denote
\begin{equation}\label{notation-R}
R (p,q;\alpha_1,\alpha_2)= 0\vee
\left[n(\alpha_1-\alpha_2)\big(\tfrac1q-\tfrac1p\big)\right]\vee
\left[n(\alpha_1-\alpha_2)\big(\tfrac1p+\tfrac1q-1\big)\right],
\end{equation}
which will be frequently used in this and the next sections. Then,
we divide $\mathbb{R}_+^2$ into 3 sub-domains in two ways (see Fig. \ref{figure-scaling}). One way
is, $\mathbb{R}_+^2=\mbox{S}_1\cup\mbox{S}_2\cup\mbox{S}_3$ with
\begin{equation*}
\begin{split}
\mbox{S}_1
&=\left\{\big(\tfrac1p,\tfrac1q\big)\in\mathbb{R}_+^2:\tfrac1q\geqslant\tfrac1p,\tfrac1p\leqslant\tfrac12\right\};
\\
\mbox{S}_2
&=\left\{\big(\tfrac1p,\tfrac1q\big)\in\mathbb{R}_+^2:\tfrac1p+\tfrac1q\geqslant1,\tfrac1p>\tfrac12\right\};
\\
\mbox{S}_3 &=\mathbb{R}_+^2\backslash\{\mbox{S}_1\cup\mbox{S}_2\},
\end{split}
\end{equation*}
Another way is,
$\mathbb{R}_+^2=\mbox{T}_1\cup\mbox{T}_2\cup\mbox{T}_3$ with
\begin{equation*}
\begin{split}
\mbox{T}_1
&=\left\{\big(\tfrac1p,\tfrac1q\big)\in\mathbb{R}_+^2:\tfrac1p\geqslant\tfrac1q,\tfrac1p>\tfrac12\right\}; \\
\mbox{T}_2
&=\left\{\big(\tfrac1p,\tfrac1q\big)\in\mathbb{R}_+^2:\tfrac1p+\tfrac1q\leqslant1,\tfrac1p\leqslant\tfrac12\right\};
\\
\mbox{T}_3 &=\mathbb{R}_+^2\backslash\{\mbox{T}_1\cup\mbox{T}_2\}.
\end{split}
\end{equation*}
\begin{figure}
\begin{center}
\hspace*{-2.5cm}\includegraphics[scale=0.94]{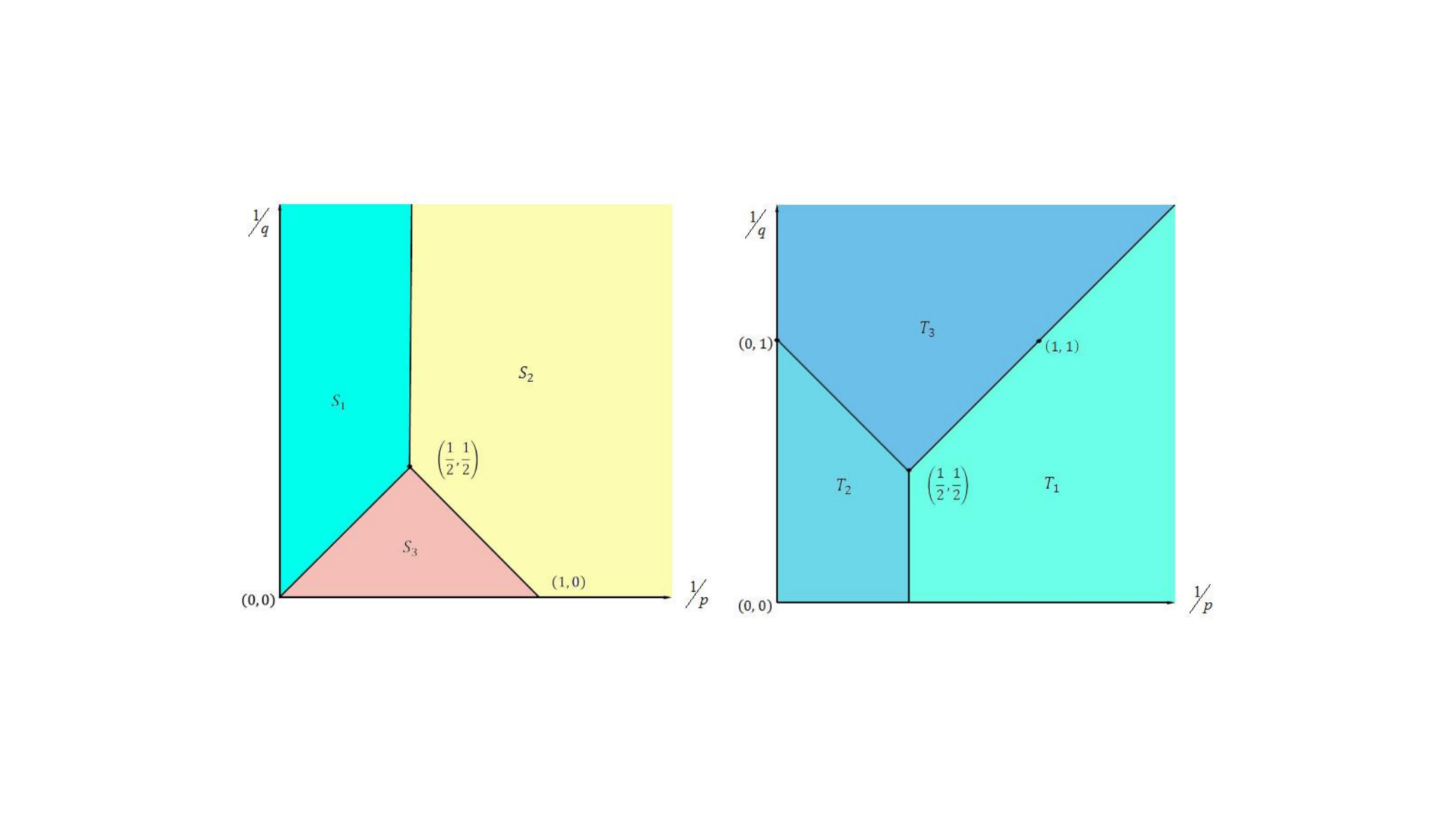}

\vspace*{-3cm}\caption{\small Distribution of $s_c$. The left-hand side figure is for $\lambda>1$, the
right-hand side figure is for $\lambda\leqslant1$. }\label{figure-scaling}
\end{center}
\end{figure}
 If
$\alpha_1\geqslant\alpha_2$, then
\begin{equation}
R (p,q;\alpha_1,\alpha_2)=
\begin{cases}
n(\alpha_1-\alpha_2)\big(\frac1q-\frac1p\big), \quad &
\big(\frac1p,\frac1q\big)\in\mbox{S}_1;
\\
n(\alpha_1-\alpha_2)\big(\frac1p+\frac1q-1\big), \quad &
\big(\frac1p,\frac1q\big)\in\mbox{S}_2;
\\
0, \quad & \big(\frac1p,\frac1q\big)\in\mbox{S}_3.
\end{cases}
\end{equation}
If $\alpha_1<\alpha_2$, then
\begin{equation}
R (p,q;\alpha_1,\alpha_2)=
\begin{cases}
0, \quad & \big(\frac1p,\frac1q\big)\in\mbox{T}_3;
\\
n(\alpha_1-\alpha_2)\big(\frac1p+\frac1q-1\big), \quad &
\big(\frac1p,\frac1q\big)\in\mbox{T}_2; \\
n(\alpha_1-\alpha_2)\big(\frac1q-\frac1p\big), \quad &
\big(\frac1p,\frac1q\big)\in\mbox{T}_1.
\end{cases}
\end{equation}

Before describing the dilation property of the $\alpha$-modulation
spaces, we introduce some critical powers. Let us write $s_p =n(1/(1\wedge p) -1)$ and
\begin{equation}
s_c=
\begin{cases}
 R (p,q;1,\alpha), & \quad
\lambda>1,
\\
-R (p,q;\alpha,1), & \quad \lambda\leqslant 1.
\end{cases}
\end{equation}
\begin{thm}\label{dilationP}
Let $0\leqslant\alpha<1,\lambda>0$ and $s + s_c \neq 0$. Then
\begin{equation}\label{scaling}
\|f_{\lambda}\|_{M_{p,q}^{s,\alpha}}\lesssim\lambda^{-\frac{n}{p}} \left[(1\vee\lambda)^{s_p}\vee\lambda^{s+s_c}\right]\|f\|_{M_{p,q}^{s,\alpha}}.
\end{equation}
holds for all $f\in M_{p,q}^{s,\alpha}$. Conversely, if
\begin{equation*}
\|f_{\lambda}\|_{M_{p,q}^{s,\alpha}}\lesssim\lambda^{-\frac{n}{p}} F(\lambda)\|f\|_{M_{p,q}^{s,\alpha}}
\end{equation*}
holds for some $F: \ (0,\infty)\to (0,\infty)$ and all $f\in M_{p,q}^{s,\alpha}$, then $F(\lambda) \gtrsim (1\vee\lambda)^{s_p}\vee\lambda^{s+s_c}$.
\end{thm}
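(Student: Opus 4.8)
The plan is to establish the direct inequality \eqref{scaling} first, and then treat the converse by constructing test functions that saturate each of the two competing powers $(1\vee\lambda)^{s_p}$ and $\lambda^{s+s_c}$. For the direct estimate, the natural strategy is to compare the $\alpha$-decomposition of $f_\lambda$ with that of $f$. Since $\widehat{f_\lambda}(\xi) = \lambda^{-n}\widehat f(\xi/\lambda)$, the frequency support of $\Box_k^\alpha f_\lambda$ lives where the support of $\eta_k^\alpha$ sits, i.e. near $\langle k\rangle^{\alpha/(1-\alpha)}k$ with radius $\sim\langle k\rangle^{\alpha/(1-\alpha)}$; dilating by $\lambda$ this is a ball in the $f$-frequency variable of center $\sim \lambda^{-1}\langle k\rangle^{\alpha/(1-\alpha)}k$ and radius $\sim \lambda^{-1}\langle k\rangle^{\alpha/(1-\alpha)}$. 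I would first handle the two extreme geometries: when $\alpha=0$ one is essentially in the modulation-space situation \eqref{dimo}--\eqref{dimo1}, and the formal limit $\alpha\nearrow 1$ is the Besov bound \eqref{dilation-besov}; the general $\alpha$ interpolates the roles of the uniform and dyadic coverings. Concretely, for a fixed $k$ I would cover the dilated ball by $O\big((1\vee\lambda^{-1})^{?}\big)$ (resp. $O\big((1\vee\lambda)^{?}\big)$) many $\alpha$-boxes $\{\eta_l^\alpha\}$, use the generalized Bernstein inequality (Proposition \ref{thm1.9}) and the convolution estimate (Proposition \ref{convolution}) to pass between $L^p$ norms of $\Box_l^\alpha f$ and $\Box_k^\alpha f_\lambda$ up to the factor $\langle l\rangle^{s_p \cdot(\text{something})}$, and finally sum in $k$ against the $\ell^q_{s,\alpha}$ weight. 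The bookkeeping of how $\langle k\rangle$ relates to $\langle l\rangle$ under dilation (roughly $\langle l\rangle^{1/(1-\alpha)}\sim \lambda^{-1}\langle k\rangle^{1/(1-\alpha)}$ in the bulk) is what produces the exponents $n(\alpha_1-\alpha_2)(\cdot)$ appearing in $R(p,q;\cdot,\cdot)$, and the $0\vee\cdots\vee\cdots$ structure comes from separating the regimes $1/q\gtrless 1/p$ and $1/p+1/q\gtrless 1$ exactly as in the partition into $\mathrm S_i$ and $\mathrm T_i$.

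\textbf{Reduction and the role of $s+s_c\neq 0$.} By Proposition \ref{isom} (the isomorphism $J^\sigma$) one could hope to reduce to $s=0$, but the dilation does not commute cleanly with $J^\sigma$, so instead I would keep $s$ general and simply note that the weight $\langle k\rangle^{s/(1-\alpha)}$ contributes a clean power when summed. The hypothesis $s+s_c\neq 0$ is what allows the geometric series in $k$ (over the "fattened" or "thinned" range of boxes) to be summed to a single power of $\lambda$ rather than picking up a logarithm; I would flag that the borderline $s+s_c=0$ is genuinely excluded and corresponds to a log-loss, so the statement as written is exactly the non-critical range.

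\textbf{The converse (sharpness).} Here I would exhibit two families of examples. For the power $\lambda^{s+s_c}$: take $f$ with $\widehat f$ a bump adapted to a single $\alpha$-box $\eta_{k_0}^\alpha$ with $|k_0|$ chosen (depending on $\lambda$) so that the dilated box $\lambda\,(\text{supp}\,\eta_{k_0}^\alpha)$ matches the critical scale; computing $\|f_\lambda\|_{M^{s,\alpha}_{p,q}}/\|f\|_{M^{s,\alpha}_{p,q}}$ directly via \eqref{bernstein-alpha}-type estimates gives $\lambda^{-n/p}\lambda^{s+s_c}$ up to constants, where the precise value of $s_c$ emerges from optimizing over which sub-domain $\mathrm S_i$ (for $\lambda>1$) or $\mathrm T_i$ (for $\lambda\le 1$) the pair $(1/p,1/q)$ lies in — this is precisely the reason $s_c = R(p,q;1,\alpha)$ resp. $-R(p,q;\alpha,1)$. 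For the power $(1\vee\lambda)^{s_p}$: this is the low-regularity (quasi-Banach, $p<1$) obstruction, and I would take $f$ to be (a smooth truncation of) a Dirac-type object, e.g. $\widehat f$ supported in a fixed ball so that $f\in M^{s,\alpha}_{p,q}$, and track the $L^p$-norm inflation under dilation using the sharpness of Proposition \ref{convolution}; this gives the factor $\lambda^{s_p}$ for $\lambda>1$ (and nothing for $\lambda\le 1$, consistent with $(1\vee\lambda)^{s_p}$). Since the left side of any valid estimate $\|f_\lambda\|\lesssim \lambda^{-n/p}F(\lambda)\|f\|$ must dominate each example, taking the max over the two families yields $F(\lambda)\gtrsim (1\vee\lambda)^{s_p}\vee\lambda^{s+s_c}$.

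\textbf{Main obstacle.} The hard part is the direct estimate in the genuinely $\alpha$-modulation regime $0<\alpha<1$ with $p<1$ and $q<1$ simultaneously: one cannot use Young's inequality to move between $\|\Box_l^\alpha\Box_k^\alpha f_\lambda\|_p$ and $\|\Box_k^\alpha f_\lambda\|_p$, and must instead invoke the $L^p$-convolution inequality with its $R^{n(1/p-1)}$ loss (Proposition \ref{convolution}), carefully tracking that the radius $R$ of the relevant frequency ball is $\sim(1\vee\lambda^{-1})\langle k\rangle^{\alpha/(1-\alpha)}$ or $\sim(1\vee\lambda)\langle l\rangle^{\alpha/(1-\alpha)}$ — getting these losses to combine correctly with the weight $\langle k\rangle^{s/(1-\alpha)}$ and the counting of boxes is where the exponent $s_p$ and the $R(p,q;\alpha_1,\alpha_2)$ structure both have to fall out at once. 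I expect to handle this by splitting $\mathbb Z^n$ into $|k|\lesssim 1$ (a finite, harmless piece where $\langle k\rangle\sim 1$) and $|k|\gg 1$ (where the power-law scaling $\langle k\rangle^{\alpha/(1-\alpha)}$ is the dominant feature), and within the latter by the sub-domains $\mathrm S_i$/$\mathrm T_i$, reducing each case to a one-dimensional summation that is finite precisely because $s+s_c\neq 0$.
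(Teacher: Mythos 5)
Your outline of the direct inequality follows the same skeleton as the paper's proof (the sets $\Lambda(l,\lambda)$, the relation $\langle k\rangle\sim\lambda^{1-\alpha}\langle l\rangle$, the count $\#\Lambda(l,\lambda)\sim 1\vee\lambda^{n(1-\alpha)}$, the split at the critical scale $|k|\lesssim\lambda^{1-\alpha}$, and Proposition \ref{convolution} for $p<1$), except that the paper does the explicit computation only at the vertices $p,q\in\{1,2,\infty\}$ and then fills in all nine regions of Figure \ref{figure-9-regions} by the complex interpolation of Section 2 together with duality for the regions $\mbox{I}^*,\mbox{II}^*$; your plan to carry out the summation directly for general $(p,q)$ is in principle workable but would require redoing the H\"older/Jensen steps in both directions of $q\gtrless p$ by hand, which is exactly the labor the interpolation avoids. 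That part of your proposal is an acceptable, if under-detailed, route.

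The genuine gap is in the converse, in the branch $\lambda^{s+s_c}$. You propose to realize this exponent with a single test function whose Fourier transform is a bump adapted to one $\alpha$-box $\eta_{k_0}^{\alpha}$ and to let ``the precise value of $s_c$ emerge from optimizing over the sub-domain.'' A single-box example cannot do this: for such an $f$ the $\ell^q$ sum is over $O(1)$ indices both before and after dilation, so the ratio $\|f_\lambda\|_{M^{s,\alpha}_{p,q}}/\|f\|_{M^{s,\alpha}_{p,q}}$ carries no dependence on $q$ at all and only produces the exponent $-n/p+s$, i.e.\ the case $s_c=0$ (this is exactly Case 1.2.2 of the paper's necessity proof). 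The $q$-dependent branches of $R(p,q;1,\alpha)$ require genuinely multi-box constructions: for $s_c=n(1-\alpha)\bigl(\tfrac1p+\tfrac1q-1\bigr)$ one takes a single bump at the origin whose dilation spreads over $\sim\lambda^{n(1-\alpha)}$ boxes, so that the $\ell^q$ sum over those boxes contributes the $1/q$ factor; for $s_c=n(1-\alpha)\bigl(\tfrac1q-\tfrac1p\bigr)$ one takes a superposition $f=\sum_{l\in A(\lambda)}f^{(l)}$ of $\sim\lambda^{n(1-\alpha)}$ separated, modulated bumps that coalesce into few boxes after dilation, and one must then bound $\bigl\|\sum_l f^{(l)}\bigr\|_p$ from above by combining Plancherel with a pointwise bound exploiting the spatial separation of the $f^{(l)}$ (the paper's Cases 1.2.1, 1.2.3, 2.2.2, 2.2.3). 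Without these constructions the lower bound $F(\lambda)\gtrsim\lambda^{s+s_c}$ is not established whenever $(1/p,1/q)$ lies in $\mbox{S}_1\cup\mbox{S}_2$ (for $\lambda>1$) or the corresponding $\mbox{T}_i$ (for $\lambda\le 1$), so the necessity claim is unproved in precisely the regions where $s_c\neq 0$.
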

\begin{proof}({\bf Sufficiency})
We denote by $\square_{k,1/\lambda}^{\alpha}$ the
pseudo-differential operator with symbol
$(\eta_k^{\alpha})_{\lambda}$. For every $l\in\mathbb{Z}^n$ and
$\lambda>0$, we introduce
\begin{equation}
\Lambda(l,\lambda)=\{k\in\mathbb{Z}^n:\square_{k,1/\lambda}^{\alpha}\square_l^{\alpha}f\neq 0\}.
\end{equation}
For any $k\in\Lambda(l,\lambda)$, it follows from \eqref{eta-2} that $k,l$ and $\lambda$ satisfy
\begin{subequations}\label{000}
\begin{align}
\lambda\langle l\rangle^{\frac{\alpha}{1-\alpha}}(l_j-C)<\langle
k\rangle^{\frac{\alpha}{1-\alpha}}(k_j+C); \label{000-1} \\
\lambda\langle l\rangle^{\frac{\alpha}{1-\alpha}}(l_j+C)>\langle
k\rangle^{\frac{\alpha}{1-\alpha}}(k_j-C)  \label{000-2}
\end{align}
\end{subequations}
with $j=1,2,\cdots,n$. In view of \eqref{000}, one sees that $k\in\Lambda(l,\lambda)$ is equivalent to  $l\in\Lambda(k, 1/\lambda)$.  Moreover,  if \eqref{000} holds, then
\begin{align}\label{relation-k-l-small}
  \langle
l\rangle\lesssim1\vee\lambda^{-(1-\alpha)}
\ \ \mbox{if and only if } \ \
\langle k\rangle\lesssim1\vee\lambda^{1-\alpha}.
\end{align}
If $\langle l\rangle\gg1\vee\lambda^{-(1-\alpha)}$, without
loss of generality, we may assume $l$ belongs to some $\mathbb{R}^n_j$,
when $l_j>0$, from \eqref{000-1}; whereas when $l_j<0$, from
\eqref{000-2}$\times(-1)$, we see $\langle
k\rangle^{\frac{1}{1-\alpha}}\gtrsim\lambda\langle
l\rangle^{\frac{1}{1-\alpha}}$. Conversely, for
$k\in\Lambda(l,\lambda)\cap\mathbb{R}^n_j$, also from \eqref{000}, we have
$\langle k\rangle^{\frac{1}{1-\alpha}}\lesssim\lambda\langle
l\rangle^{\frac{1}{1-\alpha}}$. Thus we have
\begin{equation}\label{005}
\langle k\rangle\sim\lambda^{1-\alpha}\langle l\rangle.
\end{equation}
 Since the volumes  of ${\rm supp} (\eta_k^{\alpha})_{\lambda}$ and ${\rm supp} \eta_{l}^{\alpha}$ are $ O(\lambda^{-n} \langle
k\rangle^{\frac{n \alpha}{1-\alpha}} )$ and $ O( \langle
l \rangle^{\frac{n \alpha}{1-\alpha}} )$, respectively, we see that
\begin{equation}\label{003}
\#\Lambda(l,\lambda)\sim1\vee\lambda^{n(1-\alpha)}.
\end{equation}

When $q=1$, from Lemma \ref{lemma-sequence}, we have
\begin{equation}\label{00000}
\begin{split}
\left\|\sum_{k\in\Gamma}\square_k^{\alpha}f_{\lambda}\right\|_{M_{p,1}^{s,\alpha}}
&=\sum_{k\in\Gamma}\langle
k\rangle^{\frac{s}{1-\alpha}}\|\square_k^{\alpha}f_{\lambda}\|_p
\\
&=\sum_{k\in\Gamma}\langle
k\rangle^{\frac{s}{1-\alpha}}\|(\square_{k,1/\lambda}^{\alpha}f)(\lambda\cdot)\|_p
\\
&=\lambda^{-\frac{n}{p}}\sum_{k\in\Gamma}\langle
k\rangle^{\frac{s}{1-\alpha}}\|\square_{k,1/\lambda}^{\alpha}f\|_p
\\
&\leqslant\lambda^{-\frac{n}{p}}\sum_{k\in\Gamma}\langle
k\rangle^{\frac{s}{1-\alpha}}\sum_{l\in\Lambda(k,1/\lambda)}\|\square_{k,1/\lambda}^{\alpha}\square_l^{\alpha}f\|_p.
\end{split}
\end{equation}

\begin{figure}
\begin{center}
\includegraphics[scale=.55]{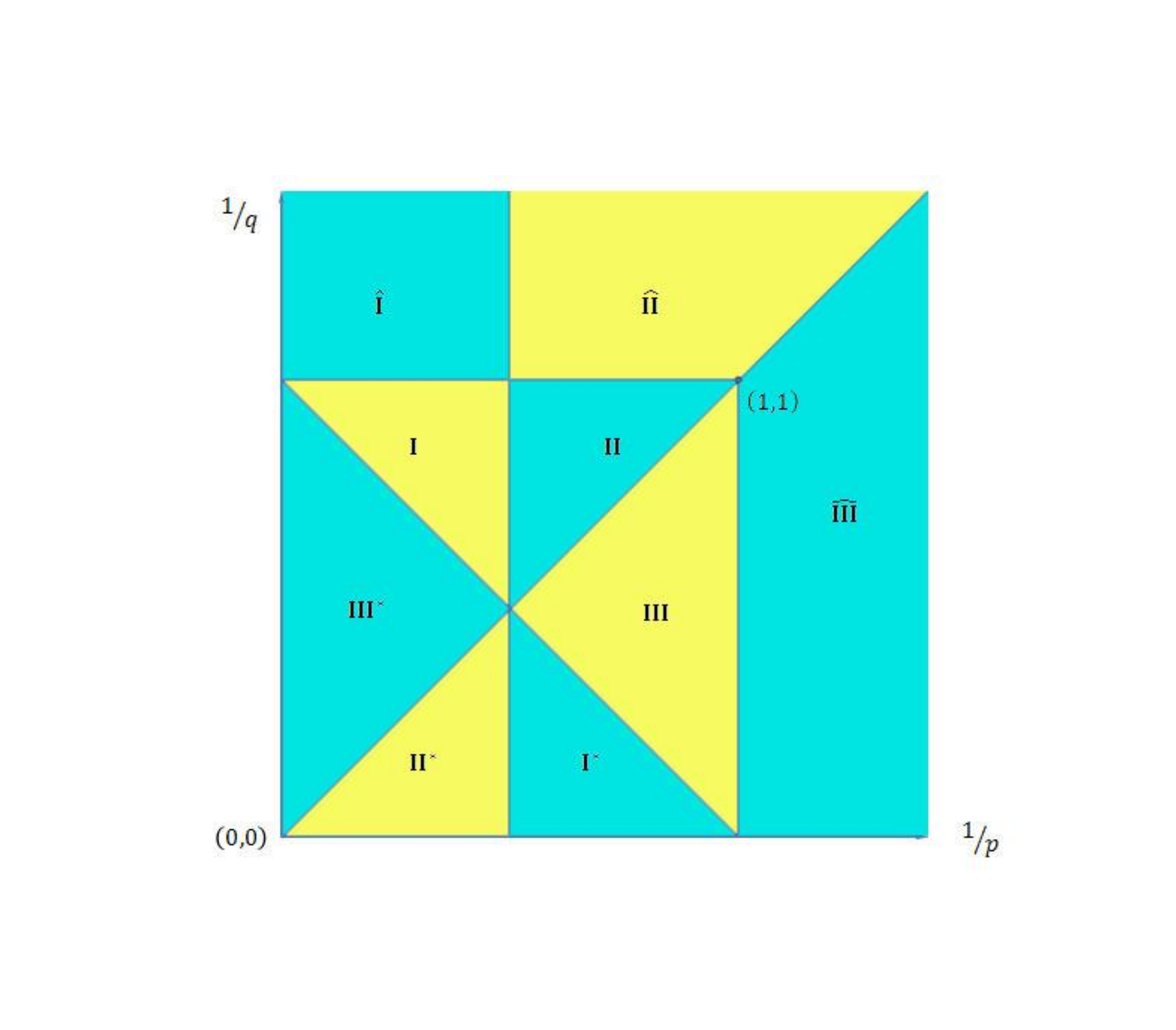}
\vspace*{-2cm}\caption{ 9 regions for the proof of Theorem \ref{dilationP}}\label{figure-9-regions}
\end{center}
\end{figure}

\noindent{\bf Case 1.} $\boldsymbol{\lambda\leqslant1,
\big(\frac1p,\frac1q\big)\in\mbox{I}\cup\mbox{II}.}$
For
$p=1,\infty$, we apply the same technique as that appeared in
Proposition \ref{equivalent-norm} to remove
$\square_{k,1/\lambda}^{\alpha}$ in \eqref{00000}. When
$|k|\lesssim1$, from \eqref{relation-k-l-small} we see that for
$l\in\Lambda(k,1/\lambda)$, there is $1\lesssim\langle
l\rangle^{\frac{1}{1-\alpha}}\lesssim\frac{1}{\lambda}$, which leads
to
\begin{equation}\label{low-f-lambda-0}
\begin{split}
\left\|\sum_{|k|\lesssim1}\square_k^{\alpha}f_{\lambda}\right\|_{M_{p,1}^{s,\alpha}}
&\lesssim\lambda^{-\frac{n}{p}}\sum_{\langle l\rangle \lesssim \lambda^{-(1-\alpha)}}\|\square_l^{\alpha}f\|_p
\lesssim\lambda^{-\frac{n}{p}}\sum_{l\in\mathbb{Z}^n}(1\vee\lambda^s)\langle l\rangle^{\frac{s}{1-\alpha}}\|\square_l^{\alpha}f\|_p \\
&\lesssim\lambda^{-\frac{n}{p}}(1\vee\lambda^s)\|f\|_{M_{p,1}^{s,\alpha}}.
\end{split}
\end{equation}
By Plancherel's identity,
\begin{equation}\label{case1.1-4}
\begin{split}
\left\|\sum_{|k|\lesssim1}\square_k^{\alpha}f_{\lambda}\right\|_{M_{2,2}^{s,\alpha}}&\lesssim\lambda^{-\frac{n}{2}}\left(\sum_{|k|\lesssim1}\langle
k\rangle^{\frac{2s}{1-\alpha}}\left\|\sum_{l\in\Lambda(k,1/\lambda)}\square_{k,1/\lambda}^{\alpha}\square_l^{\alpha}f\right\|_2^2\right)^{\frac12}
\\
&\lesssim\lambda^{-\frac{n}{2}}\left(\sum_{\ \langle l\rangle \lesssim \lambda^{-(1-\alpha)}}\|\square_l^{\alpha}f\|_2^2\right)^{\frac12}
\lesssim\lambda^{-\frac{n}{2}}(1\vee\lambda^s)\|f\|_{M_{2,2}^{s,\alpha}}.
\end{split}
\end{equation}
When $|k|\gg1$, from \eqref{005}-\eqref{00000}, we see that
\begin{equation}\label{2-1-111}
\begin{split}
\left\|\sum_{|k|\gg1}\square_k^{\alpha}f_{\lambda}\right\|_{M_{p,1}^{s,\alpha}}&\lesssim\lambda^{-\frac{n}{p}+s}\sum_{|k|\gg1}\sum_{l\in\Lambda(k,1/\lambda)}\langle
l\rangle^{\frac{s}{1-\alpha}}\|\square_l^{\alpha}f\|_p
\\
&\lesssim\lambda^{-\frac{n}{p}+s}\sum_l\langle
l\rangle^{\frac{s}{1-\alpha}}\sum_{k\in\Lambda(l,\lambda)}\|\square_l^{\alpha}f\|_p
\lesssim\lambda^{-\frac{n}{p}+s}\|f\|_{M_{p,1}^{s,\alpha}}.
\end{split}
\end{equation}
In view  of Plancherel's formula,
\begin{equation}\label{2-1-222}
\begin{split}
\left\|\sum_{|k|\gg1}\square_k^{\alpha}f_{\lambda}\right\|_{M_{2,2}^{s,\alpha}}&\leqslant\lambda^{-\frac{n}{2}+s}\left(\sum_{|k|\gg1}\sum_{l\in\Lambda(k,1/\lambda)}\langle
l\rangle^{\frac{2s}{1-\alpha}}\|\square_{k,1/\lambda}^{\alpha}\square_l^{\alpha}f\|_2^2\right)^{\frac12}
\\
&\lesssim\lambda^{-\frac{n}{2}+s}\left(\sum_{l}\langle
l\rangle^{\frac{2s}{1-\alpha}}\sum_{k\in\Lambda(l,\lambda)}\|\square_{k,1/\lambda}^{\alpha}\square_l^{\alpha}f\|_2^2\right)^{\frac12}
\lesssim\lambda^{-\frac{n}{2}+s}\|f\|_{M_{2,2}^{s,\alpha}}.
\end{split}
\end{equation}
Combining \eqref{low-f-lambda-0}-\eqref{2-1-222}, we use complex
interpolation to get
\begin{equation}\label{case111111111111111111111111111111111}
\|f_{\lambda}\|_{M_{p,q}^{s,\alpha}}\lesssim\lambda^{-\frac{n}{p}}(1\vee\lambda^s)\|f\|_{M_{p,q}^{s,\alpha}}.
\end{equation}

\noindent{\bf Case 2.} $\boldsymbol{\lambda>1,
\big(\frac1p,\frac1q\big)\in\mbox{I}\cup\mbox{III}^*.}$  Through the point
 $\big(\frac{1}{p},\frac1q\big)$, one can draw the parallel line to the
$\frac1q$-axis. We assume there exists some
$(\theta,\eta)\in[0,1]\times[0,1]$, such that the parallel line cuts the
line segment connecting $(0,1),\big(\frac12,1\big)$ and the line
segment connecting $(0,0),\big(\frac12,\frac12\big)$ at
$\big(\frac{\theta}{2},1\big)$ and
$\big(\frac{\theta}{2},\frac{\theta}{2}\big)$, respectively. Assume that
$$
\frac1p=\frac{\theta}{2}, \quad
\frac1q=1-\left(1-\frac{\theta}{2}\right)\eta.
$$
When $|k|\lesssim\lambda^{1-\alpha}$, from
\eqref{relation-k-l-small} and \eqref{00000},  we have
\begin{equation}\label{case1.2-1}
\begin{split}
\left\|\sum_{|k|\lesssim\lambda^{1-\alpha}}\square_k^{\alpha}f_{\lambda}\right\|_{M_{\infty,1}^{s,\alpha}}
&\leqslant\sum_{|l|\lesssim1}\|\square_l^{\alpha}f\|_{\infty}\sum_{k\in\Lambda(l,\lambda)}\langle
k\rangle^{\frac{s}{1-\alpha}}
\lesssim\Big(1\vee\lambda^{s+n(1-\alpha)}\Big)\|f\|_{M_{\infty,1}^{s,\alpha}}.
\end{split}
\end{equation}
By the Schwartz inequality and the Plancherel identity,
\begin{equation}\label{case1.2-2}
\begin{split}
\left\|\sum_{|k|\lesssim\lambda^{1-\alpha}}\square_k^{\alpha}f_{\lambda}\right\|_{M_{2,1}^{s,\alpha}}
&\leqslant\lambda^{-\frac{n}{2}}\sum_{|l|\lesssim1}\left(\sum_{k\in\Lambda(l,\lambda)}\|\square_{k,1/\lambda}^{\alpha}\square_l^{\alpha}f\|_2^2\right)^{\frac12}\left(\sum_{k\in\Lambda(l,\lambda)}\langle
k\rangle^{\frac{2s}{1-\alpha}}\right)^{\frac12} \\
&\lesssim\lambda^{-\frac{n}{2}}\Big(1\vee\lambda^{s+\frac{n}{2}(1-\alpha)}\Big)\|f\|_{M_{2,1}^{s,\alpha}}.
\end{split}
\end{equation}
\noindent From \eqref{relation-k-l-small}, we have
\begin{equation}\label{dsfsfdfsf-11}
\begin{split}
\left\|\sum_{|k|\lesssim\lambda^{1-\alpha}}\square_k^{\alpha}f_{\lambda}\right\|_{M_{\infty,\infty}^{s,\alpha}}&\lesssim\sup_{|k|\lesssim\lambda^{1-\alpha}}\langle
k\rangle^{\frac{s}{1-\alpha}}\|\square_k^{\alpha}f_{\lambda}\|_{\infty}
\\
&\lesssim(1\vee\lambda^s)\sup_{|k|\lesssim\lambda^{1-\alpha}}\sum_{l\in\Lambda(k,1/\lambda)}\|\square_{k,1/\lambda}^{\alpha}\square_l^{\alpha}f\|_{\infty}
\\
&\lesssim(1\vee\lambda^s)\sup_{|l|\lesssim1}\|\square_l^{\alpha}f\|_{\infty}\lesssim(1\vee\lambda^s)\|f\|_{M_{\infty,\infty}^{s,\alpha}}.
\end{split}
\end{equation}
In view of Plancherel's equality,
\begin{equation}\label{dsfsfdfsf}
\begin{split}
\left\|\sum_{|k|\lesssim\lambda^{1-\alpha}}\square_k^{\alpha}f_{\lambda}\right\|_{M_{2,2}^{s,\alpha}}
&\lesssim\left(\sum_{|k|\lesssim\lambda^{1-\alpha}}\langle
k\rangle^{\frac{2s}{1-\alpha}}\|\square_k^{\alpha}f_{\lambda}\|_2^2\right)^{\frac12}
\\
&\lesssim\lambda^{-\frac{n}{2}}\left(\sum_{|l|\lesssim1} (1\vee\lambda^s)^2 \sum_{k\in\Lambda(l,\lambda)}\|\square_{k,1/\lambda}^{\alpha}\square_l^{\alpha}f\|_2^2\right)^{\frac12}
\\
&\lesssim\lambda^{-\frac{n}{2}}(1\vee\lambda^s)\left(\sum_{|l|\lesssim1}\|\square_l^{\alpha}f\|_2^2\right)^{\frac12}
\lesssim\lambda^{-\frac{n}{2}}(1\vee\lambda^s)\|f\|_{M_{2,2}^{s,\alpha}}.
\end{split}
\end{equation}
When $|k|\gg\lambda^{1-\alpha}$, from \eqref{005}-\eqref{00000}, we
have
\begin{equation}\label{2-2-111}
\begin{split}
\left\|\sum_{|k|\gg\lambda^{1-\alpha}}\square_k^{\alpha}f_{\lambda}\right\|_{M_{\infty,1}^{s,\alpha}}&\leqslant\lambda^s\sum_{|l|\gg1}\langle
l\rangle^{\frac{s}{1-\alpha}}\sum_{k\in\Lambda(l,\lambda)}\|\square_l^{\alpha}f\|_{\infty}
\lesssim\lambda^{s+n(1-\alpha)}\|f\|_{M_{\infty,1}^{s,\alpha}}.
\end{split}
\end{equation}
By Jensen's inequality,
\begin{equation}\label{9907}
\begin{split}
\left\|\sum_{|k|\gg\lambda^{1-\alpha}}\square_k^{\alpha}f_{\lambda}\right\|_{M_{2,1}^{s,\alpha}}&\lesssim\lambda^{-\frac{n}{2}+s}\sum_{|l|\gg1}\langle
l\rangle^{\frac{s}{1-\alpha}}\sum_{k\in\Lambda(l,\lambda)}\|\square_{k,1/\lambda}^{\alpha}\square_l^{\alpha}f\|_2
\\
&\lesssim\lambda^{\frac{n}{2}+s}\sum_{|l|\gg1}\langle
l\rangle^{\frac{s}{1-\alpha}} [\#\Lambda(l,\lambda)]^{\frac12}\left(\sum_{k\in\Lambda (l,\lambda)}\|\square_{k,1/\lambda}^{\alpha}\square_{l}^{\alpha}f\|_2^2\right)^{\frac12}
\\
&\lesssim\lambda^{-\frac{n}{2}+s+\frac{n}{2}(1-\alpha)}\|f\|_{M_{2,1}^{s,\alpha}}.
\end{split}
\end{equation}
From \eqref{005},\eqref{003}, we have
\begin{equation}\label{2-2-3}
\begin{split}
\left\|\sum_{|k|\gg\lambda^{1-\alpha}}\square_k^{\alpha}f_{\lambda}\right\|_{M_{\infty,\infty}^{s,\alpha}}&\lesssim\sup_k\langle
k\rangle^{\frac{s}{1-\alpha}}\|\square_{k,1/\lambda}^{\alpha}f\|_{\infty}
\\
&\lesssim\lambda^s\sup_k\sum_{l\in\Lambda(k,1/\lambda)}\langle
l\rangle^{\frac{s}{1-\alpha}}\|\square_{k,1/\lambda}^{\alpha}\square_l^{\alpha}f\|_{\infty}
\lesssim\lambda^s\|f\|_{M_{\infty,\infty}^{s,\alpha}}.
\end{split}
\end{equation}
Similar to \eqref{2-1-222}, one has that
\begin{equation}\label{2-2-4}
\left\|\sum_{|k|\gg\lambda^{1-\alpha}}\square_k^{\alpha}f_{\lambda}\right\|_{M_{2,2}^{s,\alpha}}\lesssim\lambda^{-\frac{n}{2}+s}\|f\|_{M_{2,2}^{s,\alpha}}.
\end{equation}
Since
$n(1-\alpha)\big(1-\frac{\theta}{2}\big)=(1-\theta)n(1-\alpha)+\theta\frac{n}{2}(1-\alpha)$,
combining
\eqref{case1.2-1},\eqref{2-2-111},\eqref{case1.2-2},\eqref{9907},
complex interpolation yields
\begin{equation}\label{second-case-1}
\|f_{\lambda}\|_{M_{\frac{2}{\theta},1}^{s,\alpha}}\lesssim\lambda^{-\frac{\theta}{2}n} \left(1\vee\lambda^{s+n(1-\alpha)\big(1-\frac{\theta}{2}\big)}\right)\|f\|_{M_{\frac{2}{\theta},1}^{s,\alpha}}.
\end{equation}
Combining
\eqref{dsfsfdfsf-11},\eqref{2-2-3},\eqref{dsfsfdfsf},\eqref{2-2-4},
complex interpolation yields
\begin{equation}\label{second-case-2}
\|f_{\lambda}\|_{M_{\frac{2}{\theta},\frac{2}{\theta}}^{s,\alpha}}\lesssim\lambda^{-\frac{\theta}{2}n}(1\vee\lambda^s)\|f\|_{M_{\frac{2}{\theta},\frac{2}{\theta}}^{s,\alpha}}.
\end{equation}
Interpolating \eqref{second-case-1} and \eqref{second-case-2}, we have
\begin{equation}\label{case222222222222222222222222222222222}
\|f_{\lambda}\|_{M_{p,q}^{s,\alpha}}\lesssim\lambda^{-\frac{n}{p}}\left(1\vee\lambda^{s+n(1-\alpha)\big(\frac1q-\frac1p\big)}\right)\|f\|_{M_{p,q}^{s,\alpha}}.
\end{equation}

\noindent{\bf Case
3.} $\boldsymbol{\lambda>1,\big(\frac1p,\frac1q\big)\in\mbox{II}\cup\mbox{III}.}$
Through the point $\big(\frac{1}{p},\frac1q\big)$ one can make the parallel
line to the $\frac1q$-axis. We assume there exists some
$(\theta,\eta)\in[0,1]\times[0,1]$, such that the parallel line cuts the
line segment connecting $(1,1),\big(\frac12,1\big)$ and the line
segment connecting $(1,0),\big(\frac12,\frac12\big)$ at
$\big(1-\frac{\theta}{2},1\big)$ and
$\big(1-\frac{\theta}{2},\frac{\theta}{2}\big)$, respectively. We can assume that
$$
\frac1p=1-\frac{\theta}{2}, \quad
\frac1q=1-\left(1-\frac{\theta}{2}\right)\eta.
$$
When $|k|\lesssim\lambda^{1-\alpha}$, similarly to \eqref{case1.2-1}
and \eqref{dsfsfdfsf-11}, we have
\begin{equation}\label{case3-1111111111}
\left\|\sum_{|k|\lesssim\lambda^{1-\alpha}}\square_k^{\alpha}f_{\lambda}\right\|_{M_{1,1}^{s,\alpha}}\lesssim
\lambda^{-n}\left(1\vee\lambda^{s+n(1-\alpha)}\right)\|f\|_{M_{1,1}^{s,\alpha}};
\end{equation}
\begin{equation}\label{case3-1111111112}
\left\|\sum_{|k|\lesssim\lambda^{1-\alpha}}\square_k^{\alpha}f_{\lambda}\right\|_{M_{1,\infty}^{s,\alpha}}\lesssim
\lambda^{-n}\left(1\vee\lambda^s\right)\|f\|_{M_{1,\infty}^{s,\alpha}}.
\end{equation}
When $|k|\gg\lambda^{1-\alpha}$, similarly to \eqref{2-2-111} and
\eqref{2-2-3}, we have
\begin{equation}\label{case3-1111111113}
\left\|\sum_{|k|\gg\lambda^{1-\alpha}}\square_k^{\alpha}f_{\lambda}\right\|_{M_{1,1}^{s,\alpha}}\lesssim\lambda^{-n+s+n(1-\alpha)}\|f\|_{M_{1,1}^{s,\alpha}};
\end{equation}
\begin{equation}\label{case3-1111111114}
\left\|\sum_{|k|\gg\lambda^{1-\alpha}}\square_k^{\alpha}f_{\lambda}\right\|_{M_{1,\infty}^{s,\alpha}}\lesssim\lambda^{-n+s}\|f\|_{M_{1,\infty}^{s,\alpha}}.
\end{equation}
Combining
\eqref{case3-1111111111},\eqref{case3-1111111113},\eqref{case1.2-2},\eqref{9907},
complex interpolation yields
\begin{equation}\label{case3-1111111115}
\|f_{\lambda}\|_{M_{\frac{2}{2-\theta},1}^{s,\alpha}}\lesssim\lambda^{-\frac{n}{2} \big(1-\frac{\theta}{2}\big)}\left(1\vee\lambda^{s+n(1-\alpha)\big(1-\frac{\theta}{2}\big)}\right)\|f\|_{M_{\frac{2}{2-\theta},1}^{s,\alpha}}.
\end{equation}
Combining
\eqref{case3-1111111112},\eqref{case3-1111111114},\eqref{dsfsfdfsf},\eqref{2-2-4},
complex interpolation yields
\begin{equation}\label{case3-1111111116}
\|f_{\lambda}\|_{M_{\frac{2}{2-\theta},\frac{2}{\theta}}^{s,\alpha}}\lesssim\lambda^{-\frac{n}{2}\big(1-\frac{\theta}{2}\big)}\left(1\vee\lambda^s\right)\|f\|_{M_{\frac{2}{2-\theta},\frac{2}{\theta}}^{s,\alpha}}.
\end{equation}
Interpolating  \eqref{case3-1111111115} and \eqref{case3-1111111116}, we
have
\begin{equation}\label{case333333333333333333333333333333333}
\|f_{\lambda}\|_{M_{p,q}^{s,\alpha}}\lesssim\lambda^{-\frac{n}{p}}\left(1\vee\lambda^{s+n(1-\alpha)\big(\frac1p+\frac1q-1\big)}\right) \|f\|_{M_{p,q}^{s,\alpha}}.
\end{equation}

\noindent{\bf Case
4.} $\boldsymbol{\lambda\leqslant1,\big(\frac1p,\frac1q\big)\in\{\mbox{I}^*\cup\mbox{II}^*\cup\mbox{III}\cup\mbox{III}^*\}\backslash \{(0,1] \times[0,1]\}};  \, {\bf
or}\,\boldsymbol{\lambda>1,\big(\frac1p,\frac1q\big)}$
$\boldsymbol{\in\mbox{I}^*\cup\mbox{II}^*\backslash\{(1,0)\}.}$ We
observe that, for any $\big(\frac{1}{p},\frac{1}{q}\big)\in
 \mbox{I}^{\ast} \cup \mbox{II}^{\ast}$, we have
$\big(\frac{1}{p^*},\frac{1}{q^*}\big) \in
\mbox{I} \cup \mbox{II}$.  By duality,
\begin{equation}\label{018}
\begin{split}
|\langle f_{\lambda},g \rangle|=\frac{1}{\lambda^n}|\langle f,g_{\frac{1}{\lambda}} \rangle|
\leqslant\frac{1}{\lambda^n}\|f\|_{M_{p,q}^{s,\alpha}}\big\|g_{\frac{1}{\lambda}}\big\|_{M_{p^{\ast},q^{\ast}}^{-s,\alpha}}.
\end{split}
\end{equation}
If we denote $\|f_{\lambda}\|_{M_{p,q}^{s,\alpha}}\lesssim
F(s,\lambda;p,q)\|f\|_{M_{p,q}^{s,\alpha}}$, from the previous
several cases, we know that
\begin{equation}\label{019}
\begin{split}
\big\|g_{\frac{1}{\lambda}}\big\|_{M_{p^{\ast},q^{\ast}}^{-s,\alpha}}\lesssim
F\left(-s, \lambda^{-1}; p^*,q^*\right)\|g\|_{M_{p^{\ast},q^{\ast}}^{-s,\alpha}}.
\end{split}
\end{equation}
By the principle of duality, it follows from \eqref{018} and \eqref{019} that
\begin{equation}\label{020}
\begin{split}
\|f_{\lambda}\|_{M_{p,q}^{s,\alpha}}
&\lesssim\lambda^{-n}F\left(-s, \lambda^{-1};p^*,q^*\right)\|f\|_{M_{p,q}^{s,\alpha}}.
\end{split}
\end{equation}
For $\lambda\leqslant1$ with
$\big(\frac1p,\frac1q\big)\in\{\mbox{I}^*\cup\mbox{III}\}\backslash
\{1\}\times[0,1]$, from Case 2, \eqref{020} gives
\begin{equation}\label{3-003}
\|f_{\lambda}\|_{M_{p,q}^{s,\alpha}}\lesssim\lambda^{-\frac{n}{p}}\left(1\vee\lambda^{s-n(1-\alpha)\big(\frac1p-\frac1q\big)}\right)\|f\|_{M_{p,q}^{s,\alpha}}.
\end{equation}
For $\lambda\leqslant1$ with
$\big(\frac1p,\frac1q\big)\in\{\mbox{II}^*\cup\mbox{III}^*\}\backslash\{(0,1)\}$,
from Case 3, \eqref{020} gives
\begin{equation}\label{3-004}
\|f_{\lambda}\|_{M_{p,q}^{s,\alpha}}\lesssim\lambda^{-\frac{n}{p}}\left(1\vee\lambda^{s-n(1-\alpha)\big(1-\frac1q-\frac1p\big)}\right)\|f\|_{M_{p,q}^{s,\alpha}}.
\end{equation}
For $\lambda>1$ with
$\big(\frac1p,\frac1q\big)\in\mbox{I}^*\cup\mbox{II}^*\backslash\{(1,0)\}$,
from Case 1, \eqref{020} gives
\begin{equation}\label{3-005}
\|f_{\lambda}\|_{M_{p,q}^{s,\alpha}}\lesssim\lambda^{-\frac{n}{p}}(1\vee\lambda^s)\|f\|_{M_{p,q}^{s,\alpha}}.
\end{equation}

\noindent{\bf Case
5.} $\boldsymbol{\big(\frac1p,\frac1q\big)\in\{\widehat{\mbox{I}}\cup\widehat{\mbox{II}}\}\backslash(1,\infty)\times(1,\infty).}$
Since $\ell^q\subset\ell^1$, we know
\begin{equation}\label{jijijijijijijijiji}
\begin{split}
\|\square_k^{\alpha}f_{\lambda}\|_p
\lesssim\lambda^{-\frac{n}{p}}\sum_{l\in\Lambda(k,1/\lambda)}\|\square_{k,1/\lambda}^{\alpha}\square_l^{\alpha}f\|_p
\lesssim\lambda^{-\frac{n}{p}}\left(\sum_{l\in\Lambda(k,1/\lambda)}\|\square_{k,1/\lambda}^{\alpha}\square_l^{\alpha}f\|_p^q\right)^{\frac1q}.
\end{split}
\end{equation}
From \eqref{jijijijijijijijiji},\eqref{relation-k-l-small}, when
$\lambda\leqslant1$ and $|k|\lesssim1$, we conclude that
\begin{equation}\label{scaling-case4-1}
\begin{split}
\left\|\sum_{|k|\lesssim1}\square_k^{\alpha}f_{\lambda}\right\|_{M_{\infty,q}^{s,\alpha}}
&\lesssim\left(\sum_{|k|\lesssim1}\langle
k\rangle^{\frac{sq}{1-\alpha}}\|\square_k^{\alpha}f_{\lambda}\|_{\infty}^q\right)^{\frac1q}
\\
&\lesssim\left(\sum_{|k|\lesssim1}\langle
k\rangle^{\frac{sq}{1-\alpha}}\sum_{l\in\Lambda(k,1/\lambda)}\|\square_{k,1/\lambda}^{\alpha}\square_l^{\alpha}f\|_{\infty}^q\right)^{\frac1q}
\\
&\lesssim\left(\sum_{\langle l\rangle \lesssim \lambda^{-(1-\alpha)}}\|\square_l^{\alpha}f\|_{\infty}^q\right)^{\frac1q}
\lesssim(1\vee\lambda^s)\|f\|_{M_{\infty,q}^{s,\alpha}};
\end{split}
\end{equation}
and when $\lambda>1$ and $|k|\lesssim\lambda^{1-\alpha}$,
\begin{equation}\label{scaling-case4-2}
\begin{split}
\left\|\sum_{|k|\lesssim\lambda^{1-\alpha}}\square_k^{\alpha}f_{\lambda}\right\|_{M_{\infty,q}^{s,\alpha}}
&\lesssim\left(\sum_{|k|\lesssim\lambda^{1-\alpha}}\langle
k\rangle^{\frac{sq}{1-\alpha}}\|\square_k^{\alpha}f_{\lambda}\|_{\infty}^q\right)^{\frac1q}
\\
&\lesssim\left(\sum_{|k|\lesssim\lambda^{1-\alpha}}\langle
k\rangle^{\frac{sq}{1-\alpha}}\sum_{l\in\Lambda(k,1/\lambda)}\|\square_l^{\alpha}f\|_{\infty}^q\right)^{\frac1q}
\\
&\lesssim\left(\sum_{|l|\lesssim1}\|\square_l^{\alpha}f\|_{\infty}^q\sum_{|k|\lesssim\lambda^{1-\alpha}}\langle
k\rangle^{\frac{sq}{1-\alpha}}\right)^{\frac1q} \\
&\lesssim\left(1\vee\lambda^{s+n(1-\alpha)\frac1q}\right)\|f\|_{M_{\infty,q}^{s,\alpha}}.
\end{split}
\end{equation}
When $|k|\gg1\vee\lambda^{1-\alpha}$, from
\eqref{jijijijijijijijiji}, \eqref{005}, \eqref{003}, we have
\begin{equation}\label{scaling-case4-3}
\begin{split}
\left\|\sum_{|k|\gg1\vee\lambda^{1-\alpha}}\square_k^{\alpha}f_{\lambda}\right\|_{M_{\infty,q}^{s,\alpha}}
&\lesssim\left(\sum_{|k|\gg1\vee\lambda^{1-\alpha}}\langle
k\rangle^{\frac{sq}{1-\alpha}}\sum_{l\in\Lambda(k,1/\lambda)}\|\square_l^{\alpha}f\|_{\infty}^q\right)^{\frac1q}
\\
&\lesssim\lambda^s\left(\sum_{|l|\gg1}\langle
l\rangle^{\frac{sq}{1-\alpha}}\sum_{k\in\Lambda(l,\lambda)}\|\square_l^{\alpha}f\|_{\infty}\right)^{\frac1q}
\\
&\lesssim\lambda^s(1\vee\lambda)^{n(1-\alpha)\frac1q}\|f\|_{M_{\infty,q}^{s,\alpha}}.
\end{split}
\end{equation}
Therefore, combining
\eqref{scaling-case4-1}-\eqref{scaling-case4-3}, we get
\begin{equation}\label{scaling-case4-m-infty-q}
\|f_{\lambda}\|_{M_{\infty,q}^{s,\alpha}}\lesssim1\vee\lambda^s\vee\lambda^{s+n(1-\alpha)\frac1q}\|f\|_{M_{\infty,q}^{s,\alpha}}.
\end{equation}
The same for $M_{1,q}^{s,\alpha}$. Corresponding to
\eqref{scaling-case4-m-infty-q}, we get
\begin{equation}\label{scaling-case4-m-1-q}
\|f_{\lambda}\|_{M_{1,q}^{s,\alpha}}\lesssim\lambda^{-n}\left(1\vee\lambda^s\vee\lambda^{s+n(1-\alpha)\frac1q}\right)\|f\|_{M_{1,q}^{s,\alpha}}.
\end{equation}
Whereas when $p=2, \lambda\leqslant1$ and $|k|\lesssim1$, from
\eqref{jijijijijijijijiji},\eqref{relation-k-l-small}, we have
\begin{equation}\label{scaling-case4-4}
\begin{split}
\left\|\sum_{|k|\lesssim1}\square_k^{\alpha}f_{\lambda}\right\|_{M_{2,q}^{s,\alpha}}
&\lesssim\lambda^{-\frac{n}{2}}\left(\sum_{|k|\lesssim1}\langle
k\rangle^{\frac{sq}{1-\alpha}}\sum_{l\in\Lambda(k,1/\lambda)}\|\square_{k,1/\lambda}^{\alpha}\square_l^{\lambda}f\|_2^q\right)^{\frac1q}
\\
&\lesssim\lambda^{-\frac{n}{2}}\left(\sum_{\langle l\rangle \lesssim \lambda^{-(1-\alpha)}}\|\square_l^{\alpha}f\|_2^q\right)^{\frac1q}
\lesssim\lambda^{-\frac{n}{2}}(1\vee\lambda^s)\|f\|_{M_{2,q}^{s,\alpha}}.
\end{split}
\end{equation}
When $\lambda>1, |k|\lesssim\lambda^{1-\alpha}$, by
\eqref{jijijijijijijijiji}, \eqref{relation-k-l-small} and
 H\"{o}lder's inequality, we have
\begin{equation}\label{scaling-case4-5}
\begin{split}
\left\|\sum_{|k|\lesssim\lambda^{1-\alpha}}\square_k^{\alpha}f_{\lambda}\right\|_{M_{2,q}^{s,\alpha}}
&\lesssim\lambda^{-\frac{n}{2}}\left(\sum_{|l|\lesssim1}\sum_{k\in\Lambda(l,\lambda)}\langle
k\rangle^{\frac{sq}{1-\alpha}}\|\square_{k,1/\lambda}^{\alpha}\square_l^{\alpha}f\|_2^q\right)^{\frac1q}
\\
&\lesssim\lambda^{-\frac{n}{2}}\left[\sum_{|l|\lesssim1}\left(\sum_{k\in\Lambda(l,\lambda)}\|\square_{k,1/\lambda}^{\alpha}\square_l^{\alpha}f\|_2^2\right)^{\frac{q}{2}}
\left(\sum_{|k|\lesssim\lambda^{1-\alpha}}\langle
k\rangle^{\frac{sq}{1-\alpha}\cdot\frac{2}{2-q}}\right)^{\frac{2-q}{2}}\right]^{\frac1q}
\\
&\lesssim\lambda^{-\frac{n}{2}}\left(1\vee\lambda^{s+n(1-\alpha)\big(\frac1q-\frac12\big)}\right)\|f\|_{M_{2,q}^{s,\alpha}}.
\end{split}
\end{equation}
When $|k|\gg1\vee\lambda^{1-\alpha}$, in view of
\eqref{jijijijijijijijiji},\eqref{005},\eqref{003}
and H\"{o}lder's inequality, we have
\begin{equation}\label{scaling-case4-6}
\begin{split}
\left\|\sum_{|k|\gg1\vee\lambda^{1-\alpha}}\square_k^{\alpha}f_{\lambda}\right\|_{M_{2,q}^{s,\alpha}}
&\lesssim\lambda^{-\frac{n}{2}+s}\left(\sum_{|l|\gg1}\langle
l\rangle^{\frac{sq}{1-\alpha}}\sum_{k\in\Lambda(l,\lambda)}\|\square_l^{\alpha}\square_{k,1/\lambda}^{\alpha}f\|_2^q\right)^{\frac1q}
\\
&\lesssim\lambda^{-\frac{n}{2}+s}\left[\sum_{|l|\gg1}\langle
l\rangle^{\frac{sq}{1-\alpha}}[\#\Lambda(l,\lambda)]^{1-\frac{q}{2}}\left(\sum_{k\in\Lambda(l,\lambda)}\|\square_l^{\alpha}\square_{k,1/\lambda}^{\alpha}f\|_2^2\right)^{\frac{q}{2}}\right]^{\frac1q}
\\
&\lesssim\lambda^{-\frac{n}{2}+s}(1\vee\lambda)^{n(1-\alpha)\big(\frac1q-\frac12\big)}\|f\|_{M_{2,q}^{s,\alpha}}.
\end{split}
\end{equation}
Therefore, combining
\eqref{scaling-case4-4}-\eqref{scaling-case4-6}, we get
\begin{equation}\label{scaling-case4-m-2-q}
\|f_{\lambda}\|_{M_{2,q}^{s,\alpha}}\lesssim\lambda^{-\frac{n}{2}}\left(1\vee\lambda^s\vee\lambda^{s+n(1-\alpha)\big(\frac1q-\frac12\big)}\right)\|f\|_{M_{2,q}^{s,\alpha}}.
\end{equation}
For $\big(\frac1p,\frac1q\big)\in\widehat{\mbox{I}}$,
complex interpolation between \eqref{scaling-case4-m-infty-q} and
\eqref{scaling-case4-m-2-q} yields
\begin{equation}\label{scaling-case4-7}
\|f_{\lambda}\|_{M_{p,q}^{s,\alpha}}\lesssim\lambda^{-\frac{n}{p}}\left(1\vee\lambda^s\vee\lambda^{s+n(1-\alpha) \big(\frac1q-\frac1p\big)}\right)\|f\|_{M_{p,q}^{s,\alpha}};
\end{equation}
while for $\big(\frac1p,\frac1q\big)\in\widehat{\mbox{II}}\backslash(1,\infty)\times(1,\infty)$, complex interpolation between \eqref{scaling-case4-m-1-q} and
\eqref{scaling-case4-m-2-q}  yields
\begin{equation}\label{scaling-case4-8}
\|f_{\lambda}\|_{M_{p,q}^{s,\alpha}}\lesssim\lambda^{-\frac{n}{p}}\left(1\vee\lambda^s\vee\lambda^{s+n(1-\alpha)\big(\frac1p+\frac1q-1\big)}\right)\|f\|_{M_{p,q}^{s,\alpha}}.
\end{equation}

\noindent{\bf Case
6.}\, $\boldsymbol{\big(\frac1p,\frac1q\big)\in\widehat{\mbox{III}}.}$
Since $l^p\subset l^1$, we know
\begin{equation}\label{c7-01}
\begin{split}
\|\square_k^{\alpha}f_{\lambda}\|_p
=\lambda^{-\frac{n}{p}}\|\square_{k,1/\lambda}^{\alpha}f\|_p
\leqslant\lambda^{-\frac{n}{p}}\left(\sum_{l\in\Lambda(k,1/\lambda)}\|\square_{k,1/\lambda}^{\alpha}\square_l^{\alpha}f\|_p^p\right)^{\frac1p}.
\end{split}
\end{equation}
When $\lambda\leqslant1$ and $|k|\lesssim1$, from
\eqref{eta-2},\eqref{005}, we see that
\begin{equation*}
\mbox{diam}\;\mbox{supp}\mathscr{F}[(\mathscr{F}^{-1}(\eta_k^{\alpha})_{\lambda})(x-\cdot)\square_l^{\alpha}f(\cdot)]\lesssim1/\lambda,
\end{equation*}
By Proposition \ref{convolution}, imitating the processes  as in
\eqref{equivalent-ffdfddfdfdfdfd}-\eqref{algebra-p<1-2}, we get
\begin{equation}\label{5-2222}
\|\square_{k,1/\lambda}^{\alpha}\square_l^{\alpha}f\|_p\lesssim\left(1/\lambda\right)^{n\big(\frac1p-1\big)}\left(\lambda\big/\langle
k\rangle^{\frac{\alpha}{1-\alpha}}\right)^{n\big(\frac1p-1\big)}\|\square_l^{\alpha}f\|_p\lesssim\|\square_l^{\alpha}f\|_p.
\end{equation}
From \eqref{c7-01}, \eqref{5-2222}, the embedding $\ell^1\subset\ell^{\frac{q}{p}}$, and H\"{o}lder's
inequality,  we have
\begin{equation}\label{5-3}
\begin{split}
\left\|\sum_{|k|\lesssim1}\square_k^{\alpha}f_{\lambda}\right\|_{M_{p,q}^{s,\alpha}}
&\lesssim\lambda^{-\frac{n}{p}}\left[\sum_{|k|\lesssim1}\langle k\rangle^{\frac{sq}{1-\alpha}}\left(\sum_{l\in\Lambda(k,1/\lambda)}\|\square_{k,1/\lambda}^{\alpha}\square_l^{\alpha}f\|_p^p\right)^{\frac{q}{p}}\right]^{\frac1q}
\\
&\lesssim\lambda^{-\frac{n}{p}}\left(\sum_{|k|\lesssim1}\langle
k\rangle^{\frac{sp}{1-\alpha}}\sum_{l\in\Lambda(k,1/\lambda)}\|\square_{k,1/\lambda}^{\alpha}\square_l^{\alpha}f\|_p^p\right)^{\frac1p}
\\
&\lesssim\lambda^{-\frac{n}{p}}\left[\sum_{\langle
l\rangle\lesssim\lambda^{-(1-\alpha)}}\left(\sum_{|k|\lesssim1}\langle
k\rangle^{\frac{s}{1-\alpha}\cdot\frac{pq}{q-p}}\right)^{\frac{q-p}{q}}\left(\sum_{|k|\lesssim1}\|\square_l^{\alpha}f\|_p^q\right)^{\frac{p}{q}}\right]^{\frac1p}
\\
&\lesssim\lambda^{-\frac{n}{p}}(1\vee\lambda^s)\|f\|_{M_{p,q}^{s,\alpha}}.
\end{split}
\end{equation}
When $\lambda\leqslant1$ and $|k|\gg1$,  from
\eqref{eta-2}, \eqref{005},  we see that
\begin{equation*}
\mbox{diam}\;\mbox{supp}\mathscr{F}[(\mathscr{F}^{-1}(\eta_k^{\alpha})_{\lambda})(x-\cdot)\square_l^{\alpha}f(\cdot)]\lesssim\langle
k\rangle^{\frac{\alpha}{1-\alpha}}/\lambda.
\end{equation*}
Similarly to \eqref{5-2222},  we get
\begin{equation}\label{5-4}
\|\square_{k, 1/\lambda}^{\alpha}\square_l^{\alpha}f\|_p\lesssim\left(\langle
k\rangle^{\frac{\alpha}{1-\alpha}}/\lambda\right)^{n\big(\frac1p-1\big)}\left(\lambda\big/\langle
k\rangle^{\frac{\alpha}{1-\alpha}}\right)^{n\big(\frac1p-1\big)}\|\square_l^{\alpha}f\|_p\leqslant\|\square_l^{\alpha}f\|_p.
\end{equation}
From \eqref{c7-01}, \eqref{5-4}, \eqref{005}, \eqref{003},    we have
\begin{equation}\label{5-5}
\begin{split}
\left\|\sum_{|k|\gg1}\square_k^{\alpha}f_{\lambda}\right\|_{M_{p, q}^{s, \alpha}}&\lesssim\lambda^{-\frac{n}{p}}\left[\sum_{|k|\gg1}\langle
k\rangle^{\frac{sq}{1-\alpha}}\left(\sum_{l\in\Lambda(k, 1/\lambda)}\|\square_{k, 1/\lambda}^{\alpha}\square_l^{\alpha}f\|_p^p\right)^{\frac{q}{p}}\right]^{\frac1q}
\\
&\lesssim\lambda^{-\frac{n}{p}+s}\left(\sum_{|k|\gg1}[\# \Lambda(k, 1/\lambda)]^{\frac{q}{p}-1}\sum_{l\in\Lambda(k, 1/\lambda)}\langle
l\rangle^{\frac{sq}{1-\alpha}}\|\square_l^{\alpha}f\|_p^q\right)^{\frac1q}
\\
&\lesssim\lambda^{-\frac{n}{p}+s-n(1-\alpha)\big(\frac1p-\frac1q\big)}\left(\sum_l\langle
l\rangle^{\frac{sq}{1-\alpha}}\sum_{k\in\Lambda(l, \lambda)}\|\square_l^{\alpha}f\|_p^q\right)^{\frac1q}
\\
&\lesssim\lambda^{-\frac{n}{p}+s-n(1-\alpha)\big(\frac1p-\frac1q\big)}\|f\|_{M_{p, q}^{s, \alpha}}.
\end{split}
\end{equation}

\noindent For $\lambda>1$ and $|k|\lesssim\lambda^{1-\alpha}$,  from
\eqref{eta-2}, \eqref{005},  we know
\begin{equation*}
\mbox{diam}\;\mbox{supp}\mathscr{F}[(\mathscr{F}^{-1}(\eta_k^{\alpha})_{\lambda})(x-\cdot)\square_l^{\alpha}f(\cdot)]\lesssim1.
\end{equation*}
Similarly to \eqref{5-2222},  we get
\begin{equation}\label{scaling-case5-111}
\|\square_{k, 1/\lambda}^{\alpha}\square_l^{\alpha}f\|_p\lesssim\left(\lambda\big/\langle
k\rangle^{\frac{\alpha}{1-\alpha}}\right)^{n\big(\frac1p-1\big)}\|\square_l^{\alpha}f\|_p.
\end{equation}
From
\eqref{c7-01}, \eqref{scaling-case5-111}, \eqref{relation-k-l-small},
we have
\begin{equation}\label{scaling-case5-113}
\begin{split}
\left\|\sum_{|k|\lesssim\lambda^{1-\alpha}}\square_k^{\alpha}f_{\lambda}\right\|_{M_{p, q}^{s, \alpha}}
&\lesssim\lambda^{-\frac{n}{p}}\left[\sum_{|k|\lesssim\lambda^{1-\alpha}}\langle
k\rangle^{\frac{sq}{1-\alpha}}\left(\sum_{l\in\Lambda(k, 1/\lambda)}\|\square_{k, 1/\lambda}^{\alpha}\square_l^{\alpha}f\|_p^p\right)^{\frac{q}{p}}\right]^{\frac1q}
\\
&\lesssim\lambda^{-\frac{n}{p}+n\big(\frac1p-1\big)}\left[\sum_{|k|\lesssim\lambda^{1-\alpha}}\langle
k\rangle^{\left[\frac{s}{1-\alpha}-\frac{n\alpha}{1-\alpha}\big(\frac1p-1\big)\right]q}\left(\sum_{|l|\lesssim1}\|\square_l^{\alpha}f\|_p^p\right)^{\frac{q}{p}}\right]^{\frac1q}
\\
&\lesssim\lambda^{-\frac{n}{p}+n\big(\frac1p-1\big)}\left(1\vee\lambda^{s-n\alpha\big(\frac1p-1\big)+n(1-\alpha)\frac1q}\right)\|f\|_{M_{p, q}^{s, \alpha}}.
\end{split}
\end{equation}
When $\lambda>1$ and $|k|\gg\lambda^{1-\alpha}$,  similarly to
\eqref{5-4},  we get
\begin{equation}\label{scaling-case5-112}
\|\square_{k, 1/\lambda}^{\alpha}\square_l^{\alpha}f\|_p\lesssim\|\square_l^{\alpha}f\|_p.
\end{equation}
From \eqref{c7-01}, \eqref{scaling-case5-112}, \eqref{005},  and the
embedding $\ell^1\subset\ell^{\frac{q}{p}}$,  we have
\begin{equation}\label{scaling-case5-114}
\begin{split}
\left\|\sum_{|k|\gg\lambda^{1-\alpha}}\square_k^{\alpha}f_{\lambda}\right\|_{M_{p, q}^{s, \alpha}}
&\lesssim\lambda^{-\frac{n}{p}}\left[\sum_{|k|\gg\lambda^{1-\alpha}}\langle
k\rangle^{\frac{sq}{1-\alpha}}\left(\sum_{l\in\Lambda(k, 1/\lambda)}\|\square_{k, 1/\lambda}^{\alpha}\square_l^{\alpha}f\|_p^p\right)^{\frac{q}{p}}\right]^{\frac1q}
\\
&\lesssim\lambda^{-\frac{n}{p}+s}\left(\sum_{|k|\gg\lambda^{1-\alpha}}\sum_{l\in\Lambda(k, 1/\lambda)}\langle
l\rangle^{\frac{sq}{1-\alpha}}\|\square_l^{\alpha}f\|_p^q\right)^{\frac1q}
\\
&\lesssim\lambda^{-\frac{n}{p}+s}\left(\sum_{|l|\gg1}\langle
l\rangle^{\frac{sq}{1-\alpha}}\sum_{k\in\Lambda(l, \lambda)}\|\square_l^{\alpha}f\|_p^q\right)^{\frac1q}
\\
&\lesssim\lambda^{-\frac{n}{p}+s+n(1-\alpha)\frac1q}\|f\|_{M_{p, q}^{s, \alpha}}.
\end{split}
\end{equation}
We summarize the argument in this case as: if $\lambda\leqslant1$,
\eqref{5-3} and \eqref{5-5} give
\begin{equation}\label{case5-1111111111111111111111111111111}
\|f_{\lambda}\|_{M_{p, q}^{s, \alpha}}\lesssim\lambda^{-\frac{n}{p}}\left(1\vee\lambda^{s+n(1-\alpha)\big(\frac1q-\frac1p\big)}\right)\|f\|_{M_{p, q}^{s, \alpha}};
\end{equation}
else if $\lambda>1$,  \eqref{scaling-case5-113} and
\eqref{scaling-case5-114} give
\begin{equation}\label{case5-2222222222222222222222222222222}
\|f_{\lambda}\|_{M_{p, q}^{s, \alpha}}\lesssim\lambda^{-\frac{n}{p}+n\big(\frac1p-1\big)}\left(1\vee\lambda^{s-n\alpha\big(\frac1p-1\big)+n(1-\alpha)\frac1q}\right)\|f\|_{M_{p, q}^{s, \alpha}}.
\end{equation}

\noindent{\bf Case
7.} $\boldsymbol{\big(\frac1p, \frac1q\big)\in\widehat{\mbox{II}}\cap(1, \infty)\times(1,\infty).}$
It is a natural consequence of Cases 5 and 6 by complex
interpolation.

\eqref{scaling} in the case   $\lambda\geqslant1$ follows from \eqref{case222222222222222222222222222222222}, \eqref{scaling-case4-7}, \eqref{case333333333333333333333333333333333}, \eqref{scaling-case4-8}, \eqref{3-005}, \eqref{case5-2222222222222222222222222222222}.
\eqref{scaling} in the case   $\lambda<1$ follows from \eqref{case111111111111111111111111111111111}, \eqref{scaling-case4-7},
\eqref{scaling-case4-8}, \eqref{3-003}, \eqref{case5-2222222222222222222222222222222}, \eqref{3-004}.
\end{proof}

\begin{remk}
If $s=-s_c$,  we have the substitution for \eqref{scaling}:
\begin{equation}\label{logcase}
\|f_{\lambda}\|_{M_{p, q}^{s, \alpha}}\lesssim \lambda^{-\frac{n}{p} } F(\lambda) \|f\|_{M_{p, q}^{s, \alpha}},
\end{equation}
where
\begin{equation}
F(\lambda)=
\begin{cases}
(\ln \lambda)^{0\vee\big(\frac1q-\frac1p\big)\vee\big(\frac1q+\frac1p-1\big)},
& \quad
\lambda>1, \ p \ge 1;
\\
\lambda^{n\big(\frac1p-1\big)}(\ln \lambda)^{\frac1q}, & \quad
\lambda>1, \ p \le 1 ; \\
\left(\ln \frac{1}{\lambda}\right)^{0\vee\big(\frac1p-\frac1q\big)\vee\big(1-\frac1p-\frac1q\big)},
& \quad \lambda\leqslant1.
\end{cases}
\end{equation}
\end{remk}

\section{Embedding between $\alpha$-modulation and Besov spaces}

\begin{figure}
\begin{center}
\hspace*{-2.5cm}\includegraphics[scale=0.93]{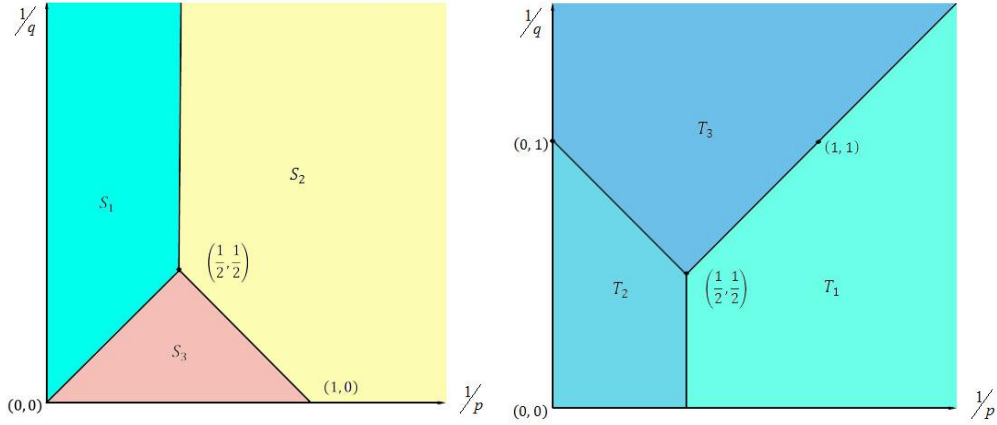}
\begin{minipage}{12cm}
\vspace*{-4cm}\caption{\small The left-hand side figure is for
$\alpha_1\geqslant\alpha_2$; while the right-hand side figure is for
$\alpha_1<\alpha_2$. }\label{figure-embedding}
\end{minipage}
\end{center}
\end{figure}

As $1\le p, q\le \infty$,  some sufficient conditions  for the inclusions between modulation and Besov spaces were obtained by Gr\"obner \cite{Gb},  then Toft \cite{T} improved Gr\"obner's sufficient conditions,  which were proven to be necessary by Sugimoto and Tomita \cite{ST}. Their results were generalized to the cases $0<p,  q \le \infty$ in \cite{WaHe07, WaHu07}.  Gr\"obner \cite{Gb} also considered the inclusions between $\alpha_1$-modulation and $\alpha_2$-modulation spaces for $1\le p, q\le \infty$ and his results are optimal in the cases $(1/p, 1/q)$ is located in the vertices of the square $[0,1]^2$. We will improve Gr\"obner's results in the cases $1\le p, q\le \infty$ and our results also cover the cases $0<p<1$  or $0<  q <1$.

\subsection{Embedding between $\alpha$-modulation spaces}

\begin{thm}\label{embedding}
Let $(\alpha_1,\alpha_2)\in[0,1)\times[0,1)$.  Then
\begin{equation}\label{embedding-dfdfdfdfdfd}
M_{p,q}^{s_1,\alpha_1}\subset M_{p,q}^{s_2,\alpha_2}
\end{equation}
holds if and only if $s_1\geqslant
s_2+\mbox{R}(p,q;\alpha_1,\alpha_2)$.
\end{thm}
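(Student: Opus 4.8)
The plan is to prove Theorem~\ref{embedding} by establishing the sufficiency and necessity separately; since the necessity part is deferred to the counterexamples section of the paper, here I will focus on the sufficiency, i.e., that $s_1 \geqslant s_2 + \mathrm{R}(p,q;\alpha_1,\alpha_2)$ implies the inclusion \eqref{embedding-dfdfdfdfdfd}. The fundamental idea is to compare the two $\alpha$-coverings. Fix $\{\eta_k^{\alpha_1}\}\in\Upsilon$ (for parameter $\alpha_1$) and $\{\eta_l^{\alpha_2}\}\in\Upsilon$ (for parameter $\alpha_2$), with associated operators $\square_k^{\alpha_1}$ and $\square_l^{\alpha_2}$. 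For each $l\in\mathbb{Z}^n$ define the index set $\Lambda(l)=\{k\in\mathbb{Z}^n:\square_k^{\alpha_1}\square_l^{\alpha_2}f\neq0\}$. The geometric heart of the matter is to understand, for a given $l$, how many $k$'s meet the support of $\eta_l^{\alpha_2}$ and what the relation between $\langle k\rangle$ and $\langle l\rangle$ is on that set. This is entirely analogous to the analysis carried out in Lemma~\ref{lemma-sequence} and in the proof of Theorem~\ref{dilationP} (equations \eqref{000}--\eqref{003}): using \eqref{eta-2} one finds that on $\Lambda(l)$ one has $\langle k\rangle^{1/(1-\alpha_1)}\sim\langle l\rangle^{1/(1-\alpha_2)}$, hence $\langle k\rangle\sim\langle l\rangle^{(1-\alpha_1)/(1-\alpha_2)}$, and comparing the volumes of the support sets (which are $O(\langle k\rangle^{n\alpha_1/(1-\alpha_1)})$ versus $O(\langle l\rangle^{n\alpha_2/(1-\alpha_2)})$) gives $\#\Lambda(l)\sim 1\vee\langle l\rangle^{n(\alpha_2-\alpha_1)/(1-\alpha_2)}$ (and symmetrically $\#\Lambda^{-1}(k):=\#\{l:k\in\Lambda(l)\}\sim 1\vee\langle k\rangle^{n(\alpha_1-\alpha_2)/(1-\alpha_1)}$).

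With this geometry in hand, the next step is a Bernstein-type estimate: for $k\in\Lambda(l)$ the function $\square_k^{\alpha_1}\square_l^{\alpha_2}f$ has frequency support contained in a ball of radius $O(\langle k\rangle^{\alpha_1/(1-\alpha_1)})=O(\langle l\rangle^{\alpha_2/(1-\alpha_2)})$ (after intersecting the two supports one gets the smaller of the two radii, but up to constants the relevant scale is comparable to $\langle l\rangle^{\alpha_2/(1-\alpha_2)}$), so Proposition~\ref{thm1.9} (the generalized Bernstein inequality) together with the uniform $H^s$-bounds on $\eta_k^{\alpha_1}$ coming from \eqref{eta-4} yields $\|\square_k^{\alpha_1}\square_l^{\alpha_2}f\|_p\lesssim\|\square_l^{\alpha_2}f\|_p$ when $p\geqslant1$; for $p<1$ one uses Proposition~\ref{convolution} instead, picking up a factor that is a power of the relevant radius ratio, which is $O(1)$ since both supports live at scale $\sim\langle l\rangle^{\alpha_2/(1-\alpha_2)}$. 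One then writes $\square_k^{\alpha_1}f=\sum_{l\in\Lambda^{-1}(k)}\square_k^{\alpha_1}\square_l^{\alpha_2}f$ and estimates the $\ell^q_{s_1,\alpha_1}$-norm of $\{\square_k^{\alpha_1}f\}$ by the $\ell^q_{s_2,\alpha_2}$-norm of $\{\square_l^{\alpha_2}f\}$. This reduces the whole problem to a discrete inequality on sequences: one must bound $\big\|\langle k\rangle^{s_1/(1-\alpha_1)}\sum_{l\in\Lambda^{-1}(k)}|b_l|\big\|_{\ell^q_k}$ by $\big\|\langle l\rangle^{s_2/(1-\alpha_2)}|b_l|\big\|_{\ell^q_l}$ where $b_l=\|\square_l^{\alpha_2}f\|_p$, using $\langle k\rangle\sim\langle l\rangle^{(1-\alpha_1)/(1-\alpha_2)}$ and the cardinality bounds above.

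The discrete inequality splits naturally according to whether $q\geqslant1$ or $q<1$ and according to the sign of $\alpha_1-\alpha_2$. When $q\ge 1$ one uses Hölder in the inner sum over $\Lambda^{-1}(k)$ (picking up $(\#\Lambda^{-1}(k))^{1/q^*}$), reindexes, and absorbs the resulting powers of $\langle l\rangle$ into the requirement on $s_1-s_2$; when $q<1$ one instead uses the embedding $\ell^q\subset\ell^1$ to pass the $q$-th power inside, which changes the exponent bookkeeping. Carrying out the $k$-to-$l$ change of summation carefully, one finds the net power of $\langle l\rangle$ that must be nonpositive is precisely
\[
\frac{1}{1-\alpha_2}\Big(s_2 - s_1 + \mathrm{R}(p,q;\alpha_1,\alpha_2)\Big),
\]
which matches the hypothesis. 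I would organize this as: first the case $\alpha_1\geqslant\alpha_2$ (where $\#\Lambda(l)$ grows and one packs many fine cubes into a coarse one), then $\alpha_1<\alpha_2$ (where $\#\Lambda^{-1}(k)$ grows), and within each, the sub-cases on $(1/p,1/q)$ corresponding to the three pieces of $\mathrm{R}$; alternatively, since $M_{2,2}^{s,\alpha}=H^s$ independently of $\alpha$ (Proposition~\ref{alpha=2-equivalent-besov}), one can get the $p=2$ endpoint for free and interpolate, exactly mirroring the structure of the proof of Theorem~\ref{dilationP}. The main obstacle I anticipate is the bookkeeping of exponents in the $p<1$ and $q<1$ regimes—keeping track of the Bernstein/convolution loss factors, the cardinality weights $(\#\Lambda)^{\beta}$ for the right $\beta$, and verifying that the three regions of $\mathrm{R}$ really do give the sharp constant rather than something larger. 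Handling $p<1$ or $q<1$ properly (where Young's inequality fails and Proposition~\ref{convolution} and the $\ell^q\subset\ell^1$ trick must be invoked) is where the argument departs from Gröbner's original and requires the most care.
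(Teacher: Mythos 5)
Your geometric setup (the index sets $\Lambda(l)$, the relation $\langle k\rangle\sim\langle l\rangle^{(1-\alpha_1)/(1-\alpha_2)}$, and the cardinality counts) is exactly what the paper uses, and the necessity is indeed handled separately by counterexamples. However, your primary plan --- reduce everything to a scalar inequality on the sequence $b_l=\|\square_l^{\alpha_2}f\|_p$ and run H\"older or $\ell^q\subset\ell^1$ on it --- cannot produce the sharp exponent $R(p,q;\alpha_1,\alpha_2)$. Once you replace each piece $\square_l^{\alpha_2}\square_k^{\alpha_1}f$ by a scalar via a multiplier bound, the sum over the $\sim\langle k\rangle^{n(\alpha_1-\alpha_2)_+/(1-\alpha_1)}$ indices paired with a fixed $k$ is controlled only by counting, which yields the $p$-independent loss $n(\alpha_1-\alpha_2)_+/q$; at $p=q=2$ this is strictly worse than the true answer $R=0$ (recall $M^{s,\alpha}_{2,2}=H^s$ for every $\alpha$, Proposition \ref{alpha=2-equivalent-besov}). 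The $1/p$-dependence of $R$ in the regions $S_1,S_2$ (resp.\ $T_1,T_2$) comes from the almost-orthogonality $\sum_{l}\|\square_l^{\alpha_2}\square_k^{\alpha_1}f\|_2^2\lesssim\|\square_k^{\alpha_1}f\|_2^2$, which is invisible after passing to $|b_l|$. So the interpolation route you offer only as an alternative is in fact the only viable one, and it is what the paper does: endpoint estimates at $(1/p,1/q)\in\{(0,1),(\tfrac12,1),(1,1)\}$ together with the $H^s$ identity at $(\tfrac12,\tfrac12)$, complex interpolation for regions $\mathrm{I}$ and $\mathrm{II}$, duality (Theorem \ref{duality}) --- which you never mention --- for $\mathrm{I}^*$ and $\mathrm{II}^*$, a further interpolation for $\mathrm{III},\mathrm{III}^*$, and separate direct arguments for the $p<1$ or $q<1$ regions.

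A second concrete error: for $p<1$ the loss from Proposition \ref{convolution} is not $O(1)$. The two Fourier supports live at scales $\langle k\rangle^{\alpha_1/(1-\alpha_1)}$ and $\langle l\rangle^{\alpha_2/(1-\alpha_2)}\sim\langle k\rangle^{\alpha_2/(1-\alpha_1)}$, whose ratio is $\langle k\rangle^{(\alpha_1-\alpha_2)/(1-\alpha_1)}$, and the correct estimate is \eqref{cnmmmmmmmmmmm}, namely $\|\square_l^{\alpha_2}\square_k^{\alpha_1}f\|_p\lesssim\langle k\rangle^{(\frac1p-1)\frac{0\vee n(\alpha_1-\alpha_2)}{1-\alpha_1}}\|\square_k^{\alpha_1}f\|_p$. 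This factor is precisely the source of the $(\frac1p-1)$ contribution to $R$ in the region $\widehat{\mathrm{III}}$; dropping it, as you propose, would ``prove'' a false, stronger embedding. (A minor slip as well: you decompose $\square_k^{\alpha_1}f$ over $l$ and bound the $\ell^q_{s_1,\alpha_1}$-norm by the $\ell^q_{s_2,\alpha_2}$-norm, which is the reverse of the stated inclusion; the intended decomposition is $\square_l^{\alpha_2}f=\sum_{k\in\Lambda(l)}\square_l^{\alpha_2}\square_k^{\alpha_1}f$.)
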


\noindent {\bf Remark. \rm In the first versions of the paper, we obtain the sufficiency of Theorem \ref{embedding},  soon after Toft and Wahlberg  \cite{ToWa11} independently considered the embeddings between $\alpha$-modulation and Besov spaces in the cases $1\le p,q \le \infty$ and  they first showed the necessity of Theorem \ref{embedding} in the regions $(1/p,1/q) \in (S_2\cup S_3) \cap [0,1]^2$ (see Fig. 3). After their work we can finally show the necessity of Theorem \ref{embedding} in all cases.}

\begin{proof}({\bf Sufficiency})
For every $k\in\mathbb{Z}^n$,  we introduce
\begin{equation}\label{lambda-k-1}
\Lambda(k) = \{l\in\mathbb{Z}^n:
\square_l^{\alpha_2}\square_k^{\alpha_1}f\neq0\}.
\end{equation}
If $ \square_l^{\alpha_2}\square_k^{\alpha_1}f\neq0 $,   then $k$ and
$l$ satisfy
\begin{equation}\label{embed-0001}
\begin{split}
\langle l\rangle^{\frac{\alpha_2}{1-\alpha_2}}(l_j-C)&<\langle
k\rangle^{\frac{\alpha_1}{1-\alpha_1}}(k_j+C),  \\
\langle l\rangle^{\frac{\alpha_2}{1-\alpha_2}}(l_j+C)&>\langle
k\rangle^{\frac{\alpha_1}{1-\alpha_1}}(k_j-C)
\end{split}
\end{equation}
for all $j=1, 2, \cdots, n$. If $|k|\lesssim1$,  it is easy to see that
$|l|\lesssim1$.  If $|k|\gg1$,  analogous to \eqref{005},  we have
\begin{equation}\label{embed-0002}
\langle l\rangle\sim\langle
k\rangle^{\frac{1-\alpha_2}{1-\alpha_1}}.
\end{equation}
Assume that $p\geqslant1$, $q=1$ and  $s_2=0$, we have
\begin{equation}\label{embed-0003}
\|f\|_{M_{p, 1}^{0, \alpha_2}}\leqslant\sum_{k\in\mathbb{Z}^n}\sum_{l\in\Lambda(k)}\|\square_k^{\alpha_1}\square_l^{\alpha_2}f\|_p\lesssim\sum_k\#\Lambda(k)\|\square_k^{\alpha_1}f\|_p.
\end{equation}
We need an estimate of $\#\Lambda(k)$.
Similar to \eqref{003},  we have
\begin{equation}\label{embed-0004}
\#\Lambda(k)\sim1\vee\langle
k\rangle^{\frac{n(\alpha_1-\alpha_2)}{1-\alpha_1}}.
\end{equation}
If $p=2$,  inserting \eqref{embed-0004} into \eqref{embed-0003} and noticing \eqref{embed-0002},  in view of Jensen's
inequality we get
\begin{equation}\label{embed-0005}
M_{2, 1}^{s_2+0\vee\frac{n(\alpha_1-\alpha_2)}{2(1-\alpha_1)}, \alpha_1}\hookrightarrow
M_{2, 1}^{s_2, \alpha_2}.
\end{equation}
If $p=\infty$ or 1,  from
\eqref{embed-0003}, \eqref{embed-0004}, \eqref{embed-0002}, one can
  directly obtain that
\begin{eqnarray}
M_{\infty, 1}^{s_2+0\vee\frac{n(\alpha_1-\alpha_2)}{1-\alpha_1}, \alpha_1}&\hookrightarrow&
M_{\infty, 1}^{s_2, \alpha_2},  \label{embed-0006}\\
M_{1, 1}^{s_2+0\vee\frac{n(\alpha_1-\alpha_2)}{1-\alpha_1}, \alpha_1}&\hookrightarrow&
M_{1, 1}^{s_2, \alpha_2}. \label{embed-0007}
\end{eqnarray}

\noindent{\bf
Case 1:} $\boldsymbol{\big(\frac{1}{p}, \frac{1}{q}\big)\in\mbox{I}}.$
For any $\theta\in[0, 1]$,  $\big(\frac{\theta}{2}, 1\big)$ is at the
line connecting $\big(\frac12, 1\big)$ and $(0, 1)$. By complex
interpolation between \eqref{embed-0005} and
\eqref{embed-0006},  one has that
\begin{equation}\label{embed-0008}
\begin{split}
M_{\frac{2}{\theta}, 1}^{0\vee\big(1-\frac{\theta}{2}\big)\frac{n(\alpha_1-\alpha_2)}{1-\alpha_1}, \alpha_1}\hookrightarrow
M_{\frac{2}{\theta}, 1}^{0, \alpha_2}.
\end{split}
\end{equation}
Since $\big(\frac{\theta}{2}, 1-\frac{\theta}{2}\big)$ is at the
line segment by connecting  $\big(\frac12, \frac12\big)$ and $(0, 1)$,
A complex interpolation  combined with Proposition
\ref{alpha=2-equivalent-besov} and \eqref{embed-0006}  yields
\begin{equation}\label{embed-0009}
\begin{split}
M_{\frac{2}{\theta}, \frac{2}{2-\theta}}^{0\vee(1-\theta)\frac{n(\alpha_1-\alpha_2)}{1-\alpha_1}}\hookrightarrow
M_{\frac{2}{\theta}, \frac{2}{2-\theta}}^{0, \alpha_2}.
\end{split}
\end{equation}
For any $\big(\frac{1}{p}, \frac{1}{q}\big)\in\mbox{I}$,  we may
suppose that there exists some $(\theta, \eta)\in[0, 1]\times[0, 1]$,  such
that
$$
\frac{1}{p}=\frac{1}{p_1}=\frac{\theta}{2},  \qquad
\frac{1}{q}=1-\frac{\theta\eta}{2}.
$$
Therefore, a complex interpolation between \eqref{embed-0008} and \eqref{embed-0009} implies that
\begin{equation}\label{embed-00100}
\begin{split}
M_{p, q}^{s_2+\big(\frac{1}{q}-\frac{1}{p}\big)[0\vee
n(\alpha_1-\alpha_2)], \alpha_1}\hookrightarrow
M_{p, q}^{s_2, \alpha_2}.
\end{split}
\end{equation}

\noindent{\bf
Case 2:} $\boldsymbol{\big(\frac{1}{p}, \frac{1}{q}\big)\in\mbox{II}}.$
For any $\theta\in[0, 1]$,
$\big(1-\frac{\theta}{2}, 1-\frac{\theta}{2}\big)$ is at the line segment
connecting $\big(\frac12, \frac12\big)$ and $(1, 1)$.
From Proposition \ref{alpha=2-equivalent-besov} and
\eqref{embed-0007},  we see that
\begin{equation}\label{embed-0011}
M_{\frac{2}{2-\theta}, \frac{2}{2-\theta}}^{s_2+(1-\theta)[0\vee
n(\alpha_1-\alpha_2)], \alpha_1}\hookrightarrow
M_{\frac{2}{2-\theta}, \frac{2}{2-\theta}}^{s_2, \alpha_2}.
\end{equation}
Noticing that $\big(1-\frac{\theta}{2}, 1\big)$ is a point at  the line segment
connecting $\big(\frac12, 1\big)$ and $(0, 1)$,  from \eqref{embed-0005}
and \eqref{embed-0007},  we see that
\begin{equation}\label{embed-0012}
M_{\frac{2}{2-\theta}, 1}^{s_2+\big(1-\frac{\theta}{2}\big)[0\vee
n(\alpha_1-\alpha_2)], \alpha_1}\hookrightarrow
M^{s_2, \alpha_2}_{\frac{2}{2-\theta}, 1}.
\end{equation}
Noticing that for any $\big(\frac1p, \frac1q\big)\in\mbox{II}$,  there
exists some $(\theta, \eta)\in[0, 1]\times[0, 1]$ satisfying
$$
\frac{1}{p}=1-\frac{\theta}{2},  \qquad
\frac{1}{q}=1-\frac{\theta\eta}{2},
$$
on the basis of \eqref{embed-0011} and \eqref{embed-0012},  we conclude
that
\begin{equation}\label{embed-0013}
M_{p, q}^{s_2+\big(\frac{1}{p}+\frac{1}{q}-1\big)[0\vee
n(\alpha_1-\alpha_2)], \alpha_1}\hookrightarrow
M_{p, q}^{s_2, \alpha_2}.
\end{equation}
When $\alpha_1\leqslant\alpha_2$,  \eqref{embed-0013} coincides with
\eqref{embed-00100}.

\noindent{\bf
Case 3:} $\boldsymbol{\big(\frac{1}{p}, \frac{1}{q}\big)\in\mbox{I}^{\ast}\cup\mbox{II}^{\ast}}.$
When $\big(\frac{1}{p}, \frac{1}{q}\big)\in\mbox{I}^{\ast}$,
$\big(\frac{1}{p^*}, \frac{1}{q^*}\big)$ is in $\mbox{I}$. From
\eqref{embed-00100},  we know
\begin{equation*}
M_{p^{\ast}, q^{\ast}}^{-s_2, \alpha_2}\hookrightarrow
M_{p^{\ast}, q^{\ast}}^{-s_2-\big(\frac1p-\frac1q\big)[0\vee
n(\alpha_2-\alpha_1)], \alpha_1}.
\end{equation*}
The duality of $\alpha$-modulation space implies that
\begin{equation}\label{embed-0015}
M_{p, q}^{s_2+\big(\frac{1}{p}-\frac{1}{q}\big)[0\vee
n(\alpha_2-\alpha_1)], \alpha_1}\hookrightarrow
M_{p, q}^{s_2, \alpha_2}.
\end{equation}
When $\big(\frac{1}{p}, \frac{1}{q}\big)\in\mbox{II}^{\ast}$,
by \eqref{embed-0013} and duality one has that
\begin{equation}\label{embed-0016}
M_{p, q}^{s_2+\big(1-\frac{1}{p}-\frac{1}{q}\big)[0\vee
n(\alpha_2-\alpha_1)], \alpha_1}\hookrightarrow
M_{p, q}^{s_2, \alpha_2}.
\end{equation}
When $\alpha_1>\alpha_2$,  \eqref{embed-0016} coincides with
\eqref{embed-0015}.

\noindent{\bf
Case 4:} $\boldsymbol{\big(\frac{1}{p}, \frac{1}{q}\big)\in\mbox{III}\cup\mbox{III}^*}.$
We may assume that for any
$\big(\frac1p, \frac1q\big)\in\mbox{III}\;(\mbox{III}^*)$, there exists a
$\eta\in(0, 1)$ satisfying
$$
\frac{1}{q}=\eta\left(1-\frac{1}{p}\right)+\frac{1-\eta}{p}.
$$
Notice that $\big(\frac1p, \frac1p\big)$ and $\big(\frac1p, 1-\frac1p\big)$
are at the boundaries of $\mbox{II}$ and $\mbox{I}^*$ ($\mbox{II}^*$
and $\mbox{I}$),  respectively. If
$\big(\frac{1}{p}, \frac{1}{q}\big)\in\mbox{III}$, a complex
interpolation between \eqref{embed-0013} and
\eqref{embed-0015} yields
\begin{equation}\label{embed-0020}
M_{p, q}^{s_2+\left[n(\alpha_1-\alpha_2)\big(\frac{1}{p}+\frac{1}{q}-1\big)\vee
n(\alpha_2-\alpha_1)\big(\frac{1}{p}-\frac{1}{q}\big)\right], \alpha_1}\hookrightarrow
M_{p, q}^{s_2, \alpha_2};
\end{equation}
If $\big(\frac{1}{p}, \frac{1}{q}\big)\in\mbox{III}^*$,
a complex interpolation between \eqref{embed-00100} and
\eqref{embed-0016} yields
\begin{equation}\label{embed-0019}
M_{p, q}^{s_2+\left[n(\alpha_1-\alpha_2)\big(\frac{1}{q}-\frac{1}{p}\big)\vee
n(\alpha_2-\alpha_1)\big(1-\frac{1}{p}-\frac{1}{q}\big)\right], \alpha_1}\hookrightarrow
M_{p, q}^{s_2, \alpha_2}.
\end{equation}
When $\alpha_1>\alpha_2$,  \eqref{embed-0020} and \eqref{embed-0019}
coincide with \eqref{embed-0013} and \eqref{embed-00100},
respectively. When $\alpha_1\leqslant\alpha_2$,  \eqref{embed-0020}
and \eqref{embed-0019} coincide with \eqref{embed-0015} and
\eqref{embed-0016},  respectively.

\noindent{\bf
Case 5:} $\boldsymbol{\big(\frac1p, \frac1q\big)\in\widehat{\mbox{I}}.}$
Imitating the proof as in the counterpart of Theorem
\ref{dilationP},  we can easily get
$$
M_{\infty, q}^{s_2+0\vee\frac{n(\alpha_1-\alpha_2)}{1-\alpha_1}\frac1q, \alpha_1}\subset
M_{\infty, q}^{s_2, \alpha_2},  \quad
M_{2, q}^{s_2+0\vee\frac{n(\alpha_1-\alpha_2)}{1-\alpha_1}\big(\frac1q-\frac12\big), \alpha_1}\subset
M_{2, q}^{s_2, \alpha_2}.
$$
A complex interpolation yields
\begin{equation}\label{embedding-555}
M_{p, q}^{s_2+0\vee\frac{n(\alpha_1-\alpha_2)}{1-\alpha_1}\big(\frac1q-\frac1p\big), \alpha_1}\subset
M_{p, q}^{s_2, \alpha_2}.
\end{equation}
\eqref{embedding-555} coincides with \eqref{embed-00100}.

\noindent{\bf
Case 6:} $\boldsymbol{\big(\frac1p, \frac1q\big)\in\widehat{\mbox{III}}.}$
From \eqref{eta-2},  as well as \eqref{embed-0002},  we see that
\begin{equation*}
\mbox{diam}\;\mbox{supp}\mathscr{F}[\mathscr{F}^{-1}\eta_l^{\alpha_2}(x-\cdot)\square_k^{\alpha_1}f(\cdot)]\lesssim\langle
k\rangle^{\frac{\alpha_1\vee\alpha_2}{1-\alpha_1}},
\end{equation*}
In view of Proposition \ref{convolution},
\begin{equation}\label{cnmmmmmmmmmmm}
\|\square_l^{\alpha_2}\square_k^{\alpha_1}f\|_p\lesssim\langle
k\rangle^{\big(\frac1p-1\big)\frac{0\vee
n(\alpha_1-\alpha_2)}{1-\alpha_1}}\|\square_k^{\alpha_1}f\|_p.
\end{equation}
Inserting
\eqref{cnmmmmmmmmmmm}, \eqref{embed-0002}, \eqref{embed-0004},  from
the embedding $\ell^p\subset\ell^1$ and with the aid of Jensen's
inequality,  we have
\begin{equation}\label{embedding-60000}
\begin{split}
\|f\|_{M_{p, q}^{s_2, \alpha_2}}
&\lesssim\left[\sum_{l\in\mathbb{Z}^n}\langle
l\rangle^{\frac{s_2q}{1-\alpha_2}}\left(\sum_{k\in\Lambda(l)}\langle
k\rangle^{(1-p)\frac{0\vee
n(\alpha_1-\alpha_2)}{1-\alpha_1}}\|\square_k^{\alpha_1}f\|_p^p\right)^{\frac{q}{p}}\right]^{\frac1q}
\\
&\lesssim\left(\sum_{l}\langle
l\rangle^{\frac{s_2q}{1-\alpha_2}+\frac{0\vee
n(\alpha_2-\alpha_1)}{1-\alpha_2}\big(\frac{q}{p}-1\big)}\sum_{k\in\Lambda(l)}\langle
k\rangle^{nq\big(\frac1p-1\big)\frac{0\vee(\alpha_1-\alpha_2)}{1-\alpha_1}}\|\square_k^{\alpha_1}f\|_p^q\right)^{\frac1q}
\\
&\lesssim\left(\sum_k\langle
k\rangle^{\frac{s_2q}{1-\alpha_1}+\frac{0\vee
n(\alpha_2-\alpha_1)}{1-\alpha_1}\big(\frac{q}{p}-1\big)+nq\frac{0\vee(\alpha_1-\alpha_2)}{1-\alpha_1}\big(\frac1p+\frac1q-1\big)}\|\square_k^{\alpha_1}f\|_p^q\right)^{\frac1q}.
\end{split}
\end{equation}
When $\alpha_1\leqslant\alpha_2$,  \eqref{embedding-60000} gives
\begin{equation}\label{embedding-60000-1}
M_{p, q}^{s_2+n(\alpha_2-\alpha_1)\big(\frac1p-\frac1q\big), \alpha_1}\subset
M_{p, q}^{s_2, \alpha_2};
\end{equation}
whereas when $\alpha_1>\alpha_2$,  \eqref{embedding-60000} gives
\begin{equation}\label{embedding-60000-2}
M_{p, q}^{s_2+n(\alpha_1-\alpha_2)\big(\frac1p+\frac1q-1\big), \alpha_1}\subset
M_{p, q}^{s_2, \alpha_2}.
\end{equation}
\eqref{embedding-60000-1} and \eqref{embedding-60000-2} coincide
with \eqref{embed-0015} and \eqref{embed-0013},  respectively.

\noindent{\bf Case
7:} $\boldsymbol{\big(\frac1p, \frac1q\big)\in\widehat{\mbox{II}}.}$
This is a consequence of the results in Cases 5 and 6 by complex
interpolation.
\end{proof}

\subsection{Embedding between Besov space and $\alpha$-modulation space}

In this section,  we study the   embedding between
1-modulation space and   $\alpha$-modulation spaces.
In an analogous way to the previous subsection,  we start with the embedding
for the same indices $p, q$.
\begin{thm}\label{embedmb}
Let $\alpha\in[0,1)$. Then
$
B_{p,q}^{s_1}\subset M_{p,q}^{s_2,\alpha}
$
holds if and only if $s_1\geqslant s_2+\mbox{R}(p,q;1,\alpha)$. Conversely,
$
M_{p,q}^{s_1,\alpha}\subset B_{p,q}^{s_2}
$
holds if and only if  $s_1 \geqslant s_2+\mbox{R}(p,q;\alpha,1)$.
\end{thm}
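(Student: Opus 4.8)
The plan is to regard this theorem as the two endpoint cases $\alpha_1=1$ and $\alpha_2=1$ of the embedding problem already settled for $(\alpha_1,\alpha_2)\in[0,1)^2$ in Theorem \ref{embedding}, and to rerun that proof with the Littlewood--Paley decomposition $\{\triangle_j\}_{j\ge0}$ replacing one of the two $\alpha$-decompositions. For $M_{p,q}^{s_1,\alpha}\subset B_{p,q}^{s_2}$ I would set $\Lambda(j)=\{k\in\mathbb{Z}^n:\triangle_j\square_k^{\alpha}f\neq0\}$ for each $j\in\mathbb{N}\cup\{0\}$ and derive, from \eqref{eta-2} and the support properties of $\varphi_j$, the analogues of \eqref{embed-0001}--\eqref{embed-0004}: for $j\gg1$ and $k\in\Lambda(j)$ one has $\langle k\rangle\sim2^{j(1-\alpha)}$ and $\#\Lambda(j)\sim2^{jn(1-\alpha)}$, while symmetrically a fixed $\alpha$-box $\square_k^{\alpha}$ overlaps only $O(1)$ dyadic annuli; the low-frequency regime $j=0$ (resp.\ $|k|\lesssim1$) is handled exactly as the $|k|\lesssim1$ case in Theorem \ref{embedding}. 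For $B_{p,q}^{s_1}\subset M_{p,q}^{s_2,\alpha}$ the roles of the indices $j$ and $k$ are interchanged.

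Next I would record the basic estimates at the reference points. The point $(p,q)=(2,2)$ is for free, since $M_{2,2}^{s,\alpha}=H^s=B_{2,2}^s$ by Proposition \ref{alpha=2-equivalent-besov} (and the classical identity $B_{2,2}^s=H^s$) and $\mathrm{R}(2,2;\alpha,1)=\mathrm{R}(2,2;1,\alpha)=0$. At the corners $(p,q)=(1,1),(2,1),(\infty,1)$ one writes $\triangle_jf=\sum_{k\in\Lambda(j)}\triangle_j\square_k^{\alpha}f$ (or $\square_k^{\alpha}f=\sum_j\square_k^{\alpha}\triangle_jf$), uses Young's inequality---or Proposition \ref{convolution} together with the generalized Bernstein inequality, Proposition \ref{thm1.9}, when $p<1$---to pass from $\triangle_j\square_k^{\alpha}f$ to $\square_k^{\alpha}f$ with an acceptable constant, and sums with Jensen's and H\"older's inequalities, using $2^{s_2j}\sim\langle k\rangle^{s_2/(1-\alpha)}$ on the relevant range; for the ``hat'' regions one additionally needs the column estimates ($p\in\{1,2,\infty\}$ fixed, $q$ arbitrary), obtained in the same way as in the proof of Theorem \ref{dilationP}. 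With these endpoint estimates in hand, a general $(1/p,1/q)$ is reached by complex interpolation over the regions $\mathrm{I},\mathrm{II},\mathrm{I}^{\ast},\mathrm{II}^{\ast},\mathrm{III},\mathrm{III}^{\ast}$ and the ``hat'' regions, exactly as in Cases 1--7 of the proof of Theorem \ref{embedding}, invoking Corollary \ref{interpolation} and the classical fact that the Besov spaces form the same kind of complex-interpolation scale as the $\alpha$-modulation spaces.

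As a partial shortcut and a consistency check, in the range $1\le p,q<\infty$ the two inclusions of the theorem are dual to each other: by Theorem \ref{duality} one has $\big(M_{p,q}^{s,\alpha}\big)^{\ast}=M_{p^{\ast},q^{\ast}}^{-s,\alpha}$ and $\big(B_{p,q}^{s}\big)^{\ast}=B_{p^{\ast},q^{\ast}}^{-s}$ there, while $\mathrm{R}(p^{\ast},q^{\ast};\alpha,1)=\mathrm{R}(p,q;1,\alpha)$ by a direct computation (using $1/p^{\ast}-1/q^{\ast}=1/q-1/p$ and $1-1/p^{\ast}-1/q^{\ast}=1/p+1/q-1$), so that $B_{p,q}^{s_1}\subset M_{p,q}^{s_2,\alpha}\Leftrightarrow M_{p^{\ast},q^{\ast}}^{-s_2,\alpha}\subset B_{p^{\ast},q^{\ast}}^{-s_1}$; outside this range (where the spaces fail to be reflexive or $\mathscr{S}$ is not dense) one proves the two inclusions directly by the method of the previous paragraphs. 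Finally, the necessity (the ``only if'' parts) follows from the near-extremal families constructed in Section 6---bump functions supported on a single $\alpha$-box, respectively on a single dyadic annulus, together with finitely many translates of them and with dilations---the same examples that furnish the necessity in Theorems \ref{dilationP} and \ref{embedding}.

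The step I expect to be the main obstacle is the geometric bookkeeping that converts the dyadic scale $2^j$ into the $\alpha$-box index $\langle k\rangle^{1/(1-\alpha)}$ and keeps track of the overlap count $\#\Lambda(j)$, so that the three competing exponents in $\mathrm{R}(p,q;\cdot,\cdot)$ come out with the correct coefficients; closely tied to this is the quasi-Banach range $p<1$ (or $q<1$), where Young's inequality must be replaced by Proposition \ref{convolution} and the extra powers of the ball radius have to be seen to cancel against the modulation weight, exactly as in Cases 5--7 of Theorem \ref{embedding}. Everything else is a routine adaptation of arguments already carried out there.
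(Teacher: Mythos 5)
Your proposal follows essentially the same route as the paper: the same overlap sets $\Lambda(j)$, $\Lambda(k)$ with $\langle k\rangle^{1/(1-\alpha)}\sim 2^j$, $\#\Lambda(j)\sim 2^{jn(1-\alpha)}$ and $\#\Lambda(k)\sim 1$, the same endpoint estimates at $(p,q)\in\{(1,1),(2,1),(\infty,1),(2,2)\}$ via Jensen/H\"older (and Proposition \ref{convolution} in the quasi-Banach range), the same complex-interpolation and duality scheme over the regions $\mathrm{I},\dots,\widehat{\mathrm{III}}$, and the same Section~6 test functions for necessity. The plan is correct and matches the paper's proof in all essentials.
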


\begin{proof}({\bf sufficiency})
For
every $j\in\mathbb{Z}_+$, we introduce
\begin{equation}\label{lambda-j-dingyi}
\Lambda(j)=\{k\in\mathbb{Z}^n:\square_k^{\alpha}\triangle_jf\neq0,\forall
f\in\mathscr{S}'(\mathbb{R}^n)\};
\end{equation}
and for every $k\in\mathbb{Z}^n$, we introduce
\begin{equation}\label{lambda-k-dingyi}
\Lambda(k)=\{j\in\mathbb{Z}_+:\square_k^{\alpha}\triangle_jf\neq0,\forall
f\in\mathscr{S}'(\mathbb{R}^n)\}.
\end{equation}
To a $j\in\mathbb{Z}_+$, for any $k\in\Lambda(j)$, it is easy to see
that the quantitative relationship between $k$ and $j$ is
\begin{equation}\label{embedding-mb-fffffff}
\langle k\rangle^{\frac{1}{1-\alpha}}\sim2^j.
\end{equation}
When $p\geqslant1, q=1$ and $s_2=0$,  we have
\begin{equation}\label{embedding-mb-ffffffg}
\|f\|_{M_{p, 1}^{0, \alpha}}\leqslant
\sum_{k\in\mathbb{Z}^n}\sum_{j\in\Lambda(k)}\|\square_k^{\alpha}\triangle_jf\|_p=
\sum_{j\in\mathbb{Z}_+}\sum_{k\in\Lambda(j)}\|\triangle_j\square_k^{\alpha}f\|_p.
\end{equation}
For any $k\in\mathbb{Z}^n$ and any $j\in\mathbb{Z}_+$,   it is easy to see
\begin{equation}\label{embedding-mb-ffffffh}
\#\Lambda(k)\sim1,  \quad \#\Lambda(j)\sim2^{jn(1-\alpha)}.
\end{equation}
Thus when $p=2$,  combining \eqref{embedding-mb-ffffffg},
 \eqref{embedding-mb-ffffffh},
also with the aid of Jensen's inequality,  we get
\begin{equation}\label{embedding-MB-1}
B_{2, 1}^{s+\frac{n(1-\alpha)}{2}}\hookrightarrow M_{2, 1}^{s, \alpha}.
\end{equation}
If $p=1$ or $\infty$,  combining
  \eqref{embedding-mb-ffffffh}, \eqref{embedding-mb-ffffffg},
we get
\begin{eqnarray}
B_{1, 1}^{s+n(1-\alpha)}&\hookrightarrow&M_{1, 1}^{s, \alpha},  \label{embedding-MB-2}\\
B_{\infty, 1}^{s+n(1-\alpha)}&\hookrightarrow&
M_{\infty, 1}^{s, \alpha}. \label{embedding-MB-3}
\end{eqnarray}

\noindent${\bf
Case 1.} \ \boldsymbol{\big(\frac{1}{p}, \frac{1}{q}\big)\in\mbox{I}}.$
For any $\theta\in[0, 1]$, a complex interpolation between
\eqref{embedding-MB-1} and \eqref{embedding-MB-3} yields
\begin{equation}\label{embedding-MB-4}
B_{\frac{2}{\theta}, 1}^{s+\big(1-\frac{\theta}{2}\big)n(1-\alpha)}\hookrightarrow
M_{\frac{2}{\theta}, 1}^{s, \alpha};
\end{equation}
while combined with Proposition \ref{alpha=2-equivalent-besov} and
\eqref{embedding-MB-3},  yields
\begin{equation}\label{embedding-MB-5}
B_{\frac{2}{\theta}, \frac{2}{2-\theta}}^{s+(1-\theta)n(1-\alpha)}\hookrightarrow
M_{\frac{2}{\theta}, \frac{2}{2-\theta}}^{s, \alpha}.
\end{equation}
In analogy to \eqref{embed-00100},  we get from
\eqref{embedding-MB-4}, \eqref{embedding-MB-5} that
\begin{equation}\label{embedding-MB-6}
B_{p, q}^{s+n\big(\frac{1}{q}-\frac{1}{p}\big)(1-\alpha)}\hookrightarrow
M_{p, q}^{s, \alpha}.
\end{equation}

Conversely,  when we encounter the embedding of $\alpha$-modulation
spaces into Besov spaces,  for $2\leqslant p\leqslant\infty$,
considering \eqref{embedding-mb-ffffffh},  we have
\begin{equation}\label{embedding-MB-7}
\begin{split}
\|f\|_{B_{p, 1}^0}&\leqslant\sum_{j\in\mathbb{Z}^+}\sum_{k\in\Lambda(j)}\|\square_k^{\alpha}\triangle_jf\|_p
\lesssim\sum_{k\in\mathbb{Z}^n}\|\square_k^{\alpha}f\|_p=\|f\|_{M_{p, 1}^{0, \alpha}},
\end{split}
\end{equation}
which gives
\begin{equation}\label{embedding-MB-8}
M_{p, q}^{s, \alpha}\hookrightarrow B_{p, q}^s.
\end{equation}

\noindent${\bf
Case 2.} \ \boldsymbol{\big(\frac{1}{p}, \frac{1}{q}\big)\in\mbox{II}}.$
For any $\theta\in[0, 1]$,  a complex interpolation between
\eqref{embedding-MB-1} and \eqref{embedding-MB-2} yields
\begin{equation}\label{embedding-MB-10}
B_{\frac{2}{2-\theta}, 1}^{s+\big(1-\frac{\theta}{2}\big)n(1-\alpha)}\hookrightarrow
M_{\frac{2}{2-\theta}, 1}^{s, \alpha}.
\end{equation}
From Proposition \ref{alpha=2-equivalent-besov} and
\eqref{embedding-MB-2} it follows that
\begin{equation}\label{embedding-MB-11}
B_{\frac{2}{2-\theta}, \frac{2}{2-\theta}}^{s+(1-\theta)n(1-\alpha)}\hookrightarrow
M_{\frac{2}{2-\theta}, \frac{2}{2-\theta}}^{s, \alpha}.
\end{equation}
Analogous to \eqref{embed-0013},  one can conclude from
\eqref{embedding-MB-10} and \eqref{embedding-MB-11} that
\begin{equation}\label{embedding-MB-12}
B_{p, q}^{s+n\big(\frac{1}{q}+\frac{1}{p}-1\big)(1-\alpha)}\hookrightarrow
M_{p, q}^{s, \alpha}.
\end{equation}
Considering the embedding of $\alpha$-modulation spaces into
Besov spaces, \eqref{embedding-MB-8} still holds if $(1/p,1/q) \in II.$

\noindent${\bf
Case 3.} \ \boldsymbol{\big(\frac{1}{p}, \frac{1}{q}\big)\in\mbox{I}^{\ast}\cup\mbox{II}^{\ast}}.$
Since
$\big(\frac{1}{p^*}, \frac{1}{q^*}\big)\in\mbox{I}\cup\mbox{II}$,
from \eqref{embedding-MB-8}, we see that
$M_{p^{\ast}, q^{\ast}}^{-s, \alpha}\hookrightarrow
B_{p^{\ast}, q^{\ast}}^{-s}$. Thus, the duality between
$B_{p, q}^s$ and $B_{p^{\ast}, q^{\ast}}^{-s}$,  as well as between
$M_{p, q}^{s, \alpha}$ and $M_{p^{\ast}, q^{\ast}}^{-s, \alpha}$,
implies that
\begin{equation}\label{embedding-MB-13}
B_{p, q}^s\hookrightarrow M_{p, q}^{s, \alpha}.
\end{equation}
Conversely,  if one considers the embedding of $\alpha$-modulation
space into Besov space, it follows from Theorem \ref{duality} and \eqref{embedding-MB-6} that
\begin{equation}\label{embedding-mb-fdfdfdfdf}
M_{p, q}^{s+n(\alpha-1)\big(\frac1q-\frac1p\big), \alpha}\subset
B_{p, q}^s, \ \ \left(\frac1p, \frac1q\right)\in\mbox{I}^*;
\end{equation}
and from
\eqref{embedding-MB-12} it follows that
\begin{equation}\label{embedding-mb-dfdfdfdfdf}
M_{p, q}^{s+n(\alpha-1)\big(\frac1p+\frac1q-1\big), \alpha}\subset
B_{p, q}^s, \ \ \left(\frac1p, \frac1q\right)\in\mbox{II}^*.
\end{equation}

\noindent${\bf
Case 4.} \ \boldsymbol{\big(\frac{1}{p}, \frac{1}{q}\big)\in\mbox{III}\cup\mbox{III}^*}.$
If $\big(\frac{1}{p}, \frac{1}{q}\big)\in\mbox{III}^*$,
\eqref{embedding-MB-6} and \eqref{embedding-MB-13} contain that
\begin{equation*}
\begin{split}
B_{p, \frac{p}{p-1}}^{s+n\big(1-\frac{2}{p}\big)(1-\alpha)}\hookrightarrow
M_{p, \frac{p}{p-1}}^{s, \alpha},  \quad B_{p, p}^s\hookrightarrow
M_{p, p}^{s, \alpha}.
\end{split}
\end{equation*}
By interpolation we have
\begin{equation}\label{embedding-MB-15}
B_{p, q}^{s+n\big(\frac{1}{q}-\frac{1}{p}\big)(1-\alpha)}\hookrightarrow
M_{p, q}^{s, \alpha},
\end{equation}
which coincides with \eqref{embedding-MB-6}. If
$\big(\frac{1}{p}, \frac{1}{q}\big)\in\mbox{III}$,
\eqref{embedding-MB-12} and \eqref{embedding-MB-13} imply that
\begin{equation*}
\begin{split}
B_{p, \frac{p}{p-1}}^s\subset M_{p, \frac{p}{p-1}}^{s, \alpha},  \quad
B_{p, p}^{s+n\big(\frac{2}{p}-1\big)(1-\alpha)}\subset
M_{p, q}^{s, \alpha}.
\end{split}
\end{equation*}
By interpolation,
\begin{equation}\label{embedding-MB-16}
B_{p, q}^{s+n\big(\frac{1}{q}+\frac{1}{p}-1\big)(1-\alpha)}\hookrightarrow
M_{p, q}^{s, \alpha},
\end{equation}
which coincides with \eqref{embedding-MB-12}.

Conversely,  considering the embedding of $\alpha$-modulation
space into Besov space,  in view of Theorem \ref{duality} and
\eqref{embedding-MB-15}  we have
\begin{equation}\label{embedding-mb-gggggggggdg}
M_{p, q}^{s+n(\alpha-1)\big(\frac1q-\frac1p\big), \alpha}\subset
B_{p, q}^s, \ \ \left(\frac1p, \frac1q\right)\in\mbox{III};
\end{equation}
while for $\big(\frac1p, \frac1q\big)\in\mbox{III}^*$,  from
\eqref{embedding-MB-16},  we have
\begin{equation}\label{embedding-mb-hggggggggggg}
M_{p, q}^{s+n(\alpha-1)\big(\frac1p+\frac1q-1\big), \alpha}\subset
B_{p, q}^s.
\end{equation}
\eqref{embedding-mb-gggggggggdg} and
\eqref{embedding-mb-hggggggggggg} coincide with
\eqref{embedding-mb-fdfdfdfdf} and \eqref{embedding-mb-dfdfdfdfdf},
respectively.

\noindent${\bf
Case 5:}\boldsymbol{\big(\frac1p, \frac1q\big)\in\widehat{\mbox{I}}.}$
For the embedding of Besov space into $\alpha$-modulation space,
imitating the argument in Theorem \ref{embedding},  we get
\begin{equation*}
M_{\infty, q}^{s+n(1-\alpha)\frac1q, 1}\subset M_{p, q}^{s, \alpha},
\quad
M_{2, q}^{s+n(1-\alpha)\big(\frac1q-\frac12\big), 1}\subset
M_{p, q}^{s, \alpha}.
\end{equation*}
From them,  we interpolate out
\begin{equation}\label{embedding-MB-18}
M_{p, q}^{s+n(1-\alpha)\big(\frac1q-\frac1p\big), 1}\subset
M_{p, q}^{s, \alpha},
\end{equation}
which coincides with \eqref{embedding-MB-6}. Conversely for the
embedding of $\alpha$-modulation space into Besov space,  we have
\begin{equation}\label{embedding-MB-19}
M_{p, q}^{s, \alpha}\subset B_{p, q}^s,
\end{equation}
which coincides with \eqref{embedding-MB-8}.

\noindent{\bf Case
6.} \  $\boldsymbol{\big(\frac1p, \frac1q\big)\in\widehat{\mbox{III}}.}$
If $\Box^\alpha_k  \triangle_j \neq 0$, then
\begin{equation}\label{jjjjjggggggggggg}
\mbox{diam}\;\mbox{supp}\mathscr{F}[\mathscr{F}^{-1}\eta_k^{\alpha}(x-\cdot)\triangle_jf(\cdot)]\lesssim\langle
k\rangle^{\frac{1}{1-\alpha}}\sim 2^j,
\end{equation}
So, in view of Proposition \ref{convolution} we have
\begin{equation}\label{hhhhhhhhhhhh}
\|\square_k^{\alpha}\triangle_jf\|_p\lesssim\langle
k\rangle^{\frac{n}{1-\alpha}\big(\frac1p-1\big)}\langle
k\rangle^{-\frac{n\alpha}{1-\alpha}\big(\frac1p-1\big)}\|\triangle_jf\|_p\sim2^{jn(1-\alpha)\big(\frac1p-1\big)}\|\triangle_jf\|_p.
\end{equation}
From \eqref{hhhhhhhhhhhh}, \eqref{embedding-mb-ffffffh},  $\ell^p\subset\ell^1$ and Jensen's inequality it follows that
\begin{equation}\label{qqqqqqqqqqqqqqqqqqqqqqqqq}
\begin{split}
\|f\|_{M_{p, q}^{s, \alpha}}&\lesssim\left[\sum_{k\in\mathbb{Z}^n}\langle
k\rangle^{\frac{sq}{1-\alpha}}\left(\sum_{j\in\Lambda(k)}\|\square_k^{\alpha}\triangle_jf\|_p^p\right)^{\frac{q}{p}}\right]^{\frac1q}
\\
&\lesssim\left(\sum_k\langle
k\rangle^{\left[\frac{s}{1-\alpha}+n\big(\frac1p-1\big)\right]q}\sum_{j\in\Lambda(k)}\|\triangle_jf\|_p^q\right)^{\frac1q}
\\
&\lesssim\left(\sum_j2^{j\left[s+n(1-\alpha)\big(\frac1p-1\big)\right]q}\sum_{k\in\Lambda(j)}\|\triangle_jf\|_p^q\right)^{\frac1q}
\lesssim\|f\|_{B_{p, q}^{s+n(1-\alpha)\big(\frac1p+\frac1q-1\big)}},
\end{split}
\end{equation}
which implies that
\begin{equation}\label{embedding-MB-20}
B_{p, q}^{s+n(1-\alpha)\big(\frac1p+\frac1q-1\big), \alpha}\subset
M_{p, q}^{s, \alpha}.
\end{equation}
It coincides with \eqref{embedding-MB-12}.

Conversely,  when we study the embedding of $\alpha$-modulation space
into Besov space,  in analogy to \eqref{jjjjjggggggggggg},  we see
\begin{equation*}
\mbox{diam}\;\mbox{supp}\mathscr{F}[\mathscr{F}^{-1}\varphi_j(x-\cdot)\square_k^{\alpha}f(\cdot)]\lesssim2^j
\end{equation*}
In contrast to \eqref{hhhhhhhhhhhh},  we conclude
\begin{equation}\label{hhhhhhhhhhhp}
\|\triangle_j\square_k^{\alpha}f\|_p\lesssim2^{jn\big(\frac1p-1\big)}2^{-jn\big(\frac1p-1\big)}\|\square_k^{\alpha}f\|_p=\|\square_k^{\alpha}f\|_p.
\end{equation}
Inserting \eqref{hhhhhhhhhhhp},  the substitution for
\eqref{qqqqqqqqqqqqqqqqqqqqqqqqq} is
\begin{equation*}
\begin{split}
\|f\|_{B_{p, q}^s}&\lesssim\left[\sum_{j\in\mathbb{Z}^+}2^{jsq}\left(\sum_{j\in\Lambda(j)}\|\triangle_j\square_k^{\alpha}f\|_p^p\right)^{\frac{q}{p}}\right]^{\frac1q}
\\
&\lesssim\left(\sum_{j}2^{jsq}\#\Lambda(j)^{\frac{q}{p}-1}\sum_{k\in\Lambda(j)}\|\triangle_j\square_k^{\alpha}f\|_p^q\right)^{\frac1q}
\\
&\lesssim\left(\sum_k\langle
k\rangle^{\left[s+n(1-\alpha)\big(\frac1p-\frac1q\big)\right]q}\|\square_k^{\alpha}f\|_p^q\right)^{\frac1q}
\lesssim\|f\|_{M_{p, q}^{s+n(1-\alpha)\big(\frac1p-\frac1q\big)}},
\end{split}
\end{equation*}
which implies that
\begin{equation}\label{embedding-MB-21}
M_{p, q}^{s+n(\alpha-1)\big(\frac1q-\frac1p\big), \alpha}\subset
M_{p, q}^{s, 1}.
\end{equation}
It coincides with \eqref{embedding-mb-fdfdfdfdf}.

\noindent${\bf
Case 7:}\boldsymbol{\big(\frac1p, \frac1q\big)\in\widehat{\mbox{II}}.}$
It is interpolated out from Case 5 and Case 6.
\end{proof}

\section{Multiplication algebra}

It is well known that
$B_{p,q}^s$ is a multiplication algebra if $s>n/p$, cf. \cite{triebel}. But for
$\alpha$-modulation space, this issue is much more complicated. The
  regularity indices  for which $M_{p,q}^{s,\alpha}$ constitutes
a multiplication algebra, are quite different from those of Besov and modulation spaces.

We introduce a parameter, denoted by $s_0=s_0(p,q;\alpha)$, to
describe   the regularity for  which
$M_{p,q}^{s,\alpha}$ with $s>s_0$ forms a multiplication algebra.  Denote (see Figure \ref{figure-algebra})
$$
\mbox{D}_1=\left\{\big(\tfrac1p,\tfrac1q\big)\in\mathbb{R}_+^2:
\tfrac1q\geqslant\tfrac2p,\tfrac1p\leqslant\tfrac12\right\}, \quad D_2= \mathbb{R}_+^2\setminus D_1
$$
and
\begin{equation*}
s_0=
\begin{cases}
\frac{n\alpha}{p}+n(1-\alpha)\big(1-1\wedge\frac1q\big)+\frac{n\alpha(1-\alpha)}{2-\alpha}\big(\frac1q-\frac2p\big), & \quad \big(\frac1p,\frac1q\big)\in\mbox{D}_1; \\
\frac{n\alpha}{p}+n(1-\alpha)\big(1\vee\frac1p\vee\frac1q-\frac1q\big)+\frac{n\alpha(1-\alpha)}{2-\alpha}\big(1\vee\frac1p\vee\frac1q-1\big),
& \quad
\big(\frac1p,\frac1q\big)\in \mbox{D}_2.
\end{cases}
\end{equation*}

\begin{figure}
\begin{center}
\includegraphics[width=10.5cm,height=10cm]{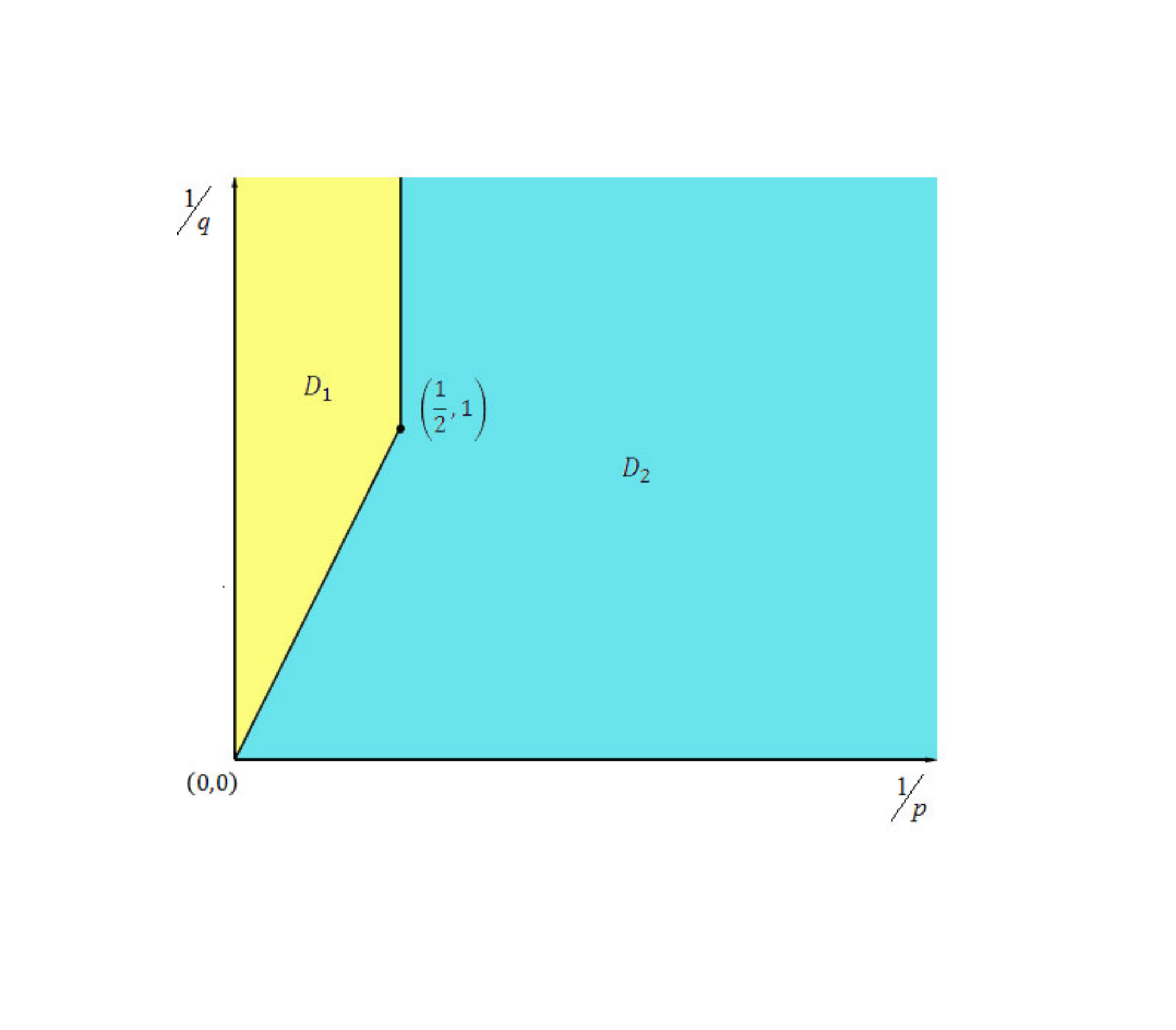}
\vspace*{-2cm}\caption{\small Distribution of $s_0$}\label{figure-algebra}
\end{center}
\end{figure}
\begin{thm}\label{algebra}
If $s>s_0$, then $M_{p,q}^{s,\alpha}$ is a multiplication algebra,
which is equivalent to say that for any $f,g\in M_{p,q}^{s,\alpha}$,
we have
\begin{equation}\label{algebra-inequality}
\|fg\|_{M_{p,q}^{s,\alpha}}\lesssim\|f\|_{M_{p,q}^{s,\alpha}}\|g\|_{M_{p,q}^{s,\alpha}}.
\end{equation}
\end{thm}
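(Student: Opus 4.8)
The plan is to prove the bilinear bound \eqref{algebra-inequality} by a Bony-type paraproduct decomposition adapted to the $\alpha$-covering, and then to cut the number of indices $(p,q)$ that must be treated by hand using the multilinear complex interpolation of Corollary \ref{interpolation} together with the duality of Theorem \ref{duality}. The structural point is that $s_0=s_0(p,q;\alpha)$ is continuous and piecewise affine in $(1/p,1/q)$ — its breaklines being $1/q=2/p$, $1/q=1$, $1/p=1$ and $1/p=1/q$ — so it is enough to establish \eqref{algebra-inequality} for every $s>s_0$, with a constant that stays controlled as $s\downarrow s_0$ away from the breaklines, at the ``corner'' and ``edge'' configurations $p\in\{1,2,\infty\}$ with $q$ ranging over all admissible values; the interior points of each polygonal piece of $\mathbb{R}_+^2$ are then filled in by interpolation, and the starred regions $\mathrm{I}^\ast,\mathrm{II}^\ast,\ldots$ by duality, exactly as in the proofs of Theorems \ref{dilationP} and \ref{embedding}.

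The decomposition I would use is $fg=\sum_{k,l\in\mathbb{Z}^n}\square_k^\alpha f\,\square_l^\alpha g$. Since the Fourier support of $\square_k^\alpha f\,\square_l^\alpha g$ sits in a ball about $\langle k\rangle^{\alpha/(1-\alpha)}k+\langle l\rangle^{\alpha/(1-\alpha)}l$ of radius $\lesssim\langle k\rangle^{\alpha/(1-\alpha)}+\langle l\rangle^{\alpha/(1-\alpha)}$, the block $\square_m^\alpha(\square_k^\alpha f\,\square_l^\alpha g)$ vanishes unless $(k,l)$ lies in the set $B_m=\{(k,l):\square_m^\alpha(\square_k^\alpha f\,\square_l^\alpha g)\neq0\}$, whose members satisfy an inequality of the type \eqref{000}. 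Splitting $B_m$ according to whether $\langle l\rangle$ is small relative to a suitable power of $\langle m\rangle$ (forcing $\langle k\rangle\sim\langle m\rangle$), the symmetric case, or $\langle k\rangle\sim\langle l\rangle=:N\gtrsim\langle m\rangle$, gives the three paraproduct pieces. For the two symmetric high--low pieces I would estimate $\|\square_m^\alpha(\square_k^\alpha f\,\square_l^\alpha g)\|_p$ by H\"older when $p\ge1$ and by Proposition \ref{convolution} when $p<1$, in either case applying the Bernstein inequality of Proposition \ref{thm1.9} to the low-frequency factor to trade its $L^\infty$-norm for its $L^p$-norm at the cost of a power $\langle l\rangle^{\frac{n\alpha}{1-\alpha}\cdot\frac1p}$ (this, together with the convolution loss $\langle k\rangle^{\frac{n\alpha}{1-\alpha}(1/p-1)}$ from Proposition \ref{convolution} in the quasi-Banach range, is what produces the $n\alpha/p$-term of $s_0$); then one sums over the low index $l$ against the weight $\langle l\rangle^{-s/(1-\alpha)}$ over its admissible range, and convergence of that sum — together with the $\ell^q$-summation in the low index when $q$ is finite — is exactly what forces $s>\tfrac{n\alpha}{p}+n(1-\alpha)\bigl(1\vee\tfrac1p\vee\tfrac1q-\tfrac1q\bigr)$, the ``high--low part'' of $s_0$.

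I expect the high--high piece $\sum_{\langle k\rangle\sim\langle l\rangle\sim N,\ \langle m\rangle\lesssim N}\square_m^\alpha(\square_k^\alpha f\,\square_l^\alpha g)$ to be the main obstacle, since at each dyadic scale $N$ there are $\sim N^n$ pairs contributing to a single output block. Two counting facts of the type already established in \eqref{003} and \eqref{embed-0004} are needed: for fixed $k$ the partner $l$ with $(k,l)\in B_m$ is determined up to $O(1)$, and, dually, a fixed pair $(k,l)$ with $\langle k\rangle\sim\langle l\rangle\sim N$ reaches only $\sim(N/\langle m\rangle)^{n\alpha/(1-\alpha)}$ output blocks $m$ — and in the extreme-cancellation regime, where $\langle k\rangle^{\alpha/(1-\alpha)}k+\langle l\rangle^{\alpha/(1-\alpha)}l$ is as small as $\langle k\rangle^{\alpha/(1-\alpha)}$, the product is pushed down to frequency $\langle m\rangle\sim N^\alpha$ and reaches $\sim N^{n\alpha}$ blocks. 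Writing $\|\square_k^\alpha f\|_p=\langle k\rangle^{-s/(1-\alpha)}b_k$ with $\{b_k\}\in\ell^q$ and $\|\{b_k\}\|_{\ell^q}=\|f\|_{M_{p,q}^{s,\alpha}}$ (and likewise for $g$), estimating the generic term by $\|\square_k^\alpha f\|_p\|\square_l^\alpha g\|_p$ times the Bernstein/Proposition \ref{convolution} loss, and then summing the weight $\langle m\rangle^{s/(1-\alpha)}$ over all reachable $m$ and over all scales $N\gtrsim\langle m\rangle$, the worst configuration is $N$ large with $\langle m\rangle\sim N^\alpha$: there the weighted count contributes a factor $N^{\,n\alpha+\alpha s/(1-\alpha)}$ which must be balanced against the decay $N^{-2s/(1-\alpha)}$, so the dyadic sum $\sum_N N^{\,n\alpha+\alpha s/(1-\alpha)-2s/(1-\alpha)}$ converges precisely when $s>\tfrac{n\alpha(1-\alpha)}{2-\alpha}$. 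This is the origin of the characteristic term $\tfrac{n\alpha(1-\alpha)}{2-\alpha}$, and the case split $\mathbb{R}_+^2=\mathrm{D}_1\cup\mathrm{D}_2$ simply records which of the high--low, high--high and ``$\max(1,1/p,1/q)$'' contributions dominates on each polygonal piece, so that the maximum of these thresholds is exactly the $s_0$ displayed in Figure \ref{figure-algebra}.

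The remaining points are routine: when $q<1$ or $p<1$ the corresponding quasi-triangle inequality replaces the triangle inequality and only reshuffles constants; the passage between the dyadic reorganizations of the three paraproduct pieces and the full $M_{p,q}^{s,\alpha}$-norm is supplied by Lemma \ref{lemma-sequence} (equivalently, Proposition \ref{equivalent-norm}); the endpoint norms are compared using the elementary embedding Proposition \ref{embede}; and, as already said, the interior regions $\widehat{\mathrm{I}},\widehat{\mathrm{II}},\widehat{\mathrm{III}}$ and the starred regions come from Corollary \ref{interpolation} and Theorem \ref{duality}. The genuinely delicate step — and the reason the $\alpha$-modulation case is harder than the Besov and modulation cases — is the simultaneous bookkeeping, in the high--high piece, of (i) the number of contributing pairs per output block, (ii) the number of output blocks reached by a fixed pair, and (iii) the sum over the dyadic scale $N$, so that the resulting threshold is exactly $s_0$ and not something strictly larger.
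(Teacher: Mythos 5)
Your core bilinear analysis is essentially the paper's: the decomposition $fg=\sum_{k,l}\square_k^\alpha f\,\square_l^\alpha g$ split by relative frequency size is exactly the paper's partition into $\Omega_1,\Omega_2,\Omega_0$; your counting in the high--high piece (partner $l$ determined up to $O(1)$ for fixed $k$, a fixed pair reaching $\sim(N/\langle m\rangle)^{n\alpha/(1-\alpha)}$ output blocks, worst case $\langle m\rangle\sim N^\alpha$) reproduces the paper's $y$-parameter analysis for $\Omega_{0,2}$ and its estimate $|k_j-\widetilde k_j|\lesssim\langle k^{(1)}\rangle^{\alpha(1-y)/(1-\alpha)}$, and your balance $N^{n\alpha+\alpha s/(1-\alpha)}$ against $N^{-2s/(1-\alpha)}$ is precisely the mechanism that produces the $\tfrac{n\alpha(1-\alpha)}{2-\alpha}$ term there. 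The use of Bernstein/Proposition \ref{convolution} on the low-frequency factor and of Corollary \ref{interpolation} to fill in interior exponents also matches Steps 1--4 of the paper.

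There is, however, one step in your plan that fails: covering the ``starred regions'' by duality ``exactly as in the proofs of Theorems \ref{dilationP} and \ref{embedding}.'' Duality transfers a \emph{linear} bound $T:X\to Y$ to $T^*:Y^*\to X^*$, and in the scaling and embedding theorems the relevant operators are linear, which is why that device works there. For the bilinear map $(f,g)\mapsto fg$, dualizing $\|fg\|_X\lesssim\|f\|_X\|g\|_X$ via $\langle fh,g\rangle=\langle h,\bar f g\rangle$ only yields the module estimate $\|fh\|_{X^*}\lesssim\|f\|_X\|h\|_{X^*}$; it does not give the algebra property for $X^*$, and there is no identification of $(M^{s,\alpha}_{p,q})^*$ that would convert one algebra inequality into another. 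This is why the paper's proof of Theorem \ref{algebra} never invokes Theorem \ref{duality}: instead of dualizing, it proves additional direct endpoint estimates at $q=\infty$ (all $p$) and at $q=p<1$, and then runs a more elaborate interpolation scheme along the segments described in Step 4 and Figure \ref{figure-step-5}. As written, your plan leaves whichever regions you intended to reach by duality genuinely uncovered. A related, smaller concern: you propose to prove the estimate directly for \emph{all} $q$ at $p\in\{1,2,\infty\}$, but controlling the $\ell^q$-sum over output blocks $m$ of the high--high piece for general $1<q<\infty$ requires an extra H\"older/Jensen bookkeeping that you have not supplied; the paper sidesteps this by working only at $q\le1$ and $q=\infty$ and interpolating in $q$. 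Replacing your duality step by these two extra families of direct estimates would bring your argument in line with a complete proof.
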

In Section 7 we will give some counterexamples to show that $s_0$ is sharp if $(1/p, 1/q) \in  \mbox{D}_2 \cap \{p\ge 1\}$.  When $\big(1/p, \; 1/q \big)\in D_1$, it is not very clear for us to know the sharp low bound of the index $s$ for which $M_{p,q}^{s,\alpha}$ constitutes
a multiplication algebra.
As a straightforward consequence of Theorem \ref{algebra}, we have the following result for which $M^s_{p,q}$ is an algebra.

\begin{cor}\label{algebramod}
Assume that
\begin{equation*}
s>
\begin{cases}
 n \big(1-1\wedge\frac1q\big), & \quad \big(\frac1p,\frac1q\big)\in\mbox{D}_1; \\
 n \big(1\vee\frac1p\vee\frac1q-\frac1q\big),
& \quad
\big(\frac1p,\frac1q\big)\in\mathbb{R}_+^2\backslash\mbox{D}_1.
\end{cases}
\end{equation*}
Then $M_{p,q}^{s}$ is a multiplication algebra,
i.e.,
\begin{equation}\label{algebra-inequalitymod}
\|fg\|_{M_{p,q}^{s}}\lesssim\|f\|_{M_{p,q}^{s}}\|g\|_{M_{p,q}^{s}}
\end{equation}
holds for all $f,g\in M^s_{p,q}.$
\end{cor}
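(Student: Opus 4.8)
The plan is to obtain Corollary \ref{algebramod} as the special case $\alpha=0$ of Theorem \ref{algebra}. First I would record the identification $M_{p,q}^{s,0}(\mathbb{R}^n)=M_{p,q}^{s}(\mathbb{R}^n)$: when $\alpha=0$ the $\alpha$-decomposition degenerates into the uniform decomposition, since the centers $\langle k\rangle^{\alpha/(1-\alpha)}k$ collapse onto the lattice points $k$, the supports of the $\eta_k^\alpha$ have diameter $O(1)$, and the weight $\langle k\rangle^{s/(1-\alpha)}$ becomes $\langle k\rangle^{s}$; the definition \eqref{alphamodulation} applies verbatim because $\alpha=0$ satisfies $0\le\alpha<1$, and by Proposition \ref{equivalent-norm} the particular admissible partition of unity is immaterial, so we recover Feichtinger's modulation space with an equivalent (quasi-)norm.

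Next I would simply evaluate the threshold $s_0=s_0(p,q;0)$ from Theorem \ref{algebra}. Putting $\alpha=0$ kills every term carrying a factor $\alpha$ or $\tfrac{\alpha(1-\alpha)}{2-\alpha}$, while the factor $(1-\alpha)$ becomes $1$. Hence on $\mathrm{D}_1=\{(1/p,1/q):1/q\ge 2/p,\ 1/p\le 1/2\}$ one gets $s_0=n\big(1-1\wedge\tfrac1q\big)$, and on $\mathbb{R}_+^2\setminus\mathrm{D}_1$ one gets $s_0=n\big(1\vee\tfrac1p\vee\tfrac1q-\tfrac1q\big)$. These are exactly the thresholds in the statement, so for $s$ above them the algebra inequality \eqref{algebra-inequality} of Theorem \ref{algebra} specializes to \eqref{algebra-inequalitymod}, which is the assertion.

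There is essentially no genuine obstacle here: the argument is a substitution plus bookkeeping. The only point deserving a word of care is the identification $M_{p,q}^{s,0}=M_{p,q}^{s}$ noted above; once that convention is granted, the corollary follows in one line from Theorem \ref{algebra}.
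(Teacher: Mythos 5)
Your proposal is correct and is exactly the derivation the paper intends: the corollary is stated as a straightforward consequence of Theorem \ref{algebra}, obtained by setting $\alpha=0$ so that $M_{p,q}^{s,0}=M_{p,q}^{s}$ and the threshold $s_0(p,q;0)$ collapses to $n\big(1-1\wedge\tfrac1q\big)$ on $\mathrm{D}_1$ and $n\big(1\vee\tfrac1p\vee\tfrac1q-\tfrac1q\big)$ off $\mathrm{D}_1$. Your bookkeeping of the vanishing terms matches the paper's (implicit) argument.
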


A natural long standing question on modulation, $\alpha$-modulation and Besov spaces is: Can we reformulate $\alpha$-modulation spaces by interpolations  between modulation and Besov spaces, say,
\begin{align} \label{intques}
M^{s, \alpha}_{p,q} = ( M^{s_0, 0}_{p,q}, \ M^{s_1, 1}_{p,q})_\alpha, \quad \mbox{if} \ \   s= (1-\alpha)s_0 + \alpha s_1 ?
\end{align}
The answer is negative  at least for some special cases. Indeed, we see $M^{s_1, 1}_{p,q}$ and $M^{s_0, 0}_{p,q}$ are algebra if $s_1>n/p$ and $s_0> 0$. If \eqref{intques} holds, then $
M^{s, \alpha}_{p,q}  $ is an algebra if $s>n\alpha/p$, however, this is not true if $1<p<2, \ 0< q<1$, see Section 7. \\

\begin{cor}\label{algebraintque}
Let $0<\alpha<1$. Then \eqref{intques} does not hold if $1<p<2, \ 0< q<1$ and $s_0=0_+, \ s_1=n/p$.
\end{cor}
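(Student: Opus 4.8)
The plan is to argue by contradiction. Suppose \eqref{intques} holds with $s_0=0_+$, $s_1=n/p$, so that $s=(1-\alpha)s_0+\alpha\,n/p$. The idea is that, for the chosen indices, both endpoint spaces $M^{s_0,0}_{p,q}=M^{s_0}_{p,q}$ and $M^{s_1,1}_{p,q}=B^{n/p}_{p,q}$ are multiplication algebras, hence by bilinear complex interpolation (Proposition \ref{boundness}) the pointwise product is bounded on their interpolation space; were that space $M^{s,\alpha}_{p,q}$, then $M^{s,\alpha}_{p,q}$ would be a multiplication algebra for an $s$ lying strictly below the sharp threshold of Theorem \ref{algebra}, contradicting the counterexamples of Section 7.

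First I would verify the two endpoint statements. Since $1<p<2$ gives $1/p>1/2$, the point $(1/p,1/q)$ lies in $\mathbb{R}_+^2\setminus D_1$, so Corollary \ref{algebramod} (equivalently Theorem \ref{algebra} with $\alpha=0$) shows that $M^{s_0}_{p,q}$ is a multiplication algebra whenever $s_0>n\big(1\vee\tfrac1p\vee\tfrac1q-\tfrac1q\big)=n\big(\tfrac1q-\tfrac1q\big)=0$, in particular for every $s_0=0_+>0$. On the other side $M^{s_1,1}_{p,q}=B^{n/p}_{p,q}$, and since $0<q<1$ the Besov space $B^{n/p}_{p,q}$ is a multiplication algebra at the endpoint $s_1=n/p$ (cf. \cite{triebel}); alternatively one may take $s_1=n/p+\varepsilon$ and invoke the already quoted fact that $B^s_{p,q}$ is an algebra for $s>n/p$, which does not affect the conclusion. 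Hence the bilinear map $T(f,g)=fg$, well defined on $M^{s_0,0}_{p,q}\cap M^{s_1,1}_{p,q}\subset B^{n/p}_{p,q}\hookrightarrow L^\infty$, maps $M^{s_0,0}_{p,q}\times M^{s_0,0}_{p,q}\to M^{s_0,0}_{p,q}$ and $M^{s_1,1}_{p,q}\times M^{s_1,1}_{p,q}\to M^{s_1,1}_{p,q}$ boundedly.

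Next I would apply Proposition \ref{boundness} with $m=2$, $\theta=\alpha$, all first endpoint spaces equal to $M^{s_0,0}_{p,q}$, all second endpoint spaces equal to $M^{s_1,1}_{p,q}$, and targets $B_0=M^{s_0,0}_{p,q}$, $B_1=M^{s_1,1}_{p,q}$. This yields that $T$ is bounded from $\big(M^{s_0,0}_{p,q},M^{s_1,1}_{p,q}\big)_\alpha\times\big(M^{s_0,0}_{p,q},M^{s_1,1}_{p,q}\big)_\alpha$ into $\big(M^{s_0,0}_{p,q},M^{s_1,1}_{p,q}\big)_\alpha$. Under \eqref{intques} this interpolation space equals $M^{s,\alpha}_{p,q}$ with $s=(1-\alpha)s_0+\alpha n/p$, so $M^{s,\alpha}_{p,q}$ would be a multiplication algebra. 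To rule this out, observe that $(1/p,1/q)\in D_2$ (again $1/p>1/2$) and $1/q>1$, so the sharp algebra threshold of Theorem \ref{algebra} is
$$
s_0(p,q;\alpha)=\frac{n\alpha}{p}+n(1-\alpha)\Big(\tfrac1q-\tfrac1q\Big)+\frac{n\alpha(1-\alpha)}{2-\alpha}\Big(\tfrac1q-1\Big)=\frac{n\alpha}{p}+\frac{n\alpha(1-\alpha)}{2-\alpha}\Big(\tfrac1q-1\Big)>\frac{n\alpha}{p}.
$$
Choosing the arbitrarily small positive number $s_0$ with $s_0<\frac{n\alpha}{2-\alpha}\big(\tfrac1q-1\big)$ gives
$$
\frac{n\alpha}{p}<s=(1-\alpha)s_0+\frac{\alpha n}{p}<\frac{n\alpha}{p}+\frac{n\alpha(1-\alpha)}{2-\alpha}\Big(\tfrac1q-1\Big)=s_0(p,q;\alpha),
$$
so $s$ lies strictly below the sharp threshold. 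By the counterexamples of Section 7 (sharpness of $s_0$ on $D_2\cap\{p\ge1\}$) $M^{s,\alpha}_{p,q}$ is \emph{not} a multiplication algebra, contradicting the previous paragraph; hence \eqref{intques} cannot hold.

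Inside this argument there is no deep difficulty once the Section 7 counterexamples are in hand; the only thing needing care is the bookkeeping ensuring that $s=(1-\alpha)s_0+\alpha n/p$ can be made simultaneously $>n\alpha/p$ (so that \eqref{intques} would force the algebra property) and $<s_0(p,q;\alpha)$ (so that the counterexample applies). Both are possible precisely because $q<1$, which makes $\tfrac1q-1>0$ and opens the genuine gap $s_0(p,q;\alpha)-\tfrac{n\alpha}{p}=\tfrac{n\alpha(1-\alpha)}{2-\alpha}\big(\tfrac1q-1\big)>0$; this, together with $1<p<2$ (which places us in $D_2$ and also makes $M^{s_0}_{p,q}$ an algebra for all $s_0>0$), is exactly where the hypotheses $1<p<2$, $0<q<1$ enter.
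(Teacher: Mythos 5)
Your argument is correct and is essentially the paper's own reasoning: the paper proves this corollary via the two-sentence remark preceding it (endpoint spaces are algebras for $s_0>0$, $s_1>n/p$; interpolation would force $M^{s,\alpha}_{p,q}$ to be an algebra for $s>n\alpha/p$; Theorem \ref{prop6.5} of Section 7 rules this out since the sharp threshold on $D_2\cap\{p>1\}$ exceeds $n\alpha/p$ when $q<1$). You have merely filled in the bookkeeping — verifying the endpoint algebra properties via Corollary \ref{algebramod} and the Besov result, invoking Proposition \ref{boundness}, and checking the gap $n\alpha/p<s<\frac{n\alpha}{p}+\frac{n\alpha(1-\alpha)}{2-\alpha}\bigl(\frac1q-1\bigr)$ is nonempty — all of which is consistent with the paper's intent.
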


\noindent \begin{proof} [Proof  of Theorem \ref{algebra}.]
We start with some notations and basic conclusions. For every
$(k^{(1)},k^{(2)})\in\mathbb{Z}^{2n}$, we introduce
\begin{equation}
\Lambda\big(k^{(1)},k^{(2)}\big)=\left\{k\in\mathbb{Z}^n:\square_k^{\alpha}(\square_{k^{(1)}}^{\alpha}f\square_{k^{(2)}}^{\alpha} g)\neq 0 \right\};
\end{equation}
and for every $k\in\mathbb{Z}^n$, we introduce
\begin{equation*}
\Lambda(k)=\left\{\big(k^{(1)},k^{(2)}\big)\in\mathbb{Z}^{2n}:\square_k^{\alpha}(\square_{k^{(1)}}^{\alpha}f\square_{k^{(2)}}^{\alpha} g)\neq 0
 \right\}.
\end{equation*}
It is worth to mention that $\Lambda\big(k^{(1)},k^{(2)}\big)$ and $\Lambda(k)$ are independent of $f$ and $g$.  From \eqref{eta-2} we see that for any $k\in\Lambda(k^{(1)},k^{(2)})$, or $(k^{(1)},k^{(2)})  \in\Lambda (k)$,  $k^{(1)},k^{(2)}$ and $k$ satisfy
\begin{subequations}\label{rela}
\begin{align}
&\langle k\rangle^{\frac{\alpha}{1-\alpha}}(k_j-C)<\langle
k^{(1)}\rangle^{\frac{\alpha}{1-\alpha}}(k^{(1)}_j+C)+\langle
k^{(2)}\rangle^{\frac{\alpha}{1-\alpha}}(k^{(2)}_j+C), \label{rela-1}\\
&\langle k\rangle^{\frac{\alpha}{1-\alpha}}(k_j+C)>\langle
k^{(1)}\rangle^{\frac{\alpha}{1-\alpha}}(k^{(1)}_j-C)+\langle
k^{(2)}\rangle^{\frac{\alpha}{1-\alpha}}(k^{(2)}_j-C)
\label{rela-2}
\end{align}
\end{subequations}
for $j=1,2,\cdots,n$. Let $ \langle k_{\max} \rangle    $, $ \langle k_{\rm min} \rangle $  and $ \langle k_{\rm med} \rangle  $ to be the maximal, minimal and medial ones in $\langle k \rangle, \langle k^{(1)} \rangle $ and $\langle k^{(2)} \rangle$, respectively.  If \eqref{rela} holds, then
\begin{equation} \label{maxmed}
 \langle k_{\max} \rangle \sim  \langle k_{\rm med} \rangle, \ \ \# \big\{ k_{\max}: \ k_{\max}, \ k_{\rm med}, \ k_{\min} \ \mbox{satisfy}  \ \eqref{rela} \big\}\lesssim 1.
\end{equation}
We write
\begin{equation}
\begin{split}
K_j\big(k^{(1)},k^{(2)}\big) &=\langle
k^{(1)}\rangle^{\frac{\alpha}{1-\alpha}}k^{(1)}_j+\langle
k^{(2)}\rangle^{\frac{\alpha}{1-\alpha}}k^{(2)}_j; \\
K\big(k^{(1)},k^{(2)}\big) &=\max_{1\leqslant j\leqslant
n}|K_j(k^{(1)},k^{(2)})|.
\end{split}
\end{equation}
In order to get  more precise estimates, we divide
$\mathbb{Z}^{2n}$ of all $(k^{(1)},k^{(2)})$ into
\begin{equation}
\begin{split}
\Omega_0
&=\left\{\big(k^{(1)},k^{(2)}\big)\in\mathbb{Z}^{2n}:\langle
k^{(1)}\rangle\sim\langle k^{(2)}\rangle\right\}, \\
\Omega_1 &=\left\{\big(k^{(1)},k^{(2)}\big)\in\mathbb{Z}^{2n}:\langle k^{(1)}\rangle\gg\langle k^{(2)}\rangle\right\}, \\
\Omega_2
&=\left\{\big(k^{(1)},k^{(2)}\big)\in\mathbb{Z}^{2n}:\langle
k^{(1)}\rangle\ll\langle k^{(2)}\rangle\right\},
\end{split}
\end{equation}
and separate  $\Omega_0$   into
\begin{equation}
\begin{split}
\Omega_{0,1}
&=\left\{\big(k^{(1)},k^{(2)}\big)\in\Omega_0:K\big(k^{(1)},k^{(2)}\big)\lesssim\langle
k^{(1)}\rangle^{\frac{\alpha}{1-\alpha}}\right\}; \\
\Omega_{0,2}
&=\left\{\big(k^{(1)},k^{(2)}\big)\in\Omega_0:K\big(k^{(1)},k^{(2)}\big)\gg\langle
k^{(1)}\rangle^{\frac{\alpha}{1-\alpha}}\right\}.
\end{split}
\end{equation}
If $(k^{(1)},k^{(2)})\in\Omega_{0,1}$, from \eqref{rela} it is easy to see that
\begin{equation} \label{3.8}
\langle k\rangle \lesssim  \langle k^{(1)}\rangle^{\alpha}.
\end{equation}
Let
$(k^{(1)},k^{(2)})\in\Omega_{0,2}$ be fixed.   There exists some $y:=y(k^{(1)},k^{(2)}) \in(\alpha,1]$
such that
\begin{equation}\label{rela-K-k^1}
K_i(k^{(1)},k^{(2)})=K(k^{(1)},k^{(2)})\sim\langle
k^{(1)}\rangle^{\frac{y}{1-\alpha}},
\end{equation}
for some $i $ with $ 1\leqslant i\leqslant n$. We can assume that $K_i(k^{(1)},k^{(2)})>0$.  By
\eqref{rela} and \eqref{rela-K-k^1} we have
\begin{subequations}\label{jjjjjjjjjjjjjjjjj}
\begin{align}
\langle k\rangle^{\frac{\alpha}{1-\alpha}}(k_i-C)
<K_i(k^{(1)},k^{(2)})+C(\langle
k^{(1)}\rangle^{\frac{\alpha}{1-\alpha}}+\langle
k^{(2)}\rangle^{\frac{\alpha}{1-\alpha}}) &\lesssim\langle
k^{(1)}\rangle^{\frac{y}{1-\alpha}}, \label{jjjjjjjjjjjjjjjjj-1}\\
\langle k\rangle^{\frac{\alpha}{1-\alpha}}(k_i+C)
>K_i(k^{(1)},k^{(2)})-C(\langle
k^{(1)}\rangle^{\frac{\alpha}{1-\alpha}}+\langle
k^{(2)}\rangle^{\frac{\alpha}{1-\alpha}}) &\gtrsim\langle
k^{(1)}\rangle^{\frac{y}{1-\alpha}}. \label{jjjjjjjjjjjjjjjjj-2}
\end{align}
\end{subequations}
For every $k\in\Lambda(k^{(1)},k^{(2)})\cap\mathbb{R}^n_{\widetilde{i}}$,
we substitute $\widetilde{i}$ for $j$ in \eqref{rela}, thus
\eqref{rela-1} and \eqref{rela-2} are rewritten as
\begin{subequations}\label{kkkkkkkkkkkkkkkkkkk}
\begin{align}
\langle k\rangle^{\frac{\alpha}{1-\alpha}}(k_{\widetilde{i}}-C)
&<K_{\widetilde{i}}(k^{(1)},k^{(2)})+C(\langle
k^{(1)}\rangle^{\frac{\alpha}{1-\alpha}}+\langle
k^{(2)}\rangle^{\frac{\alpha}{1-\alpha}}); \label{kkkkkkkkkkkkkkkkkkk-1}\\
\langle k\rangle^{\frac{\alpha}{1-\alpha}}(k_{\widetilde{i}}+C)
&>K_{\widetilde{i}}(k^{(1)},k^{(2)})-C(\langle
k^{(1)}\rangle^{\frac{\alpha}{1-\alpha}}+\langle
k^{(2)}\rangle^{\frac{\alpha}{1-\alpha}}).
\label{kkkkkkkkkkkkkkkkkkk-2}
\end{align}
\end{subequations}
For such $k$, we claim that
\begin{equation}\label{algebra-ggggggggggg}
\langle k\rangle\sim\langle k^{(1)}\rangle^y.
\end{equation}
\eqref{algebra-ggggggggggg} is obvious when
$|k^{(1)}|\lesssim1$ and so, it suffices to consider \eqref{algebra-ggggggggggg} in the case
$|k^{(1)}|\gg1$. If either $|k_i|\sim\langle k\rangle$ or
$|K_{\widetilde{i}}(k^{(1)},k^{(2)})|\sim K(k^{(1)},k^{(2)})$
exists, \eqref{algebra-ggggggggggg} follows from
\eqref{jjjjjjjjjjjjjjjjj} or \eqref{kkkkkkkkkkkkkkkkkkk} directly.
Otherwise, we see that $|k_i|\ll\langle k\rangle$ and
$|K_{\widetilde{i}}(k^{(1)},k^{(2)})|\ll K(k^{(1)},k^{(2)})$.
When $k_{\widetilde{i}}>0$, we let
\eqref{kkkkkkkkkkkkkkkkkkk-1}+\eqref{jjjjjjjjjjjjjjjjj-1} and
\eqref{kkkkkkkkkkkkkkkkkkk-2}+\eqref{jjjjjjjjjjjjjjjjj-2}; whereas
when $k_{\widetilde{i}}<0$, we let
\eqref{kkkkkkkkkkkkkkkkkkk-2}$\times(-1)+$\eqref{jjjjjjjjjjjjjjjjj-1}
and
\eqref{jjjjjjjjjjjjjjjjj-1}$\times(-1)+$\eqref{jjjjjjjjjjjjjjjjj-2},
then we get
\begin{equation*}
\langle k\rangle^{\frac{1}{1-\alpha}}\gtrsim\langle
k^{(1)}\rangle^{\frac{y}{1-\alpha}}, \quad \langle
k\rangle^{\frac{1}{1-\alpha}}\lesssim\langle
k^{(1)}\rangle^{\frac{y}{1-\alpha}},
\end{equation*}
which imply \eqref{algebra-ggggggggggg}. Let $\widetilde{k} = (k_1,...,k_{j-1}, \tilde{k}_j,k_{j+1},...,k_n)\in \Lambda (k^{(1)},k^{(2)})$.  In view of (\ref{rela-1}) and (\ref{rela-2}) and $(k^{(1)},k^{(2)})\in\Omega_{0,2}$, we have
\begin{equation*}
\left|\langle
k\rangle^{\frac{\alpha}{1-\alpha}}k_j-\langle\widetilde{k}\rangle^{\frac{\alpha}{1-\alpha}}\widetilde{k}_j\right|\lesssim\langle
k^{(1)}\rangle^{\frac{\alpha}{1-\alpha}}+\langle
k\rangle^{\frac{\alpha}{1-\alpha}}+\langle\widetilde{k}\rangle^{\frac{\alpha}{1-\alpha}}.
\end{equation*}
Thus Taylor's theorem, combined with \eqref{algebra-ggggggggggg},
gives
\begin{equation}\label{algebra-set-length}
\left|k_j-\widetilde{k}_j\right|\lesssim\langle
k^{(1)}\rangle^{\frac{\alpha(1-y)}{1-\alpha}}.
\end{equation}
For $\big(k^{(1)},k^{(2)}\big)\in\Omega_1\cup\Omega_2$, in view of \eqref{maxmed} we have
\begin{equation}\label{algebra-relation-1-2-fff}
\langle k\rangle\sim\langle k^{(1)}\rangle\vee\langle
k^{(2)}\rangle
\end{equation}
and
\begin{equation}\label{algebra-relation-1-2-ggg}
\#\Lambda\big(k^{(1)},k^{(2)}\big)\sim1.
\end{equation}

In what follows, we separate the proof into four steps. In Steps 1-3, we prove
\eqref{algebra-inequality} for certain $p$ and
$q$. In Step 4, applying the complex interpolation together with the
conclusions obtained in the previous three steps,  we can get
\eqref{algebra-inequality}.

\noindent{\bf Step 1.} $\boldsymbol{1\leqslant p\leqslant\infty,q \leqslant 1.}$
Suppose $f,g\in M_{p,q}^{s,\alpha}$, from the triangle inequality
and the embedding $\ell^q\subset\ell^1$, we have
\begin{equation}\label{algebra-I-II-hat}
\begin{split}
\|fg\|_{M_{p,q}^{s,\alpha}} &=\left(\sum_{k\in\mathbb{Z}^n}\langle
k\rangle^{\frac{sq}{1-\alpha}}\|\square_k^{\alpha}(fg)\|_p^q\right)^{\frac1q}
\\
&\leqslant\left[\sum_k\langle
k\rangle^{\frac{sq}{1-\alpha}}\left(\sum_{k^{(1)},k^{(2)}}\|\square_k^{\alpha}(\square_{k^{(1)}}^{\alpha}f\square_{k^{(2)}}^{\alpha}g)\|_p\right)^q\right]^{\frac1q}
\\
&\leqslant\left(\sum_k\langle
k\rangle^{\frac{sq}{1-\alpha}}\sum_{k^{(1)},k^{(2)}}\|\square_k^{\alpha}(\square_{k^{(1)}}^{\alpha}f\square_{k^{(2)}}^{\alpha}g)\|_p^q\right)^{\frac1q}
\\
&=\left(\sum_{\ell=0}^2\sum_{(k^{(1)},k^{(2)})\in\Omega_{\ell}}\sum_{k\in\Lambda(k^{(1)},k^{(2)})}\langle
k\rangle^{\frac{sq}{1-\alpha}}\|\square_k^{\alpha}(\square_{k^{(1)}}^{\alpha}f\square_{k^{(2)}}^{\alpha}g)\|_p^q\right)^{\frac1q}.
\end{split}
\end{equation}
Applying the multiplier estimate and H\"older's inequality, we see that
\begin{align}\label{sum-p-1-1}
\|\square_k^{\alpha}(\square_{k^{(1)}}^{\alpha}f\square_{k^{(2)}}^{\alpha}g)\|_p \lesssim  \| \square_{k^{(1)}}^{\alpha}f\square_{k^{(2)}}^{\alpha}g \|_p  \lesssim \| \square_{k^{(1)}}^{\alpha}f \|_{p} \|\square_{k^{(2)}}^{\alpha}g \|_\infty.
\end{align}
For $(k^{(1)},k^{(2)})\in\Omega_{0,1}$, when $p=1$ and $s\geqslant
n\alpha+\frac{n\alpha(1-\alpha)}{2-\alpha}\big(\frac1q-1\big)$, we have
\begin{equation}\label{algebra-I-II-hat-1}
\begin{split}
&\sum_{(k^{(1)},k^{(2)})\in\Omega_{0,1}}\sum_{k\in\Lambda(k^{(1)},k^{(2)})}\langle
k\rangle^{\frac{sq}{1-\alpha}}\|\square_k^{\alpha}(\square_{k^{(1)}}^{\alpha}f\square_{k^{(2)}}^{\alpha}g)\|_1^q
\\
&\qquad\lesssim\sum_{(k^{(1)},k^{(2)})}\langle
k^{(1)}\rangle^{\frac{sq\alpha}{1-\alpha}+n\alpha}\|\square_{k^{(1)}}^{\alpha}f\|_1^q\langle
k^{(2)}\rangle^{\frac{nq\alpha}{1-\alpha}}\|\square_{k^{(2)}}^{\alpha}g\|_1^q
\leqslant\|f\|_{M_{1,q}^{s,\alpha}}^q\|g\|_{M_{1,q}^{s,\alpha}}^q.
\end{split}
\end{equation}
If $p=2$ and
$s\geqslant\frac{n\alpha}{2}+\frac{n\alpha(1-\alpha)}{2-\alpha}\big(\frac1q-1\big)$, by Plancherel and Jensen's inequality\footnote{$a^\theta_1+...+ a^\theta_N \le N^{1-\theta} (a_1+...+a_N)^\theta$ for $\theta \in (0,1)$},
 we have
\begin{equation}\label{algebra-I-II-hat-2}
\begin{split}
&\sum_{(k^{(1)},k^{(2)})\in\Omega_{0,1}}\sum_{k\in\Lambda(k^{(1)},k^{(2)})}\langle
k\rangle^{\frac{sq}{1-\alpha}}\|\square_k^{\alpha}(\square_{k^{(1)}}^{\alpha}f\square_{k^{(2)}}^{}g)\|_2^q
\\
&\qquad\lesssim\sum_{(k^{(1)},k^{(2)})}\sup_{k\in\Lambda(k^{(1)},k^{(2)})}\langle
k\rangle^{\frac{sq}{1-\alpha}}\sum_{k\in\Lambda(k^{(1)},k^{(2)})}\|\square_k^{\alpha}(\square_{k^{(1)}}^{\alpha}f\square_{(2)}^{\alpha}g)\|_2^q
\\
&\qquad\lesssim\sum_{(k^{(1)},k^{(2)})}\langle
k^{(1)}\rangle^{\frac{sq\alpha}{1-\alpha}} [\#\Lambda(k^{(1)},k^{(2)})]^{1-\frac{q}{2}}\|\square_{k^{(1)}}^{\alpha}f\square_{k^{(2)}}^{\alpha}g\|_2^q
\\
&\qquad\lesssim\sum_{(k^{(1)},k^{(2)})}\langle
k^{(1)}\rangle^{\frac{sq\alpha}{1-\alpha}+n\alpha\big(1-\frac{q}{2}\big)}\|\square_{k^{(1)}}^{\alpha}f\|_2^q\langle
k^{(2)}\rangle^{\frac{nq\alpha}{2(1-\alpha)}}\|\square_{k^{(2)}}^{\alpha}g\|_2^q\leqslant\|f\|_{M_{2,q}^{s,\alpha}}^q\|g\|_{M_{2,q}^{s,\alpha}}^q.
\end{split}
\end{equation}
For $p=\infty$ and
$s\geqslant\frac{n\alpha(1-\alpha)}{2-\alpha}\frac1q$, we have
\begin{equation}\label{algebra-I-II-hat-infty}
\begin{split}
&\sum_{(k^{(1)},k^{(2)})\in\Omega_{0,1}}\sum_{k\in\Lambda(k^{(1)},k^{(2)})}\langle
k\rangle^{\frac{sq}{1-\alpha}}\|\square_k^{\alpha}(\square_{k^{(1)}}^{\alpha}f\square_{k^{(2)}}^{\alpha}g)\|_{\infty}^q
\\
&\qquad\lesssim\sum_{(k^{(1)},k^{(2)})}\langle
k^{(1)}\rangle^{\frac{sq\alpha}{1-\alpha}+n\alpha}\|\square_{k^{(1)}}^{\alpha}f\|_{\infty}^q\|\square_{k^{(2)}}^{\alpha}g\|_{\infty}^q\leqslant\|f\|_{M_{\infty,q}^{s,\alpha}}^q\|g\|_{M_{\infty,q}^{s,\alpha}}^q.
\end{split}
\end{equation}

For $(k^{(1)},k^{(2)})\in\Omega_{0,2}$, when $s\geqslant
n\alpha (1+ (1-\alpha)/q)/(2-\alpha)$, we see that
$$
2s \geqslant sy + n\alpha \frac{(1+q-y)}{q}.
$$
It follows that
\begin{equation}\label{algebra-mid-1}
\begin{split}
&\sum_{(k^{(1)},k^{(2)})\in\Omega_{0,2}}\sum_{k\in\Lambda(k^{(1)},k^{(2)})}\langle
k\rangle^{\frac{s q}{1-\alpha}}\|\square_k^{\alpha}(\square_{k^{(1)}}^{\alpha}f\square_{k^{(2)}}^{\alpha}g)\|^q_1
\\
&\qquad \lesssim \sum_{k^{(1)},k^{(2)}}\langle
k^{(1)}\rangle^{\frac{syq}{1-\alpha}+\frac{n\alpha}{1-\alpha}(1-y)}\|\square_{k^{(1)}}^{\alpha}f\|^q_1
\langle
k^{(2)}\rangle^{\frac{n\alpha q}{1-\alpha}}\|\square_{k^{(2)}}^{\alpha}g\|^q_1
\lesssim\|f\|_{M_{1,q}^{s,\alpha}}\|g\|_{M_{1,q}^{s,\alpha}}.
\end{split}
\end{equation}
When $p=\infty$ and $s\geqslant\frac{n\alpha(1-\alpha)}{q(2-\alpha)}$,
it is suffices to get
\begin{equation}\label{algebra-mid-infty}
\begin{split}
&\sum_{(k^{(1)},k^{(2)})\in\Omega_{0,2}}\sum_{k\in\Lambda(k^{(1)},k^{(2)})}\langle
k\rangle^{\frac{sq}{1-\alpha}}\|\square_k^{\alpha}(\square_{k^{(1)}}^{\alpha}f\square_{k^{(2)}}^{\alpha}g)\|^q_{\infty}
\\
&\qquad\lesssim\sum_{k^{(1)},k^{(2)}}\langle
k^{(1)}\rangle^{\frac{sy q}{1-\alpha} +\frac{n\alpha}{1-\alpha}(1-y)}\|\square_{k^{(1)}}^{\alpha}f\|_{\infty}\|\square_{k^{(2)}}^{\alpha}g\|_{\infty}
\lesssim\|f\|_{M_{\infty,q}^{s,\alpha}}\|g\|_{M_{\infty,q}^{s,\alpha}};
\end{split}
\end{equation}
If $p=2$ and $s\geqslant\frac{n\alpha (\alpha/2+ (1-\alpha)/q)}{2-\alpha}$, by Plancherel and Jensen's inequality,
\begin{equation}\label{algebra-mid-2}
\begin{split}
&\sum_{(k^{(1)},k^{(2)})\in\Omega_{0,2}}\sum_{k\in\Lambda(k^{(1)},k^{(2)})}\langle
k\rangle^{\frac{s q}{1-\alpha}}\|\square_k^{\alpha}(\square_{k^{(1)}}^{\alpha}f\square_{k^{(2)}}^{\alpha}g)\|^q_2
\\
&\qquad \lesssim\sum_{k^{(1)},k^{(2)}}\langle
k^{(1)}\rangle^{\frac{sy q}{1-\alpha}} [\# \Lambda(k^{(1)},k^{(2)})]^{1-q/2} \|\square_{k^{(1)}}^{\alpha}f\square_{k^{(2)}}^{\alpha}g\|^q_2
\\
&\qquad \lesssim \sum_{k^{(1)},k^{(2)}}\langle
k^{(1)}\rangle^{\frac{sy q}{1-\alpha}+\frac{n\alpha}{(1-\alpha)}(1-y)(1-q/2)}\|\square_{k^{(1)}}^{\alpha}f\|^q_2 \langle
k^{(2)}\rangle^{\frac{n\alpha q}{2(1-\alpha)}}\|\square_{k^{(2)}}^{\alpha}g\|^q_2\\
&\qquad \lesssim\|f\|_{M_{2,q}^{s,\alpha}}\|g\|_{M_{2,q}^{s,\alpha}}.
\end{split}
\end{equation}
From
\eqref{algebra-relation-1-2-fff}, \eqref{algebra-relation-1-2-ggg},
\eqref{embede-1}, if $(k^{(1)},k^{(2)})\in\Omega_1$, when
$s\geqslant\frac{n\alpha}{p}$,
\begin{equation}\label{algebra-high-1}
\begin{split}
&\sum_{(k^{(1)},k^{(2)})\in\Omega_1} \ \sum_{k\in\Lambda(k^{(1)},k^{(2)})}\langle
k\rangle^{\frac{s q}{1-\alpha}}\|\square_k^{\alpha}(\square_{k^{(1)}}^{\alpha}f\square_{k^{(2)}}^{\alpha}g)\|^q_p
\\
&\qquad\lesssim \sum_{k^{(1)}\in\mathbb{Z}^n}\langle
k^{(1)}\rangle^{\frac{s q}{1-\alpha}}\|\square_{k^{(1)}}^{\alpha}f\|^q_p \sum_{k^{(2)}\in\mathbb{Z}^n} \|\square_{k^{(2)}}^{\alpha}g\|^q_{\infty}
\lesssim\|f\|_{M_{p,q}^{s,\alpha}}\|g\|_{M_{\infty,q}^{0,\alpha}}
\\
&\qquad\lesssim\|f\|_{M_{p,q}^{s,\alpha}}\|g\|_{M_{p,q}^{s,\alpha}}.
\end{split}
\end{equation}
Similarly,  if $(k^{(1)},k^{(2)})\in\Omega_2$ and
$s\geqslant\frac{n\alpha}{p}$,
 \begin{equation}\label{algebra-I-II-hat-23}
\begin{split}
&\sum_{(k^{(1)},k^{(2)})\in \Omega_2} \ \sum_{k\in\Lambda(k^{(1)},k^{(2)})}\langle
k\rangle^{\frac{sq}{1-\alpha}}\|\square_k^{\alpha}(\square_{k^{(1)}}^{\alpha}f\square_{k^{(2)}}^{\alpha}g)\|_p^q
  \lesssim \|f\|_{M_{p,q}^{s,\alpha}}\|g\|_{M_{p,q}^{s,\alpha}}.
\end{split}
\end{equation}
By complex interpolation,
\eqref{algebra-I-II-hat}-\eqref{algebra-I-II-hat-23} imply that $M_{p,q}^{s,\alpha}$ is
a multiplication algebra
as long as $s\geqslant s_0$ for some $s_0$. More precisely, when $1\leqslant
p\leqslant2$, from \eqref{algebra-I-II-hat-1} and
\eqref{algebra-I-II-hat-2}, we get
$s_0=\frac{n\alpha}{p}+\frac{n\alpha(1-\alpha)}{2-\alpha}\big(\frac1q-1\big)$;
and when $2<p\leqslant\infty$, from \eqref{algebra-I-II-hat-2} and
\eqref{algebra-I-II-hat-infty}, we get $s_0=\frac{n\alpha}{p}
+\frac{n\alpha(1-\alpha)}{2-\alpha}\big(\frac1q-\frac{2}{p}\big)$.

\noindent{\bf Step 2.} $\boldsymbol{0 < p\leqslant\infty,q=\infty}.$
First, we consider the case $1\leqslant p \leqslant \infty$. Suppose $f,g\in M_{p,\infty}^{s,\alpha}$, from the triangle
inequality, we have
\begin{equation}\label{algebra-p>1-q=infty}
\begin{split}
\|fg\|_{M_{p,\infty}^{s,\alpha}} &=\sup_{k\in\mathbb{Z}^n}\langle
k\rangle^{\frac{s}{1-\alpha}}\|\square_k^{\alpha}(fg)\|_p \\
&\leqslant\sup_{k\in\mathbb{Z}^n}\langle
k\rangle^{\frac{s}{1-\alpha}}\sum_{(k^{(1)},k^{(2)})\in\Lambda(k)}\|\square_k^{\alpha}(\square_{k^{(1)}}^{\alpha}f\square_{k^{(2)}}^{\alpha}g)\|_p
\\
&=\sup_{k\in\mathbb{Z}^n}\sum_{\ell=0}^2\sum_{(k^{(1)},k^{(2)})\in\Lambda(k)\cap\Omega_{\ell}}\langle
k\rangle^{\frac{s}{1-\alpha}}\|\square_k^{\alpha}(\square_{k^{(1)}}^{\alpha}f\square_{k^{(2)}}^{\alpha}g)\|_p.
\end{split}
\end{equation}
For a $\Psi\subset\mathbb{Z}^{2n}$, we denote
\begin{equation*}
\begin{split}
\Psi^{\bot}_1 =\left\{k^{(1)}\in\mathbb{Z}^n: \exists
k^{(2)}\in\mathbb{Z}^n\;\mbox{s.t.}\;\big(k^{(1)},k^{(2)}\big)\in\Psi\right\};
\\
\Psi^{\bot}_2 =\left\{k^{(2)}\in\mathbb{Z}^n: \exists
k^{(1)}\in\mathbb{Z}^n\;\mbox{s.t.}\;\big(k^{(1)},k^{(2)}\big)\in\Psi\right\}.
\end{split}
\end{equation*}
Let $s>\frac{n\alpha}{p}+n(1-\alpha)$. For any
$k^{(2)}\in\left\{\{\Omega_0\cup\Omega_1\}\cap\Lambda(k)\right\}_2^{\bot}$ with
every fixed $k$, noticing \eqref{maxmed}, we easily see $\#\Lambda(-k^{(2)},k)\lesssim1$.
Inserting \eqref{sum-p-1-1} and using
\eqref{embede-2}, we obtain
\begin{equation}\label{algebra-p>1-q=infty-1}
\begin{split}
&\sum_{(k^{(1)},k^{(2)})\in\{\Omega_0\cup\Omega_1\}\cap\Lambda(k)}\langle
k\rangle^{\frac{s}{1-\alpha}}\|\square_k^{\alpha}(\square_{k^{(1)}}^{\alpha}f\square_{k^{(2)}}^{\alpha}g)\|_p
\\
&\qquad\lesssim\sup_{k^{(1)}\in\big\{\{\Omega_0\cup\Omega_1\}\cap\Lambda(k)\big\}^{\bot}_1}\langle
k\rangle^{\frac{s}{1-\alpha}}\|\square_{k^{(1)}}^{\alpha}f\|_p\sum_{k^{(1)}\in\big\{\{\Omega_0\cup\Omega_1\}\cap\Lambda(k)\big\}^{\bot}_1}\sum_{k^{(2)}\in\Lambda(-k^{(1)},k)}\|\square_{k^{(2)}}^{\alpha}g\|_{\infty}
\\
&\qquad\lesssim\sup_{k^{(1)}\in\mathbb{Z}^n}\langle
k^{(1)}\rangle^{\frac{s}{1-\alpha}}\|\square_{k^{(1)}}^{\alpha}f\|_p\sum_{k^{(2)}\in\big\{\{\Omega_0\cup\Omega_1\}\cap\Lambda(k)\big\}^{\bot}_2}\sum_{k^{(1)}\in\Lambda(-k^{(2)},k)}\|\square_{k^{(2)}}^{\alpha}g\|_{\infty}
\\
&\qquad\lesssim\|f\|_{M_{p,\infty}^{s,\alpha}}\|g\|_{M_{\infty,1}^{0,\alpha}}\lesssim\|f\|_{M_{p,\infty}^{s,\alpha}}\|g\|_{M_{p,\infty}^{s,\alpha}}.
\end{split}
\end{equation}
For $k^{(1)}\in\left\{\Omega_2\cap\Lambda(k)\right\}_1^{\bot}$ with every fixed
$k$, symmetrically, we have
\begin{equation}\label{algebra-p>1-q=infty-2}
\begin{split}
&\sum_{(k^{(1)},k^{(2)})\in\Omega_2\cap\Lambda(k)}\langle
k\rangle^{\frac{s}{1-\alpha}}\|\square_k^{\alpha}(\square_{k^{(1)}}^{\alpha}f\square_{k^{(2)}}^{\alpha}g)\|_p
\\
&\qquad\lesssim\sup_{k^{(2)}\in \mathbb{Z}^n} \langle
k^{(2)}\rangle^{\frac{s}{1-\alpha}}\|\square_{k^{(2)}}^{\alpha}g\|_p\sum_{k^{(1)}\in \left\{\Omega_2\cap\Lambda(k)\right\}_1^{\bot}} \ \sum_{k^{(2)}\in \Lambda(-k^{(1)},k)}\|\square_{k^{(1)}}^{\alpha}f\|_{\infty}
\\
&\qquad\lesssim\|f\|_{M_{\infty,1}^{0,\alpha}}\|g\|_{M_{p,\infty}^{s,\alpha}}\lesssim\|f\|_{M_{p,\infty}^{s,\alpha}}\|g\|_{M_{p,\infty}^{s,\alpha}}.
\end{split}
\end{equation}
Combining \eqref{algebra-p>1-q=infty}--\eqref{algebra-p>1-q=infty-2},
we know when $s>\frac{n\alpha}{p}+n(1-\alpha)$,
$M_{p,\infty}^{s,\alpha}$ is a multiplication algebra.

Next, we consider the case $0<p<1$ and $q=\infty$.  Suppose that $f,g\in M_{p,\infty}^{s,\alpha}$.  It follows from the
embedding $\ell^p\subset\ell^1$ that
\begin{equation}\label{algebra-p<1-q=infty-0}
\begin{split}
\|fg\|_{M_{p,\infty}^{s,\alpha}} &\leqslant\sup_k\langle
k\rangle^{\frac{s}{1-\alpha}}\left(\sum_{(k^{(1)},k^{(2)})\in\Lambda(k)}\|\square_k^{\alpha}(\square_{k^{(1)}}^{\alpha}f\square_{k^{(2)}}^{\alpha}g)\|_p^p\right)^{\frac1p}
\\
&=\sup_k\left(\sum_{\ell=0}^2\sum_{(k^{(1)},k^{(2)})\in\Lambda(k)\cap\Omega_{\ell}}\langle
k\rangle^{\frac{sp}{1-\alpha}}\|\square_k^{\alpha}(\square_{k^{(1)}}^{\alpha}f\square_{k^{(2)}}^{\alpha}g)\|_p^p\right)^{\frac1p}.
\end{split}
\end{equation}
By Proposition \ref{convolution}, for $(k^{(1)}, k^{(2)}) \in \Lambda (k) \cap \Omega_0$, we have
\begin{align}
\|\square_k^{\alpha}(\square_{k^{(1)}}^{\alpha}f\square_{k^{(2)}}^{\alpha}g)\|_p & \lesssim  \langle k^{(1)}\rangle^{\frac{n\alpha}{(1-\alpha)}(\frac{1}{p}-1)} \|\mathscr{F}^{-1} \eta^\alpha_k \|_p \|\square_{k^{(1)}}^{\alpha}f\square_{k^{(2)}}^{\alpha}g\|_p  \nonumber\\
 & \lesssim  \langle k^{(1)}\rangle^{\frac{n\alpha}{(1-\alpha)}(\frac{1}{p}-1)} \langle k\rangle^{- \frac{n\alpha}{(1-\alpha)}(\frac{1}{p}-1)} \|\square_{k^{(1)}}^{\alpha}f\square_{k^{(2)}}^{\alpha}g\|_p. \label{algebra-p<1-omega-1}
\end{align}
When
$s \ge \frac{n }{p} + \frac{n\alpha(1-\alpha)}{(2-\alpha)}(\frac{1}{p}-1)$,
inserting \eqref{algebra-p<1-omega-1} and using \eqref{embede-2}, we
obtain that
\begin{equation}\label{algebra-p<1-q=infty-1}
\begin{split}
&\sum_{(k^{(1)},k^{(2)})\in\Lambda(k)\cap\Omega_0}\langle
k\rangle^{\frac{sp}{1-\alpha}}\|\square_k^{\alpha}(\square_{k^{(1)}}^{\alpha}f\square_{k^{(2)}}^{\alpha}g)\|_p^p
\\
&\qquad\lesssim \sum_{(k^{(1)},k^{(2)}) \in\Lambda(k)\cap\Omega_0 }
\langle k^{(1)}\rangle^{\frac{spy + n\alpha(1-y)(1-p)}{1-\alpha}} \|\square_{k^{(1)}}^{\alpha}f\|_p^p\|\square_{k^{(2)}}^{\alpha}g\|_{\infty}^p
\\
&\qquad\lesssim \sup_{k^{(1)}\in\mathbb{Z}^n}\langle
k^{(1)}\rangle^{\frac{sp}{1-\alpha}}\|\square_{k^{(1)}}^{\alpha}f\|_p^p
\sum_{k^{(2)} \in \left\{\Lambda(k)\cap\Omega_0\right\}_2^{\bot}} \ \ \sum_{k^{(1)}\in\Lambda(-k^{(2)}, k)}  \langle k^{(2)}\rangle^{\frac{sp(y-1) + n\alpha(1-y)(1-p)}{1-\alpha}} \|\square_{k^{(2)}}^{\alpha}g\|_{\infty}^p
\\
&\qquad \lesssim\|f\|_{M_{p,\infty}^{s,\alpha}}^p\|g\|_{M_{p,\infty}^{s,\alpha}}^p.
\end{split}
\end{equation}
For any $(k^{(1)},k^{(2)})\in\Lambda(k)\cap\{\Omega_1\cup\Omega_2\}$
with every fixed $k$,  imitating the process  as in
\eqref{algebra-p<1-omega-1} and combining
\eqref{algebra-relation-1-2-fff}, we get
\begin{equation}\label{algebra-p<1-omega-2}
\|\square_k^{\alpha}(\square_{k^{(1)}}^{\alpha}f\square_{k^{(2)}}^{\alpha}g)\|_p\lesssim\|\square_{k^{(1)}}^{\alpha}f\square_{k^{(2)}}^{\alpha}g\|_p.
\end{equation}
When $s>\frac{n}{p} $, inserting
\eqref{algebra-p<1-omega-2} and also using \eqref{embede-2}, we
obtain
\begin{equation}\label{algebra-p<1-q=infty-2}
\begin{split}
&\sum_{\ell=1}^2\sum_{(k^{(1)},k^{(2)})\in\Lambda(k)\cap\Omega_{\ell}}\langle
k\rangle^{\frac{sp}{1-\alpha}}\|\square_k^{\alpha}(\square_{k^{(1)}}^{\alpha}f\square_{k^{(2)}}^{\alpha}g)\|_p^p
\\
&\qquad\lesssim\|f\|_{M_{p,\infty}^{s,\alpha}}^p\|g\|_{M_{\infty,p}^{0,\alpha}}^p+\|f\|_{M_{\infty,p}^{0,\alpha}}^p\|g\|_{M_{p,\infty}^{s,\alpha}}^p\lesssim\|f\|_{M_{p,\infty}^{s,\alpha}}^p\|g\|_{M_{p,\infty}^{s,\alpha}}^p.
\end{split}
\end{equation}

Combining
\eqref{algebra-p<1-q=infty-0},\eqref{algebra-p<1-q=infty-1} and
\eqref{algebra-p<1-q=infty-2}, we conclude when
$s \ge \frac{n }{p} + \frac{n\alpha(1-\alpha)}{(2-\alpha)}((1\vee \frac{1}{p})-1)$,
$M_{p,\infty}^{s,\alpha}$ is a multiplication algebra.

\noindent{\bf Step 3.} $\,\boldsymbol{p<1, \ q= p.}$  Suppose $f,g\in M_{p,p}^{s,\alpha}$. From the embedding
$\ell^p\subset\ell^1\subset \ell^{1/p}$ it follows that
\begin{equation}\label{algebra-iii-hat-0}
\begin{split}
\|fg\|_{M_{p,p}^{s,\alpha}} &\leqslant\left(\sum_k\langle
k\rangle^{\frac{sp}{1-\alpha}}\sum_{k^{(1)},k^{(2)}}\|\square_k^{\alpha}(\square_{k^{(1)}}^{\alpha}f\square_{k^{(2)}}^{\alpha}g)\|_p^p\right)^{\frac1p}
\\
&=\left(\sum_{\ell=0}^2\sum_{(k^{(1)},k^{(2)})\in\Omega_{\ell}}\sum_{k\in\Lambda(k^{(1)},k^{(2)})}\langle
k\rangle^{\frac{sp}{1-\alpha}}\|\square_k^{\alpha}(\square_{k^{(1)}}^{\alpha}f\square_{k^{(2)}}^{\alpha}g)\|_p^p\right)^{\frac1p}.
\end{split}
\end{equation}
For $(k^{(1)},k^{(2)})\in\Omega_0$,  if
$s\geqslant\frac{n\alpha}{p} + \frac{n\alpha(1-\alpha)}{(2-\alpha)}(\frac{1}{p}-1)$,
then we see from \eqref{3.8}, \eqref{algebra-ggggggggggg}, \eqref{algebra-set-length} and \eqref{algebra-p<1-omega-1} that
\begin{equation}\label{algebra-iii-hat-1}
\begin{split}
&\sum_{(k^{(1)},k^{(2)})\in\Omega_0} \ \sum_{k\in\Lambda(k^{(1)},k^{(2)})}\langle
k\rangle^{\frac{sp}{1-\alpha}}\|\square_k^{\alpha}(\square_{k^{(1)}}^{\alpha}f\square_{k^{(2)}}^{\alpha}g)\|_p^p
\\
&\qquad\lesssim \sum_{(k^{(1)},k^{(2)})\in\Omega_0}\langle
k^{(1)}\rangle^{\frac{spy}{1-\alpha} - \frac{n\alpha y}{1-\alpha}(1-p) +\frac{n\alpha}{1-\alpha}(1-y) +\frac{n\alpha}{1-\alpha}(1-p) }\|\square_{k^{(1)}}^{\alpha}f\|_p^p\|\square_{k^{(2)}}^{\alpha}g\|_{\infty}^p
\\
&\qquad\lesssim\sum_{(k^{(1)},k^{(2)})\in\Omega_0}\langle
k^{(1)}\rangle^{\frac{spy}{1-\alpha} +\frac{n\alpha}{1-\alpha}(1-p)(1-y) +\frac{n\alpha}{1-\alpha}(2-y)}\|\square_{k^{(1)}}^{\alpha}f\|_p^p\|\square_{k^{(2)}}^{\alpha}g\|_p^p\\
& \qquad \lesssim\|f\|_{M_{p,p}^{s,\alpha}}^p\|g\|_{M_{p,p}^{s,\alpha}}^p.
\end{split}
\end{equation}
For $(k^{(1)},k^{(2)})\in\Omega_1\cup\Omega_2$, when
$s\geqslant\frac{n\alpha}{p}$, in view  of \eqref{embede-1}, we
obtain
\begin{equation}\label{algebra-iii-hat-2}
\begin{split}
&\sum_{(k^{(1)},k^{(2)})\in\Omega_1 \cup \Omega}\sum_{k\in\Lambda(k^{(1)},k^{(2)})}\langle
k\rangle^{\frac{sp}{1-\alpha}}\|\square_k^{\alpha}(\square_{k^{(1)}}^{\alpha}f\square_{k^{(2)}}^{\alpha}g)\|_p^p
\\
&\qquad\lesssim\|f\|_{M_{p,p}^{s,\alpha}}^p\|g\|_{M_{\infty,p}^{0,\alpha}}^p+\|f\|_{M_{\infty,p}^{0,\alpha}}^p\|g\|_{M_{p,p}^{s,\alpha}}^p\lesssim\|f\|_{M_{p,p}^{s,\alpha}}^p\|g\|_{M_{p,p}^{s,\alpha}}^p.
\end{split}
\end{equation}
Combining \eqref{algebra-iii-hat-0}-\eqref{algebra-iii-hat-2}, we
conclude when
$s\geqslant\frac{n\alpha}{p}+\frac{n\alpha(1-\alpha)}{2-\alpha}\big(\frac1p-1\big)$,
$M_{p,p}^{s,\alpha}$ is a multiplication algebra.

\begin{figure}
\begin{center}
\includegraphics[scale=.6]{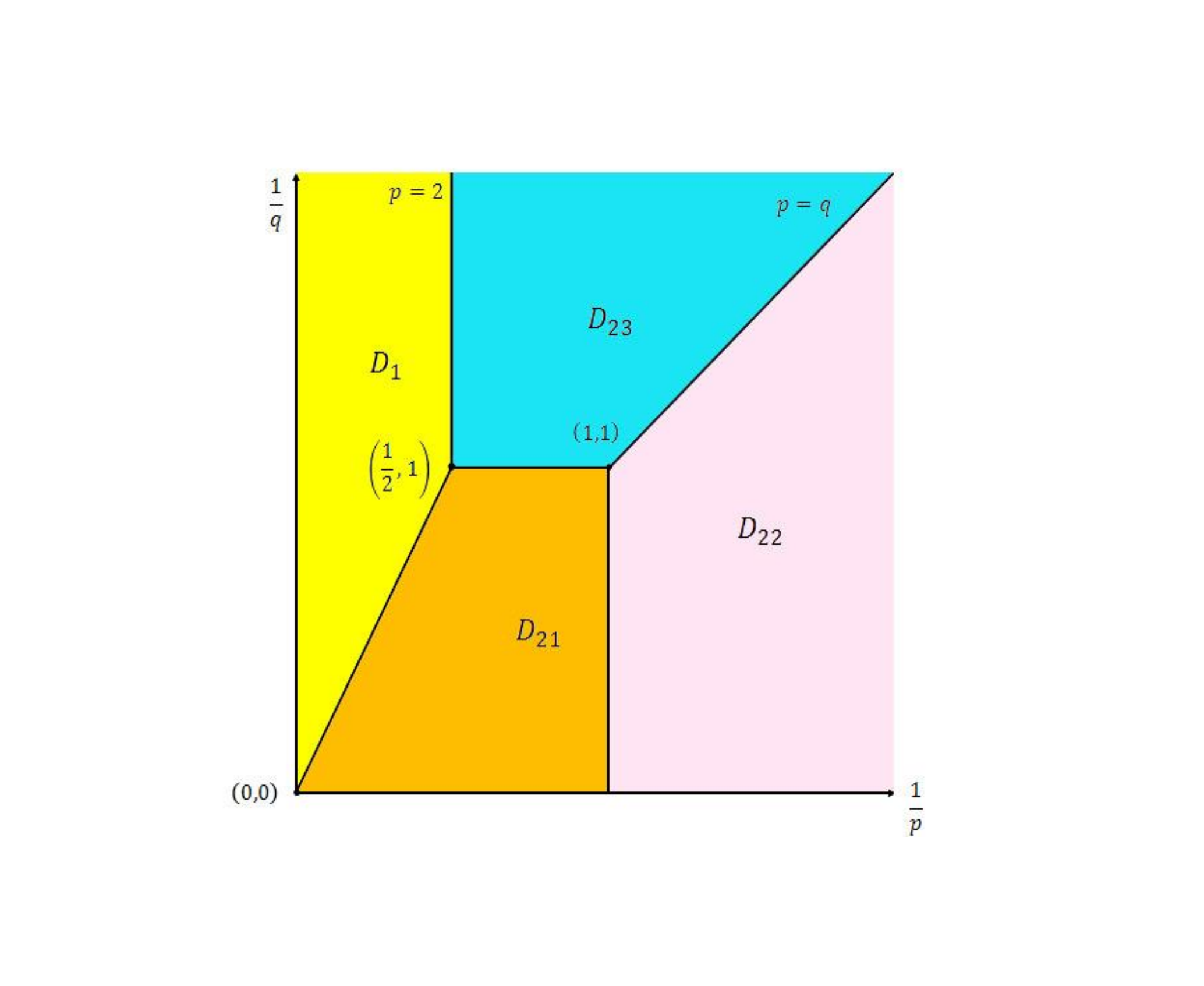}
\vspace*{-2cm}\caption{\small For Step 4 in the proof of Theorem \ref{algebra}. }\label{figure-step-5}
\end{center}
\end{figure}

\noindent{\bf Step 4.} Let
$\big(\frac1p,\frac1q\big)\in\big\{\big(\frac1p,\frac1q\big)\in\mbox{D}_1:\frac1q<1\big\}$. It is easy to see that $\big(\frac1p,\frac1q\big)$
is a point at the line segment connecting
$\big(\frac1p+\frac12\big(1-\frac1q\big),1\big)$ and
$\big(0,\frac1q-\frac2p\big)$, which is parallel to the line connecting
$\big(\frac12,1\big)$ and $(0,0)$.  At the point
$(\frac{1}{\bar{p}}, 1):= \big(\frac1p+\frac12\big(1-\frac1q\big),1\big)$, in Step 1 we have shown
that $M^{s,\alpha}_{\bar{p},1}$ is a multiplication algebra  if $s\geqslant
n\alpha\big[\frac1p+\frac12\big(1-\frac1q\big)\big]+\frac{n\alpha(1-\alpha)}{2-\alpha}\big(\frac1q-\frac2p\big)$. For $(0, \frac{1}{\bar{q}}):=
\big(0,\frac1q-\frac2p\big)$, complex interpolation between $(0,1)$
in Step 1 and $(0,0)$ in Step 3 shows that once
$s>n(1-\alpha)\big(1-\frac1q+\frac2p\big)+\frac{n\alpha(1-\alpha)}{2-\alpha}\big(\frac1q-\frac2p\big)$,
the associated $\alpha$-modulation space $M^{s,\alpha}_{\infty, \bar{q}}$ is a multiplication algebra. Again,
using the complex interpolation and combining
the result in Step 2, and we arrive at \eqref{algebra-inequality} in
$\mbox{D}_1$. Denote (see Fig. \ref{figure-step-5})
\begin{equation*}
\begin{split}
\mbox{D}_{21} &=[0,1]\times[0,1]\backslash\mbox{D}_1; \\
\mbox{D}_{23}
&=\left\{\big(\tfrac1p,\tfrac1q\big)\in\mathbb{R}_+^2:
\tfrac1q\geqslant\tfrac1p\right\}\Big\backslash\mbox{D}_1\cup\mbox{D}_{21}.
\end{split}
\end{equation*}
Through the point $\big(\frac1p,\frac1q\big)\in\mbox{D}_{21}$, one can make a line segment connecting
$\big(\frac{1}{2q},\frac1q\big)$ and $\big(1,\frac1q\big)$. For
$\big(\frac{1}{2q},\frac1q\big)$,  we see that once
$s>\frac{n\alpha}{2q}+n(1-\alpha)\big(1-\frac1q\big)$,
$M_{2q,q}^{s,\alpha}$ is a multiplication algebra. For
$\big(1,\frac1q\big)$, complex interpolation between $(1,1)$ in Step
1 and $(1,0)$ in Step 3 shows that once
$s>n\alpha+n(1-\alpha)\big(1-\frac1q\big)$, the associated
$\alpha$-modulation space is a multiplication algebra. Then we use complex
interpolation to get that
$M_{p,q}^{s,\alpha}$ is a multiplication algebra if
$s>\frac{n\alpha}{p}+n(1-\alpha)\big(1-\frac1q\big)$.  If
$(1/p,1/q) \in \mbox{D}_{23}$, the result can be derived in a similar way.

If
$\big(\frac1p,\frac1q\big)\in  \mbox{D}_{22}$,
then it belongs to the segment by connecting $\big(\frac1p,0\big)$ and $\big(\frac1p,\frac1p\big)$. In
Step 4 we see that once
$s\geqslant\frac{n\alpha}{p}+n(1-\alpha)\frac1p+\frac{n\alpha(1-\alpha)}{2-\alpha}\big(\frac1p-1\big)$,
$M_{p,\infty}^{s,\alpha}$ is a multiplication algebra; and once
$s\geqslant\frac{n\alpha}{p}+\frac{n\alpha(1-\alpha)}{2-\alpha}\big(\frac1p-1\big)$,
$M_{p,p}^{s,\alpha}$ is a multiplication algebra. Then complex
interpolation between them gives once
$s\geqslant\frac{n\alpha}{p}+n(1-\alpha)\big(\frac1p-\frac1q\big)+\frac{n\alpha(1-\alpha)}{2-\alpha}\big(\frac1p-1\big)$,
$M_{p,q}^{s,\alpha}$ is a multiplication algebra.
\end{proof}

~~

\section{Sharpness for the scaling and embedding properties}

In this section we show the necessity of Theorems \ref{dilationP}, \ref{embedding} and \ref{embedmb}. Since the $p$-$BAPU$ has no scaling, it is difficult to calculate the norm for a known function. However, we have the following equivalent norm on $\alpha$-modulation spaces. Let
$\rho$ be a smooth radial bump function supported in $B(0,2)$,
satisfying $\rho(\xi)=1$ as $|\xi|<1$, and $\rho(\xi)=0$ as
$|\xi|\geqslant2$. Let $\rho^\alpha_k $ be as in \eqref{rhok}:
$$
\rho^\alpha_k (\xi)= \rho\left(\frac{\xi- \langle k\rangle^{\alpha/(1-\alpha)}k}{C \langle k\rangle^{\alpha/(1-\alpha)}}\right).
$$
\begin{prop} Let $\rho^\alpha_k$ be as in the above. Then
$$
\|f\|^{\circ}_{M^{s,\alpha}_{p,q}} = \left(\sum_{k\in \mathbb{Z}^n} \langle k\rangle^{ \frac{sq}{1-\alpha}} \|(\mathscr{F}^{-1}\rho^\alpha_k)*f\|^q_p \right)^{1/q}
$$
is an equivalent norm on $M^{s,\alpha}_{p,q}$.
\end{prop}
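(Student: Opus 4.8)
The plan is to compare the quantity $\|f\|^{\circ}_{M^{s,\alpha}_{p,q}}$ built from the (overlapping, non–partition-of-unity) bumps $\rho^\alpha_k$ with the genuine norm $\|f\|_{M^{s,\alpha}_{p,q}}$ coming from a fixed BAPU $\{\eta^\alpha_k\}\in\Upsilon$. The heart of the matter is the standard "almost orthogonality" observation already exploited in Lemma \ref{lemma-sequence}: the family $\{\operatorname{supp}\rho^\alpha_k\}$ is an $\alpha$-covering with bounded overlap, and for a function frequency-localized near $\langle l\rangle^{\alpha/(1-\alpha)}l$, only boundedly many $k$ have $\operatorname{supp}\rho^\alpha_k$ meeting $\operatorname{supp}\eta^\alpha_l$, and for those $k$ one has $\langle k\rangle\sim\langle l\rangle$ — this is exactly the content of \eqref{quantitative-k-l-3333} and \eqref{cardinal}, whose proof goes through verbatim with $\eta^\alpha_l$ replaced by $\rho^\alpha_k$. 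So I would first record: set $\Lambda(k)=\{l:\operatorname{supp}\rho^\alpha_k\cap\operatorname{supp}\eta^\alpha_l\neq\varnothing\}$ and $\widetilde\Lambda(l)=\{k:\operatorname{supp}\rho^\alpha_k\cap\operatorname{supp}\eta^\alpha_l\neq\varnothing\}$; then $\#\Lambda(k)\sim 1$, $\#\widetilde\Lambda(l)\sim 1$, and $l\in\Lambda(k)\Leftrightarrow k\in\widetilde\Lambda(l)$ forces $\langle k\rangle\sim\langle l\rangle$.

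For the direction $\|f\|^{\circ}_{M^{s,\alpha}_{p,q}}\lesssim\|f\|_{M^{s,\alpha}_{p,q}}$: since $\sum_l\eta^\alpha_l\equiv 1$, write $(\mathscr{F}^{-1}\rho^\alpha_k)*f=\sum_{l\in\Lambda(k)}(\mathscr{F}^{-1}\rho^\alpha_k)*\square^\alpha_l f$. Because $\rho^\alpha_k$ is a Schwartz multiplier adapted to a ball of radius $\sim\langle k\rangle^{\alpha/(1-\alpha)}$ and $\square^\alpha_l f$ is frequency-supported in a ball of the same order (with $\langle l\rangle\sim\langle k\rangle$), the generalized Bernstein inequality (Proposition \ref{thm1.9}) together with Young's inequality when $p\ge 1$, or Proposition \ref{convolution} when $p<1$ — exactly the estimate $\|\square^\alpha_k\square^\alpha_l f\|_p\lesssim\|\square^\alpha_l f\|_p$ used inside the proof of Lemma \ref{lemma-sequence} — gives $\|(\mathscr{F}^{-1}\rho^\alpha_k)*\square^\alpha_l f\|_p\lesssim\|\square^\alpha_l f\|_p$ uniformly. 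Summing over the $O(1)$ indices $l\in\Lambda(k)$, using $\langle k\rangle\sim\langle l\rangle$ to match the weights $\langle k\rangle^{s/(1-\alpha)}\sim\langle l\rangle^{s/(1-\alpha)}$, taking $\ell^q$ norms and invoking the (quasi-)triangle inequality in $\ell^q(L^p)$ as in \eqref{vvvvvvvvvvvvvvvvvvvvvvvvvvvvv} yields the bound by $\|f\|_{M^{s,\alpha}_{p,q}}$ (for $q<1$ one loses only a harmless constant from the $q$-triangle inequality since $\#\Lambda(k)\sim1$).

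For the reverse direction $\|f\|_{M^{s,\alpha}_{p,q}}\lesssim\|f\|^{\circ}_{M^{s,\alpha}_{p,q}}$: here one uses that the $\rho^\alpha_k$ already form a "frame-like" lower bound, namely $\sum_l\rho^\alpha_l(\xi)\gtrsim 1$ everywhere (this is how $\eta^\alpha_k$ was built in \eqref{rhok}ff.). Fix a smooth $\tilde\rho^\alpha_k$ of the same type but equal to $1$ on $\operatorname{supp}\eta^\alpha_k$ and supported where $\sum_l\rho^\alpha_l\gtrsim1$; then $\eta^\alpha_k=\eta^\alpha_k\,\big(\sum_{l}\rho^\alpha_l\big)^{-1}\sum_{l\in\widetilde\Lambda(k)}\rho^\alpha_l$, and the multiplier $\eta^\alpha_k(\sum_l\rho^\alpha_l)^{-1}$ satisfies the same derivative bounds \eqref{eta-4} (its $H^s$-norm after rescaling by $\langle k\rangle^{\alpha/(1-\alpha)}$ is $O(1)$ by the quotient rule and the uniform lower bound on $\sum_l\rho^\alpha_l$), so by Proposition \ref{thm1.9} again $\|\square^\alpha_k f\|_p\lesssim\sum_{l\in\widetilde\Lambda(k)}\|(\mathscr{F}^{-1}\rho^\alpha_l)*f\|_p$. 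Then the same $O(1)$-overlap summation as before, now over $l\in\widetilde\Lambda(k)$, gives the claim. The main obstacle — and the only place requiring care rather than bookkeeping — is verifying that the rescaled multipliers $\rho^\alpha_k$ and the quotients $\eta^\alpha_k(\sum_l\rho^\alpha_l)^{-1}$ have uniformly (in $k$) bounded $H^s$-norms after dilating their support to the unit ball, so that Proposition \ref{thm1.9} applies with a constant independent of $k$; this is routine once one uses \eqref{eta-4}, the explicit form \eqref{rhok}, and the uniform lower bound $\sum_l\rho^\alpha_l\gtrsim1$, but it is where all the $\alpha$-dependent scaling is actually used. Everything else reduces to Lemma \ref{lemma-sequence}'s combinatorics and the convolution estimates of Propositions \ref{convolution}–\ref{thm1.9}.
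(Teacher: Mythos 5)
Your proposal is correct and follows essentially the same route as the paper: one direction via Young's inequality (for $p\ge 1$) or Proposition \ref{convolution} with the scaling factor $\langle k\rangle^{n\alpha(1/p-1)/(1-\alpha)}$ cancelled against $\|\mathscr{F}^{-1}\rho^\alpha_k\|_p$ (for $p<1$), and the reverse direction by factoring $\eta^\alpha_k$ as $\rho^\alpha_k$ (or a bounded sum of the $\rho^\alpha_l$) times a uniformly bounded multiplier, with the bounded-overlap combinatorics of Lemma \ref{lemma-sequence} handling the summation. The only cosmetic difference is that the paper writes $\eta^\alpha_k=\rho^\alpha_k\sigma^\alpha_k$ with a single bump, whereas you use the finite sum $\sum_{l\in\widetilde\Lambda(k)}\rho^\alpha_l$; this changes nothing of substance.
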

\begin{proof}
If $p\ge 1$, in view of Young's inequality,
$$
\|(\mathscr{F}^{-1}\rho^\alpha_k)*f\|_p \le \sum_{|\ell|_\infty \le C}\|(\mathscr{F}^{-1}\rho^\alpha_{k+\ell})\|_1 \|\Box^\alpha_k f\|_p \lesssim  \|\Box^\alpha_k f\|_p.
$$
If $p<1$, by Proposition \ref{convolution} and the scaling argument, we have
$$
\|(\mathscr{F}^{-1}\rho^\alpha_k)*f\|_p \le  \langle k\rangle^{n\alpha(1/p-1)/(1-\alpha)} \sum_{|\ell|_\infty \le C}\|(\mathscr{F}^{-1}\rho^\alpha_{k+\ell})\|_p \|\Box^\alpha_k f\|_p \lesssim  \|\Box^\alpha_k f\|_p.
$$
Combining the above two cases, we have $\|f\|^{\circ}_{M^{s,\alpha}_{p,q}} \le \|f\|_{M^{s,\alpha}_{p,q}}$.  On the other hand, noticing that
$$
\eta^\alpha_k = \rho^\alpha_k \frac{\rho\left(\frac{\xi- \langle k\rangle^{\alpha/(1-\alpha)}k}{2 C \langle k\rangle^{\alpha/(1-\alpha)}}\right)}{ \sum_{ \ell
\in \mathbb{Z}^n}\rho\left(\frac{\xi- \langle k +\ell \rangle^{\alpha/(1-\alpha)}(k+\ell)}{C \langle k+\ell\rangle^{\alpha/(1-\alpha)}}\right)}: = \rho^\alpha_k \sigma^\alpha_k,
$$
We have for $p\ge 1$,  in view of Young's inequality,
$$
\|\Box^\alpha_k f\|_p \le \|(\mathscr{F}^{-1}\rho^\alpha_k)*f\|_p \|(\mathscr{F}^{-1}\sigma^\alpha_{k})\|_1 \lesssim  \|(\mathscr{F}^{-1}\rho^\alpha_k)*f\|_p .
$$
If $p<1$, by Proposition \ref{convolution}, the scaling argument and the generalized Bernstein inequality, we have
$$
\|\Box^\alpha_k f\|_p  \lesssim    \langle k\rangle^{n\alpha(1/p-1)/(1-\alpha)} \|(\mathscr{F}^{-1}\rho^\alpha_k)*f\|_p  \|\mathscr{F}^{-1}\sigma^\alpha_{k}\|_p \|\Box^\alpha_k f\|_p \lesssim  \|(\mathscr{F}^{-1}\rho^\alpha_k)*f\|_p.
$$
The result follows.
\end{proof}

\begin{proof} [Proof of Theorem \ref{dilationP}]
({\bf Necessity}) We divide the proof into the following two cases $\lambda\gg 1$ and $\lambda \ll 1$, respectively.

\noindent{\bf Case 1.} $\lambda \gg 1$. One needs to show that
\begin{equation*}
\|f_{\lambda}\|_{M_{p,q}^{s,\alpha}}\gtrsim\lambda^{-\frac{n}{p}+n\big(\frac1p-1\big)\vee (s+s_c)}\|f\|_{M_{p,q}^{s,\alpha}}.
\end{equation*}
\noindent{\bf Case 1.1.} We consider the case $s_p= n(1/p-1) > s+s_c$.  Our aim is to show that there exists a function $f$ satisfying
\begin{equation*}
\|f_{\lambda}\|_{M_{p,q}^{s,\alpha}}\gtrsim\lambda^{- n }\|f\|_{M_{p,q}^{s,\alpha}}.
\end{equation*}
Taking $f= \mathscr{F}^{-1}\rho$, we have
\begin{equation}
\begin{split}
\|f_{\lambda}\|^{\circ}_{M_{p,q}^{s,\alpha}} \ge
\|(\mathscr{F}^{-1} \rho^\alpha_0)* f_\lambda\|_p
 \geqslant \lambda^{-n}\|\mathscr{F}^{-1}\rho\|_p
 \gtrsim \lambda^{-n}\| f \|_{M_{p,q}^{s,\alpha}}.
\end{split}
\end{equation}

\noindent{\bf Case 1.2.} We consider the case $s_p < s+s_c$.  According to the definition of $s_c$, we separate the proof into the following three cases.

{\bf Case 1.2.1.}  $s_c= n(1-\alpha)(1/p+1/q-1)$. Put $f= \mathscr{F}^{-1} \rho$.  Since $\lambda \gg 1$, we see that for some  $0<\varepsilon_0<\varepsilon_1 \ll 1$,
$$
(\mathscr{F}^{-1}\rho^\alpha_k)*f_\lambda = \lambda^{-n} \mathscr{F}^{-1}\rho^\alpha_k,
\quad  \quad |k | \in [\varepsilon_0 \lambda^{1-\alpha}, \ \varepsilon_1 \lambda^{1-\alpha}].
$$
It follows that
\begin{equation}
\begin{split}
\|f_{\lambda}\|^{\circ}_{M_{p,q}^{s,\alpha}} &\gtrsim
\lambda^{-n}\left(\sum_{|l|\in [\varepsilon_0 \lambda^{1-\alpha}, \ \varepsilon_1 \lambda^{1-\alpha}]}    \langle l\rangle^{\frac{sq}{1-\alpha}}\|\mathscr{F}^{-1}\rho_l^{\alpha}\|_p^q\right)^{\frac1q} \\
&\gtrsim\lambda^{-\frac{n}{p}+s+n(1-\alpha)\big(\frac1p+\frac1q-1\big)}
\\
&\gtrsim\lambda^{-\frac{n}{p}+s+n(1-\alpha)\big(\frac1p+\frac1q-1\big)}\|f\|_{M_{p,q}^{s,\alpha}}.
\end{split}
\end{equation}

\noindent{\bf Case 1.2.2.} $s_c=0$. Let us take
$f={\rm e}^{{\rm i} x_1}\mathscr{F}^{-1} \rho_\lambda.$  We have $f_\lambda = \lambda^{-n} e^{{\rm i} x_1 \lambda} \mathscr{F}^{-1}\rho$.  We may assume that there exists $l_0 \in \mathbb{N}$ such that $\langle l_0\rangle^{\alpha/(1-\alpha)} l_0  \sim \lambda$ and $B((\lambda,0,...,0), \ 2) \subset B(\langle l_0\rangle^{\alpha/(1-\alpha)} (l_0,0,...,0), \ C\langle l_0\rangle^{\alpha/(1-\alpha)} )$.
It is easy to see that $\|f\|_p\sim\lambda^{n\big(\frac1p-1\big)}$  and
\begin{equation}
\|f_{\lambda}\|_{M_{p,q}^{s,\alpha}} \sim \lambda^{-n} \langle l_0\rangle^{s/(1-\alpha)}  \sim \lambda^{-n+s}
\gtrsim\lambda^{-\frac{n}{p}+s}\|f\|_p\gtrsim \lambda^{-\frac{n}{p}+s}\|f\|_{M_{p,q}^{s,\alpha}}.
\end{equation}

\noindent{\bf Case 1.2.3.} $s_c = n(1-\alpha)(1/q-1/p)$.  Put
\begin{equation}
f^{(l)}={\rm e}^{{\rm i}x \langle l \rangle^{\frac{\alpha}{1-\alpha}}l /\lambda} \  \tau_{\lambda^{1-\alpha} l}  \left( \rho \left(\frac{\lambda \ \cdot}{ c  \langle l \rangle^{\frac{\alpha}{1-\alpha}}} \right)  \right)^\vee, \quad f=\sum_{|\ell|\sim \lambda^{1-\alpha}} f^{(l)},
\end{equation}
where $|\ell| \sim \lambda^{1-\alpha}$ means that
$ |l| \in [\varepsilon_0 \lambda^{1-\alpha}, \ \varepsilon_1 \lambda^{1-\alpha}]$ for some $0<\varepsilon_0<\varepsilon_1\ll 1.$
If $|l| \sim \lambda^{1-\alpha}$,  then  $\|f^{(l)}\|_p\sim\lambda^{n(1-\alpha)\big(\frac1p-1\big)}$, and
\begin{equation}
(f^{(l)})_{\lambda}=  {\rm e}^{{\rm i}x \langle l \rangle^{\frac{\alpha}{1-\alpha}}l } \  \tau_{\lambda^{1-\alpha} l}  \left( \rho \left(\frac{ \cdot}{ c  \langle l \rangle^{\frac{\alpha}{1-\alpha}}} \right)  \right)^\vee,
\end{equation}
which follows that $(\mathscr{F}^{-1}\rho^\alpha_l)* (f^{(l)})_{\lambda}=(f^{(l)})_{\lambda}$. Since supp $\rho^\alpha_l$ overlaps at most finite many supp $\rho^\alpha_{l+k}$, we see that $\Box^\alpha_{l+k} (f^{(l)})_\lambda =0$ if $|k| \ge C$. Let $A(\lambda) \subset \{l: |l|\sim \lambda^{1-\alpha}\}$ be the set so that for any $l, \tilde{l} \in A(\lambda)$ ($l\neq \tilde{l}$), $|l-\tilde{l}|\ge C$. We have
\begin{equation}
\begin{split}
\left\| f_{\lambda}\right\|_{M_{p,q}^{s,\alpha}} & \gtrsim \left(\sum_{l \in A{(\lambda)}} \langle l \rangle^{\frac{sq}{1-\alpha}}\|(f^{(l)})_{\lambda}\|_p^q\right)^{\frac1q}
\\
&\gtrsim\lambda^{-\frac{n}{p}+s}\left(\sum_{l \in A{(\lambda)}} \|f^{(l)}\|_p^q\right)^{\frac1q}
\\
&\gtrsim\lambda^{-\frac{n}{p}+s+n(1-\alpha)\big(\frac1q+\frac1p -1\big)}.
\end{split}
\end{equation}
Moreover, we easily see that
$$
|\widehat{f}^{(l)}(\xi)|= \rho \left(\frac{\lambda \xi -
\langle l\rangle^{\alpha/(1-\alpha)}l}{c \langle l\rangle^{\alpha/(1-\alpha)}}\right).
$$
It follows that ${\rm supp } \ \widehat{f}^{(l)}$ is included in the unit ball. Hence, we have
$$
\left\|f \right\|_{M_{p,q}^{s,\alpha}} \lesssim \left\|\sum_{l \in A{(\lambda)}}  f^{(l)}\right\|_{p}.
$$
By Plancheral's identity,
$$
\left\|f \right\|_{2} =\left\|\widehat{f} \right\|_{2} = \left(\sum_{l \in A{(\lambda)}}  \int \rho^{2} \left(\frac{\lambda \xi -
\langle l\rangle^{\alpha/(1-\alpha)}l}{c \langle l\rangle^{\alpha/(1-\alpha)}}\right) d\xi  \right)^{1/2} \sim 1.
$$
On the other hand, in view of $\mathscr{F}^{-1}\rho$ is a Schwartz function, we have
$$
|\mathscr{F}^{-1}\rho (x)| \lesssim \langle x\rangle^{-N}.
$$
It follows that for $N \gg n$,
$$
\left |\sum_{l \in A{(\lambda)}}  f^{(l)} (x)\right | \lesssim \lambda^{-n(1-\alpha)} \sum_{l \in A{(\lambda)}} \left(1+ \lambda^{-(1-\alpha)} |x- \lambda^{1-\alpha} l|\right)^{-N} \lesssim  \lambda^{-n(1-\alpha)}.
$$
By H\"older's inequality,
$$
\left\|f \right\|_{M_{p,q}^{s,\alpha}} \lesssim \left\|f \right\|_{p} \lesssim  \lambda^{n(1-\alpha)(2/p -1)}.
$$
The result follows.

\noindent{\bf Case 2.} $\lambda \ll 1$. It suffices to show that for some $f\in M^{s,\alpha}_{p,q}$,
\begin{equation*}
\|f_{\lambda}\|^{\circ}_{M_{p,q}^{s,\alpha}}\gtrsim\lambda^{-\frac{n}{p}} (1\vee \lambda^{ s+s_c})\|f\|_{M_{p,q}^{s,\alpha}}.
\end{equation*}

\noindent{\bf Case 2.1.} $s+s_c \ge 0$. Taking $f= \mathscr{F}^{-1}\rho$, we have
\begin{equation}
\|f_\lambda\|^{\circ}_{M_{p,q}^{s,\alpha}} =\|(\mathscr{F}^{-1}
\rho)(\lambda \, \cdot)\|_p
 \sim \lambda^{-\frac{n}{p}}\|f\|_{M_{p,q}^{s,\alpha}}.
\end{equation}

\noindent{\bf Case 2.2.} $s+s_c < 0$. We divide the proof into the following three cases.

{\bf Case 2.2.1.} We can find some $k_0$ such that $\lambda\langle k_0\rangle^{\frac{1}{1-\alpha}}\sim 1$. Denote
\begin{equation}
f={\rm e}^{{\rm i}x\langle k_0\rangle^{\frac{\alpha}{1-\alpha}}k_0}\mathscr{F}^{-1}\left( \rho \left(\frac{\cdot}{c \langle k_0\rangle^{\frac{\alpha}{1-\alpha}}} \right) \right),
\end{equation}
We have
\begin{equation}
\widehat{f}_\lambda = \lambda^{-n}     \rho \left(\frac{\xi/\lambda - \langle k_0\rangle^{\frac{\alpha}{1-\alpha}}k_0}{c \langle k_0\rangle^{\frac{\alpha}{1-\alpha}}} \right),
\end{equation}
 Therefore,
\begin{equation}
\|f_{\lambda}\|^{\circ}_{M_{p,q}^{s,\alpha}}\gtrsim\|f_{\lambda}\|_p\gtrsim\lambda^{-\frac{n}{p}+s}\langle k_0\rangle^{\frac{s}{1-\alpha}}\|f\|_p\gtrsim\lambda^{-\frac{n}{p}+s}\|f\|_{M_{p,q}^{s,\alpha}}.
\end{equation}

\noindent{\bf Case 2.2.2.} Taking $f= \lambda^n \mathscr{F}^{-1} \rho_\lambda $, we have $f_\lambda = \mathscr{F}^{-1} \rho$. It follows that $\|f_\lambda\|^{\circ}_{M_{p,q}^{s,\alpha}} \sim 1$. On the other hand,
\begin{equation}
\begin{split}
\|f\|^{\circ}_{M_{p,q}^{s,\alpha}}
& \lesssim \left(\sum_{|k| \lesssim \lambda^{\alpha-1} } \langle k \rangle^{\frac{sq}{1-\alpha}}\|(\mathscr{F}^{-1}\rho_{k}^{\alpha})*f \|_p^q\right)^{\frac1q}
\\
& \lesssim \left(\sum_{|k| \lesssim \lambda^{\alpha-1} } \langle k \rangle^{\frac{sq}{1-\alpha}}\| \mathscr{F}^{-1}\rho_{k}^{\alpha})\|^q_p \|f \|_1^q\right)^{\frac1q}
\\
& \lesssim
\lambda^{n-s-n(1-\alpha)\frac1q-n\alpha\big(1-\frac1p\big)}.
\end{split}
\end{equation}
It follows that
\begin{equation*}
\|f_{\lambda}\|^{\circ}_{M_{p,q}^{s,\alpha}}\gtrsim\lambda^{-\frac{n}{p}+s+s_c} \|f\|_{M_{p,q}^{s,\alpha}}.
\end{equation*}

\noindent{\bf Case 2.2.3.} $s_c= n(1-\alpha)(1/q-1/p)$.  Let $A(\lambda)$ be the set so that for any $l, \tilde{l} \in A(\lambda)$ ($l\neq \tilde{l}$), $|l-\tilde{l}|\ge C$ and $|l| \in [\varepsilon_0 \lambda^{\alpha-1}, \  \varepsilon_1 \lambda^{\alpha-1}]$ for some $0<\varepsilon_0< \varepsilon_1 \ll 1$. Take
\begin{equation}
f^{(l)}={\rm e}^{{\rm i}x\langle l \rangle^{\frac{\alpha}{1-\alpha}} l } \tau_{C'l} \mathscr{F}^{-1}\left(\rho \left( \frac{\cdot}{c \langle l\rangle^{\frac{\alpha}{1-\alpha}}} \right)\right)  , \ \ f=\sum_{l\in A(\lambda)} f^{(l)}.
\end{equation}
One easily sees that
\begin{equation}
(f^{(l)})_{\lambda}= \lambda^{-n} {\rm e}^{{\rm i}x\lambda\langle l \rangle^{\frac{\alpha}{1-\alpha}}l } \tau_{C'l} \mathscr{F}^{-1}\left(\rho \left( \frac{\cdot}{c \lambda \langle l\rangle^{\frac{\alpha}{1-\alpha}}} \right)\right).
\end{equation}
We have
\begin{equation}
\begin{split}
\left\| f \right\|^{\circ}_{M_{p,q}^{s,\alpha}} &=\left(\sum_{l \in A{(\lambda)}}
\langle l \rangle^{\frac{sq}{1-\alpha}}\|(f^{(l)})\|_p^q\right)^{\frac1q}
\\
&\lesssim \lambda^{-s}\left(\sum_{l \in A{(\lambda)}}  \langle l \rangle^{\frac{n\alpha}{1-\alpha}(1-\frac{1}{p})q}   \right)^{\frac1q}
\\
&\lesssim \lambda^{- s+n\alpha \big(\frac1p -1\big) + \frac{n(\alpha-1)}{q}}.
\end{split}
\end{equation}
Since ${\rm supp } \widehat{f}^{(l)}_\lambda$ is contained in the unit ball, we see that
$$
\left\| f_\lambda \right\|_{M_{p,q}^{s,\alpha}} \gtrsim  \left\| \sum_{l \in A{(\lambda)}} {f}^{(l)}_\lambda \right\|_{p} = \lambda^{-n/p} \left\| \sum_{l \in A{(\lambda)}} {f}^{(l)} \right\|_{p}.
$$
We note  that
$$
f^{(l)} (x) =  \langle l\rangle^{\frac{n\alpha}{1-\alpha}} {\rm e}^{{\rm i} x \langle l \rangle^{\frac{\alpha}{1-\alpha}}l }  (\mathscr{F}^{-1} \rho ) \left(  c   \langle l\rangle^{\frac{\alpha}{1-\alpha}} (x- C' l)\right).
$$
Due to $\mathscr{F}^{-1}\rho $ is a Schwartz function, we see that $|\mathscr{F}^{-1}\rho (x)| \lesssim \langle x\rangle^{-2n}$. We can assume that $\mathscr{F}^{-1}\rho (0)=1$, which follows that there exists a $\delta>0$ such that
$$
|\mathscr{F}^{-1}\rho (x)| \ge 1/2, \quad |x| \le \delta.
$$
Hence,  for any $k\in A(\lambda)$,  $x\in B(C'k, \delta/c \langle k\rangle^{\frac{\alpha}{1-\alpha}})$,
$$
\left| \sum_{l\in A(\lambda)} f^{(l)} (x) \right|  \ge \frac{\lambda^{-n\alpha}}{2}  - C \lambda^{-n\alpha} \sup_{|y|\lesssim \delta}\sum_{l \in \mathbb{Z}^n}
\left(y + C'c \ l \right)^{-2n}.
$$
We can take $C'c\gg 1$. It follows that for any $k\in A(\lambda)$,
$$
\left|  \sum_{l\in A(\lambda)} f^{(l)} (x) \right|  \ge \frac{\lambda^{-n\alpha}}{2}  - C  \lambda^{-n\alpha} (C'c)^{-n} \ge  \frac{\lambda^{-n\alpha}}{4}, \quad x\in B(C'k, \delta/c \langle k\rangle^{\frac{\alpha}{1-\alpha}}).
$$
So, we have
$$
\left\|  \sum_{l\in A(\lambda)} f^{(l)} (x) \right\|_p   \gtrsim   \lambda^{-n\alpha}    \lambda^{\frac{2n\alpha-n}{p}}.
$$
It follows that
\begin{equation*}
\|f_{\lambda}\|^{\circ}_{M_{p,q}^{s,\alpha}}\gtrsim\lambda^{- n/p +s+n(1-\alpha)(1/q-1/p)} \|f\|_{M_{p,q}^{s,\alpha}}.
\end{equation*}
The result follows.
 \end{proof}

\begin{proof} [Proof of Theorem \ref{embedmb}]
({\bf Necessity}) {\bf Case 1.} Let us assume that $B^{s_1}_{p,q} \subset M^{s_2}_{p,q}$.

\noindent{\bf Case 1.1.} We show that $s_1 \ge s_2$. Let $k\in \mathbb{Z}^n$, $|k| \gg 1$ with $B(\langle k\rangle^{\frac{\alpha}{1-\alpha}}k, C\langle k\rangle^{\frac{\alpha}{1-\alpha}}) \subset \{\xi : \ 5\cdot 2^{j-3} \le |\xi| \le 3\cdot 2^{j-1}\}$.  We see that
\begin{equation*}
\left\|\mathscr{F}^{-1}\left( \rho_k^{\alpha} (2 \ \cdot) \right)\right\|^{\circ}_{M_{p,q}^{s_2,\alpha}}\geqslant \langle k \rangle^{\frac{s_2}{1-\alpha}}\|\mathscr{F}^{-1}\rho_{k}^{\alpha} (2 \ \cdot) \|_p\gtrsim\langle k \rangle^{\frac{s_2}{1-\alpha}+\frac{n\alpha}{1-\alpha}\big(1-\frac1p\big)},
\end{equation*}
\begin{equation*}
\left\|\mathscr{F}^{-1}\left( \rho_k^{\alpha}(2 \ \cdot) \right)\right\|_{B_{p,q}^{s_1}}\leqslant 2^{j s_1}\|\mathscr{F}^{-1}\rho_{k}^{\alpha} (2 \ \cdot)\|_p\lesssim\langle k \rangle^{\frac{s_1 }{1-\alpha}+\frac{n\alpha}{1-\alpha}\big(1-\frac1p\big)}.
\end{equation*}
$B_{p,q}^{s_1}\subset M_{p,q}^{s_2,\alpha}$ follows that $s_1\geqslant s_2$.

\noindent{\bf Case 1.2.} We show that $s_1 \ge s_2 +n(1-\alpha) (1/p+1/q-1)$. One has that
\begin{equation}
\|\mathscr{F}^{-1}\varphi_{j}\|_{B_{p,q}^{s_1}}\lesssim2^{j s_1 +j n\big(1-\frac1p\big)}.
\end{equation}
Denote
$$
A_j = \{k: \ {\rm supp} \rho_k^{\alpha} \subset \{\xi : \ 5\cdot 2^{j-3} \le |\xi| \le 3\cdot 2^{j-1}\} \}.
$$
\begin{equation}
\|\mathscr{F}^{-1}\varphi_{j}\|^\circ_{M_{p,q}^{s_2,\alpha}}\geqslant\left(\sum_{k\in A_j} \langle k\rangle^{\frac{s_2q}{1-\alpha}}\|\mathscr{F}^{-1}\rho_k^{\alpha}\|_p^q\right)^{\frac1q}\gtrsim\left(\sum_{k\in A_j}\langle k\rangle^{\frac{s_2q}{1-\alpha}+\frac{n\alpha}{1-\alpha}\big(1-\frac1p\big)q}\right)^{\frac1q}.
\end{equation}
Noticing that $\# A_j \sim O(2^{nj(1-\alpha)})$, we immediately have
\begin{equation}
\|\mathscr{F}^{-1}\varphi_{j}\|^\circ_{M_{p,q}^{s_2,\alpha}} \gtrsim 2^{ s_2 j+ n\alpha j \big(1-\frac1p\big) + n(1-\alpha)j \frac1q }.
\end{equation}
$B_{p,q}^{s_1}\subset M_{p,q}^{s_2,\alpha}$ implies that $s_1 \geqslant s_2+n(1-\alpha)\big(\frac1p+\frac1q-1\big)$.

\noindent{\bf Case 1.3.} We show that $s_1 \geqslant s_2+n(1-\alpha)\big(\frac1q- \frac1p\big)$.
We denote by $A_j$ the set such that for every $k,l \in \mathbb{Z}^n \cap A_j$ ($l\neq \tilde{l}$), $|k-l|\ge C$ and $|k|^{1/(1-\alpha)} \in [5\cdot 2^{j-3}+C2^{j\alpha}, 3\cdot 2^{j-1}-C2^{j\alpha}]$.  Put
\begin{equation}\label{b-m-necessity-2-2}
f^{(k)}= \tau_{k} \left( \mathscr{F}^{-1}\rho_{k}^{\alpha} \right), \quad \quad f=\sum_{k\in A_j} f^{(k)}.
\end{equation}
Noticing that $\# A_j \sim O(2^{nj(1-\alpha)})$, we have
\begin{equation}
\left\|f \right\|^{\circ}_{M_{p,q}^{s_2,\alpha}}=\left(\sum_{k \in A_j}\langle k \rangle^{\frac{s_2q}{1-\alpha}}\|f^{(k)}\|_p^q\right)^{\frac1q} \gtrsim 2^{j \left(s_2+n(1-\alpha)\frac1q+n\alpha\big(1-\frac1p\big)\right)}.
\end{equation}
On the other hand,
\begin{equation}
\left\|f \right\|_{B_{p,q}^{s_1}}  \lesssim 2^{j s_1}\left\|\sum_{k\in A_j} f^{(k)}\right\|_p.
\end{equation}
By Plancherel's identity,
$$
\|f\|_2 \lesssim \left(\sum_{k\in A_j } \|\rho^\alpha_k (2 \ \cdot )\|^2_2 \right)^{1/2} \lesssim 2^{nj/2}.
$$
Moreover, let us observe that
$$
f^{(k)}= C\langle k\rangle^{\frac{n\alpha}{1-\alpha}} (\mathscr{F}^{-1} \rho)(C\langle k\rangle^{\frac{n\alpha}{1-\alpha}}(x- k)).
$$
Using the same way as in the proof of Case 1.2.3 in Theorem \ref{dilationP},  we have
$$
|f(x)| \lesssim 2^{n\alpha j}.
$$
By H\"older's inequality,
$$
\|f\|_p \lesssim 2^{j n\alpha(1-2/p) + nj/p}.
$$
It follows from $B_{p,q}^{s_1}\subset M_{p,q}^{s_2,\alpha}$ that $s_1\geqslant s_2+n(1-\alpha)\big(\frac1q-\frac1p\big)$.

\noindent{\bf Case 2.} We assume that $M_{p,q}^{s_1, \alpha_1} \subset B_{p,q}^{s_2}$.

\noindent{\bf Case 2.1.} Let $j \gg 1$. One can find some $k \in \mathbb{Z}^n$ verifying
  $\rho_{k}^{\alpha}\varphi_{j}=\rho_{k}^{\alpha}$. It follows that $2^j \sim  \langle k\rangle^{1/(1-\alpha)}$.  Thus,
\begin{equation}
\|\mathscr{F}^{-1}\rho_{k}^{\alpha}\|_{B_{p,q}^{s_2}}\geqslant2^{j s_2}\|\mathscr{F}^{-1}\rho_{k}^{\alpha}\|_p \gtrsim 2^{j s_2}\langle k \rangle^{\frac{n\alpha}{1-\alpha}\big(1-\frac1p\big)},
\end{equation}
\begin{equation}
\|\mathscr{F}^{-1}\rho_{k}^{\alpha}\|^{\circ}_{M_{p,q}^{s_1,\alpha}}\lesssim \langle k \rangle^{\frac{s_1}{1-\alpha}+\frac{n\alpha}{1-\alpha}\big(1-\frac1p\big)}.
\end{equation}
So, we have $s_1 \geqslant s_2$.

\noindent{\bf Case 2.2.} Let $j \gg 1$.
We have
\begin{equation}
\left\|\mathscr{F}^{-1} \varphi_j \right\|_{B_{p,q}^{s_2}}\geqslant 2^{j s_2}\|\mathscr{F}^{-1}\varphi^2_{j}\|_p \gtrsim2^{j s_2+j n\big(1-\frac1p\big)}.
\end{equation}
On the other hand,
\begin{equation}
\left\|\mathscr{F}^{-1} \varphi_j \right\|_{M_{p,q}^{s_1,\alpha}}\lesssim \left(\sum_{|k|^{1/(1-\alpha)}\sim 2^j} \langle k\rangle^{\frac{s_1 q}{1-\alpha}}2^{j n\alpha\big(1-\frac1p\big)q}\right)^{\frac1q}\lesssim2^{j n(1-\alpha)\frac1q+s_1j+j n\alpha\big(1-\frac1p\big)}.
\end{equation}
It follows from $M_{p,q}^{s_1,\alpha}\subset B_{p,q}^{s_2}$  that $s_1 \geqslant s_2+n(1-\alpha)\big(1-\frac1p-\frac1q\big)$.

\noindent{\bf Case 2.3.}
We denote by $A_j$ the set such that for every $k,l \in \mathbb{Z}^n \cap A_j$ ($l\neq k$), $|k-l|\ge C$ and $|k|^{1/(1-\alpha)} \in [5\cdot 2^{j-3}+C2^{j\alpha}, \ 3\cdot 2^{j-1}-C2^{j\alpha}]$.
Put
\begin{equation}
f^{(k)}=  \tau_{C'k} \mathscr{F}^{-1}\left(\rho \left( \frac{\cdot \ - \langle k \rangle^{\frac{\alpha}{1-\alpha}} k}{c \langle k \rangle^{\frac{\alpha}{1-\alpha}}} \right)\right)  , \ \ f=\sum_{k\in A_j} f^{(k)}.
\end{equation}
Using the same way as in the proof of Case 2.2.3  in Theorem \ref{dilationP},
\begin{equation}
\left\|f \right\|^{\circ}_{M_{p,q}^{s_1,\alpha}} \lesssim2^{j \left[s_1+n(1-\alpha)\frac1q+n\alpha\big(1-\frac1p\big)\right]},
\end{equation}
\begin{equation}
\left\|f \right\|_{B_{p,q}^{s_2}}=2^{j s_2}\left\|f \right\|_p \gtrsim2^{j \left[s_2+n(1-\alpha)\frac1p+n\alpha\big(1-\frac1p\big)\right]}.
\end{equation}
From $M_{p,q}^{s_1,\alpha}\subset B_{p,q}^{s_2}$ it follows that $s_1 \geqslant s_2+n(1-\alpha)\big(\frac1p-\frac1q\big)$.
\end{proof}

\begin{proof} [Proof of Theorem \ref{embedding}]
({\bf Necessity}) We separate the proof into the following two cases.

\noindent{\bf Case 1.} We assume that
$M_{p,q}^{s_1,\alpha_1}\subset M_{p,q}^{s_2,\alpha_2}$ with $\alpha_1\geqslant\alpha_2$.

\noindent{\bf Case 1.1.} We show that $s_1\ge s_2$.  Denote
$$
\Lambda_2(k)= \{l \in \mathbb{Z}^n: \ \rho^{\alpha_2}_k \rho^{\alpha_1}_l \neq 0 \}.
$$
We have
\begin{equation}
\left\|\mathscr{F}^{-1} \rho_k^{\alpha_2} \right\|^\circ_{M_{p,q}^{s_2,\alpha_2}} \geqslant \langle k \rangle^{\frac{s_2}{1-\alpha_2}} \|\mathscr{F}^{-1}\rho_{k}^{\alpha_2} \rho^{\alpha_2}_k\|_p \gtrsim \langle k \rangle^{\frac{s_2}{1-\alpha_2}+\frac{n\alpha_2}{1-\alpha_2}\big(1-\frac1p\big)},
\end{equation}
and
\begin{equation}
\left\|\mathscr{F}^{-1} \rho_k^{\alpha_2} \right\|^\circ_{M_{p,q}^{s_1,\alpha_1}}
\lesssim\left(\sum_{l \in \Lambda_2 (k) } \langle l \rangle^{\frac{s_1 q}{1-\alpha_1}}
\| \mathscr{F}^{-1} \rho_k^{\alpha_2} \rho_l^{\alpha_1}\|^q_p  \right)^{\frac1q}.
\end{equation}
Using Young's inequality  and Proposition \ref{convolution}, respectively for $p\ge 1$ and $p<1$,  we have
$$
\| \mathscr{F}^{-1} \rho_k^{\alpha_2} \rho_l^{\alpha_1}\|_p \lesssim \| \mathscr{F}^{-1} \rho_k^{\alpha_2} \|_p \lesssim   \langle k \rangle^{\frac{n\alpha_2}{1-\alpha_2}\big(1-\frac1p\big)}.
$$
Since $\# \Lambda_2(k) $ is finite and $|k|^{\frac{1}{1-\alpha_2}} \sim |l|^{\frac{1}{1-\alpha_1}} $ for all $l\in \lambda_2(k)$, one has that
\begin{equation}
\left\|\mathscr{F}^{-1} \rho_k^{\alpha_2} \right\|^\circ_{M_{p,q}^{s_1,\alpha_1}} \lesssim\left(\sum_{k\in \Lambda_2(k)} \langle k\rangle^{\frac{sq}{1-\alpha_1}}\langle k \rangle^{\frac{n\alpha_2}{1-\alpha_2}\big(1-\frac1p\big)q}\right)^{\frac1q} \lesssim \langle k \rangle^{\frac{s}{1-\alpha_2}+\frac{n\alpha_2}{1-\alpha_2}\big(1-\frac1p\big)}.
\end{equation}
From $M_{p,q}^{s,\alpha_1}\subset M_{p,q}^{s_2,\alpha_2}$ it follows that $s_1 \geqslant s_2$.

\noindent{\bf Case 1.2.}
Let $k \in\mathbb{Z}^n$ with $|k|\gg 1$. Denote
\begin{equation}\label{m-m-necessity-1-2-1}
\Lambda^*(k)=\left\{l\in\mathbb{Z}^n: \rho_{k}^{\alpha_1} \rho_l^{\alpha_2}= \rho_l^{\alpha_2}\right\}.
\end{equation}
We have
\begin{equation}
\|\mathscr{F}^{-1}\rho_{k}^{\alpha_1}\|_{M_{p,q}^{s_1,\alpha_1}}
\lesssim \langle k \rangle^{\frac{s_1}{1-\alpha_1}+\frac{n\alpha_1}{1-\alpha_1}\big(1-\frac1p\big)}.
\end{equation}
Since $\# \Lambda^* (k) \sim  \langle k\rangle^{\frac{n(\alpha_1-\alpha_2)}{1-\alpha_1}} $  and $\langle k \rangle^{\frac{1}{1-\alpha_1}} \sim \langle l \rangle^{\frac{1}{1-\alpha_2}} $ for all $l\in \Lambda^* (k)$, one has that
\begin{equation}
\|\mathscr{F}^{-1}\rho_{k}^{\alpha_1}\|^\circ_{M_{p,q}^{s_2,\alpha_2}}
 \geqslant \left(\sum_{l\in\Lambda^*(k)} \langle l\rangle^{\frac{s_2q}{1-\alpha_2}} \|\mathscr{F}^{-1}\eta_l^{\alpha_2}\|_p^q\right)^{\frac1q} \gtrsim\langle k \rangle^{\frac{n(\alpha_1-\alpha_2)}{q(1-\alpha_1)}+\frac{s_2}{1-\alpha_1}+\frac{n\alpha_2}{1-\alpha_1}\big(1-\frac1p\big)}.
\end{equation}
$M_{p,q}^{s,\alpha_1}\subset M_{p,q}^{s_2,\alpha_2}$ implies that $s_1\geqslant s_2+n(\alpha_1-\alpha_2)\big(\frac1p+\frac1q-1\big)$.

\noindent{\bf Case 1.3.} We show that $s_1\geqslant s_2+n(\alpha_1-\alpha_2)\big(\frac1q-\frac1p\big)$.
Let $\Lambda^*_0(k)$ be the subset of $\Lambda^*(k)$ such that $|l-\tilde{l}| \ge C$ for all $l, \tilde{l} \in \Lambda^*_0(k)$ ($l\neq \tilde{l}$). Denote
\begin{equation}
f^{(l)}= \tau_{l} \mathscr{F}^{-1}\rho^{\alpha_2}_l, \quad \quad  f= \sum_{l\in \Lambda^*_0(k)} f^{(l)}.
\end{equation}
It follows that
\begin{equation}
\left\|f \right\|^\circ_{M_{p,q}^{s_2,\alpha_2}}=\left( \sum_{l\in \Lambda^*_0(k)} \langle l \rangle^{\frac{s_2q}{1-\alpha_2}}\|f^{(l)}\|_p^q\right)^{\frac1q}
\gtrsim \langle k \rangle^{\frac{s_2}{1-\alpha_1}+\frac{n(\alpha_1-\alpha_2)}{q(1-\alpha_1)}+\frac{n\alpha_2}{1-\alpha_1}\big(1-\frac1p\big)}.
\end{equation}
On the other hand,
$$
\|f\|_2 = \left\| \sum_{l\in \Lambda^*_0(k)} \rho_l^{\alpha_2} \right\|_2 \sim  \langle k \rangle^{\frac{n\alpha_2}{2(1-\alpha_1)} +\frac{n(\alpha_1-\alpha_2)}{2(1-\alpha_1)} },
$$
and noticing that $|f^{(l)}(x)| =  \langle l \rangle^{\frac{n\alpha_2}{(1-\alpha_2)} } |(\mathscr{F}^{-1})(C \langle l \rangle^{\frac{\alpha_2}{(1-\alpha_2)} } (x-l))|$, we have
$$
\|f\|_\infty  \lesssim  \langle k \rangle^{\frac{n\alpha_2}{(1-\alpha_1)}}.
$$
Hence,
$$
\|f\|_p \lesssim  \langle k \rangle^{\frac{n\alpha_2}{(1-\alpha_1)}(1-\frac{1}{p}) +\frac{n(\alpha_1-\alpha_2)}{p(1-\alpha_1)} }.
$$
It follows that
\begin{equation}
\left\| f\right\|^\circ_{M_{p,q}^{s_1,\alpha_1}} \lesssim \langle k \rangle^{\frac{s_1}{1-\alpha_1}} \left\|f\right\|_p
\lesssim\langle k \rangle^{\frac{s_1}{1-\alpha_1}+\frac{n(\alpha_1-\alpha_2)}{p(1-\alpha_1)}+\frac{n\alpha_2}{1-\alpha_1}\big(1-\frac1p\big)}.
\end{equation}
$M_{p,q}^{s,\alpha_1}\subset M_{p,q}^{s_2,\alpha_2}$ implies that $s_1\geqslant s_2+n(\alpha_1-\alpha_2)\big(\frac1q-\frac1p\big)$.

\noindent{\bf Case 2.} We assume that
$M_{p,q}^{s_1,\alpha_1}\subset M_{p,q}^{s_2,\alpha_2},\alpha_1<\alpha_2$. The idea is the same as in the proof of Theorem   \ref{embedmb} and we only give a sketch proof.

\noindent{\bf Case 2.1.} We show that $s_1\ge s_2$.
Let $k \in\mathbb{Z}^n$, $|k| \gg 1$.  One can find some $l \in\mathbb{Z}^n$  such that $\rho_{k}^{\alpha_1} \rho_{l}^{\alpha_2}=\rho_{k}^{\alpha_1}$, Then,
\begin{equation}
\|\mathscr{F}^{-1}\rho_{k }^{\alpha_1}\|^\circ_{M_{p,q}^{s_2,\alpha_2}}  \gtrsim \langle k \rangle^{\frac{s_2}{1-\alpha_1}+\frac{n\alpha_1}{1-\alpha_1}\big(1-\frac1p\big)},
\end{equation}
\begin{equation}
\|\mathscr{F}^{-1}\rho_{k }^{\alpha_1}\|^\circ_{M_{p,q}^{s_1,\alpha_1}} \lesssim\langle k \rangle^{\frac{s_1}{1-\alpha_1}+\frac{n\alpha_1}{1-\alpha_1}\big(1-\frac1p\big)}.
\end{equation}
$M_{p,q}^{s_1,\alpha_1}\subset M_{p,q}^{s_2,\alpha_2}$ implies that $s_1\geqslant s_2$.

\noindent{\bf Case 2.2.} We show that $s_1 \geqslant s_2+n(\alpha_1-\alpha_2)\big(\frac1p+\frac1q-1\big)$.
\begin{equation}
\left\|\mathscr{F}^{-1}\rho_k^{\alpha_2} \right\|^\circ_{M_{p,q}^{s_2,\alpha_2}}   \gtrsim\langle k \rangle^{\frac{s_2}{1-\alpha_2}+\frac{n\alpha_2}{1-\alpha_2}\big(1-\frac1p\big)},
\end{equation}
\begin{equation}
\left\|\mathscr{F}^{-1} \rho_k^{\alpha_2} \right\|^\circ_{M_{p,q}^{s_1,\alpha_1}} \lesssim \langle k \rangle^{\frac{n(\alpha_2-\alpha_1)}{q(1-\alpha_2)}+\frac{s_1}{1-\alpha_2}+\frac{n\alpha_1}{1-\alpha_2}\big(1-\frac1p\big)}.
\end{equation}
  $M_{p,q}^{s_1,\alpha_1}\subset M_{p,q}^{s_2,\alpha_2}$ implies that $s_1 \geqslant s_2+n(\alpha_1-\alpha_2)\big(\frac1p+\frac1q-1\big)$.

\noindent{\bf Case 2.3.} Let $k \in\mathbb{Z}^n$ with $|k |\gg1$, and
\begin{equation}
\Lambda^*(k )=\left\{l \in\mathbb{Z}^n:\rho_l^{\alpha_1}\rho_{k}^{\alpha_2}=\rho_l^{\alpha_1}\right\}.
\end{equation}
Let $\Lambda^*_0(k)$ be the subset of $\Lambda^*(k)$ such that $|l-\tilde{l}| \ge C$ for all $l, \tilde{l} \in \Lambda^*_0(k)$ ($l\neq \tilde{l}$). It is easy to see that  $\#\Lambda^*_0 (k) \sim\langle k \rangle^{\frac{n(\alpha_2-\alpha_1)}{1-\alpha_2}}$.   Put
\begin{equation}
f^{(l)}= \tau_{Cl} \mathscr{F}^{-1}\left( \rho_{l}^{\alpha_1}\right), \quad \quad  f= \sum_{l\in \Lambda^*_0(k)} f^{(l)}.
\end{equation}
then,
\begin{equation}
\left\|f\right\|^\circ_{M_{p,q}^{s_1,\alpha_1}} \lesssim \langle k \rangle^{\frac{s_1}{1-\alpha_2}+\frac{n(\alpha_2-\alpha_1)}{q(1-\alpha_2)}+\frac{n\alpha_1}{1-\alpha_2}\big(1-\frac1p\big)},
\end{equation}
\begin{equation}
\left\| f \right\|^\circ_{M_{p,q}^{s_2,\alpha_2}} \gtrsim
\langle k \rangle^{\frac{s_2}{1-\alpha_2}+\frac{n(\alpha_2-\alpha_1)}{p(1-\alpha_2)}+\frac{n\alpha_1}{1-\alpha_2}\big(1-\frac1p\big)}.
\end{equation}
$M_{p,q}^{s_1,\alpha_1}\subset M_{p,q}^{s_2,\alpha_2}$ implies that $s_1 \geqslant s_2+n(\alpha_2-\alpha_1)\big(\frac1p-\frac1q\big)$.
\end{proof}

\section{Counterexamples for the algebra structure}

In order to show that our results are sharp, we need an $\alpha$-covering which is a slightly modified version in \cite{FHW11}. Let $Q(a, r):= \prod^n_{i=1}[a_i-r, a_i+r]$ and we consider the following covering of $\mathbb{R}$:
$$
Q_0 = [-1,1], \
\  Q_j= Q(|j|^{\alpha/(1-\alpha)}j, \ r_j |j|^{\alpha/(1-\alpha)}), \ j\neq 0.
$$

\begin{lem} \label{lem6.1}
Let $ r  >1/2(1-\alpha) $.  There exists $j_0 \in \mathbb{N}$,  such that $\{ Q_j\}_{j\in \mathbb{Z}}$ is an $\alpha$-covering of $\mathbb{R}$, where
$$
r_j =  \left\{
\begin{array}{ll}
 r, \ \ & |j| > j_0, \\
{\rm suitable}, \ \ & |j| \le  j_0.
\end{array}
\right.
$$
Moreover, if $r < 8/15(1-\alpha)$, then
\begin{align}
Q_{j\pm 1}  \cap Q(|j|^{\alpha/(1-\alpha)}j, \ \frac{7}{8} r_j |j|^{\alpha/(1-\alpha)}) = \ \varnothing  \label{alth}
\end{align}
for all $j \in \mathbb{Z}$.
\end{lem}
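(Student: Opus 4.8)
The assertion is one-dimensional, so I write $Q(a,\varrho)=[a-\varrho,a+\varrho]$, put $\beta=\alpha/(1-\alpha)\ge 0$ and $\gamma=1/(1-\alpha)=\beta+1$, and record that for $j\ge 1$ the interval $Q_j$ has centre $c_j=j^{\gamma}$ and half-length $r_jj^{\beta}$, while $Q_{-j}=-Q_j$; thus the construction is invariant under $\xi\mapsto-\xi$ and it suffices to argue for $j\ge 1$. The whole proof is driven by one elementary estimate: since $t\mapsto t^{\gamma}$ is convex on $[0,\infty)$ (an equality when $\alpha=0$), comparing the secant slope on $[j,j+1]$ with the tangent slopes at the endpoints gives
\begin{equation*}
\gamma\,j^{\beta}\le (j+1)^{\gamma}-j^{\gamma}\le\gamma\,(j+1)^{\beta},\qquad j\ge 1 .
\end{equation*}
Because $(1+1/j)^{\beta}\to 1$, the middle quantity equals $\gamma j^{\beta}$ up to a factor tending to $1$, and it is precisely this that produces the constants $\tfrac12$ and $\tfrac{8}{15}$.

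First I would dispose of the tail $|j|>j_{0}$, where $r_{j}=r$. Two consecutive intervals $Q_{j},Q_{j+1}$ overlap exactly when $(j+1)^{\gamma}-j^{\gamma}\le r\bigl(j^{\beta}+(j+1)^{\beta}\bigr)$; by the displayed bound it suffices that $\gamma(1+1/j)^{\beta}\le r\bigl(1+(1+1/j)^{\beta}\bigr)$, and since $r>\tfrac{1}{2(1-\alpha)}=\tfrac{\gamma}{2}$ \emph{strictly}, the right side exceeds the left once $j$ is large; fix $j_{0}$ accordingly. Likewise $Q_{j+1}$ is disjoint from the shrunken interval $Q\bigl(c_{j},\tfrac78 r j^{\beta}\bigr)$ iff $(j+1)^{\gamma}-j^{\gamma}>\tfrac78 r j^{\beta}+r(j+1)^{\beta}$, for which it suffices that $\gamma>\bigl(\tfrac78+(1+1/j)^{\beta}\bigr)r$; the bracket tends to $\tfrac{15}{8}$, and $\gamma>\tfrac{15}{8}r$ holds \emph{strictly} exactly because $r<\tfrac{8}{15(1-\alpha)}=\tfrac{8}{15}\gamma$, so this is valid for $j>j_{0}$ after possibly enlarging $j_{0}$. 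The separation involving $Q_{j-1}$, and the negative indices, follow from the reflection $\xi\mapsto-\xi$. Hence, outside a fixed finite range of indices, the $Q_j$ overlap consecutively and stay off the $\tfrac78$-shrunken cubes.

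Next I would check that the family is an $\alpha$-covering. For $\xi\in Q_{j}$ with $|j|>2r$ one has $(|j|-r)|j|^{\beta}\le|\xi|\le(|j|+r)|j|^{\beta}$, hence $|\xi|\sim|j|^{\gamma}$ and $|Q_{j}|=2r_{j}|j|^{\beta}\sim\langle j\rangle^{\beta}\sim\langle\xi\rangle^{\alpha}$; the finitely many remaining $Q_{j}$ are bounded intervals of length bounded above and below, so the size condition holds uniformly. Moreover, if $Q_{j}\cap Q_{k}\neq\varnothing$ then the centre-and-width comparison (the one-dimensional analogue of the arguments giving \eqref{quantitative-k-l-3333} and \eqref{cardinal}, resp. \eqref{003}) forces $|j-k|\lesssim 1$, which is the uniformly bounded overlap; combined with consecutive overlap on the tail and the fixed interval $Q_{0}=[-1,1]$ in the middle, this gives $\bigcup_{j\in\mathbb{Z}}Q_{j}=\mathbb{R}$ once the finitely many small cubes are chosen to connect $Q_{0}$ to $Q_{j_{0}+1}$.

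It then remains to choose the ``suitable'' radii $r_{j}$ for $|j|\le j_{0}$ so that \emph{both} the consecutive-overlap property and the separation \eqref{alth} hold for those indices too, including the degenerate case $j=0$, where $Q\bigl(0,\tfrac78 r_{0}\,0^{\beta}\bigr)=\{0\}$ and one only needs $0\notin Q_{\pm1}$, i.e. $r_{\pm1}<1$. This is a finite system of strict inequalities whose solvability rests solely on the fact that the admissible window $\bigl(\tfrac{1}{2(1-\alpha)},\,\tfrac{8}{15(1-\alpha)}\bigr)$ for $r$ is non-empty, which holds since $\tfrac12<\tfrac{8}{15}$; concretely one builds $r_{\pm1},r_{\pm2},\dots$ recursively outward, placing each at the centre of the interval (non-empty, of length comparable to the preceding half-width) left available by the two constraints. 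Taking $j_{0}$ to be the larger of the two tail thresholds then finishes the proof. The only genuinely delicate point, and the one I expect to absorb most of the work, is exactly this bookkeeping near the origin: simultaneously keeping the small intervals overlapping, keeping them away from the $\tfrac78$-shrunken neighbours, enforcing $0\notin Q_{\pm1}$, and splicing correctly onto the fixed interval $[-1,1]$, and verifying that these finitely many constraints are mutually compatible for every $\alpha\in[0,1)$.
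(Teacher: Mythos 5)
Your proposal is correct and follows essentially the same route as the paper: both arguments reduce to comparing the gap $c_{j+1}-c_j=(j+1)^{1/(1-\alpha)}-j^{1/(1-\alpha)}\sim \frac{1}{1-\alpha}\,j^{\alpha/(1-\alpha)}$ (mean value theorem in the paper, the equivalent secant--tangent convexity bounds in your write-up) with the sum of the half-lengths, extracting the two asymptotic thresholds $r>\frac{1}{2(1-\alpha)}$ for consecutive overlap and $r<\frac{8}{15(1-\alpha)}$ for disjointness from the $\tfrac78$-shrunken intervals, and then fixing the finitely many radii $r_j$, $|j|\le j_0$, by hand. You are somewhat more explicit than the paper about verifying the remaining $\alpha$-covering axioms (the size condition $|Q_j|\sim\langle\xi\rangle^{\alpha}$ and the bounded overlap) and about the bookkeeping near the origin, but these are elaborations of the same argument rather than a different one.
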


\begin{proof}
Let $j>100$. Noticing that $Q_{j+1} \cap Q_j \neq \varnothing$ if and only if
\begin{align}
|j+1|^{\alpha/(1-\alpha)}(j+1) - r_{j+1} |j+1|^{\alpha/(1-\alpha)} < |j|^{\alpha/(1-\alpha)} j  - r_j |j|^{\alpha/(1-\alpha)}. \label{j+1j}
\end{align}
In view of mean value theorem, we see that \eqref{j+1j} is equivalent to
\begin{align}
\frac{1}{1-\alpha} |j+\theta|^{\alpha/(1-\alpha)} < r_{j+1} |j+1|^{\alpha/(1-\alpha)}  + r_j |j|^{\alpha/(1-\alpha)}, \label{mj+1j}
\end{align}
where $\theta \in (0,1)$. Take $r_{j+1}=r_j = r $.  Hence, there exists $j_0:= j_0(\alpha)$ such that for any $j>j_0$, \eqref{mj+1j} holds.  Next, if $j>j_0$, we have
 \begin{align}
|j+1|^{\alpha/(1-\alpha)}(j+1) - r  |j+1|^{\alpha/(1-\alpha)} > |j|^{\alpha/(1-\alpha)} j  + \frac{7}{8} r  |j|^{\alpha/(1-\alpha)}, \label{mmj+1j}
\end{align}
which implies \eqref{alth} for $j>j_0$. If $j \le j_0$, one can choose suitable $r_j$ so that the conclusion holds.
\end{proof}

Using Lemma \ref{lem6.1} and the idea as in \cite{FHW11}, we now construct a new $\alpha$-covering of $\mathbb{R}^n$, where the original idea goes back to Lizorkin's dyadic decomposition to $\mathbb{R}^n$. Let $j \in \mathbb{Z}$ with $|j| >j_0$. We may assume $8|j|/7 r  \in \mathbb{N}$.
We divide $[-|j|^{\frac{1}{1-\alpha}}, |j|^{\frac{1}{1-\alpha}}]$
into $16|j|/7 r $ equal intervals:
$$
[-|j|^{\frac{1}{1-\alpha}}, \,
|j|^{\frac{1}{1-\alpha}}]=[r_{j,-N_j}, r_{j, -N_j+1}]\cup ... \cup
[r_{j, N_j-1}, r_{j, N_j}].
$$
Denote
$$
\mathscr{R}=\{r_{j, s}: j\in \mathbb{N}, s=-N_j, \cdots, N_j\}.
$$
We further write
$$
\mathscr{K}^n_j=\{k= (k_1, \cdots, k_n): k_i \in \mathscr{R}, \,
\max_{1\leq i \leq n} |k_i| =|j|^{\frac{1}{1-\alpha}}\}.
$$
For any $k \in \mathscr{K}^n_j$, we write
$$
Q_{k j}=Q(k, r  |j|^{\frac{\alpha}{1-\alpha}}), \quad  |j| >j_0.
$$
From the construction of $Q_{k j}$ one sees that
\begin{equation}
\left\{
\begin{aligned}
& \# \{Q_{k' j'}: \  Q_{k' j'} \cap Q_{k j} \neq \varnothing \} =2n,   \\
 & Q(k, r  |j|^{\frac{\alpha}{1-\alpha}}/2 ) \cap Q_{k' j'} \neq \varnothing   \Leftrightarrow  k'=k, j'=j.
\end{aligned}
\right. \label{Qk>}
\end{equation}
For $|j| \le j_0$, one can choose suitable $r_j$, and in a similar way as above to define
$$
Q_{k j}=Q(k, r_j |j|^{\frac{\alpha}{1-\alpha}}), \quad  1\le |j| \le j_0, \ \ Q_0 =Q(0,r_0)
$$
so that
\begin{equation}
\left\{
\begin{aligned}
& \# \{Q_{k' j'}:\  Q_{k' j'} \cap Q_{k j} \neq \varnothing \} =2n,   \\
 & Q(k, r_j |j|^{\frac{\alpha}{1-\alpha}}/2 ) \cap Q_{k' j'} \neq \varnothing   \Leftrightarrow  k'=k, j'=j \ \ for \ j\neq 0, \\
 & Q(0, r_0 /2 ) \cap Q_{k' j'} \neq \varnothing   \Leftrightarrow   Q_{k' j'} =Q_0.
\end{aligned}
\right. \label{Qk<}
\end{equation}

\begin{lem} \label{lem6.2}
 $\{Q_0\} \cup \{Q_{kj}\}_{j\in \mathbb{Z}\setminus \{0\}, \ k\in \mathscr{K}_j}$ as in \eqref{Qk>} and \eqref{Qk<} is an $\alpha$-covering of $\mathbb{R}^n$.
\end{lem}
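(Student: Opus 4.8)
The plan is to verify the three defining properties of an $\alpha$-covering for the family $\mathcal{Q}:=\{Q_0\}\cup\{Q_{kj}\}_{j\in\mathbb{Z}\setminus\{0\},\,k\in\mathscr{K}^n_j}$: (i) $\bigcup_{Q\in\mathcal{Q}}Q=\mathbb{R}^n$; (ii) $\sup_{Q\in\mathcal{Q}}\#\{Q'\in\mathcal{Q}:Q'\cap Q\neq\varnothing\}<\infty$; (iii) $|Q|\sim\langle\xi\rangle^{n\alpha/(1-\alpha)}$ uniformly in $\xi\in Q$ and $Q\in\mathcal{Q}$. Property (ii) is precisely the first lines of \eqref{Qk>} and \eqref{Qk<} (each neighbourhood count equals $2n$), so nothing further is needed for it. For (iii), since $|Q_{kj}|=(2r_j|j|^{\alpha/(1-\alpha)})^n$ and $|Q_0|\sim1$, it suffices to prove $\langle\xi\rangle\sim\langle k\rangle\sim|j|^{1/(1-\alpha)}$ for all $\xi\in Q_{kj}$, and $\langle\xi\rangle\sim1$ for $\xi\in Q_0$.

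The main step is (i), and the idea is to reduce the $n$-dimensional covering to the one-dimensional covering of Lemma \ref{lem6.1} along the coordinate of largest modulus. Given $\xi=(\xi_1,\dots,\xi_n)$ with $|\xi|_\infty>r_0$, I would choose $i_0$ with $|\xi_{i_0}|=|\xi|_\infty$ and, say, $\xi_{i_0}>0$. By Lemma \ref{lem6.1} there is $j\geqslant1$ with $\xi_{i_0}\in Q_j$, i.e. $|\xi_{i_0}-j^{1/(1-\alpha)}|\leqslant r_j|j|^{\alpha/(1-\alpha)}$; since $\alpha<1$ this forces $\xi_{i_0}\sim j^{1/(1-\alpha)}$, hence $|\xi_i|\leqslant|\xi_{i_0}|\lesssim j^{1/(1-\alpha)}$ for every $i$. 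I would then construct a centre $k\in\mathscr{K}^n_j$ coordinatewise: take $k_{i_0}=j^{1/(1-\alpha)}$ and, for $i\neq i_0$, let $k_i$ be the point of $\mathscr{R}\cap[-j^{1/(1-\alpha)},j^{1/(1-\alpha)}]$ closest to $\xi_i$. Because that grid has spacing $\tfrac{7r}{8}|j|^{\alpha/(1-\alpha)}$ and $\xi_i$ lies within $r_j|j|^{\alpha/(1-\alpha)}$ of the interval, one checks $|k_i-\xi_i|\leqslant r_j|j|^{\alpha/(1-\alpha)}$ for every $i$ (including $i_0$), so $\xi\in Q(k,r_j|j|^{\alpha/(1-\alpha)})=Q_{kj}$; and $\max_i|k_i|=j^{1/(1-\alpha)}$ gives $k\in\mathscr{K}^n_j$. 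The quantitative input here is the lower bound $r>1/(2(1-\alpha))$ from Lemma \ref{lem6.1}, which is exactly what closes the radial gap between consecutive shells $|j|^{1/(1-\alpha)}$ and $(j+1)^{1/(1-\alpha)}$. The cases $|\xi|_\infty\leqslant r_0$ (giving $\xi\in Q_0$) and $1\leqslant|j|\leqslant j_0$ (with the suitable $r_j$) are treated in the same manner, so $\mathcal{Q}$ covers $\mathbb{R}^n$.

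For (iii), fix $\xi\in Q_{kj}$ with $|j|>j_0$. Then $|\xi-k|_\infty\leqslant r|j|^{\alpha/(1-\alpha)}$ while $|k|_\infty=|j|^{1/(1-\alpha)}$, and since $\alpha<1$ we have $r|j|^{\alpha/(1-\alpha)}\ll|j|^{1/(1-\alpha)}$; hence $\langle\xi\rangle\sim\langle k\rangle\sim|k|_\infty=|j|^{1/(1-\alpha)}$, whence $|Q_{kj}|=(2r|j|^{\alpha/(1-\alpha)})^n\sim\langle k\rangle^{n\alpha/(1-\alpha)}\sim\langle\xi\rangle^{n\alpha/(1-\alpha)}$. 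For the finitely many $Q_{kj}$ with $|j|\leqslant j_0$ and for $Q_0$ one has $\langle\xi\rangle\sim1\sim|Q|^{1/n}$, with implicit constants depending only on $j_0$ and the finitely many $r_j$. Combining (i) and (iii) with (ii) read off from \eqref{Qk>}--\eqref{Qk<} then finishes the proof.

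The difficulty I anticipate lies entirely in the bookkeeping of step (i): one must check that the coordinatewise construction of $k$ works uniformly even when the maximal coordinate $\xi_{i_0}$ sits in an overlap $Q_j\cap Q_{j+1}$, so that a single shell level may be chosen consistently while every remaining coordinate still lands inside $[-|j|^{1/(1-\alpha)},|j|^{1/(1-\alpha)}]$ up to the cube half-side. This is precisely the regime controlled by the two-sided restriction $1/(2(1-\alpha))<r<8/(15(1-\alpha))$ and by the separation \eqref{alth} of Lemma \ref{lem6.1}, and doing it correctly is what simultaneously excludes gaps in the cover and the uncontrolled accumulation of overlapping cubes; the remaining points — the comparison $\langle k\rangle\sim|j|^{1/(1-\alpha)}$ and the low-frequency cubes $|j|\leqslant j_0$ — are routine.
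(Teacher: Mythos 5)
The paper states Lemma \ref{lem6.2} without any proof, so there is nothing to compare against; judged on its own, your covering argument is the right one and essentially complete. Reducing to the one--dimensional covering of Lemma \ref{lem6.1} along the coordinate of largest modulus, setting $k_{i_0}=\mathrm{sgn}(\xi_{i_0})\,|j|^{1/(1-\alpha)}$ and taking the nearest level-$j$ grid point in the remaining coordinates does produce $k\in\mathscr{K}^n_j$ with $|k_i-\xi_i|\le r_j|j|^{\alpha/(1-\alpha)}$ for every $i$: the grid spacing $\tfrac{7r}{8}|j|^{\alpha/(1-\alpha)}$ gives a distance at most $\tfrac{7r}{16}|j|^{\alpha/(1-\alpha)}<r|j|^{\alpha/(1-\alpha)}$ for coordinates inside $[-|j|^{1/(1-\alpha)},|j|^{1/(1-\alpha)}]$, and any coordinate exceeding $|j|^{1/(1-\alpha)}$ does so by at most $r_j|j|^{\alpha/(1-\alpha)}$ because $|\xi_i|\le|\xi_{i_0}|$ and $\xi_{i_0}\in Q_j$. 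The radial overlap of consecutive shells is indeed exactly where $r>1/2(1-\alpha)$ enters, and the admissibility is supplied by \eqref{Qk>}--\eqref{Qk<}, as you say.

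The one thing to fix is the size condition. An $\alpha$-covering requires $|Q|\sim\langle\xi\rangle^{n\alpha}$ for $\xi\in Q$, not $\langle\xi\rangle^{n\alpha/(1-\alpha)}$, and your verifying chain $(2r|j|^{\alpha/(1-\alpha)})^n\sim\langle k\rangle^{n\alpha/(1-\alpha)}$ is false as written: here $k$ is the cube centre, a point of $\mathbb{R}^n$ with $|k|_\infty=|j|^{1/(1-\alpha)}$, so $\langle k\rangle^{n\alpha/(1-\alpha)}\sim|j|^{n\alpha/(1-\alpha)^2}$, whereas $|Q_{kj}|\sim|j|^{n\alpha/(1-\alpha)}$. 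You have conflated the lattice index (which for $Q_{kj}$ is $j$, not $k$) with the frequency location; in the paper's $\{\eta_k^\alpha\}$ decomposition the index $k$ plays the role that $j$ plays here. The correct chain is $|Q_{kj}|\sim\langle j\rangle^{n\alpha/(1-\alpha)}=\bigl(\langle j\rangle^{1/(1-\alpha)}\bigr)^{n\alpha}\sim\langle\xi\rangle^{n\alpha}$ for $\xi\in Q_{kj}$, which is exactly the required growth. With that exponent corrected, the proof is complete.
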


Let $\eta: \mathbb{R} \to [0,1]$ be a smooth bump function
satisfying
\begin{align}
\eta(\xi):= \left\{
\begin{array}{ll}
1, & |\xi|\leq 1/2,\\
{\rm smooth},  & 1/2<|\xi| \le 1,\\
 0, & |\xi|\geq 1.
\end{array}
\right.\label{eta0}
 \end{align}

Let $r$ and $r_j$ be as in \eqref{Qk>} and \eqref{Qk<}, respectively. Denote for $i=1,
\cdots, n$, $j\neq 0$,
$$
\phi_{kj}(\xi_i)=\eta \left(\frac{\xi_i-k_i}{r_j |
j |^{\frac{\alpha}{1-\alpha}}}\right) , \quad k=(k_1, \cdots,
k_n) \in \mathscr{K}_j, \ \ \phi_{0}(\xi_i)= \eta \left(\frac{\xi_i}{r_0  }\right),
$$
$$
\phi_{kj}(\xi)=\phi_{kj}(\xi_1)...\phi_{kj}(\xi_n), \ \  \phi_{0}(\xi)=\phi_{0}(\xi_1)...\phi_{0}(\xi_n).
$$
We put
\begin{align}
\psi_{kj}(\xi)=\frac{\phi_{kj}(\xi)}{\phi_0(\xi) + \sum_{k\in\mathscr{K}_j, j\in
\mathbb{Z} \setminus \{0\}}\phi_{kj}(\xi)}, \ \ \psi_{0}(\xi)=\frac{\phi_{0}(\xi)}{\phi_0(\xi) + \sum_{k\in\mathscr{K}_j, j\in
\mathbb{Z} \setminus \{0\}}\phi_{kj}(\xi)}. \label{psikj}
\end{align}

\begin{lem} \label{lem6.3}
 $ \{\psi_0\} \cup \{\psi_{kj}\}_{j\in \mathbb{Z}\setminus \{0\}, \ k\in \mathscr{K}_j}$ as in \eqref{psikj} is a  $p$-BAPU.
\end{lem}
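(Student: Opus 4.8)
The plan is to verify the two defining properties of a $p$-BAPU for the family $\{\psi_0\}\cup\{\psi_{kj}\}$: first that each $\psi_{kj}$ is a smooth function supported in (a constant dilate of) the box $Q_{kj}$ with the $\alpha$-compatible derivative bounds $\||j|^{\alpha|\delta|/(1-\alpha)}D^\delta\psi_{kj}\|_\infty\lesssim 1$ uniformly in $k,j,\delta$ (for $|\delta|$ up to the finite order needed), and second that the partial-sum multipliers $\sum_{\text{finite}}\psi_{kj}$ have uniformly bounded $\mathscr{F}^{-1}L^p_{\rm loc}$-operator norms, i.e. $\sup_{k,j}\|\mathscr{F}^{-1}\psi_{kj}\mathscr{F}\|_{L^p\to L^p}\lesssim 1$. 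Since $Q_{kj}$ has side length $\sim r_j|j|^{\alpha/(1-\alpha)}$ and $|j|^{1/(1-\alpha)}\sim\langle k\rangle$ on $Q_{kj}$, the $\alpha$-covering geometry from Lemma \ref{lem6.2} translates these into exactly the conditions \eqref{eta-1}--\eqref{eta-4} required of an admissible partition, so by Proposition \ref{equivalent-norm} the resulting $\alpha$-modulation norm is equivalent to the standard one.

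First I would record that the denominator $D(\xi):=\phi_0(\xi)+\sum_{k\in\mathscr{K}_j,\,j\neq 0}\phi_{kj}(\xi)$ is bounded below by a positive constant: by Lemma \ref{lem6.2} the boxes $\{Q(k,r_j|j|^{\alpha/(1-\alpha)}/2)\}$ already cover $\mathbb{R}^n$ (the ``$\le 1/2$'' plateau of $\eta$ in \eqref{eta0} corresponds to the half-size boxes), so $D(\xi)\ge 1$ for every $\xi$; and by the finite-overlap bound in \eqref{Qk>}--\eqref{Qk<} at most $2n$ of the $\phi_{kj}$ are nonzero at any point, so $D(\xi)\le 2n+1$. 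Thus $1\le D\le 2n+1$, and on $Q_{kj}$ only finitely many neighbouring $\phi_{k'j'}$ contribute to $D$. Next, I would use the scaling built into the definition $\phi_{kj}(\xi_i)=\eta((\xi_i-k_i)/(r_j|j|^{\alpha/(1-\alpha)}))$: each $\xi_i$-derivative of $\phi_{kj}$ costs a factor $|j|^{-\alpha/(1-\alpha)}$, matching \eqref{eta-4}; and since on the support of $\phi_{kj}$ every nonzero $\phi_{k'j'}$ in $D$ is of comparable scale $|j'|\sim|j|$ (again by the covering geometry and $|j|^{1/(1-\alpha)}\sim\langle k\rangle$), the quotient rule shows $D^\delta\psi_{kj}$ inherits the same bound $\||j|^{\alpha|\delta|/(1-\alpha)}D^\delta\psi_{kj}\|_\infty\lesssim 1$. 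The lower bound $|\psi_{kj}|\gtrsim 1$ on the half-size box follows from $\phi_{kj}\equiv 1$ there and $D\le 2n+1$.

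For the bounded-multiplier property: when $p\ge 1$ this is immediate from Young's inequality once one controls $\|\mathscr{F}^{-1}\psi_{kj}\|_1$, which is itself controlled via the derivative bounds just established (write $\psi_{kj}(\xi)=\Psi((\xi-c_{kj})/(r_j|j|^{\alpha/(1-\alpha)}))$ for a family of functions $\Psi$ lying in a bounded set of $\mathscr{S}$, whose inverse Fourier transforms then have uniformly bounded $L^1$ norms after the natural change of variables). When $0<p<1$ one instead invokes the generalized Bernstein inequality (Proposition \ref{thm1.9}) together with Proposition \ref{convolution} and the scaling argument, exactly as in the proof of the equivalent-norm Proposition just before this lemma and in Step~6 of the proof of Theorem \ref{dilationP}: the support of $\psi_{kj}$ has diameter $\sim|j|^{\alpha/(1-\alpha)}\sim\langle k\rangle^{\alpha/(1-\alpha)}$, so $\|(\mathscr{F}^{-1}\psi_{kj})*f\|_p\lesssim\langle k\rangle^{n\alpha(1/p-1)/(1-\alpha)}\|\mathscr{F}^{-1}\psi_{kj}\|_p\|f\|_p$ and the two $\langle k\rangle$-powers cancel after accounting for $\|\mathscr{F}^{-1}\psi_{kj}\|_p$ itself scaling like $\langle k\rangle^{n\alpha(1-1/p)/(1-\alpha)}$ via Proposition \ref{thm1.9} and the uniform Schwartz bounds on $\Psi$.

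The main obstacle is the honest verification that the neighbouring terms appearing in the denominator $D$ on the support of a given $\phi_{kj}$ are all of the same dyadic generation $|j'|\sim|j|$ with comparably sized boxes — this is where the careful construction in Lemma \ref{lem6.1} (the separation estimate \eqref{alth} guaranteeing $Q_{j\pm 1}$ misses the $7/8$-shrunk box) and the Lizorkin-type partition $\mathscr{K}_j$ of the shell $\max_i|k_i|=|j|^{1/(1-\alpha)}$ are essential, since without it the quotient $\phi_{kj}/D$ could fail the uniform derivative bounds \eqref{eta-4}. Once that geometric comparability is in hand, everything else is a routine but slightly tedious bookkeeping of scalings, and I would simply refer to \cite{FHW11} for the details of that bookkeeping, as the excerpt already signals.
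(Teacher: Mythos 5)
The paper itself gives no proof of this lemma: it is stated bare, with the whole construction and its verification deferred to \cite{FHW11} (``Using Lemma \ref{lem6.1} and the idea as in \cite{FHW11}\dots''). Your argument therefore cannot be compared against an in-paper proof, but it is the standard and correct verification, and it supplies exactly what the paper leaves implicit: (a) the denominator $D$ in \eqref{psikj} is bounded above by the finite-overlap count in \eqref{Qk>}--\eqref{Qk<} and below by $1$ because the plateau boxes $Q(k,r_j|j|^{\alpha/(1-\alpha)}/2)$, on which $\phi_{kj}\equiv 1$ by \eqref{eta0}, already cover $\mathbb{R}^n$; (b) every $\phi_{k'j'}$ alive on ${\rm supp}\,\phi_{kj}$ has $|j'|\sim|j|$, so the quotient rule transfers the rescaled derivative bounds of $\phi_{kj}$ to $\psi_{kj}$, giving \eqref{eta-1}--\eqref{eta-4} with $\langle k\rangle\sim|j|^{1/(1-\alpha)}$; and (c) the uniform multiplier bound follows from Young's inequality for $p\ge 1$ and from Proposition \ref{convolution} together with the dilation identity $\|\mathscr{F}^{-1}\psi_{kj}\|_p\sim |j|^{\frac{n\alpha}{1-\alpha}(1-\frac1p)}$ for $p<1$, the two powers of $|j|^{\alpha/(1-\alpha)}$ cancelling exactly as in a $p$-BAPU must. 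Two small cautions. First, the covering-by-plateaus claim in (a) is not literally Lemma \ref{lem6.2} (which only asserts that the full boxes cover); it needs the within-shell computation (adjacent centres are $\tfrac{7}{8}r|j|^{\alpha/(1-\alpha)}$ apart while the plateau has half-width $\tfrac12 r|j|^{\alpha/(1-\alpha)}$) together with the radial overlap \eqref{j+1j} from Lemma \ref{lem6.1}; you correctly flag this geometric comparability as the real content and point to \cite{FHW11}, which is where the paper also hides it. Second, the overlap constants announced in \eqref{Qk>} are not obviously consistent with the stated side lengths (next-nearest boxes in a shell also meet), but any uniform finite-overlap bound suffices for your argument, so this is a defect of the paper's bookkeeping rather than of your proof.
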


On the basis of the above $p$-$BAPU$, we immediately have

\begin{prop} \label{prop6.4}
Let $0< \alpha <1$, $0<p, q \leqslant \infty$, then
$$
\|f\|_{M^{s, \alpha}_{p,q}}=\left( \|\mathscr{F}^{-1}\psi_{0}\mathscr{F}f\|^q_{L^p(\mathbb{R}^n)} + \sum_{j\in \mathbb{Z}\setminus \{0\}} \langle
j\rangle^{sq/(1-\alpha)} \sum_{k\in \mathscr{K}_j}
\|\mathscr{F}^{-1}\psi_{kj}\mathscr{F}f\|^q_{L^p(\mathbb{R}^n)}\right)^{1/q}
$$
is an equivalent norm on $\alpha$-modulation space.
\end{prop}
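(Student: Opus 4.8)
The plan is to reduce everything to the scale-invariant multiplier estimates and the bounded-overlap structure that are already used several times in the excerpt (Propositions \ref{equivalent-norm}, \ref{embede}, \ref{convolution} and the proof of the $\rho_k^\alpha$-norm just above). Morally this is Proposition \ref{equivalent-norm} again, but since the family $\{\psi_0\}\cup\{\psi_{kj}\}$ is indexed by the Lizorkin-type pairs $(k,j)$ rather than by $\mathbb{Z}^n$, and carries the weight $\langle j\rangle^{s/(1-\alpha)}$ rather than $\langle m\rangle^{s/(1-\alpha)}$, the comparison has to be run by hand. First I would record the geometry. By Lemma \ref{lem6.2} the sets $\{Q_0\}\cup\{Q_{kj}\}$ form an $\alpha$-covering, so for
$$\Lambda(k,j)=\{m\in\mathbb{Z}^n:\ \square_m^\alpha\,\mathscr{F}^{-1}\psi_{kj}\mathscr{F}\not\equiv 0\}$$
one has $\#\Lambda(k,j)\lesssim 1$; and since $Q_{kj}$ has center of size $|k|_\infty=|j|^{1/(1-\alpha)}$ and diameter $\sim r_j|j|^{\alpha/(1-\alpha)}$, while $\operatorname{supp}\eta_m^\alpha$ has center of size $\langle m\rangle^{1/(1-\alpha)}$ and diameter $\sim\langle m\rangle^{\alpha/(1-\alpha)}$, overlap forces $\langle j\rangle\sim\langle m\rangle$ and hence also comparability of the two diameters. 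Symmetrically, $\#\{(k,j):\ \square_m^\alpha\,\mathscr{F}^{-1}\psi_{kj}\mathscr{F}\not\equiv 0\}\lesssim 1$ with $\langle j\rangle\sim\langle m\rangle$. The low-frequency terms $\psi_0$, $Q_0$ and the finitely many scales $|j|\le j_0$ are all of bounded size and correspond to the $\square_m^\alpha$ with $|m|\lesssim 1$, so they are absorbed into implicit constants.

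The second ingredient is the uniform multiplier estimate: if $g$ has $\widehat g$ supported in a set of diameter $\lesssim R$ and $h$ is a smooth bump adapted to scale $R$ with $|D^\delta h|\lesssim R^{-|\delta|}$, then $\|\mathscr{F}^{-1}(h\widehat g)\|_p\lesssim\|g\|_p$ with constant independent of $R$ — via Young's inequality and $\|\mathscr{F}^{-1}h\|_1\lesssim 1$ when $p\ge1$, and via Proposition \ref{convolution} together with $\|\mathscr{F}^{-1}h\|_p\lesssim R^{-n(1/p-1)}$ when $p<1$, exactly as in the proof of the $\rho_k^\alpha$-norm. Here $\psi_{kj}$ is such a bump at scale $R\sim r_j|j|^{\alpha/(1-\alpha)}$ (the denominator $\phi_0+\sum\phi_{kj}$ is bounded above and below by finite overlap, so it does not spoil the derivative bounds), and $\eta_m^\alpha$ is such a bump at scale $\langle m\rangle^{\alpha/(1-\alpha)}$; since these scales agree on overlapping pieces, the estimate applies with uniform constants in either direction.

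With these in hand the argument is routine. For one inclusion, use $\sum_m\eta_m^\alpha\equiv1$ to write $\mathscr{F}^{-1}\psi_{kj}\mathscr{F}f=\sum_{m\in\Lambda(k,j)}\mathscr{F}^{-1}\psi_{kj}\mathscr{F}\square_m^\alpha f$; the multiplier estimate gives $\|\mathscr{F}^{-1}\psi_{kj}\mathscr{F}f\|_p\lesssim\sum_{m\in\Lambda(k,j)}\|\square_m^\alpha f\|_p$, and taking the weighted $\ell^q$ (or $\ell^\infty$) norm, using $\langle j\rangle\sim\langle m\rangle$ on $\Lambda(k,j)$ together with the two-sided bounded-overlap counts, yields the Lizorkin-type quantity $\lesssim\|f\|_{M_{p,q}^{s,\alpha}}$. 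For the reverse inclusion one argues symmetrically: write $\square_m^\alpha f=\mathscr{F}^{-1}\eta_m^\alpha\mathscr{F}\big(\mathscr{F}^{-1}\psi_0\mathscr{F}f+\sum_{k,j}\mathscr{F}^{-1}\psi_{kj}\mathscr{F}f\big)$ using $\psi_0+\sum_{k,j}\psi_{kj}\equiv1$, expand into the $\lesssim1$ surviving terms, and repeat the same estimates.

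I expect the only genuine obstacle to be the geometric bookkeeping in the first paragraph — making precise that overlap of a Lizorkin box $Q_{kj}$ with a standard $\alpha$-covering element $\operatorname{supp}\eta_m^\alpha$ forces both the center-moduli $\langle j\rangle\sim\langle m\rangle$ and the diameters to be comparable, and that the resulting correspondence $(k,j)\leftrightarrow m$ has bounded multiplicity in both directions. Once that is nailed down, the rest is the by-now-standard almost-orthogonality computation, and the equality of quasi-norms follows with implicit constants depending only on $n,p,q,\alpha,s$ and the chosen $p$-BAPU.
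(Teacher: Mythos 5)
Your argument is correct, but it is organized differently from the paper's. The paper offers essentially no computation at this point: it proves Lemmas \ref{lem6.1}--\ref{lem6.3}, i.e.\ that $\{Q_0\}\cup\{Q_{kj}\}$ is an $\alpha$-covering and that $\{\psi_0\}\cup\{\psi_{kj}\}$ is a $p$-BAPU subordinate to it, and then declares the norm equivalence to follow ``immediately'' from the general fact (Proposition \ref{equivalent-norm}, credited to Borup--Nielsen) that the $\alpha$-modulation quasi-norm does not depend on the choice of admissible covering and BAPU. What you do instead is re-prove that independence by hand for this particular pair of decompositions: the two-sided bounded-multiplicity correspondence $(k,j)\leftrightarrow m$ with $\langle j\rangle\sim\langle m\rangle$ (which reconciles the weights $\langle j\rangle^{s/(1-\alpha)}$ and $\langle m\rangle^{s/(1-\alpha)}$), the scale-uniform multiplier bound via Young for $p\ge 1$ and via Proposition \ref{convolution} together with $\|\mathscr{F}^{-1}\psi_{kj}\|_p\lesssim R^{n(1-1/p)}$ for $p<1$, and the almost-orthogonality summation in both directions. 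This is exactly the content hidden inside the abstract equivalence theorem, so the two routes are mathematically the same proof at different levels of packaging; yours buys self-containedness and makes explicit the one genuinely nontrivial point (that overlap of a Lizorkin box with a standard $\alpha$-piece forces comparable centers and comparable diameters, hence bounded mutual multiplicity), while the paper's buys brevity by outsourcing it. One small caution: bounded overlap \emph{within} each covering does not by itself give bounded overlap \emph{between} the two coverings, so your derivation of $\#\Lambda(k,j)\lesssim 1$ should rest, as you indicate in your first paragraph, on the comparability of centers and diameters rather than on Lemma \ref{lem6.2} alone; with that reading the proof is complete.
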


\begin{thm} \label{prop6.5}
Let $0 \le \alpha <1$, $ (1/p, 1/q) \in D_2, \ \   {p>1}$. If $s < s_0$, then $M^{s, \alpha}_{p,q}$ is not a Banach algebra.
\end{thm}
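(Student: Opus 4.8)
\medskip\noindent\textbf{Proof proposal.} The plan is to \emph{disprove the bilinear estimate}
\[
\|fg\|_{M_{p,q}^{s,\alpha}}\lesssim\|f\|_{M_{p,q}^{s,\alpha}}\,\|g\|_{M_{p,q}^{s,\alpha}},
\]
which is exactly what a multiplication--algebra structure on $M_{p,q}^{s,\alpha}$ would yield; hence it is enough to produce, for each admissible triple $(p,q,s)$, a family $(f_\nu,g_\nu)$ of Schwartz functions (which always lie in $M_{p,q}^{s,\alpha}$) whose quotient $\|f_\nu g_\nu\|_{M_{p,q}^{s,\alpha}}/\bigl(\|f_\nu\|_{M_{p,q}^{s,\alpha}}\|g_\nu\|_{M_{p,q}^{s,\alpha}}\bigr)$ is unbounded. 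I shall compute throughout with the equivalent norm of Proposition~\ref{prop6.4} attached to the Lizorkin-type $p$-BAPU $\{\psi_0\}\cup\{\psi_{kj}\}$, whose pieces are essentially inverse Fourier transforms of smooth nonnegative bumps over the $\alpha$-covering cubes $Q_{kj}$ (centre $\sim k$ at distance $\sim|k|$, side $\sim|k|^{\alpha}$, carrying weight $\sim|k|^{s}$, with $\sim|j|^{n-1}$ cubes in $\mathscr{K}_j$ and $\sim P^{n(1-\alpha)}$ cubes at a dyadic distance $P$ from the origin). Since $p>1$ we have $1\vee\tfrac1p\vee\tfrac1q=\max(1,\tfrac1q)$, so the formula for $s_0$ on $D_2$ breaks into the two regimes $q<1$ and $q\ge1$; I treat them by two different constructions, in both of which the counterexample exploits the ``reflected-cube'' self-resonance (the case $\Omega_{0,2}$ with $y=\alpha$ in the proof of Theorem~\ref{algebra}) that is responsible for the unusual exponents $\tfrac{n\alpha(1-\alpha)}{2-\alpha}$ and $n(1-\alpha)$ in $s_0$.

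\emph{Case $q<1$} (so necessarily $1<p<2$, and $s_0=\tfrac{n\alpha}{p}+\tfrac{n\alpha(1-\alpha)}{2-\alpha}(\tfrac1q-1)$). I would take $f_N=\mathrm e^{\mathrm i\xi_0\cdot x}\varphi$ with $\varphi=\mathscr{F}^{-1}\bigl(\rho(\,\cdot\,/(N^{\alpha}/2))\bigr)$ real and $\xi_0$ the centre of one covering cube at distance $\sim N$, so that $\widehat{f_N}$ sits on a single cube, $\|f_N\|_{M_{p,q}^{s,\alpha}}\sim N^{\,s+n\alpha(1-1/p)}$, and $f_N\overline{f_N}=\varphi^{2}$ exactly. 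The profile $\varphi^{2}$ is a fixed Schwartz function dilated to spatial scale $N^{-\alpha}$ with $\widehat{\varphi^{2}}=\rho(\,\cdot\,/(N^{\alpha}/2))\ast\rho(\,\cdot\,/(N^{\alpha}/2))\ge0$, which is $\sim N^{n\alpha}$ on a ball $B(0,cN^{\alpha})$. I would bound $\|\varphi^2\|_{M_{p,q}^{s,\alpha}}$ from below \emph{term by term}: on each cube $Q_{kj}$ with $|j|^{1/(1-\alpha)}\le cN^{\alpha}$ one has $\psi_{kj}\,\widehat{\varphi^{2}}\gtrsim N^{n\alpha}\psi_{kj}$, hence $\|\mathscr{F}^{-1}(\psi_{kj}\widehat{\varphi^{2}})\|_p\gtrsim N^{n\alpha}|j|^{\,n\alpha(1-1/p)/(1-\alpha)}$ (only a lower bound on each term is needed, so overlaps in physical space are harmless). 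Summing the resulting weighted $\ell^q$-series over $j$ gives a geometric series in the dyadic scale $P$ with exponent $\mathcal E:=n+\tfrac{q\,(s+n\alpha(1-1/p))}{1-\alpha}$; when $\mathcal E>0$ it is dominated by the top scale $P\sim N^{\alpha(1-\alpha)}$ and yields
\[
\|\varphi^{2}\|_{M_{p,q}^{s,\alpha}}\ \gtrsim\ N^{\,n\alpha+\frac{n\alpha(1-\alpha)}{q}+\alpha s+n\alpha^{2}(1-1/p)} .
\]
Dividing by $\|f_N\|_{M_{p,q}^{s,\alpha}}^{2}\sim N^{\,2s+2n\alpha(1-1/p)}$ and simplifying, the quotient blows up precisely when $s<\tfrac{n\alpha}{p}+\tfrac{n\alpha(1-\alpha)}{2-\alpha}(\tfrac1q-1)=s_0$. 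The remaining (very negative) $s$ with $\mathcal E\le0$ are handled separately: there the same series is $\gtrsim N^{n\alpha q}$, so $\|f_N\overline{f_N}\|_{M_{p,q}^{s,\alpha}}\gtrsim N^{n\alpha}$, which already beats $N^{2s+2n\alpha(1-1/p)}$ because $\mathcal E\le0$ forces $s<n\alpha(\tfrac1p-\tfrac12)$.

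\emph{Case $q\ge1$} (so $1<p<2q$, and $s_0=\tfrac{n\alpha}{p}+n(1-\alpha)(1-\tfrac1q)$). Single-cube resonance is now too weak, so I would oppose one very high bump against a low-frequency plateau. Fix a large $r_0$ and a much larger $\mathbf N$ with $r_0\le c\,\mathbf N^{\alpha}$; set $f=\mathrm e^{\mathrm i\xi_0\cdot x}\phi_0$ with $\phi_0=\mathscr{F}^{-1}\bigl(\rho(\,\cdot\,/(c\mathbf N^{\alpha}/4))\bigr)$, $|\xi_0|\sim\mathbf N$ (so $\|f\|_{M_{p,q}^{s,\alpha}}\sim\mathbf N^{\,s+n\alpha(1-1/p)}$), and
\[
g=\sum_{\mathrm{dist}(Q,0)\le r_0} c_Q\,\mathrm e^{\mathrm i\xi_Q\cdot x}\,\mathscr{F}^{-1}\!\bigl(\rho(\,\cdot\,/(\mathrm{dist}(Q,0)^{\alpha}/2))\bigr),\qquad c_Q=\mathrm{dist}(Q,0)^{-s-\varepsilon-n\alpha(1-1/p)}>0,
\]
the sum over all covering cubes inside $B(0,r_0)$ with $\varepsilon$ any fixed number $>n(1-\alpha)/q$, so that $\|g\|_{M_{p,q}^{s,\alpha}}\sim1$ uniformly in $r_0$. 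Since $r_0\le c\mathbf N^{\alpha}$, $\widehat{fg}=\widehat f\ast\widehat g$ is supported well inside the cube $Q_0\ni\xi_0$, whence $\|fg\|_{M_{p,q}^{s,\alpha}}\gtrsim\mathbf N^{s}\|\square^{\alpha}_{Q_0}(fg)\|_p\gtrsim\mathbf N^{s}\|fg\|_p$ by the generalized Bernstein inequality (Proposition~\ref{thm1.9}). As all $c_Q>0$, $g(0)=\sum_Q c_Q\,\mathscr{F}^{-1}\!\bigl(\rho(\,\cdot\,/(\mathrm{dist}(Q,0)^{\alpha}/2))\bigr)(0)\sim r_0^{\,n(1-\alpha)+n\alpha/p-s-\varepsilon}$, the sum being dominated by the cubes with $\mathrm{dist}(Q,0)\sim r_0$ exactly when $s<n(1-\alpha)+n\alpha/p-\varepsilon$; moreover $g\gtrsim g(0)$ on $\{|x|\lesssim r_0^{-1}\}\supset\{|x|\lesssim\mathbf N^{-\alpha}\}$ while $|\phi_0|\gtrsim\mathbf N^{n\alpha}$ on $\{|x|\lesssim\mathbf N^{-\alpha}\}$, so integrating over that set gives $\|fg\|_p\gtrsim\mathbf N^{n\alpha(1-1/p)}r_0^{\,n(1-\alpha)+n\alpha/p-s-\varepsilon}$ and hence
\[
\frac{\|fg\|_{M_{p,q}^{s,\alpha}}}{\|f\|_{M_{p,q}^{s,\alpha}}\|g\|_{M_{p,q}^{s,\alpha}}}\ \gtrsim\ r_0^{\,n(1-\alpha)+n\alpha/p-s-\varepsilon}\ \longrightarrow\ \infty
\]
as $r_0\to\infty$, whenever $s<n(1-\alpha)+n\alpha/p-\varepsilon$. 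Letting $\varepsilon\downarrow n(1-\alpha)/q$ covers exactly $s<\tfrac{n\alpha}{p}+n(1-\alpha)(1-\tfrac1q)=s_0$.

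The main obstacle is \emph{sharpness} rather than the mere existence of counterexamples: one must verify that these lower bounds for $\|f_N\overline{f_N}\|_{M_{p,q}^{s,\alpha}}$ and $\|fg\|_{M_{p,q}^{s,\alpha}}$ recover the precise threshold $s_0$. This hinges on (i) the careful book-keeping of the geometric sums in the dyadic scale and of the degenerate sub-case $\mathcal E\le0$ in the first construction, and (ii) a careful comparison of the coherence region of the plateau $g$ with the envelope of $\phi_0$, together with the collapse of $\widehat{fg}$ into a single covering cube, in the second; the pointwise Schwartz-decay estimates needed are exactly of the type already used in Cases~1.2.3 and 2.2.3 of the proof of Theorem~\ref{dilationP}.
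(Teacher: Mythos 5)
Your overall strategy is sound and, where it works, it recovers exactly the threshold $s_0$ on $D_2$. Your first construction ($q<1$) is essentially the paper's Step~1 in disguise: the paper takes $\widehat f=\chi_{A(J)}$ and $\widehat g=\chi_{-A(J)}$ for a sub-cube $A(J)$ of a single covering cube, so that $\widehat f*\widehat g$ is a tent of height $\sim|J|^{n\alpha/(1-\alpha)}$ spread over the $\sim |J|^{n\alpha}$-many covering cubes near the origin; your $f_N\overline{f_N}=\varphi^2$ is the smooth version of the same resonance. Your version is actually a little cleaner at one point: since $\widehat{\varphi^2}=\widehat\varphi*\widehat\varphi\ge 0$, the termwise lower bound $\|\mathscr{F}^{-1}(\psi_{kj}\widehat{\varphi^2})\|_p\gtrsim N^{n\alpha}P^{n\alpha(1-1/p)}$ follows from $\mathscr{F}^{-1}(\psi_{kj}\widehat{\varphi^2})(0)=\int\psi_{kj}\widehat{\varphi^2}$ plus Nikolskii, whereas the paper has to expand the tent as $(r|J|^{\alpha/(1-\alpha)})^n+R(\xi,J)$ and control the remainder $R$ term by term (their \eqref{estfg}--\eqref{estR}). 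Your bookkeeping of the geometric sum and of the degenerate regime $\mathcal E\le 0$ is correct. Your second construction ($q\ge1$) is genuinely different from the paper's Step~2: the paper pairs two \emph{large} opposite frequency cubes of side $\sim J^{1/(1-\alpha)}$ (each already occupying $\sim J^n$ covering cubes, whence the extra factor $J^{n/q}$ in $\|f\|$ and $\|g\|$), and again reads off the low-frequency output; you instead pair a single high-frequency bump with a weighted low-frequency plateau of unit norm, so that the product collapses into \emph{one} covering cube but is pointwise large near $x=0$. Both give the same exponent; yours isolates more transparently where the loss $n(1-\alpha)(1-1/q)$ comes from (the $\ell^q$-summability cost of the plateau).

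There is one concrete gap: your $q\ge 1$ construction requires $r_0\le c\,\mathbf N^{\alpha}$ with $r_0\to\infty$, which is impossible when $\alpha=0$ (then $\mathbf N^{\alpha}=1$, so $r_0$ is forced to stay bounded and $\widehat{fg}$ no longer fits inside a single covering cube). Since the theorem is stated for $0\le\alpha<1$, the case $\alpha=0$, $q\ge1$ is not covered by your argument as written; the paper's Step~2 handles it uniformly because its two big cubes make sense for every $\alpha\in[0,1)$. This is fixable (e.g.\ by reverting to the paper's construction, or by replacing the single-cube collapse with a finite-overlap/triangle-inequality count of the $\sim r_0^n$ output cubes), but as stated it is a missing case rather than a cosmetic omission. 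Two smaller points you should make explicit if you write this up: the lower bound $\|\mathscr{F}^{-1}(\psi_{kj}\widehat{\varphi^2})\|_p\gtrsim N^{n\alpha}\|\mathscr{F}^{-1}\psi_{kj}\|_p$ does not follow from the pointwise inequality $\psi_{kj}\widehat{\varphi^2}\gtrsim N^{n\alpha}\psi_{kj}$ alone (Fourier multipliers are not monotone); you need the positivity route through the value at $x=0$ sketched above. And the step $\|fg\|_{M^{s,\alpha}_{p,q}}\gtrsim \mathbf N^{s}\|fg\|_p$ is a finite-overlap argument ($fg$ is a sum of $O(1)$ blocks of comparable weight), not an application of Proposition~\ref{thm1.9}.
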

\begin{proof} {\bf Step 1.} {\bf $p>1$, $q\le 1$.}
Let $\chi_A$ be the characteristic function on $A$.  Now we take for $J\gg j_0$, $\ell= (J,J,...,J)$,
\begin{align}
\widehat{f} = \chi_{A(J)},  \ \ \widehat{g} = \chi_{-A(J)}, \ \ A(J) = Q(|J|^{\alpha/(1-\alpha)}\ell, \  r|J|^{\alpha/(1-\alpha)}/2 ). \label{A(J)}
\end{align}
Noticing that
\begin{equation}
(\chi_{[B-b, B+b]} * \chi_{[-B-b, -B+b]})(\xi)= \left\{
\begin{aligned}
0, & \ \  |\xi| \ge 2b,\\
2b-|\xi|, &  \ \  |\xi| < 2b.
\end{aligned}
\right. \label{chi}
\end{equation}
Hence,
\begin{equation}
\left(\chi_{A(J)} * \chi_{-A(J)}\right) (\xi)= \left\{
\begin{aligned}
0,  \quad  |\xi_i| \ge r |J|^{\frac{\alpha}{(1-\alpha)} } \ \mbox{for some} \ i=1,...,n,\\
\prod^n_{i=1} (r |J|^{\frac{\alpha}{(1-\alpha)} }- |\xi_i|),  \quad   |\xi_i| < r |J|^{\frac{\alpha}{(1-\alpha)} } \ \mbox{for all} \ i=1,...,n .
\end{aligned}
\right.
\end{equation}
Hence,
$$
{\rm supp } \widehat{f} * \widehat{g} = \{\xi \in \mathbb{R}^n: |\xi_i| \le r |J|^{\frac{\alpha}{(1-\alpha)} }, \ i=1,...,n\}.
$$
One has that
\begin{align*}
\prod^n_{i=1} (r |J|^{\frac{\alpha}{(1-\alpha)} } + \xi_i ) & = (r|J|^{\frac{\alpha}{(1-\alpha)} })^n +  (r|J|^{\frac{\alpha}{(1-\alpha)} })^{n-1} \sum_{i} \xi_i \\
& \quad +  (r|J|^{\frac{\alpha}{(1-\alpha)} })^{n-2} \sum_{i<j} \xi_i \xi_j +...+ \xi_1...\xi_n \\
& :=  (r|J|^{\frac{\alpha}{(1-\alpha)} })^n  + R(\xi, J).
\end{align*}
Let us write
$$
\mathscr{A}_j = \{k\in \mathscr{K}^n_j: \ \mbox{ if } \ \xi, \eta \in {\rm supp }\psi_{k,j}, \ \mbox{then} \ \xi_i \eta_i >0 \ \mbox{ for all } \ i=1,...,n\}.
$$
Let $1\ll j< \varepsilon |J|^\alpha$ $(0<\varepsilon \ll 1)$ and $k \in \mathscr{A}_j$. We may assume that $\xi_i >0$ if $\xi \in {\rm supp} \psi_{kj}$. Noticing that ${\rm supp}\psi_{kj} \subset
Q(0, r|J|^{\alpha/(1-\alpha)})$,  we have
\begin{align}
\|\mathscr{F}^{-1}\psi_{kj}\mathscr{F}(fg)\|_{ p} & =  \left\|\mathscr{F}^{-1}\psi_{kj}  \prod^n_{i=1} (r |J|^{\frac{\alpha}{(1-\alpha)} }-  \xi_i ) \right\|_p \nonumber\\
& \ge (r |J|^{\frac{ \alpha}{1-\alpha}})^n  \|\mathscr{F}^{-1}\psi_{kj}\|_p -   \left\|\mathscr{F}^{-1}(\psi_{kj} R( \xi,J ))   \right\|_p \nonumber\\
& \ge  c |J|^{\frac{n\alpha}{1-\alpha}} |j|^{\frac{n\alpha}{1-\alpha}(1-\frac{1}{p})}  -   \left\|\mathscr{F}^{-1}(\psi_{kj} R( \xi,J ))   \right\|_p.  \label{estfg}
\end{align}
\begin{align}
   \left\|\mathscr{F}^{-1}(\psi_{kj} R( \xi,J ))   \right\|_p &  \lesssim |J|^{\frac{(n-1) \alpha}{1-\alpha} } \left\|\mathscr{F}^{-1}(\psi_{kj}  \sum^n_{i=1} \xi_i)  \right\|_p  \nonumber\\
   & \ \ +  |J|^{\frac{(n-2) \alpha}{1-\alpha}} \left\|\mathscr{F}^{-1}(\psi_{kj}  \sum_{i<j} \xi_i \xi_j)  \right\|_p +...+ \left\|\mathscr{F}^{-1}(\psi_{kj}    \xi_1... \xi_n)  \right\|_p.
\end{align}
For instance, we estimate $ \|\mathscr{F}^{-1}(\psi_{kj} \xi_1 \xi_2)\|_p $. Let $k$ be the center of ${\rm supp} \psi_{kj}$.  We have
\begin{align}
  \left\|\mathscr{F}^{-1}(\psi_{kj}    \xi_1 \xi_2)  \right\|_p  &  \lesssim |k_1||k_2|\left\|\mathscr{F}^{-1} \psi_{kj}   \right\|_p  + \left\|\mathscr{F}^{-1}(\psi_{kj}    (\xi_1-k_1) (\xi_2-k_2)) \right\|_p \nonumber\\
   &  \quad + |k_1| \left\|\mathscr{F}^{-1}(\psi_{kj} (\xi_2-k_2)) \right\|_p +   |k_2| \left\|\mathscr{F}^{-1}(\psi_{kj} (\xi_1-k_1)) \right\|_p \nonumber\\
   & \lesssim |j|^{\frac{2}{1-\alpha} } |j|^{\frac{n\alpha}{1-\alpha}(1-\frac{1}{p})}
      \lesssim \varepsilon^{2} |J|^{\frac{2\alpha}{1-\alpha} } |j|^{\frac{n\alpha}{1-\alpha}(1-\frac{1}{p})}.
\end{align}
So, one has that
\begin{align}
   \left\|\mathscr{F}^{-1}(\psi_{kj} R( \xi,J ))   \right\|_p    \lesssim  \varepsilon  |J|^{\frac{n\alpha}{1-\alpha} } |j|^{\frac{n\alpha}{1-\alpha}(1-\frac{1}{p})}. \label{estR}
\end{align}
It follows from \eqref{estfg} and \eqref{estR} that
\begin{align}
\|\mathscr{F}^{-1}\psi_{kj}\mathscr{F}(fg)\|_{ p}
\gtrsim  |J|^{\frac{n\alpha}{1-\alpha} } |j|^{\frac{n\alpha}{1-\alpha}(1-\frac{1}{p})}.   \label{estfg2}
\end{align}
\eqref{estfg2} yields
\begin{align}
\|fg\|_{M^{s,\alpha}_{p,q}}  & \gtrsim  \left( \sum_{1\ll j \le \varepsilon |J|^\alpha} |j|^{\frac{sq}{1-\alpha}} \sum_{k\in \mathscr{A}_j} \|\mathscr{F}^{-1}\psi_{kj}\mathscr{F}(fg)\|^q_{ p} \right)^{1/q} \nonumber\\
& \gtrsim |J|^{\frac{n\alpha}{1-\alpha}} \left (\sum_{1\ll j \le \varepsilon |J|^\alpha} |j|^{\frac{sq}{1-\alpha}} \sum_{k\in \mathscr{A}_j}  |j|^{\frac{n\alpha q}{1-\alpha}(1-\frac{1}{p})}    \right)^{1/q} \nonumber\\
& \gtrsim |J|^{\frac{n \alpha}{(1-\alpha)} + \alpha \left( \frac{s}{(1-\alpha)}  + \frac{n\alpha}{1-\alpha}(1-\frac{1}{p}) + \frac{n}{q} \right) }.
\end{align}
On the other hand,
\begin{align}
\|f \|_{M^{s,\alpha}_{p,q}} \sim |J|^{  \frac{s}{(1-\alpha)} } \|\mathscr{F}^{-1} \chi_{A(J)}\|_p \sim |J|^{    \frac{s}{(1-\alpha)}  + \frac{n\alpha}{1-\alpha}(1-\frac{1}{p})   }.
\end{align}
Similarly,
\begin{align}
\|g\|_{M^{s,\alpha}_{p,q}}  \sim |J|^{    \frac{s}{(1-\alpha)}  + \frac{n\alpha}{1-\alpha}(1-\frac{1}{p})   }.
\end{align}
Hence, in order to $M^{s,\alpha}_{p,q}$ forms an algebra, one must has that
\begin{align}
   \frac{2s}{(1-\alpha)}  + \frac{2n\alpha}{1-\alpha}(1-\frac{1}{p})  \ge\frac{n \alpha}{(1-\alpha)} + \alpha \left( \frac{s}{(1-\alpha)}  + \frac{n\alpha}{1-\alpha}(1-\frac{1}{p}) + \frac{n}{q} \right).
\end{align}
Namely,
\begin{align}
 s\ge     \frac{ n\alpha}{ p}  +   \frac{n \alpha(1-\alpha)}{(2-\alpha)}\left(\frac{1}{q}-1\right).
\end{align}

{\bf Step 2. } $(1/p,1/q) \in [0,1]^2 \cap D_2$. Let $J\gg 1$. Put
$$
\widehat{f} (\xi) = \chi_{[J^{1/(1-\alpha)}, 3 J^{1/(1-\alpha)}]^n } (\xi), \quad  \widehat{g} (\xi) = \chi_{[-3J^{1/(1-\alpha)}, -J^{1/(1-\alpha)}]^n } (\xi).
$$
In view of \eqref{chi} we have
\begin{equation}
 (\widehat{f} * \widehat{g} ) (\xi)= \left\{
\begin{aligned}
0,  \quad  |\xi_i| \ge  2  J^{\frac{1}{(1-\alpha)} } \ \mbox{for some} \ i=1,...,n,\\
\prod^n_{i=1} (2 J^{\frac{1}{(1-\alpha)} }- |\xi_i|),  \quad   |\xi_i| \le 2 J^{\frac{1}{(1-\alpha)} } \ \mbox{for all} \ i=1,...,n .
\end{aligned}
\right.
\end{equation}
Hence,
$$
{\rm supp} (\widehat{f} * \widehat{g} ) \subset \{\xi: \ |\xi_i| \le 2 J^{\frac{1}{(1-\alpha)} } \   \ i=1,...,n \}.
$$
Using the same way as in \eqref{estfg2}, we have for $|j| \le \varepsilon J$ and $k\in \mathscr{A}_j$,
\begin{align}
\|\mathscr{F}^{-1}\psi_{kj}\mathscr{F}(fg)\|_{ p}
\gtrsim   J^{\frac{n }{1-\alpha} } |j|^{\frac{n\alpha}{1-\alpha}(1-\frac{1}{p})}.   \label{estfg2}
\end{align}
\eqref{estfg2} implies that
\begin{align}
\|fg\|_{M^{s,\alpha}_{p,q}}  & \gtrsim  \left( \sum_{1\ll j \le \varepsilon  J } |j|^{\frac{sq}{1-\alpha}} \sum_{k\in \mathscr{A}_j} \|\mathscr{F}^{-1}\psi_{kj}\mathscr{F}(fg)\|^q_{ p} \right)^{1/q} \nonumber\\
& \gtrsim  J^{\frac{n}{1-\alpha}} \left (\sum_{1\ll j \le \varepsilon  J} |j|^{\frac{sq}{1-\alpha}} \sum_{k\in \mathscr{A}_j}  |j|^{\frac{n\alpha q}{1-\alpha}(1-\frac{1}{p})}    \right)^{1/q} \nonumber\\
& \gtrsim  J^{\frac{n}{1-\alpha} + \frac{s}{1-\alpha}  + \frac{n\alpha}{1-\alpha}(1-\frac{1}{p}) + \frac{n}{q} }.
\end{align}
On the other hand,
\begin{align}
\|f \|_{M^{s,\alpha}_{p,q}}  & \lesssim   \left( \sum_{|j|   \sim   J } |j|^{\frac{sq}{1-\alpha}} \sum_{k\in \mathscr{K}_j} \|\mathscr{F}^{-1}\psi_{kj} \widehat{f} \|^q_{ p} \right)^{1/q} \nonumber\\
& \lesssim  J^{\frac{s}{1-\alpha}  + \frac{n\alpha}{1-\alpha}(1-\frac{1}{p}) + \frac{n}{q} }.
\end{align}
Similarly,
\begin{align}
\|g\|_{M^{s,\alpha}_{p,q}}   \lesssim  J^{\frac{s}{1-\alpha}  + \frac{n\alpha}{1-\alpha}(1-\frac{1}{p}) + \frac{n}{q} }.
\end{align}
Hence, in order to $M^{s,\alpha}_{p,q}$ forms an algebra, one must has that
\begin{align}
 s\ge     \frac{ n\alpha}{ p}  +  n (1-\alpha) \left(1 - \frac{1}{q}\right).
\end{align}
\end{proof}

{\bf Acknowledgement. \rm  The first named author is supported in part by the National Science Foundation of China, grant 11026053. Part of the work was carried out while the second named author was visiting the Beijing International Center for Mathematical Research (BICMR), he would like to thank BICMR for its hospitality. }

\end{document}
